\DeclareMathOperator*{\dprime}{\prime \prime}
\newtheorem{theorem}{Theorem}[section]
\newtheorem{proposition}[theorem]{Proposition}
\newtheorem{corollary}[theorem]{Corollary}
\newtheorem{lemma}[theorem]{Lemma}
\newtheorem{fact}[theorem]{Fact}
\theoremstyle{corollry}
\theoremstyle{definition}
\newtheorem{definition}[theorem]{Definition}
\newtheorem{notation}[theorem]{Notation}
\newtheorem{remark*}[theorem]{}
\theoremstyle{remark}
\newtheorem{remark}[theorem]{Remark}
\newcommand{\C}{\mathbb C}
\def\ss{\subset}
\def\la{\langle}
\def\ra{\rangle}
\newcommand{\p}{p}
\begin{document}
\title{ Intermediate Planar Algebra revisited}
\author{Keshab Chandra Bakshi}
\address{Institute of Mathematical Sciences, Taramani, Chennai, India-600113 and Homi Bhabha National
Institute, Training School Complex, Anushakti Nagar, Mumbai, India-400094}
\email{keshabcb@imsc.res.in}

\begin{abstract}
 In this paper, we explicitly work out the subfactor planar algebra $P^{(N \subset Q)}$ for an intermediate subfactor
$N \subset Q \subset M$ of an irreducible subfactor $N \subset M$ of finite
index. We do this in terms of the subfactor planar algebra $P^{(N \subset M)}$
by showing that if $T$ is any planar tangle, the associated operator $Z^{(N \subset Q)}_T$ can be read off from $Z^{(N \subset M)}_T$ by a formula involving the
so-called {\em biprojection} corresponding to the intermediate subfactor $N \subset Q \subset M$ and a scalar $\alpha(T)$ carefully chosen so as to ensure that the formula defining $Z^{(N \subset Q)}_T$ is multiplicative with respect to composition of tangles.
Also, the planar algebra of $Q \ss M$
can be obtained by applying these results to $M \ss M_1$. We also apply our result to the example of a semi-direct product subgroup-subfactor.
\end{abstract}
\maketitle
\section{Introduction}
In this paper, we consider the situation in which an irreducible subfactor $N \ss M$ has an
intermediate subfactor $Q$; and work out a reformulation of the proof of the fact that the planar algebra $P^{(N\subseteq Q)}$
may be derived from $P^{(N\subseteq M)}$ (see \cite{BhaLa} and \cite{La2})
by requiring that the action of a  planar tangle $T$ is given by Equation (\ref{alpheq}) below. 
It is well-known from earlier work of Bisch \cite{Bi1} - and reformulated in \cite{BiJo2} and \cite{La1} in the planar algebraic terms that we will actually use here - that such intermediate subfactors are in bijective correspondence with so-called {\em biprojections} say $q \in P^{(N\subset M)}_2$. We wish here to
describe (in Theorem~\ref{main1}) the planar algebra of $N \ss Q$ in terms of the planar algebra of $N \ss M$, while the planar algebra of $Q \ss M$
can be obtained by applying these results to $M \ss M_1$. The biprojection $q$ corresponding to the intermediate
subfactor $Q$ gives rise naturally to a mapping $F = \{F_m\}$ from tangles of any colour (say $m$) to partially labelled
tangles of the same colour (see Definition \ref{defF}), and a scalar-valued
function $\alpha$ defined on the collection of all tangles (see Definition \ref{defalpha}), such that $P^{(N \ss Q)}$ may be identified with a planar algebra, call it $P^\prime$, with $P'_n = range (Z_{F(I^n_n)}^{(N \ss M)})$,where $I^n_n$ is the identity tangle of colour $n$ and
the multilinear map $Z^{(N \ss Q)}$ associated to a tangle $T^{k_0}_{k_1,\cdots, k_b}$ is given by
\begin{equation} \label{alpheq}
Z^{(N \ss Q)}_T = \alpha(T)Z^{(N \ss M)}_{F(T)},
 \end{equation}
where both sides are thought of as acting on $\otimes_{i=1}^b P'_{k_i}$.
The slightly involved proof of the above assertion takes some work - see Theorem \ref{main} and Theorem \ref{main1}.

\smallskip
The difference in proofs here and in \cite{BhaLa} stems from the two ways that a planar algebra $P$ can be described, respectively, (i) as in \cite{KodSun} (where one says what the underlying spaces $P_n$ are, and explicitly describes the multilinear operator $Z^P_T$  associated to a planar tangle $T$, and  then verifying that these tangle maps satisfy the necessary compatibility conditions, as in Theorem \ref{main1}), and (ii) by specifying a non-degenerate scalar-valued partition function  $Z$ on $0$-tangles (labelled by $S = \coprod_{k\in ~Col} S_k$) which is invariant under planar isotopy and multiplicative on connected components (as in \cite{Jo2}). Thus, one may say that a `bonus' in our approach is that we know how any planar tangle acts on a vector in its domain.
\smallskip

With our formulation of intermediate planar algebra, as an application, in Section \ref{CPE} we have recovered the result of \cite{LaSu} which establishes an one-one correspondence between a planar
subalgebra $P^{\Theta}$ of the `group planar algebra' which is naturally associated with a group $\Theta$ of automorphisms of the given group $G$ and the planar agebra
corresponding to the `subgroup-subfactor' associated with the inclusion $\Theta \subset (G\rtimes \Theta)$. It seems that there is a slight inaccuracy with the constant in the defining
isomorphism ${\beta}_k$ of \cite{LaSu},while the corrected constant may be found in  Definition \ref{defi}.
\smallskip

In an appendix, we have described the tower of iterated basic construction of  $N\subseteq Q$ in terms of the the corresponding tower of $N \subseteq M$. This is
the crucial step in obtaining standard invariant of $N\subset Q$ in terms of $N\subset M$.
The Jones' tower of $N\subset Q$ in terms of $N\subset M$ was described in \cite{BhaLa}. Here we give another proof using yet another characterization of the basic construction, in terms of Pimsner Popa bases. It should be mentioned that D. Bisch gave a partial description of standard invariant of $N\subset Q$ in \cite{Bi2} giving the standard invariant of the inclusion $N\subset Q_1$, where $Q_1$ is the first step basic construction for $N\subset Q$.
\smallskip

After posting this paper on the arXiv, the author has been requested by D. Bisch to mention that there would be a section on intermediate planar algebra involving the same tangles as in Definition \ref{defF} in unpublished joint work with V. Jones, for which a preprint is forthcoming.

\section{Notation and some basic facts}

In this paper, all factors will be of type ${\rm II}_1$, and all subfactors $N\ss M$ will be of finite index $[M:N]$.  By $tr_M$ we will mean the unique normal faithful trace defined on $M$.  $E^M_N$ will denote the trace preserving conditional expectation from $M$ onto $N$; we shall often omit $M$ and write $E_N$ when doing so is unambiguous.\par
 
Following Bisch, we denote the `Jones towers' built from the basic construction for $N \ss Q \ss M$  as:
\[ N \ss Q  \ss M \ss P_1 \ss M_1 \ss P_2 \ss M_2 \ss P_3 \ss \dots \ss M_{2n+1}~. \]
We write $e_{\epsilon,i}, \epsilon \in \{0,1\}, i \geq 1$ for the  projections:
\[ e_{0,i}: L^2(M_{i-1}) \rightarrow L^2(P_{i-1}) \mbox{ \ \ \ \ \ and \ \ \ \ \ } e_{1,i}: L^2(M_{i-1}) \rightarrow L^2(M_{i-2})\]
so that $P_i = \la M_{i-1}, e_{0,i} \ra$ and $M_i=\la M_{i-1}, e_{1,
i}\ra$ (here we set $M_0=M$, $M_{-1}=N$, $P_0=Q$).

The description of the algebras generated by $e_{0,i}$ and $e_{1,i}$
are given in \cite{BiJo1}. We will use the following relations
appearing in \cite{BiJo1}.  In what follows, as usual,
$[a,b]=0$ means that $a$ and $b$ commute, and $[a,B]=0$ means that $a$
commutes with all elements of the set $B$.
\begin{fact} \label{f:erel}
The following relations hold:
\begin{enumerate}
\item $e_{0,i}e_{1,i}=e_{1,i}$,
\item $[e_{a,i},e_{b,j}]=0$ for $|i-j|\geq 2$,
\item $[e_{0,i},e_{0, i\pm 1}]=0$,
\item $[e_{0,i},\  P_{i-1}]=[e_{1,i},\  M_{i-2}]=0$,
\item for $i$ even, $e_{0,i}e_{1,i \pm1}e_{0,i} =[Q:N]^{-1} e_{0,i}e_{0,i\pm1}$, and  $e_{1,i}e_{0,i \pm1}e_{1,i} = [M:Q]^{-1} e_{1,i} $,   
\item for $i$ odd,  $e_{0,i}e_{1,i \pm1}e_{0,i} =[M:Q]^{-1} e_{0,i}e_{0,i\pm1}$, and $e_{1,i}e_{0,i \pm1}e_{1,i} = [Q:N]^{-1} e_{1,i} $,
\item $e_{1,i}e_{1,i \pm1}e_{1,i} =[M:N]^{-1} e_{1,i}  $.
\end{enumerate}
\end{fact}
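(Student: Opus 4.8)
The plan is to read off every relation from the single structural fact that each $e_{\epsilon,i}$ is the Jones projection of an explicit one-step inclusion: $e_{1,i}$ is the Jones projection of $M_{i-2}\ss M_{i-1}$ (so $M_i=\la M_{i-1},e_{1,i}\ra$), while $e_{0,i}$ is the Jones projection of $P_{i-1}\ss M_{i-1}$ (so $P_i=\la M_{i-1},e_{0,i}\ra$). Thus each projection carries the three standard properties: it commutes with the base algebra, it implements the trace-preserving expectation via $e\,x\,e=E(x)\,e$, and it satisfies the Markov relation $E(e)=[\text{middle}:\text{base}]^{-1}$. Before anything else I would pin down all the indices by multiplicativity of the index under basic construction, starting from $[P_0:M_{-1}]=[Q:N]$, $[M_0:P_0]=[M:Q]$ and $[M_i:M_{i-1}]=[M:N]$; this produces the alternation $[M_{i-1}:P_{i-1}]=[Q:N]$ for $i$ even and $[M:Q]$ for $i$ odd (and the mirror statement for $[M_i:P_i]$), which is exactly the bookkeeping that distinguishes the even case (5) from the odd case (6).

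With this in hand the easy relations are immediate. Relation (1) is the nesting $M_{i-2}\ss P_{i-1}$, so that $e_{0,i}$ (range $L^2(P_{i-1})$) dominates $e_{1,i}$ (range $L^2(M_{i-2})$). Relation (4) is the defining commutation of a Jones projection with its base, and relation (3) then falls out of (4): $e_{0,i+1}$ commutes with $P_i\ni e_{0,i}$, and $e_{0,i}$ commutes with $P_{i-1}\ni e_{0,i-1}$. Relation (2) is the usual far commutativity: for $|i-j|\ge 2$ the more distant projection lies inside the base algebra with which the nearer one commutes. Relation (7) is nothing but the Temperley--Lieb relation for the Jones tower $N\ss M\ss M_1\ss\cdots$ of index $[M:N]$, whose Jones projections are precisely the $e_{1,\bullet}$.

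The real content is in the mixed relations (5) and (6), which I would organize by whether the inner projection sits below or above the level of the outer one. When the inner projection lies in the domain of the outer expectation the computation is direct: $e_{1,i}e_{0,i-1}e_{1,i}=E_{M_{i-2}}(e_{0,i-1})\,e_{1,i}=[M_{i-2}:P_{i-2}]^{-1}e_{1,i}$ by Markov, and $e_{0,i}e_{1,i-1}e_{0,i}=E_{P_{i-1}}(e_{1,i-1})\,e_{0,i}$. The delicate point is to identify $E_{P_{i-1}}(e_{1,i-1})$ as a scalar multiple of $e_{0,i-1}$ rather than some unknown element of $P_{i-1}$: using $e_{1,i-1}\le e_{0,i-1}$ (relation (1)) one confines it to the corner $e_{0,i-1}P_{i-1}e_{0,i-1}\cong P_{i-2}$, and using that $E_{P_{i-1}}$ is $M_{i-3}$-bimodular one confines it further to $P_{i-2}\cap M_{i-3}'$. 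The key lemma I would isolate is that this relative commutant is trivial: iterating the basic-construction isomorphism $B'\cap\la B,e_A\ra\cong A'\cap B$ down the tower collapses $P_{i-2}\cap M_{i-3}'$ to $M'\cap M_1\cong N'\cap M=\C$, where the irreducibility hypothesis on $N\ss M$ is used decisively. The surviving scalar is then forced by the trace, yielding exactly $[Q:N]^{-1}$ (even) or $[M:Q]^{-1}$ (odd).

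The hardest part, and where I expect the main obstacle, is the ``$+1$'' half of (5)--(6), e.g. $e_{1,i}e_{0,i+1}e_{1,i}$, because here the inner projection $e_{0,i+1}$ no longer lies in $M_{i-1}$, so the expectation formula does not apply in one stroke. I would first show the product is a scalar multiple of $e_{1,i}$, arguing as above that the sandwich $e_{1,i}(\cdots)e_{1,i}$ lands in a corner that irreducibility forces to be scalar, and then fix the scalar by a trace computation using $E_{M_i}(e_{0,i+1})=[M_i:P_i]^{-1}$ together with $\tr(e_{0,i+1}e_{1,i})=[M_i:P_i]^{-1}\tr(e_{1,i})$. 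An attractive alternative worth setting up is the order-reversing anti-automorphism of the finite tower $M_n$ that reflects the chain and interchanges ``$+1$'' with ``$-1$'' relations; the only subtlety there is tracking how the parity, and hence the interchange of $[Q:N]$ with $[M:Q]$, behaves under the reflection. Either route reduces the ``$+1$'' relations to computations of the same shape as the ``$-1$'' ones, completing all seven relations.
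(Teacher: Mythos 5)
The first thing to note is that the paper offers no argument for this Fact at all: its ``proof'' is the single line ``For a proof look at \cite{BiJo1} Proposition $5.1$'', so any self-contained argument is automatically a different route from the paper's. Most of your proposal is sound. Items (1)--(4) and (7) are handled correctly; the index bookkeeping (that $[M_j:P_j]$ alternates between $[M:Q]$ for $j$ even and $[Q:N]$ for $j$ odd) is right; and your treatment of the ``$-1$'' halves of (5)--(6) works: the identity $e_{0,i}e_{1,i-1}e_{0,i}=E_{P_{i-1}}(e_{1,i-1})\,e_{0,i}$, the confinement of $E_{P_{i-1}}(e_{1,i-1})$ to the corner $e_{0,i-1}P_{i-1}e_{0,i-1}=P_{i-2}\,e_{0,i-1}$, the $M_{i-3}$-bimodularity step, the collapse of $M_{i-3}'\cap P_{i-2}$ to $N'\cap M=\C$ via $B'\cap\la B,e_A\ra\cong A'\cap B$ and irreducibility, and the final trace evaluation are all correct; indeed identities of exactly this type, such as $E^{M_1}_{P_1}(e_{1,1})=[Q:N]^{-1}e_{0,1}$ (Equation (\ref{e2})), are proved in just this way in the paper's appendix.

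The genuine gap is in your ``$+1$'' step, and the mechanism you name would fail. The claim that ``the sandwich $e_{1,i}(\cdots)e_{1,i}$ lands in a corner that irreducibility forces to be scalar'' is not true: $e_{1,i}$ is \emph{not} the Jones projection of the top inclusion $M_i\subset M_{i+1}$, so the one-step corner fact does not apply to it, and the corner $e_{1,i}\,(M_{i-2}'\cap M_{i+1})\,e_{1,i}$ is genuinely non-scalar (compressing the relative commutant by a projection sitting two levels down reproduces a shifted two-box space, not $\C$; compare the paper's appendix, where compression by such a projection produces the whole factor $Q_1$). The decisive evidence that scalarity cannot be the right statement is the companion ``$+1$'' relation itself: for $i$ even one must prove $e_{0,i}e_{1,i+1}e_{0,i}=[Q:N]^{-1}e_{0,i}e_{0,i+1}$, whose right-hand side is a scalar times the \emph{strictly smaller} projection $e_{0,i}e_{0,i+1}$, not a scalar times $e_{0,i}$; any argument forcing the sandwich to be a scalar multiple of the outer projection would prove something false here. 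Your fallback, an order-reversing anti-automorphism of the tower, is circular: that reflection symmetry is a feature of the abstract algebra presented by the very relations being proved and is not available for the concrete tower of factors beforehand. The gap is fixable with your own ingredients plus one idea, positivity: inserting the already-proved ``$-1$'' relation at level $i+1$ into the middle of $z=e_{1,i}e_{0,i+1}e_{1,i}$ gives $z^2=[M:Q]^{-1}z$ (using $e_{1,i}e_{0,i}=e_{1,i}$ and $[e_{0,i},e_{0,i+1}]=0$), so $[M:Q]\,z$ is a projection $p\le e_{1,i}$; your trace identity $\tr(z)=[M_i:P_i]^{-1}\tr(e_{1,i})=[M:Q]^{-1}\tr(e_{1,i})$ then forces $\tr(p)=\tr(e_{1,i})$, hence $p=e_{1,i}$ by faithfulness, which is relation (5). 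The same trick, now with $p\le e_{0,i}e_{0,i+1}$ (using $e_{1,i+1}\le e_{0,i+1}$), settles $e_{0,i}e_{1,i+1}e_{0,i}$; alternatively the classical push-down argument (expand $x\in M_i$ as $\sum_j a_je_{1,i}b_j$ and use $E_{P_i}(e_{1,i})=c\,e_{0,i}$ together with $e_{1,i}e_{0,i}=e_{1,i}$) does the job. Either way, an ingredient beyond your sketch is required.
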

\begin{proof}
 For a proof look at \cite{BiJo1} Proposition $5.1$.
 \end{proof}
 V. Jones introduced his theory of planar algebras in
\cite{Jo2}. A summary of planar algebra terminology is
given in \cite{La1} and also a crash course on planar algebras is given in \cite {KodSun}. We will mainly follow the notation for planar algebras from \cite{KodSun}(section 2). Thus, we write $P_k$ for the $k$-box space $N^\prime \cap M_{k-1}$, $\delta = [M:N]^{-1/2}$ and write $Z_T$ for the multilinear operator corresponding to a planar tangle $T$.
For each disc, one of its boundary arcs is distinguished and marked with a $*$ placed near it (whereas in \cite{KodSun},$*$ was marked to a distinguished point).
As is usual, we will normally draw the discs as boxes with their $*$ arcs unmarked and assumed to contain their north-west corner (and in exceptional cases when it has been necessary to use a `2-click rotation, as in the following figure, for instance, the $*$-interval will be explicitly marked); typically, when a 2-box has a $q$ in it, the $*$-arc has to be in a white arc, and for biprojections, it is immaterial which white arc has the $*$, and we may omit indicating the $*$. Similarly,  we shall  sometimes omit drawing the external disc. (If from the context the shading is clear we will omit that also.)
If $r\in P_2$ sometimes we also write 
$\begin{minipage}{.1\textwidth}
  \centering
  \includegraphics[scale= .4]{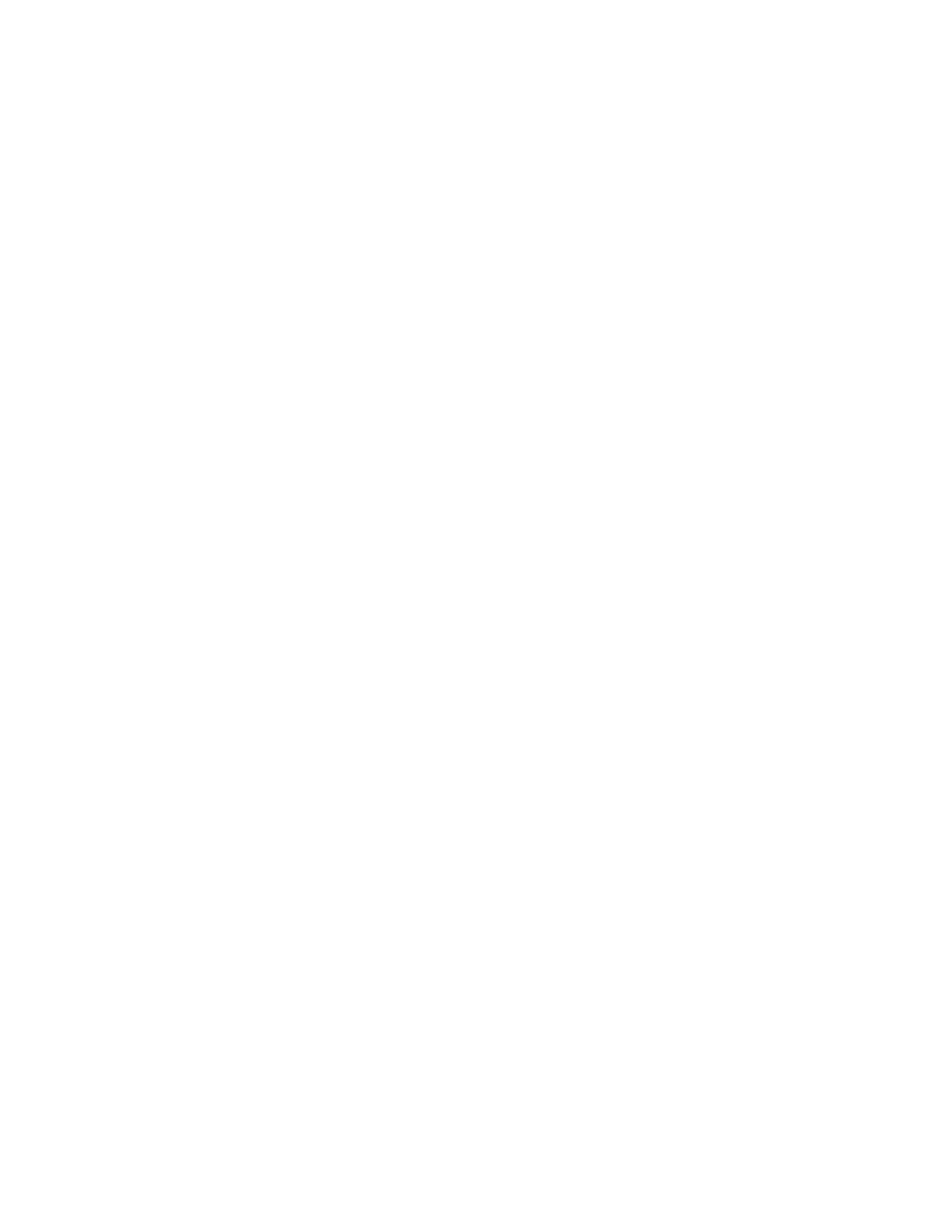}
 \end{minipage}
$ \hspace{2mm} for  \hspace{2mm}$
\begin{minipage}{.1\textwidth}
  \centering
  \includegraphics[scale= .4]{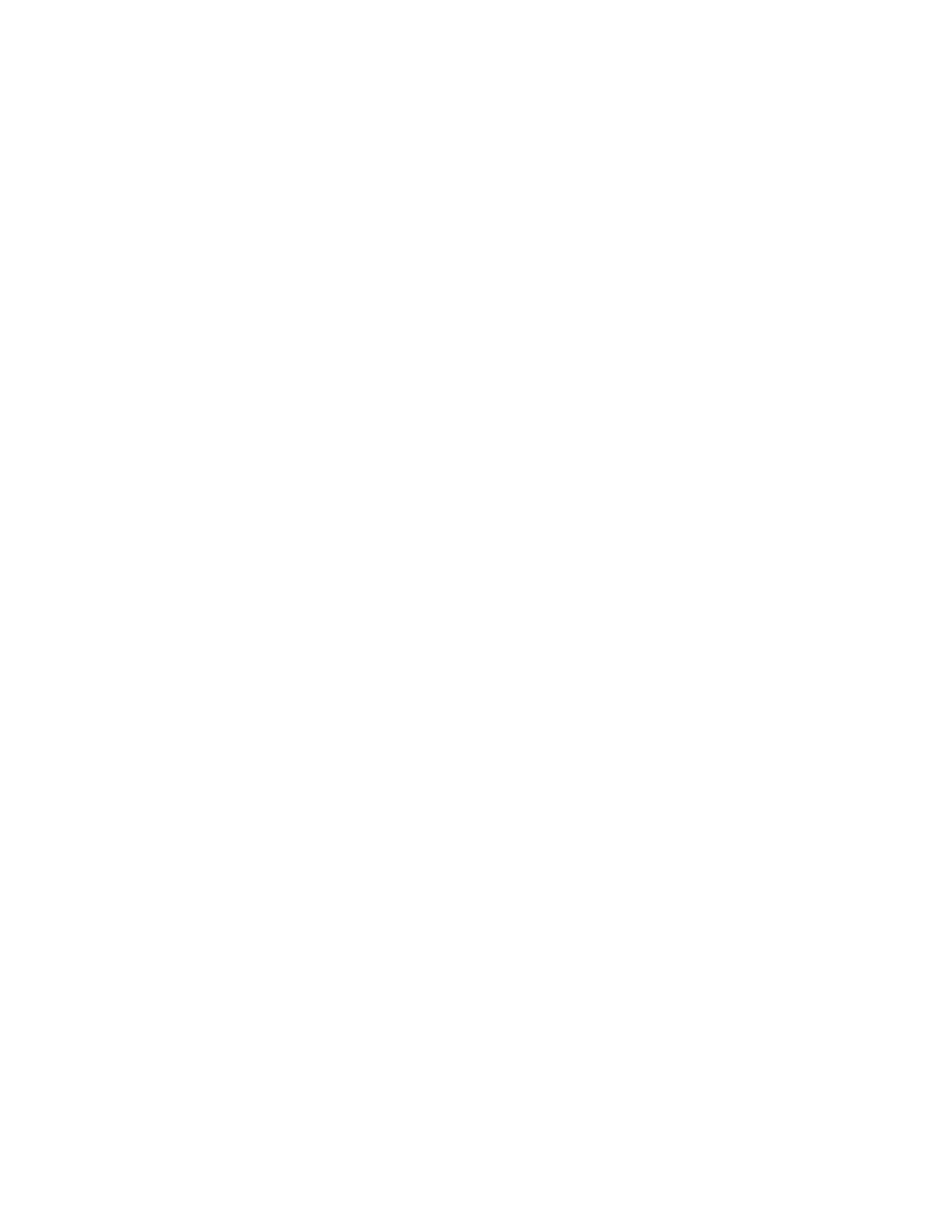}
 \end{minipage}$.
\par
It is well-known from \cite{Bi1}, \cite{La1} and \cite{BiJo2}  that in case $N^\prime \cap M = \C$, there is a bijective correspondence between biprojections $q$ (corresponding to the Jones projection of $L^2(M)$ onto $L^2(Q)$) and the intermediate subfactor $Q$, where  $N \subset Q \subset M$. More precisely, we have the following (reformulation of) Theorem 3.2 of \cite{Bi1}.
\begin{theorem}\label{Bisch}\cite{Bi1} \cite{La1} \cite{BiJo2}
 Let $N \ss M$  be an extremal ${\rm II}_1$ subfactor.
Let $P^{(N\ss M)}$ be the planar algebra of $N \ss M$, and let
$\Phi_{N\ss M}$ be the presenting map of $P^{(N \ss M)}$ on itself, i.e. $\Phi_{N\ss M}: \mathcal{P}
(L) \rightarrow P^{(N\ss M)}$ with $L = \coprod P^{(N \ss M)}_{k}$.  Suppose there exists an intermediate subfactor $Q$, $N \ss Q \ss M$.  
If we let $\begin{minipage}{.05\textwidth}
            \centering
            \includegraphics[scale=.35]{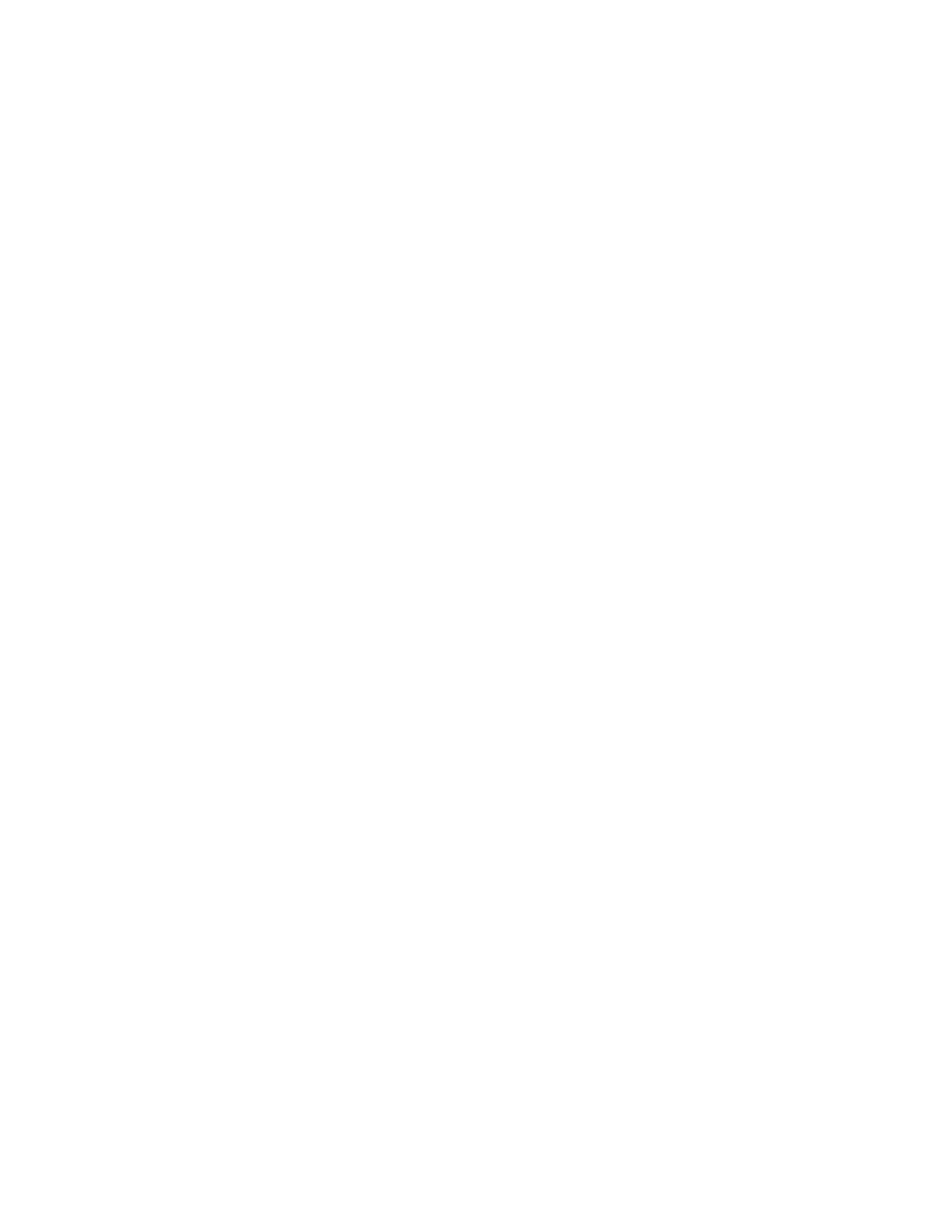}
           \end{minipage}
$ denote the biprojection  corresponding to $Q$, we have \vspace{2mm}\\
\begin{tabular}{ll}
a) \ \ $\begin{array}[c]{l} \includegraphics[scale=.45]{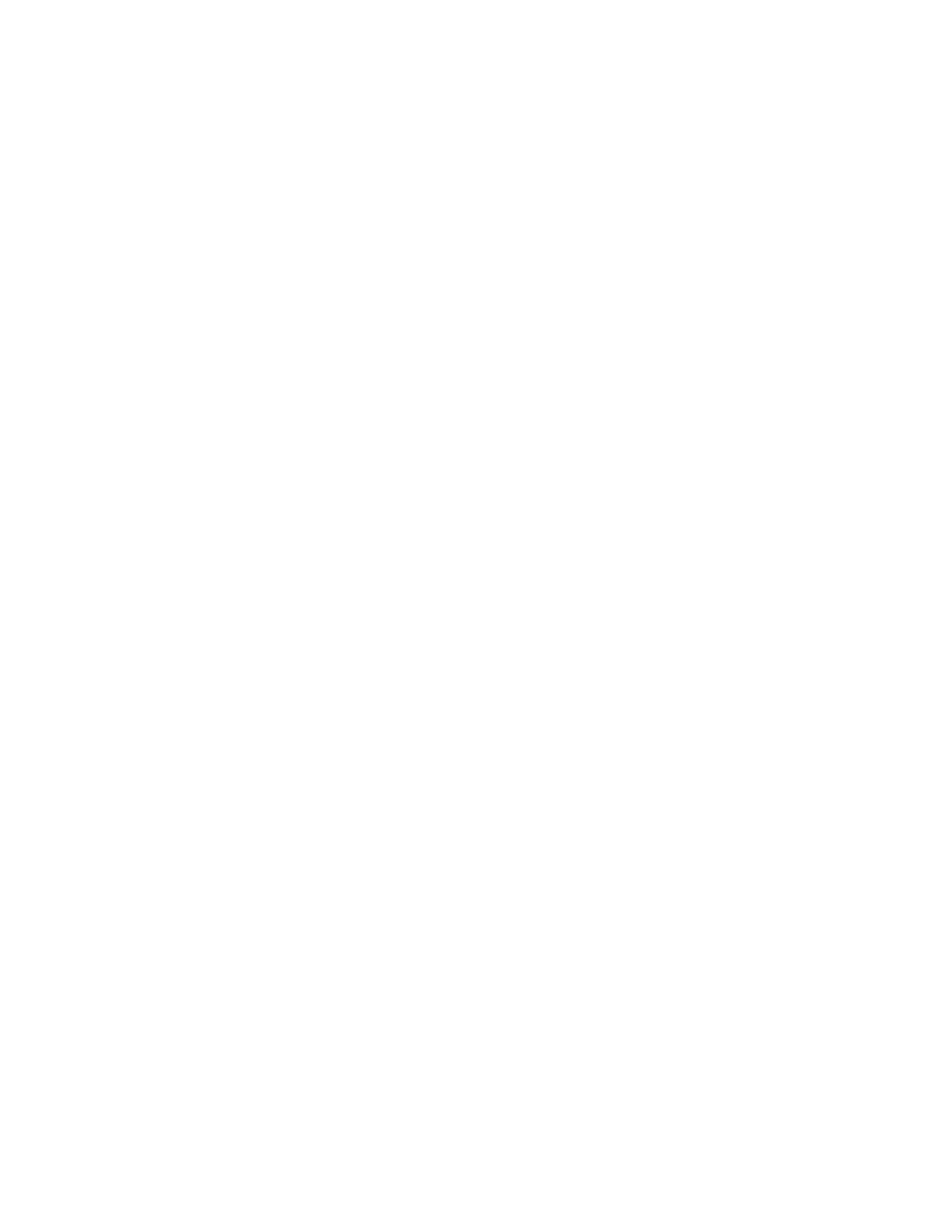} \end{array}$ &
b) \ \ $\begin{array}[c]{l}  \includegraphics[scale=.45]{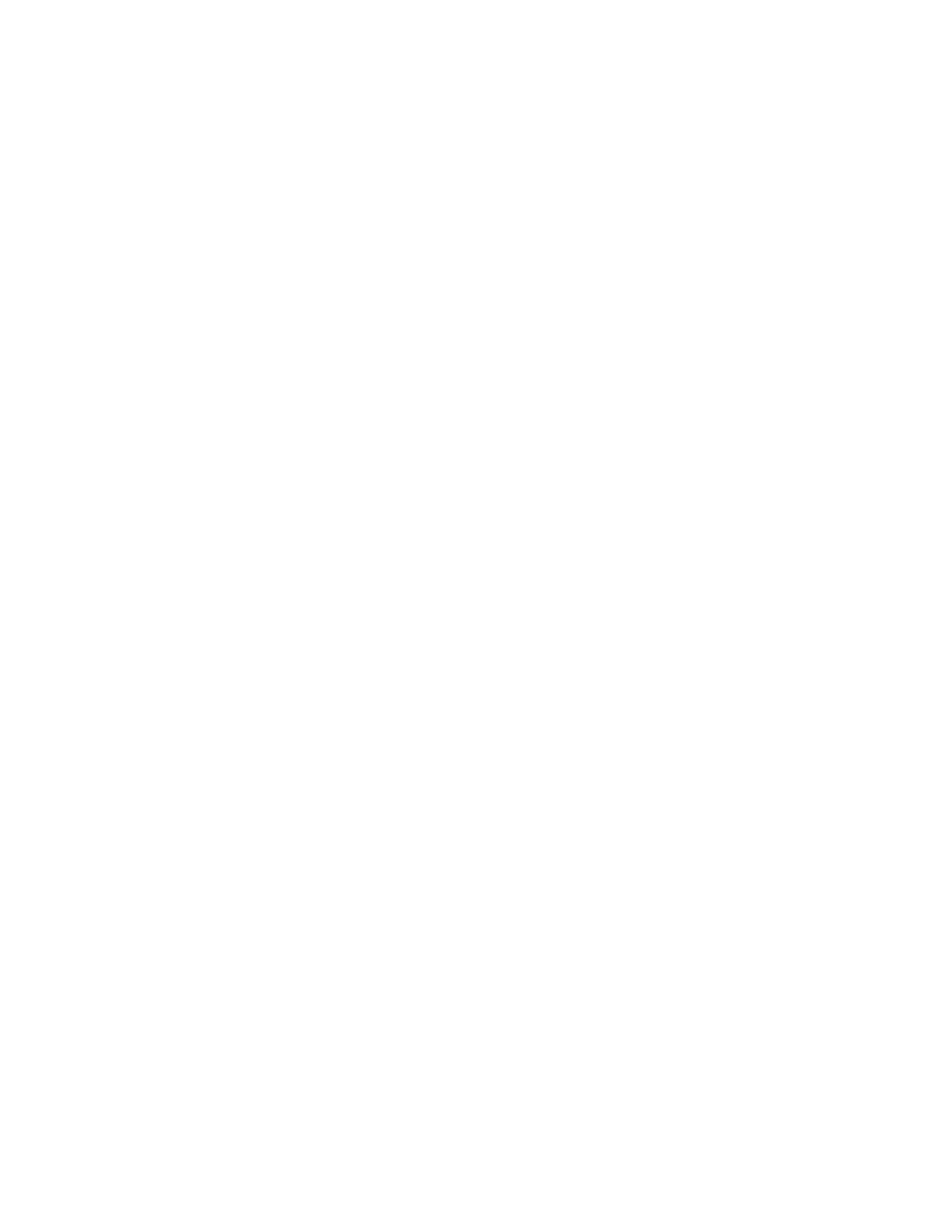} \end{array}$ \vspace{2mm}\\
c) \ \ $\begin{array}[c]{l}  \includegraphics[scale=.45]{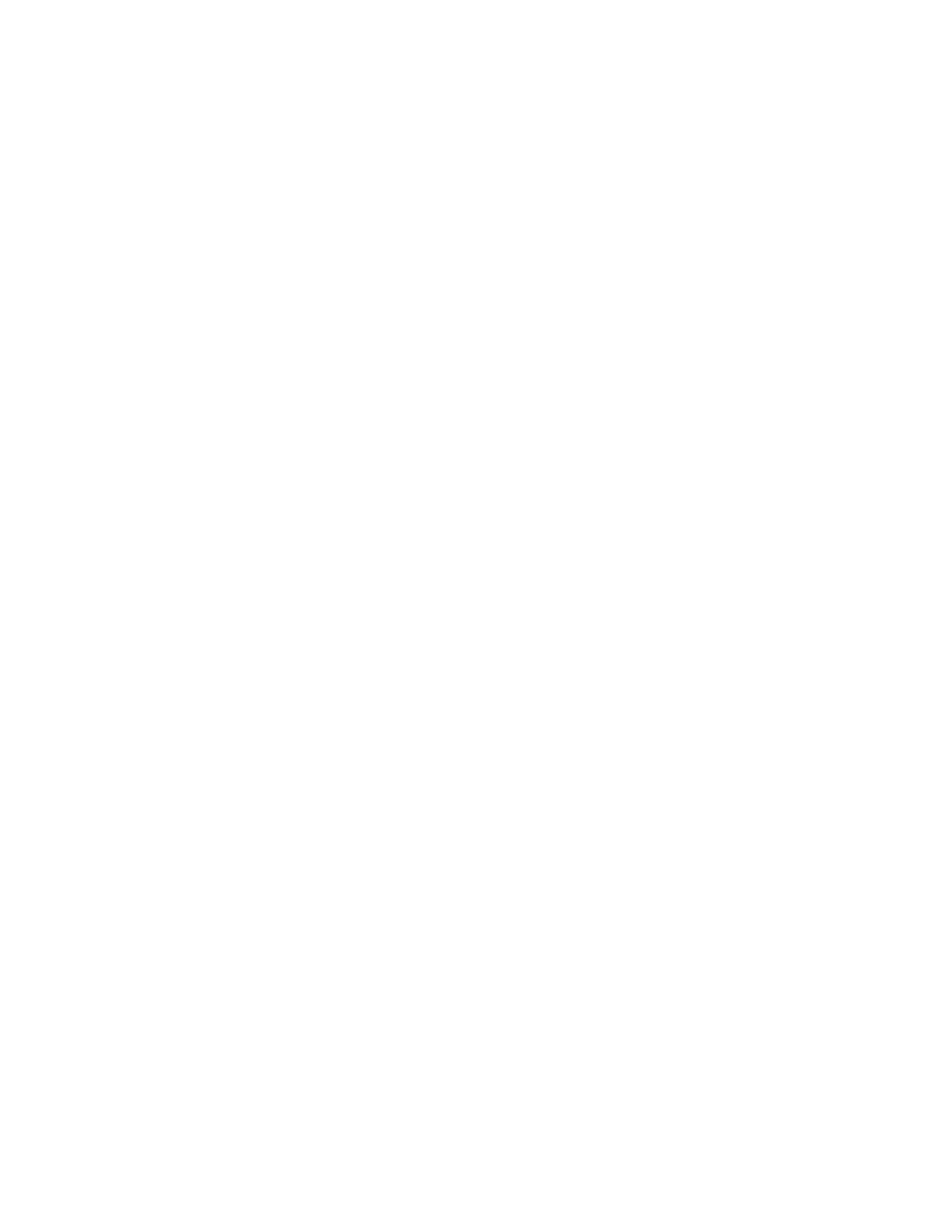} \end{array}$ &
d) \ \  $\begin{array}[c]{l} \includegraphics[scale=.4]{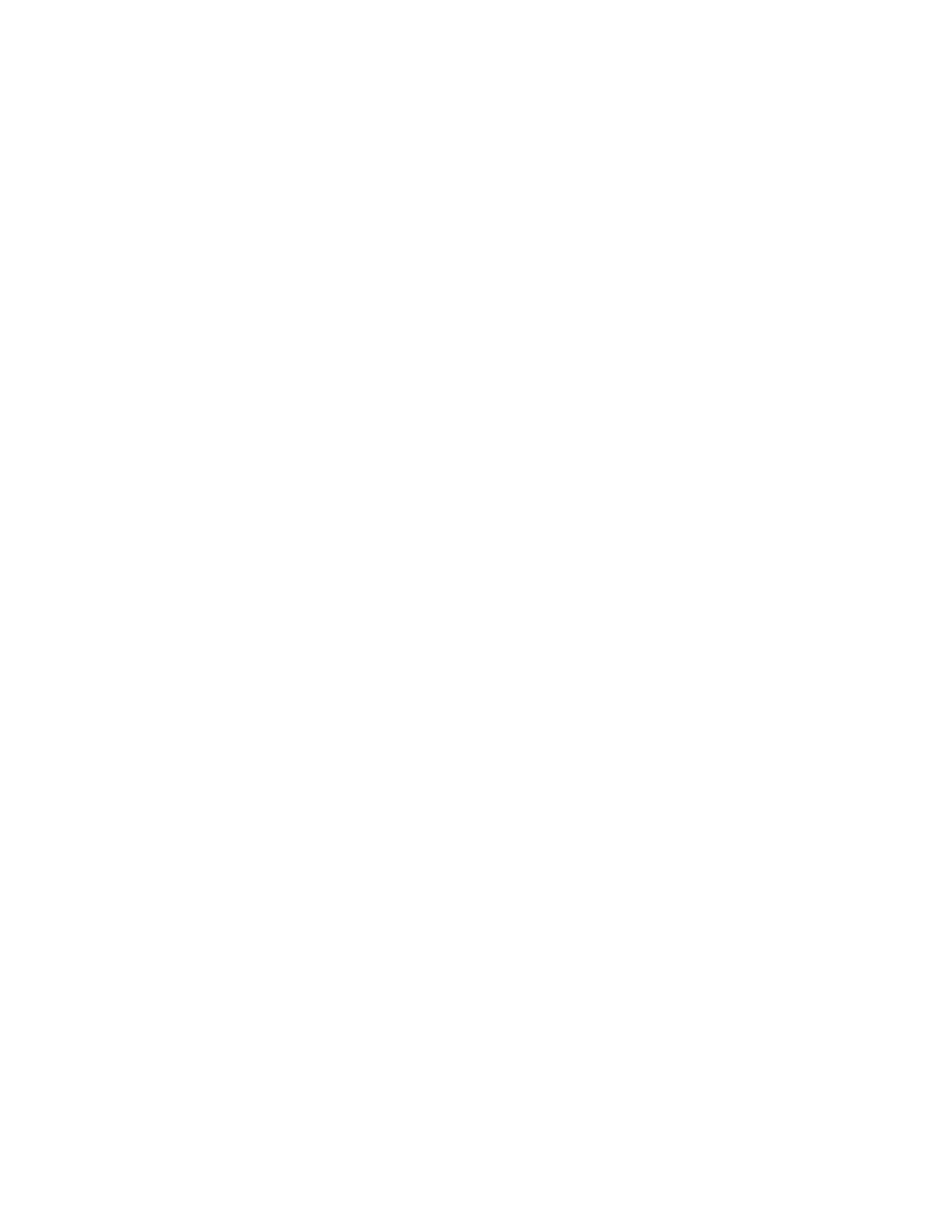} \end{array}$
\end{tabular}

 with $c = [M:N]^{1/2}[M:Q ]^{-1}$. Furthermore, in the case $N'\cap M=\C$, the converse is also true. 
 Namely  a 2-box  $\begin{minipage}{.05\textwidth}
                    \centering
                    \includegraphics[scale=.5]{q.eps}
                   \end{minipage}
$ satisfying a)-d) above implies the 
 existence of an intermediate subfactor $Q$, $N \ss Q \ss M$ corresponding to
  $\begin{minipage}{.05\textwidth}
    \centering
    \includegraphics[scale=.5]{q.eps}
   \end{minipage}
$.
\end{theorem}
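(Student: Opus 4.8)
The statement breaks into two halves: the forward implication (an intermediate subfactor $N \ss Q \ss M$ produces a $2$-box satisfying (a)--(d)), and, under the irreducibility hypothesis $N' \cap M = \C$, the converse (a $2$-box satisfying (a)--(d) produces an intermediate subfactor). The plan is to handle the forward direction by taking $q$ to be the Jones projection of $Q$ and checking the four relations directly, and to handle the converse by reconstructing $Q$ from the range of $q$ inside $L^2(M)$, the constant $c = [M:N]^{1/2}[M:Q]^{-1}$ being exactly what is needed to make the two passages mutually inverse.

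For the forward direction I would set $q = e_Q \colon L^2(M) \to L^2(Q)$. First one records that $q$ genuinely lies in $P_2 = N' \cap M_1$: it commutes with $N$ because $E_Q$ is $N$--$N$ bimodular (as $N \ss Q$), and it sits inside $M_1 = \la M, e_1 \ra$ by the standard description of the Jones projection of an intermediate subalgebra. The self-adjointness and idempotence relations among (a)--(d) are then immediate from $e_Q^* = e_Q = e_Q^2$, and the relation tying $q$ to $e_1$ follows from $L^2(N) \ss L^2(Q)$, which gives $e_Q e_1 = e_1 = e_1 e_Q$, i.e.\ $e_1 \le e_Q$. The only point requiring real care is the coproduct relation (c) \emph{together with} the exact scalar $c$. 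Here I would expand the relevant tangle on a Pimsner--Popa basis for $M$ over $Q$ and use the index factorisation $[M:N] = [M:Q]\,[Q:N]$, so that $c = \delta^{-1}[M:Q]^{-1}$ with $\delta = [M:N]^{-1/2}$; the scalars $[M:Q]^{-1}$ and $[Q:N]^{-1}$ are precisely the ones appearing in the triple products of Fact~\ref{f:erel}, and the bookkeeping of the $\delta$-factors contributed by the shaded regions of the tangle is what pins down $c$ on the nose.

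For the converse, assume $N' \cap M = \C$ and that a $2$-box $q$ satisfies (a)--(d). From the idempotence and self-adjointness relations, $q$ is a projection in $N' \cap M_1$, so its range $\mathcal H_q := q\, L^2(M)$ is a closed $N$--$N$ subbimodule of $L^2(M)$; the relation involving $e_1$ gives $L^2(N) = e_1 L^2(M) \ss \mathcal H_q$. The essential point is that the coproduct relation (c) is the planar encoding of the assertion that $\mathcal H_q \cap M$ is closed under the multiplication of $M$: read as an operator identity, $q * q = c\, q$ says that the suitably normalised map $x \mapsto (\text{the $M$-part of } qx)$ is an idempotent whose range is multiplicatively and $*$-closed, the $*$-closure coming from the reflection symmetry of $q$. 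One then sets $Q := \{x \in M : \widehat{x} \in \mathcal H_q\}$, a weakly closed unital $*$-subalgebra with $N \ss Q \ss M$, and verifies $e_Q = q$ by comparing ranges. Irreducibility enters only at the end: since $N \ss Q$, one has $Q' \cap Q \ss N' \cap Q \ss N' \cap M = \C$, so $Q$ is a factor, and the assignments $Q \mapsto e_Q$ and $q \mapsto Q$ are mutually inverse, giving the bijection.

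I expect the main obstacle to be the central claim of the converse, namely that relation (c) forces $\mathcal H_q \cap M$ to be a subalgebra---the translation of the pictorial coproduct identity into the statement ``the $q$-fixed vectors in $M$ are closed under the product of $M$.'' This is the heart of Bisch's theorem and is where the planar/operator dictionary must be used most carefully, since the constant $c$ is calibrated exactly so that the associated idempotent coincides with the conditional expectation $E_Q$ (equivalently, so that $q = e_Q$ exactly rather than up to scalar). A secondary, purely technical, obstacle is keeping the $\delta$- and index-normalisations consistent between the two directions so that the forward scalar in (c) and the normalisation used in reconstructing $E_Q$ agree.
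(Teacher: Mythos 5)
First, a point of comparison: the paper itself supplies no proof of this statement. Theorem \ref{Bisch} is quoted as a known result --- a reformulation of Theorem 3.2 of \cite{Bi1}, with \cite{La1} and \cite{BiJo2} supplying the planar-algebraic formulation --- so your proposal can only be measured against those sources. Your outline does follow their general strategy: take $q = e_Q$ for the forward direction, and reconstruct $Q$ from the range of $q$ for the converse.

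Measured that way, however, the proposal has a genuine gap, and it is the one you yourself flag: the entire mathematical content of the converse is the claim that relation (c) forces $M \cap qL^2(M)$ to be closed under multiplication, and your text describes this claim, calls it ``the heart of Bisch's theorem,'' and moves on. A proof attempt that declares the central implication an expected ``obstacle'' has not proved it; everything else in your converse --- that $q$ is a projection in $N' \cap M_1$, that $L^2(N) \ss qL^2(M)$, that $Q := \{x \in M : \hat{x} \in qL^2(M)\}$ is a weakly closed $*$-closed subspace containing $N$, that irreducibility gives factoriality --- is the routine part. A second, unacknowledged gap hides in the phrase ``verifies $e_Q = q$ by comparing ranges'': the inclusion $L^2(Q) \ss qL^2(M)$ is immediate, but equality requires showing that $\widehat{Q}$ is \emph{dense} in $qL^2(M)$, i.e., that the range of $q$ is spanned by the bounded vectors it contains. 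This is precisely where finite index enters (as an $N$--$N$ bimodule, $L^2(M)$ is a finite sum of bifinite irreducibles, so any closed sub-bimodule is generated by its bounded vectors), and it is a step Bisch's argument devotes real work to; without it, $q$ could a priori dominate $e_Q$ strictly. The forward direction is in better shape: $e_Q \in N' \cap M_1$, $e_1 \le e_Q$, and the self-adjointness/idempotence relations are indeed immediate, and pinning down $c = [M:N]^{1/2}[M:Q]^{-1}$ (that is, $[M:N]^{1/2}$ times $\tr(q)$) by a Pimsner--Popa expansion over $Q$ is a correct and standard plan --- though as written it, too, is a plan rather than a computation.
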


\begin{corollary}\label{cor:exchange}
The following exchange relation holds:
\begin{figure}[h!]
  \begin{minipage}{1\textwidth}
    \begin{equation*}
    {\begin{minipage}{.4\textwidth}
    \centering
    \includegraphics[scale=.4]{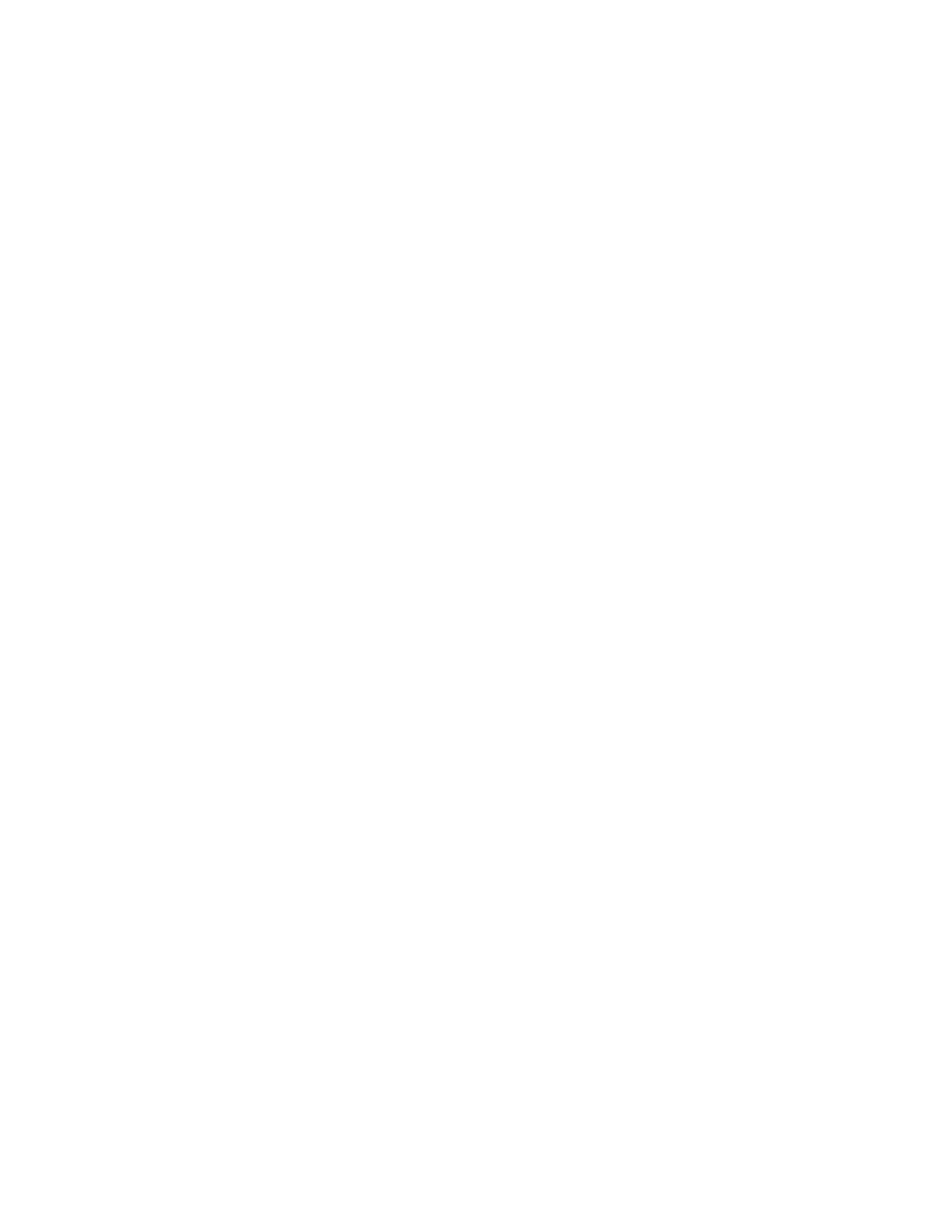}
    \end{minipage}}
    = 
    {\begin{minipage}{.4\textwidth}
    \centering
    \includegraphics[scale=.4]{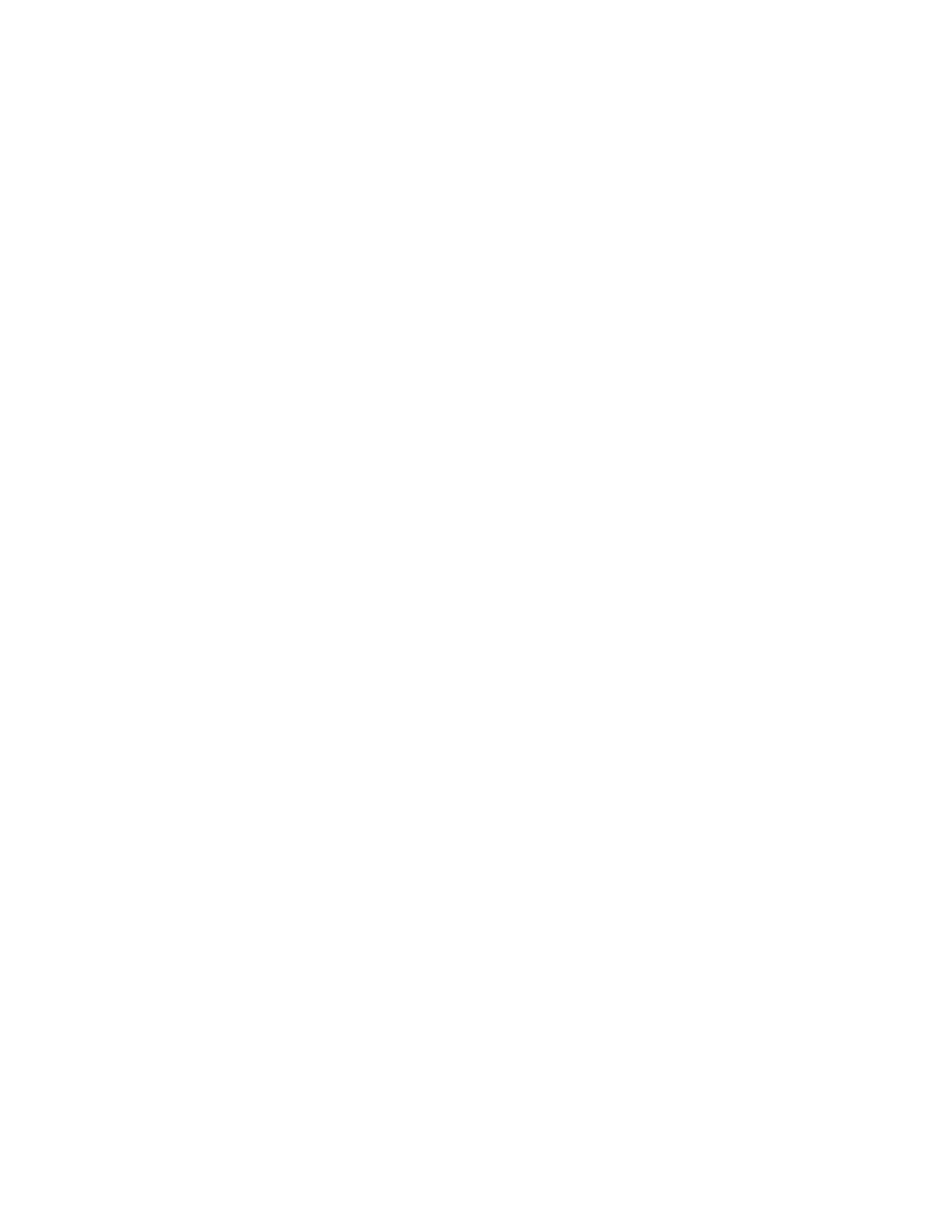}
    \end{minipage}}
    \end{equation*}
  \end{minipage}
  \end{figure}
\end{corollary}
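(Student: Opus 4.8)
The plan is to derive the exchange relation directly from the four defining relations a)--d) of the biprojection in Theorem~\ref{Bisch}, treating it as a purely local identity. Since the two flower tangles agree outside a small disc surrounding the biprojection and its neighbouring strings, it suffices to establish the equality for the local tangle; the general statement then follows from the compatibility of $Z^{(N\ss M)}$ with composition of tangles. Conceptually, the relation expresses a left--right symmetry of the biprojection: routing a string past $q$ on one side must agree with routing it past on the other, and this symmetry is precisely what relations a)--d) encode.

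First I would read off the left-hand flower as a composite of elementary tangles: a copy of the biprojection $q$ together with a capping/cupping string routed along one side. The key observation is that relations b) and c) describe exactly how $q$ absorbs such a cap---producing the biprojection (or its partner obtained by a $2$-click rotation) times a definite scalar---so I would apply the relevant capping relation to detach the string from one side of $q$.

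Next, using the self-adjointness and idempotence of $q$ (relation a) together with planar isotopy, I would reattach the detached string on the opposite side and recognise the result as the right-hand flower2. The crucial point is that the scalar produced when the cap is absorbed on the first side must be cancelled exactly by the scalar produced when it is reintroduced on the second side; this is where the precise value $c = [M:N]^{1/2}[M:Q]^{-1}$ together with the loop parameter $\delta$ enter, and verifying that the net scalar equals $1$ is the heart of the computation.

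I expect the main obstacle to be bookkeeping rather than conceptual: keeping careful track of the shadings and of the $*$-interval under the $2$-click rotation flagged in the discussion preceding the statement, since a biprojection placed with its $*$ in the `wrong' white arc can introduce spurious factors. A safe fallback, should the diagrammatic manipulation become delicate, is to pair both sides against an arbitrary test element and appeal to non-degeneracy of the trace, thereby reducing the identity to a scalar equation that can be checked using the relations of Fact~\ref{f:erel}; but I would attempt the direct local reduction first.
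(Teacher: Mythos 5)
Your primary route does not work, and the gap is structural rather than a matter of bookkeeping. Relations a)--d) of Theorem \ref{Bisch} only tell you how to simplify a picture when two copies of $q$ are directly stacked (the projection property a), directly convolved side by side (c), or when a $q$ is closed off by a cap (b, d); the exchange configuration is precisely the one that none of these moves touches. In particular, ``detaching'' a string that has been absorbed by a capping relation and ``reattaching'' it on the other side of $q$ is not a legitimate planar-algebra operation: capping is not invertible, and no sequence of local applications of a)--d) together with planar isotopy carries one flower into the other. This is not an accident of presentation. If the exchange relation were a formal consequence of a)--d) by local moves, it would hold for any $2$-box satisfying a)--d) in any planar algebra, with no role for positivity or irreducibility; but the exchange relation encodes exactly the fact that the object attached to $q$ is an \emph{algebra} (the range of $E_Q$ is closed under multiplication), which is the nontrivial content of Bisch's theorem, and all known proofs pass through the subfactor structure rather than through formal diagrammatics. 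Consequently the ``heart of the computation'' is not a cancellation of scalars built from $c=[M:N]^{1/2}[M:Q]^{-1}$; no such cancellation can substitute for the missing algebraic input.

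What the paper actually does is quote the relation as a known consequence of Theorem \ref{Bisch}: once $q$ is identified (up to the scalar $c$) with the Jones projection $e_{0,1}$ of $L^2(M)$ onto $L^2(Q)$, both flowers become words in the projections $e_{\epsilon,i}$ and the inserted labels, and the equality is the diagrammatic transcription of the operator identities $e_Q x e_Q = E_Q(x)e_Q$ and $[e_Q,\,Q]=0$, i.e.\ of the relations collected in Fact \ref{f:erel} from \cite{BiJo1}; this is the proof carried out in \cite{Bi1}, \cite{La1} and \cite{BiJo2}. Your fallback --- pairing both sides against arbitrary test elements, invoking nondegeneracy of the trace, and evaluating via Fact \ref{f:erel} --- is therefore not a fallback at all but the correct argument, and you should promote it to the main proof. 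Be aware, though, that it is more than a ``scalar equation'': you must first realize the inputs as elements of the relevant relative commutants and the $q$'s as multiples of (suitably shifted copies of) $e_{0,1}$, after which the evaluation of both sides reduces to the $Q$-bimodule property of $E_Q$ and the commutation of $e_Q$ with $Q$, not to any bookkeeping of loop factors.
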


Denote by $N \ss Q 
\ss Q_1 \ss Q_2 \ss \dots$ the Jones tower of $N \ss Q $. 
\begin{definition}\label{defF}
 Denote the following tangle by $E_n$:
 $$ \includegraphics [scale=.5] {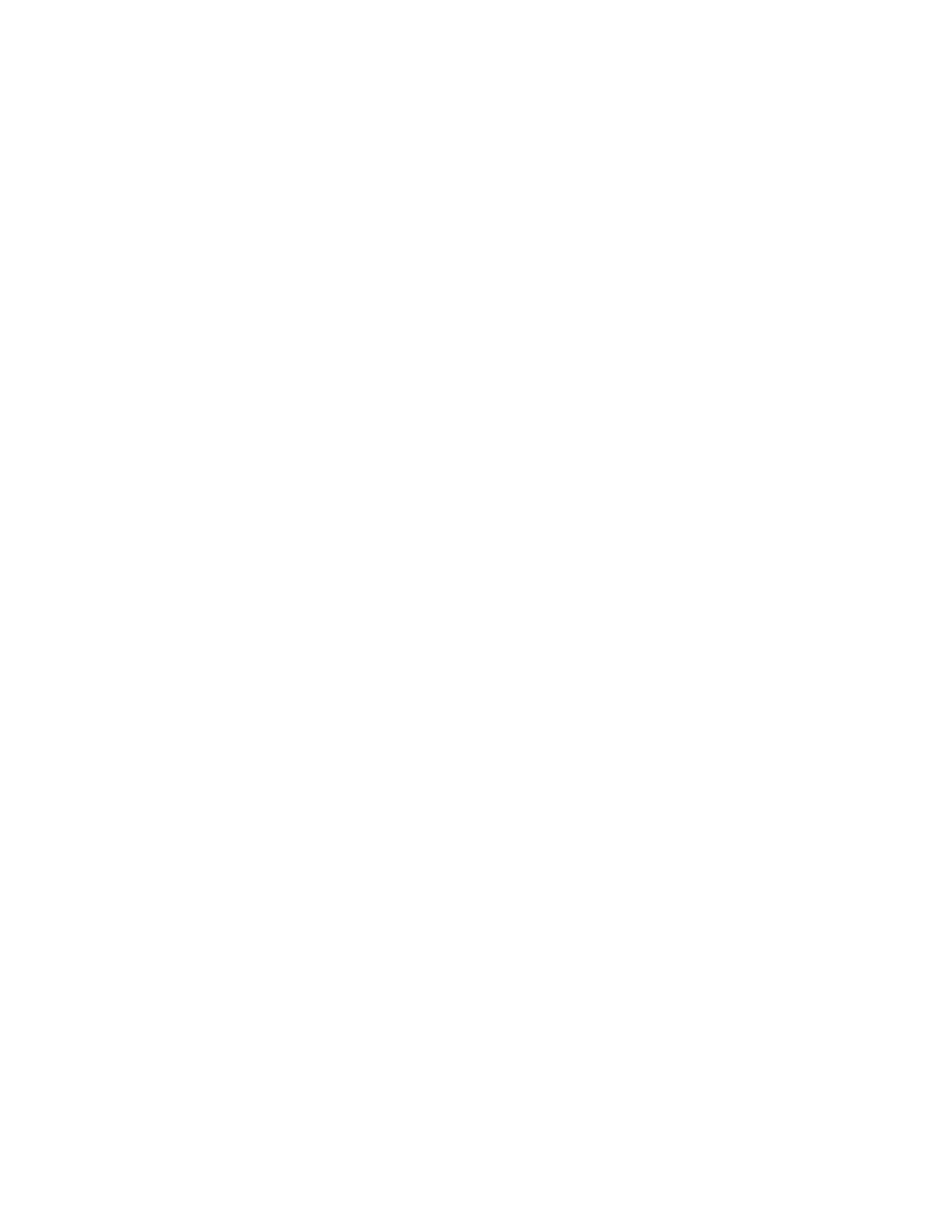}$$
according as $n$ is even or odd respectively. We shall use these to define a map $T \mapsto F(T)$ from the class of $k$-tangles to the class of {\em partially labelled} $k$-tangles with $(k+1)$ internal discs all but the last of which are 2-boxes labelled with a $q$, with the tangle $T$ inserted in the last disc of colour $k$. Thus, $F(T) = E_k\circ_{ (D_1,D_2,\cdots, D_k, D_{k+1})}(q,q,\cdots,q,T)$.

If it is clear from the context then we write $E$ instead of $E_n$. \par Define functions  $F_n :P_n \mapsto P_n$ by $F_n(x)=Z_{E_n}(q \otimes q \otimes \cdots \otimes q \otimes x)$
for $x \in P_n$. We often write $F(x)$ instead of $F_n(x)$ if there is no confusion.
\end{definition}
Following \cite{BhaLa} define the natural inclusion map 
\[
i:  F_n(P_n) \rightarrow F_{n+1}(P_{n+1}) 
\]
given by
\[ 
t \mapsto F_{n+1}(t).  
\]
We denote this inclusion by $\ss_i$.  Our starting point is the following result from \cite{BhaLa}:
\begin{theorem}\label{planarstinv}
The lattice of algebras:
\[
\begin{array}{ccccccccccc}
F_{0}(P_0) & \ss_i & F_{1}(P_1) & \ss_i &F_{2}(P_2)  &\ss_i &\dots&
\ss_i& F_{n}(P_{n}) & \ss_i & \dots \\
&&\cup && \cup && &&\cup && \\
&& F_{1}(P_{1,1}) & \ss_i & F_{2}(P_{1,2}) & \ss_i & \dots & \ss_i &
F_{n}(P_{1,n})& \ss_i & \dots 
\end{array}\]
is isomorphic to
the standard invariant of $N\ss Q $:
\[
\begin{array}{ccccccccccc}
N'\cap N & \ss & N' \cap Q & \ss & N' \cap Q_1 & \ss&
\dots& \ss& N' \cap Q_{n-1} &\ss & \dots \\
&&\cup && \cup &&&& \cup && \\
&&Q ' \cap Q &\ss & Q ' \cap Q_1 &\ss& \dots & \ss & Q '
\cap Q_{n-1} & \ss & \dots
\end{array}\]
The Jones projections are 
$$ F_{2n+1}(P_{2n+1}) \ni e^{Q }_{2n} = [M:Q ]^{1/2}[Q :N]^{-1/2}
{\begin{minipage}{.4\textwidth}
    \centering
    \includegraphics[scale=.5]{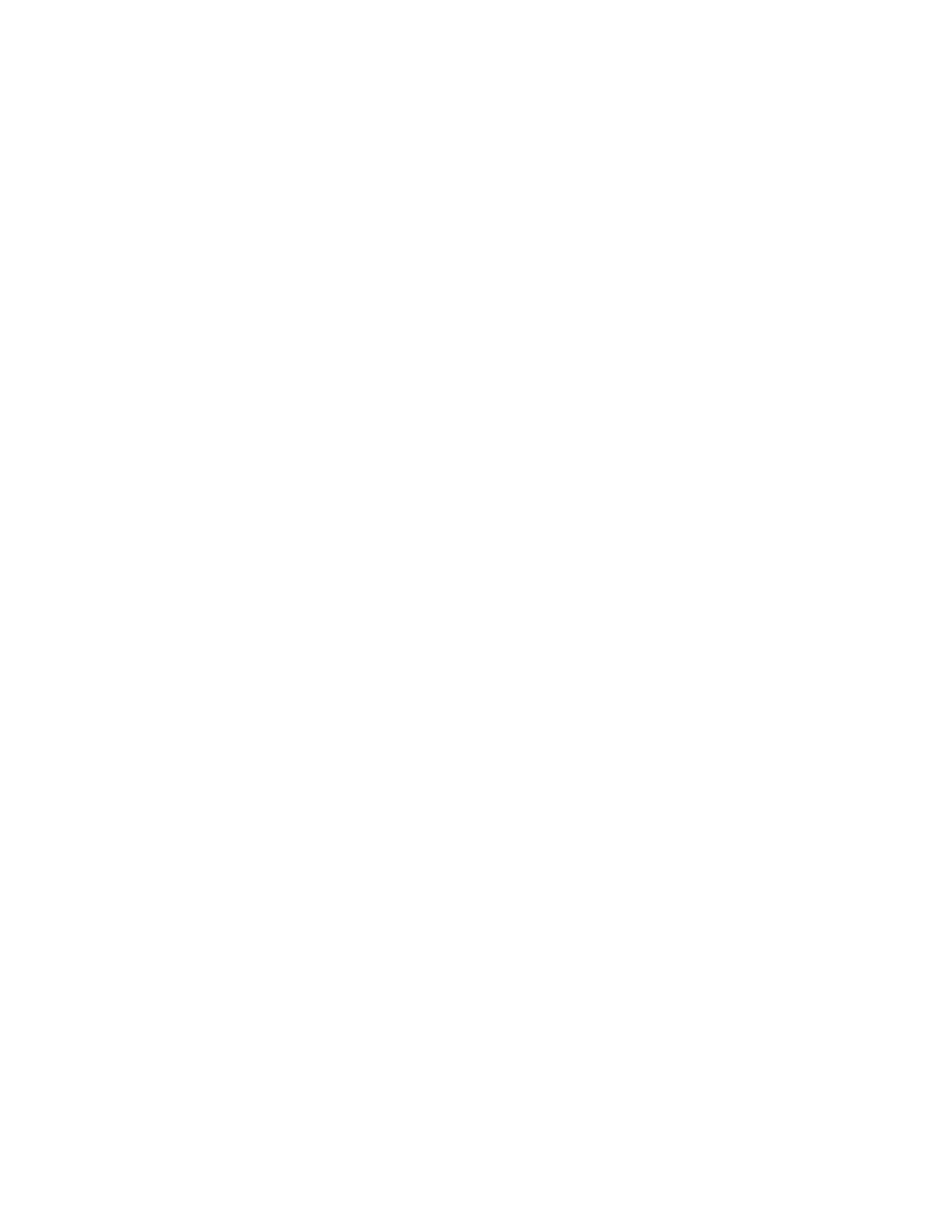}
    \end{minipage}}
     $$
and
$$ F_{2n+2}(P_{2n+2}) \ni e^{Q }_{2n+1}= [M:N]^{-1/2}
     {\begin{minipage}{.4\textwidth}
    \centering
    \includegraphics[scale=.5]{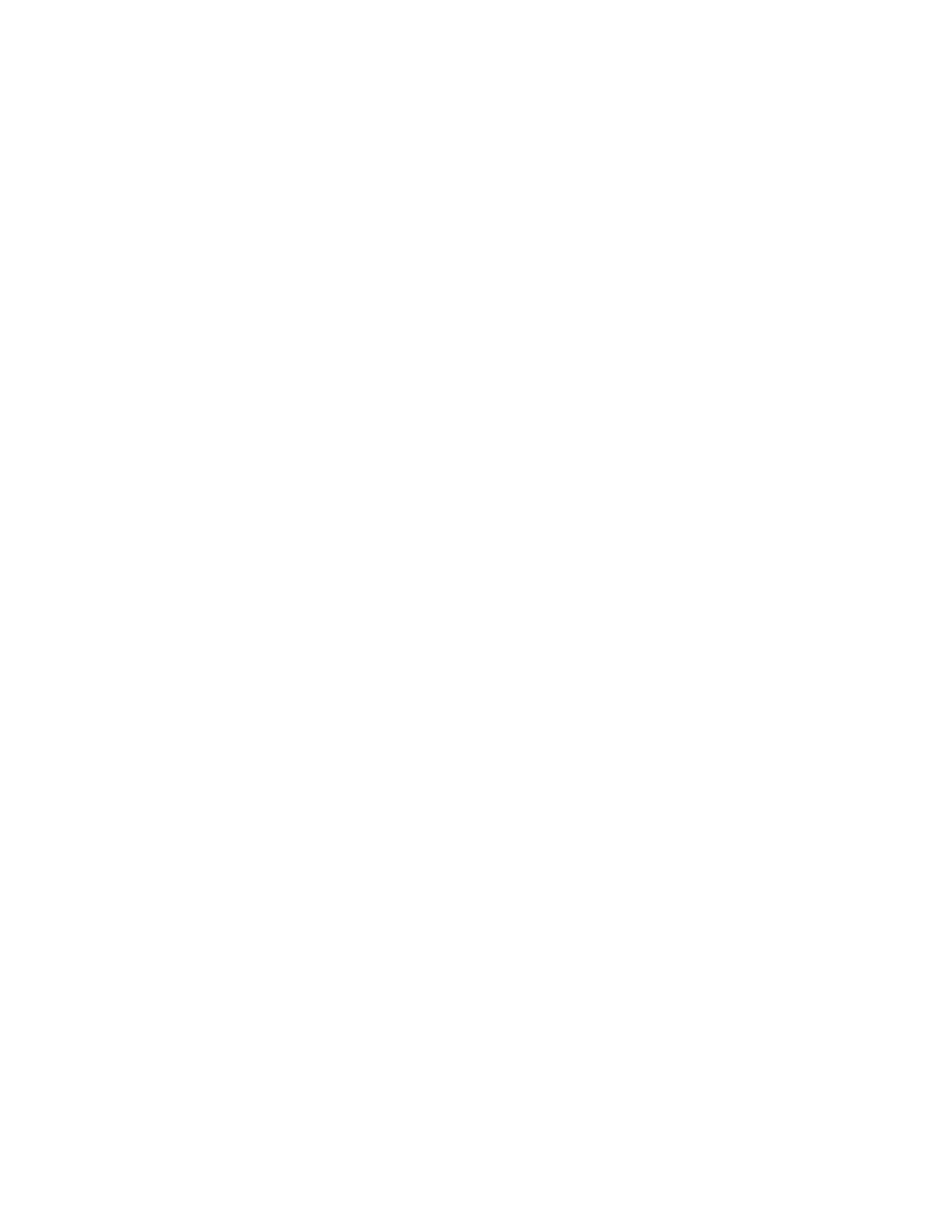}
    \end{minipage}} 
    $$
for $n\geq 1$.
The trace on $F_{2n}(P_{2n})$ and $F_{2n+1}(P_{2n+1}))$ is given by
$tr_{N \ss Q }(x) = [M:Q ]^{n} tr_{N \ss M}(x) $.
\end{theorem}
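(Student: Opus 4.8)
The plan is to transport the full standard invariant of $N \ss Q$ into the planar algebra of $N \ss M$ by first realizing the Jones tower $N \ss Q \ss Q_1 \ss Q_2 \ss \cdots$ concretely inside Bisch's interlaced tower $N \ss Q \ss M \ss P_1 \ss M_1 \ss \cdots$, and then identifying each relative commutant with the corresponding range $F_n(P_n)$ or $F_n(P_{1,n})$. The natural tool for the first step is the characterization of the basic construction through Pimsner--Popa bases: I would show inductively that $Q_j$ is generated by $Q_{j-1}$ together with the projection implementing $E^{Q_{j-1}}_{Q_{j-2}}$, and that this projection is expressible, up to an explicit scalar, in terms of the $e_{0,i}, e_{1,i}$ of Fact \ref{f:erel}. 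Because the relations in parts (5)--(7) of Fact \ref{f:erel} alternate between $[Q:N]^{-1}$ and $[M:Q]^{-1}$ according to the parity of $i$, the realizations of $Q_{2n}$ and of $Q_{2n+1}$ must be treated as two separate inductive streams, and the Jones-relation constant $[Q:N]^{-1}$ for the $N \ss Q$ tower must be recovered from the combined relations of the larger tower.

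Once the tower is in place, the second step is to identify $N' \cap Q_{n-1}$ with $F_n(P_n)$ and $Q' \cap Q_{n-1}$ with $F_n(P_{1,n})$ as $*$-algebras. Here the flower tangle $E_n$ of Definition \ref{defF} does the work: cabling an element $x$ with $n$ copies of the biprojection $q$ is the planar incarnation of compressing into the $Q$-picture, and properties a)--d) of Theorem \ref{Bisch}, together with the exchange relation of Corollary \ref{cor:exchange}, guarantee that $F_n$ is multiplicative and $*$-preserving on the relevant subalgebra, so that its range is a $*$-subalgebra isomorphic to the abstract relative commutant. Compatibility of these identifications with the inclusions is precisely the assertion that the connecting map is $t \mapsto F_{n+1}(t)$, i.e. the $\ss_i$ of the theorem; this follows from the recursive relationship between $E_{n+1}$ and $E_n$ after the biprojection relations are applied.

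For the Jones projections I would verify directly that the two displayed diagrams, with the stated normalizations $[M:Q]^{1/2}[Q:N]^{-1/2}$ and $[M:N]^{-1/2}$, are projections satisfying the Jones relations $e^Q_i e^Q_{i \pm 1} e^Q_i = [Q:N]^{-1} e^Q_i$ and the requisite commutations. This is a computation in the planar algebra that collapses the diagrams by capping off biprojections, using the partial-trace identities of Theorem \ref{Bisch}; the scalars are then forced by demanding that the resulting elements be genuine projections of the correct trace. The trace formula $tr_{N \ss Q}(x) = [M:Q]^n\, tr_{N \ss M}(x)$ follows by evaluating the Markov trace of a cabled element and comparing loop parameters: each basic-construction step of $N \ss Q$ carries the loop weight $[Q:N]^{1/2}$ rather than $[M:N]^{1/2}$, and reconciling the accumulated discrepancy over the $n$ biprojection-compressions against the ambient $N \ss M$ normalization produces exactly the factor $[M:Q]^n$.

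The main obstacle, I expect, lies not in the conceptual architecture but in the scalar bookkeeping of Steps 1 and 3: every diagram must be normalized so that the transported maps are trace-preserving $*$-isomorphisms and not merely isomorphisms up to a constant, and the parity-dependence of Fact \ref{f:erel}(5)--(7) forces the powers of $[M:N]$, $[M:Q]$ and $[Q:N]$ to be tracked separately along the even and odd towers. Ensuring that the constants are mutually consistent---so that, for example, the two Jones-projection formulas agree under one further basic construction---is the delicate point, and it is presumably the reason the argument is routed through the explicit Pimsner--Popa description developed in the appendix.
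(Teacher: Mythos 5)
Your proposal follows essentially the same route as the paper: the paper itself quotes Theorem \ref{planarstinv} from \cite{BhaLa}, and its own contribution to the proof is exactly your Step 1 --- the appendix (Theorem \ref{T:intsi}), which uses the Pimsner--Popa basis characterization of the basic construction (Lemma \ref{fvrt}) to realize the Jones tower of $N \ss Q$ as the compressed tower $N\p \ss \p M \p \ss \p M_1 \p \ss \cdots$ with $\p = e_{0,1}e_{0,3}\cdots e_{0,2n-1}$, Jones projections $e_{0,2i}\p$ and $e_{1,2i+1}\p$, and trace $tr_{N\ss Q} = [M:Q]^n\, tr_{N \ss M}$, handled in parity-separated inductive steps just as you anticipate. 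Your remaining steps --- identifying the relative commutants with $F_{2n}(P_{2n}) = p_{[0,2n-1]}(N'\cap M_{2n-1})p_{[0,2n-1]}$ (via compression by $\p$, not literal multiplicativity of $F_n$, which is only the identity on its range) and pinning down the Jones-projection scalars --- are precisely the portion the paper defers to \cite{BhaLa}, so both the architecture and the scalar bookkeeping you flag as the delicate point coincide with the paper's.
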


%%%%%%%%%%%%%%%%%%%%%%%%%%%%%%%%%%%%%%%%%%%%%%%%%%%%%%%%%%%%%%%%%%%%%%%%%%%%%%%%%%%%%%%%%%%%%%%%%%%%%%%%%%%%%%%%%%%%%%%%%%%%%%%%%%%%%%%%%%%
\section{The Intermediate Planar Algebra}
This section is devoted to our reformulation of the proof of the fact that the planar algebra $P^{(N\subseteq Q)}$
may be derived from $P^{(N\subseteq M)}$ (see \cite{BhaLa})
by requiring that the action of a  planar tangle $T$ is given by Equation (\ref{alpheq}) of the Introduction.
\label{sec:planar-algebra-an}

\begin {definition}\label{defalpha}
  Let $ T $ be a $k$-tangle with $ b\geq 1 $ internal discs ${D_1,\dots D_b}$ of colours ${k_1,\dots k_b}$. Then define $\alpha(T) = [M:Q]^{\frac{1}{2}c(T)}$, where
\[c(T)=(\lceil k_0/2 \rceil+\lfloor k_1/2 \rfloor+\dots +\lfloor k_b/2 \rfloor)-l(T) \]
with $l(T)$ being the number of closed loops after capping the black intervals of the external disc  of $T$ and cupping the black intervals of all internal discs of $T$. 
\end {definition}

\begin{proposition}\label{alphas}
If $T = T^{k_0}_{k_1,\cdots,k_b}$ and $\tilde{T} = \tilde{T}^{\tilde{k}_0}_{\tilde{k}_1,\cdots,\tilde{k}_{\tilde{b}}}$ are tangles with discs of indicated colours such that $\tilde{k}_0=k_i$ for some $1 \leq i \leq b$, then
\[\frac{\alpha(T)\alpha(\tilde{T})}{\alpha(T\circ_i\tilde{T})} =  [M:Q]^{\frac{1}{2}(\tilde{k_0} - l(T) - l(\tilde{T}) + l(T\circ_i\tilde{T}))}.\]
\end{proposition}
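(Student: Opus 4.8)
The plan is entirely combinatorial, since $\alpha$ is a power of the single scalar $[M:Q]$. Writing $\alpha(S) = [M:Q]^{c(S)/2}$ for every tangle $S$, the left-hand side of the asserted identity is immediately
\[
\frac{\alpha(T)\alpha(\tilde{T})}{\alpha(T\circ_i\tilde{T})} = [M:Q]^{\frac{1}{2}\left(c(T) + c(\tilde{T}) - c(T\circ_i\tilde{T})\right)}.
\]
Thus the whole proposition reduces to verifying the single scalar identity
\[
c(T) + c(\tilde{T}) - c(T\circ_i\tilde{T}) = \tilde{k}_0 - l(T) - l(\tilde{T}) + l(T\circ_i\tilde{T}),
\]
so I would simply unwind Definition~\ref{defalpha} on both sides and compare term by term.

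The key step is to track the disc structure of the composite. First I would observe that $T\circ_i\tilde{T}$ is again a $k_0$-tangle, and that its internal discs are precisely the internal discs $D_1,\dots,D_b$ of $T$ with $D_i$ deleted, together with all $\tilde{b}$ internal discs of $\tilde{T}$ (of colours $\tilde{k}_1,\dots,\tilde{k}_{\tilde{b}}$). Substituting the three definitions, the ``combinatorial'' part of $c$ (the ceiling term for the external disc plus the sum of floor terms over internal discs) telescopes: the external term $\lceil k_0/2 \rceil$ cancels between $c(T)$ and $c(T\circ_i\tilde{T})$; the floor sum $\sum_{j=1}^{\tilde{b}}\lfloor \tilde{k}_j/2 \rfloor$ over the discs of $\tilde{T}$ cancels between $c(\tilde{T})$ and $c(T\circ_i\tilde{T})$; and the floor sum over the discs of $T$ loses only its $i$-th summand, contributing $+\lfloor k_i/2 \rfloor$. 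Hence the combinatorial part of $c(T) + c(\tilde{T}) - c(T\circ_i\tilde{T})$ collapses to exactly $\lfloor k_i/2 \rfloor + \lceil \tilde{k}_0/2 \rceil$, while the loop counts contribute $-l(T) - l(\tilde{T}) + l(T\circ_i\tilde{T})$ with no cancellation.

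Finally I would invoke the hypothesis $\tilde{k}_0 = k_i$ together with the elementary identity $\lfloor n/2 \rfloor + \lceil n/2 \rceil = n$, which gives $\lfloor k_i/2 \rfloor + \lceil \tilde{k}_0/2 \rceil = \tilde{k}_0$; this is exactly the surviving term on the right-hand side, completing the verification. There is no genuine obstacle here: the only point requiring care is the bookkeeping in the telescoping step, namely being precise that the external disc of the composite inherits the colour $k_0$ of $T$ and that deleting $D_i$ from $T$'s list of internal discs removes precisely the summand $\lfloor k_i/2 \rfloor$, so that the pairing of the two halves $\lfloor k_i/2\rfloor$ and $\lceil \tilde{k}_0/2\rceil$ across the common colour $\tilde{k}_0 = k_i$ is the crux. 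Notably, the argument operates purely at the level of the exponent $c$ and uses nothing about the loop count beyond recording it; this is why the residual loop terms are carried explicitly in the statement, being matched separately against the scalar produced by the diagrammatic composition of the $F$-images in the subsequent multiplicativity argument.
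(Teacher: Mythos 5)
Your proposal is correct and follows essentially the same route as the paper's proof: both unwind Definition~\ref{defalpha}, note that the discs of $T\circ_i\tilde{T}$ are those of $T$ with $D_i$ deleted together with those of $\tilde{T}$, cancel the common terms, and conclude via $\tilde{k}_0=k_i$ and the identity $\lceil n/2\rceil+\lfloor n/2\rfloor=n$. Your telescoping bookkeeping matches the paper's displayed computation of $c(T)+c(\tilde{T})-c(T\circ_i\tilde{T})$ exactly.
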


\begin{proof} This is simple arithmetic:
\begin{eqnarray*}
c(T) &=&  (\lceil k_0/2 \rceil+\lfloor k_1/2 \rfloor+\dots +\lfloor k_b/2 \rfloor)-l(T)\\
c(\tilde{T}) &=& (\lceil \tilde{k}_0/2 \rceil+\lfloor \tilde{k}_1/2 \rfloor+\dots +\lfloor \tilde{k}_{\tilde{b}}/2 \rfloor)-l(\tilde{T})\\
c(T\circ_{D_i} \tilde{T}) &=& (\lceil k_0/2 \rceil+\lfloor k_1/2 \rfloor+\dots +\lfloor k_b/2 \rfloor)- \lfloor \tilde{k}_0/2 \rfloor + \lfloor \tilde{k}_1/2 \rfloor+\dots +\lfloor \tilde{k}_{\tilde{b}}/2 \rfloor- l(T\circ_i \tilde{T})
\end{eqnarray*}
Hence, after all the cancellation, we find that
\begin{eqnarray*}
\frac{\alpha(T)\alpha(\tilde{T})}{\alpha(T\circ_i\tilde{T})} &=&
[M:Q]^{1/2\left(c(T) +c(\tilde{T})-c(T\circ_i \tilde{T})\right)}\\
&=& [M:Q]^{1/2\left(\tilde{k}_0 - l(T) - l(\tilde{T}) + l(T\circ_i\tilde{T}) \right)}~,
\end{eqnarray*}
since $\lceil n/2 \rceil + \lfloor n/2 \rfloor = n$ for all integral $n$. 
\end{proof}

We shall show that:

\begin{theorem}\label{main1} If $P^\prime_k = ran(F(I^k_k))$ and $Z^\prime_T = \alpha(T) Z_{F(T)|_{\otimes P^\prime_{k_i(T)}}}$, then $(P^\prime, T \mapsto Z^\prime_T|_{\otimes P^\prime_{k_i(T)}})$ is a subfactor planar algebra which is isomorphic to $P^{(N\ss Q)}$. 
\end{theorem}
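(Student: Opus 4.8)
The plan is to verify the Kodiyalam--Sunder axioms for the pair $(P^\prime, Z^\prime)$ and then to match the resulting structure with the already-identified standard invariant of $N \ss Q$. I would organize the argument in three stages: well-definedness of the tangle maps, multiplicativity under composition, and identification of the planar algebra.

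First I would check that $Z^\prime_T$ carries $\otimes_i P^\prime_{k_i}$ into $P^\prime_{k_0}$. Since the outer disc of $F(T)$ is surrounded by the $E_{k_0}$-structure decorated with copies of the biprojection $q$, and since $q$ is a projection (one of the defining relations of Theorem \ref{Bisch}), applying $F(I^{k_0}_{k_0})$ to the output of $Z_{F(T)}$ reproduces that output; hence it lies in $ran(F(I^{k_0}_{k_0})) = P^\prime_{k_0}$. Isotopy invariance of $Z^\prime$ is inherited from that of $Z$, because $\alpha(T)$ and the combinatorial data defining $F(T)$ depend only on the isotopy class of $T$.

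The heart of the proof is multiplicativity: I must show $Z^\prime_{T \circ_i \tilde T} = Z^\prime_T \circ_i Z^\prime_{\tilde T}$ on the relevant tensor factors. Because $Z$ is already a planar algebra, $Z_{F(T)} \circ_i Z_{F(\tilde T)} = Z_{F(T)\circ_i F(\tilde T)}$, so everything reduces to comparing the two labelled tangles $F(T) \circ_i F(\tilde T)$ and $F(T \circ_i \tilde T)$. These differ only in a neighbourhood of the $i$-th disc: the former carries an extra band of $q$-boxes, coming from the $E$-cupping on the boundary of the $i$-th disc in $F(\tilde T)$ meeting the $E$-capping inherited from $F(T)$. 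Using the biprojection identities of Theorem \ref{Bisch} (in particular the absorption $q\cdot q = q$ and the exchange relation of Corollary \ref{cor:exchange}) I would collapse this extra band, producing a scalar power of $[M:Q]$ together with a definite number of closed loops. The key computation is that the exponent of $[M:Q]$ so produced is exactly $\tfrac12\bigl(\tilde k_0 - l(T) - l(\tilde T) + l(T\circ_i \tilde T)\bigr)$, which by Proposition \ref{alphas} equals $\alpha(T)\alpha(\tilde T)/\alpha(T\circ_i\tilde T)$; the $\alpha$-factors then cancel the discrepancy exactly and yield the desired identity. This bookkeeping --- tracking the loop count and the power of $[M:Q]$ through the diagrammatic reduction, for both parities of the colours (whence the ceilings and floors in Definition \ref{defalpha}) --- is the step I expect to be the main obstacle.

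Finally, I would establish that $(P^\prime, Z^\prime)$ is a subfactor planar algebra isomorphic to $P^{(N \ss Q)}$. By Theorem \ref{planarstinv} the filtered $*$-algebra $\{F_n(P_n)\}$ with its inclusions $\ss_i$ and the trace $tr_{N\ss Q} = [M:Q]^n\, tr_{N\ss M}$ is precisely the standard invariant of $N \ss Q$, which in particular gives $\dim P^\prime_0 = 1$, sphericality, positivity, and the correct finite modulus. Since a subfactor planar algebra is determined by its standard invariant, it remains to check that $Z^\prime$ reproduces the canonical tangle actions on this tower: I would verify that $Z^\prime$ sends the Jones-projection and conditional-expectation tangles to the operators listed in Theorem \ref{planarstinv} (with the stated constants), and that the multiplication and rotation tangles act correctly. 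Matching these generating tangles, together with the multiplicativity already proved, identifies $Z^\prime$ with the planar algebra structure of $N\ss Q$ under the identification of Theorem \ref{planarstinv}, completing the proof.
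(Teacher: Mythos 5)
Your overall architecture coincides with the paper's: reduce multiplicativity to a single identity comparing $Z_{F(T)\circ_i F(\tilde T)}$ with $Z_{F(T\circ_i\tilde T)}$, cancel the discrepancy against $\alpha(T)\alpha(\tilde T)/\alpha(T\circ_i\tilde T)$ via Proposition \ref{alphas}, and then identify the resulting planar algebra with $P^{(N\ss Q)}$ by checking the action of generating tangles (Jones projections, conditional expectations) against Theorem \ref{planarstinv}. Your stages 1 and 3 correspond to the paper's routine compatibility checks and its part (b), and are fine in outline.

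The genuine gap is in your stage 2, which is exactly where the paper's Theorem \ref{main} lives and which you explicitly defer (``the step I expect to be the main obstacle''). Two concrete problems. First, your proposed mechanism --- collapsing the extra band of $q$'s around the $i$-th disc by absorption $q\cdot q=q$ and the exchange relation --- does not work as a purely local move: $F(T)$ decorates only the \emph{external} disc of $T$, so the $q$'s contributed by $F(\tilde T)$ do not in general sit against other $q$'s, and there is nothing for them to be absorbed into. Second, and relatedly, the identity you want is \emph{false} as an identity of labelled tangles, or of multilinear maps on all of $\otimes_j P_{k_j}$; it holds only on inputs from $P^\prime$. It is precisely this restriction that lets one pretend the internal discs also carry $q$'s (the content of the paper's Lemma \ref{follows}), and it is invisible in a comparison made at the level of the two tangles alone. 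The paper's actual proof of Theorem \ref{main} is a four-step reduction: trace against arbitrary elements to reduce to the case where $T$ is a $0_+$-tangle, use sphericality to normalize the picture, reduce $\tilde T$ to a Temperley--Lieb tangle (with separate arguments for the two parities of $k_i$), and finally induct on the colour of $\tilde T$ with a case analysis on its Temperley--Lieb structure --- one subcase of which (two marked intervals lying in the same black region) additionally requires \emph{irreducibility} of the subfactor to replace a $1$-box by a scalar, a hypothesis your sketch never invokes. None of this is recoverable from ``collapse the band and track the loop count,'' so as written your proposal names the correct key statement but does not contain a proof of it.
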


The proof of this theorem has two main ingredients: (a) the verification that $P^\prime$ is a planar algebra; and (b) the verification that this is isomorphic to the planar algebra of $N \ss Q$. The proof of (b) is an application of a theorem of Jones (see \cite{KodSun} Theorem 2.1,which is the formulation that we shall use) while the only really non-trivial part of proving (a) is in the verification of compatibility of the partition function to gluing of tangles. In order to verify that the operation of tangles (in $P^\prime$) is compatible with composition of tangles, we will need to verify that
\[\alpha(T\circ \tilde{T}) Z_{F(T \circ \tilde{T})} = \alpha(T)\alpha(\tilde{T}) Z_{F(T)\circ F(\tilde{T})}~,\]
which, in view of Proposition \ref{alphas}, is seen to translate to:
\begin{equation}\label{TPT}
Z_{F(T \circ F(\tilde{T}))} = [M:Q]^{-1/2(\tilde{k}_0 - l(T) -l(\tilde{T_0}) + l(T\circ \tilde{T})} Z_{F(T \circ \tilde{T})}~,
\end{equation}
which is what the next few pages are devoted to. We start on part (b) in the few lines after the proof of ``compatibility under substitution''.

\bigskip \noindent
Thus, we assume that $P$ is an irreducible subfactor planar algebra and $q \in P_2$ be a biprojection. Let $T = T_{k_1,\cdots,k_b}^{k_0}$ and $\tilde{T} = \tilde{T}_{\tilde{k}_1,\cdots,\tilde{k}_{\tilde{b}}}^{\tilde{k}_0}$ be tangles with $k_i = \tilde{k}_0$. By $F(T)$ we will denote the partially labelled tangle obtained from $T$
by `surrounding it with $q$'s'. It has the same number and colours of discs as $T$ does. Let $P^\prime_k$ be the range  of the tangle $F(I_k^k)$.

\begin{theorem}\label{main} The equation
\begin{equation*}
Z_{F(T) \circ_i F(\tilde{T})} = \tau(q)^{\frac{1}{2}({k}_i + l(T \circ_i \tilde{T}) - l(T) - l(\tilde{T}))} Z_{F(T \circ_i \tilde{T})},
\end{equation*}
holds for inputs coming from $P^\prime$.
\end{theorem}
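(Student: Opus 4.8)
The plan is to reduce the identity to a purely local statement about the biprojection $q$ and then to recover the scalar by counting loops. First I would unwind the two sides using Definition \ref{defF}: writing $F(T)=E_{k_0}\circ(q,\dots,q,T)$ and $F(\tilde T)=E_{k_i}\circ(q,\dots,q,\tilde T)$, both $F(T)\circ_i F(\tilde T)$ and $F(T\circ_i\tilde T)$ carry, on the \emph{external} disc, the same flower of $q$'s produced by $E_{k_0}$, and their internal discs receive the same $P'$-inputs. The common outer flower may therefore be disregarded and attention localized to a neighbourhood of the curve that was the boundary of $D_i$. There the two tangles differ by exactly one feature: $F(T)\circ_i F(\tilde T)$ carries an \emph{internal} annular ring of $k_i$ biprojections sitting on that curve — namely the external flower of $F(\tilde T)$, pushed into the (bare) disc $D_i$ of $F(T)$ by the gluing — while $F(T\circ_i\tilde T)$ carries none. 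Thus the theorem is equivalent to the claim that this internal $q$-ring can be \emph{deleted} on $P'$-inputs at the cost of the scalar $\tau(q)^{\frac12(k_i+l(T\circ_i\tilde T)-l(T)-l(\tilde T))}$.

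The engine for deleting the ring is the interplay between the exchange relation of Corollary \ref{cor:exchange} and the defining biprojection relations a)--d) of Theorem \ref{Bisch}. The role of the hypothesis ``inputs coming from $P'=\mathrm{ran}\,F(I^{k}_{k})$'' is decisive here: the $P'$-elements fed into the internal discs of $\tilde T$ are themselves in the range of $F$, hence already carry their own flowers. The essential lemma — of which Theorem \ref{main} is the tangle-level form, and which at bottom expresses that the flower map $F$ is idempotent up to a scalar on its own range — is that surrounding $\tilde T$ by $E_{k_i}$ becomes redundant once its discs are flowered. I would establish it by transporting the interface biprojections through the strand structure of $\tilde T$ via repeated use of the exchange relation, organised as an induction peeling off one biprojection of the ring at a time (equivalently, an induction on $k_i$ or on an elementary-tangle decomposition of $\tilde T$). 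Each transported $q$ is then removed in one of two ways dictated by the connectivity of $\tilde T$: it is absorbed into the flower of a neighbouring $P'$-input using the projection property of $q$, or it is capped off into a closed loop using the biprojection relation carrying the constant $c=[M:N]^{1/2}[M:Q]^{-1}$. Each step is local, involving a single application of the exchange relation followed by a single biprojection relation.

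The scalar is then assembled from two independent counts. The term $k_i$ in the exponent arises from the $k_i$ biprojections removed along the $D_i$-interface, each contributing one half-power of $\tau(q)$; the loop difference $l(T\circ_i\tilde T)-l(T)-l(\tilde T)$ records precisely the net number of closed loops created when the capping of external black intervals and cupping of internal black intervals prescribed by Definition \ref{defalpha} is applied to the glued diagram rather than to $T$ and $\tilde T$ separately — these are exactly the loops produced by the capping moves above. Tracking the index normalizations of Fact \ref{f:erel} one checks that all the surviving factors collapse into a single power of $[M:Q]$, so that $\tau(q)=[M:Q]^{-1}$; this is precisely the value that, together with Proposition \ref{alphas}, makes the statement of Theorem \ref{main} reproduce equation \eqref{TPT}, and hence yields the multiplicativity $\alpha(T\circ_i\tilde T)Z_{F(T\circ_i\tilde T)}=\alpha(T)\alpha(\tilde T)Z_{F(T)\circ_i F(\tilde T)}$ sought in Theorem \ref{main1}.

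The step I expect to be the main obstacle is the parity bookkeeping. The tangle $E_n$ takes two different shapes according as $n$ is even or odd, and correspondingly the biprojection relations in Fact \ref{f:erel}(5)--(6) interchange the roles of $[Q:N]$ and $[M:Q]$ depending on the parity of the level at which a projection sits. Consequently the factor collected each time a $q$ is absorbed \emph{a priori} depends on parity, and the crux is to verify that the ``wrong-index'' contributions $[Q:N]^{\pm1}$ and $[M:N]^{\pm1}$ cancel in pairs, leaving only the clean power of $[M:Q]$ recorded above. Casting the argument as the local induction on $k_i$ is what keeps this tractable, since each inductive step confines the parity analysis to a single biprojection and a single newly-formed loop, whose shading — and hence the index it contributes — is pinned down by the colour $k_i$ of the interface.
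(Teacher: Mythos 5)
Your reformulation is the right one --- equation (\ref{TPT}) of the paper says exactly that the ring of $k_i$ biprojections left at the $D_i$-interface by the gluing $F(T)\circ_i F(\tilde{T})$ can be deleted on $P^\prime$-inputs at the stated cost, and your ``essential lemma'' (flowering $\tilde{T}$ is redundant once its inputs are already flowered) is the paper's Lemma \ref{follows}. But the engine you propose does not work as described, and this is where the genuine gap lies. There is no well-defined induction that ``peels off one biprojection of the ring at a time,'' because the cost of removing a single $q$ is not a local quantity: whether that removal is free (absorption, as in Lemma \ref{follows}) or produces a factor $\delta\,\tau(q)$ (closure of a loop) depends on whether the two black regions flanking that $q$ are connected to each other elsewhere in the glued diagram. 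A partially flowered interface has no uniform value relative to the fully flowered one --- this is precisely why Lemma \ref{follows} carries the global hypothesis that every black region meets at most one unflowered position. Asserting that each $q$ is ``absorbed or capped, as dictated by the connectivity of $\tilde{T}$,'' and that the costs telescope to $\tau(q)^{\frac{1}{2}(k_i+l(T\circ_i\tilde{T})-l(T)-l(\tilde{T}))}$, is essentially a restatement of the theorem rather than a proof of it. The paper's proof exists to supply exactly the missing structure: Steps 1--2 use the trace and sphericality to close $T$ up into a $0_+$-tangle (your ``localization'' move is not justified otherwise, since the exponent involves the global loop counts $l(T)$, $l(\tilde{T})$, $l(T\circ_i\tilde{T})$); Step 3 uses auxiliary tangles $U$, $\tilde{S}$ to reduce $\tilde{T}$ to a Temperley--Lieb tangle; and only then does Step 4 run an induction on the colour $k_1$, with a case analysis (subcases 4.1, 4.2(a), 4.2(b)) keyed to exactly the connectivity dichotomy you gesture at. Note also that subcase 4.2(b) requires irreducibility ($N^\prime\cap M=\C$) to collapse a $1$-box into a scalar times a string --- an ingredient nowhere invoked in your sketch, although it is a standing hypothesis of the theorem.

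Two smaller misdirections. First, the scalar bookkeeping has nothing to do with Fact \ref{f:erel}: those are tower relations among the von Neumann projections $e_{\epsilon,i}$ and are never used in the proof of Theorem \ref{main}; the scalars arise purely diagrammatically, from $\tau(q)=[M:Q]^{-1}$ (which you do identify correctly) and from the modulus attached to closed loops, each capped $q$-loop contributing $\delta\,\tau(q)$. Consequently the ``main obstacle'' you flag --- cancellation of $[Q:N]^{\pm 1}$ and $[M:N]^{\pm 1}$ contributions coming from Fact \ref{f:erel}(5)--(6) --- is not an obstacle in the actual argument at all; parity enters only in the mild form of choosing different auxiliary tangles in the even and odd cases of Step 3 (with one extra floating loop in the odd case). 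The real difficulty is the one described above: organizing the deletion of the interface ring so that every step is governed by a verifiable local rule, which is what the paper's reduction-plus-induction architecture accomplishes and your sketch leaves unsupplied.
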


Here, for any tangle $T$, $l(T)$ is the number of loops obtained after black-cupping the internal discs of $T$ and black-capping the external
disc of $T$.

The proof of Theorem \ref{main} proceeds by a series of reductions to easier and easier cases until the result is obvious. There are 4 main steps.

\bigskip\noindent
{\bf Step 1:} Reduction to the case $T$ is a $0_+$-tangle : Let $S$ be the $0_+$-tangle in Figure~\ref{step}
below and $\tilde{S} = \tilde{T}$. We claim that the truth of the equation for
$S$ and $\tilde{S}$ implies it for $T$ and $\tilde{T}$. The new disc of $S$ is the last numbered one.
\begin{figure}[!h]
\begin{center}
\psfrag{hatt}{}
\psfrag{k}{\huge $k_0$}
\psfrag{1}{\huge $T$}
\resizebox{3cm}{!}{\includegraphics{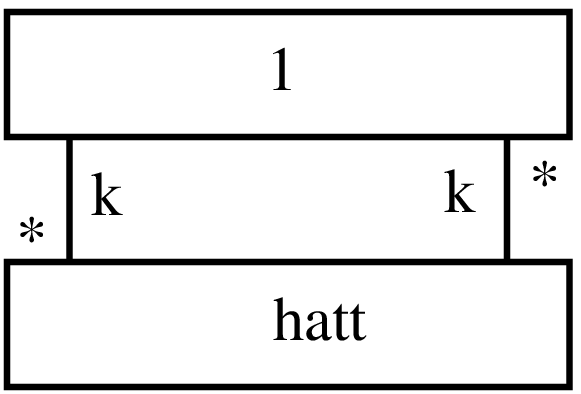}}
\end{center}
\caption{}\label{step}
\end{figure}

Observe that, by definition, $l(S) = l(T), l(\tilde{S}) = l(\tilde{T})$ and $l(S \circ_i \tilde{S}) =
l(T \circ_i \tilde{T})$. To prove Theorem \ref{main}, it suffices to trace both sides against an arbitrary
element $x \in P_{k_0}$ and verify that the results are the same.

Now $\delta^{k_0} \tau(Z_{F(T) \circ_i F(\tilde{T})}(\cdots) x) =  Z_{F(S) \circ_i F(\tilde{S})}(\cdots, F(x))$ and \par
$\delta^{k_0} \tau(Z_{F(T \circ_i \tilde{T})}(\cdots) x) =  Z_{F(S \circ_i \tilde{S})}(\cdots, F(x))$

Also, we are given that
\begin{equation*}
Z_{F(S) \circ_i F(\tilde{S})} = \tau(q)^{\frac{1}{2}({k}_i + l(S \circ_i \tilde{S}) - l(S) - l(\tilde{S}))} Z_{F(S \circ_i \tilde{S})},
\end{equation*}
holds when all inputs come from $P^\prime$. 

Thus the $\delta^{k_0} \tau(Z_{F(T) \circ_i F(\tilde{T})}(\cdots) x) $ above equals
\begin{eqnarray*}
\tau(q)^{\frac{1}{2}({k}_i + l(S \circ_i \tilde{S}) - l(S) - l(\tilde{S}))} Z_{F(S \circ_i \tilde{S})}(\cdots,F(x)) = \\
\delta^{k_0} \tau(q)^{\frac{1}{2}({k}_i + l(T \circ_i \tilde{T}) - l(T) - l(\tilde{T}))} \tau(Z_{F(T \circ_i \tilde{T})}(\cdots)x).&
\end{eqnarray*}
The desired reduction follows.
This reduction having been made, we will henceforth assume that $T$
 is a $0_+$-tangle and therefore the equation that must be seen to hold on $P^\prime$ is:
\begin{equation*}
Z_{T \circ_i F(\tilde{T})} = \tau(q)^{\frac{1}{2}({k}_i + l(T \circ_i \tilde{T}) - l(T) - l(\tilde{T}))} Z_{T \circ_i \tilde{T}}. {\hspace {1in}}  (*)
\end{equation*}
(since $F(T) =T$ for a $0_+$-tangle $T$).

\bigskip\noindent
{\bf Step 2:} Reduction to the case $T$ is of the form in Figure \ref{one}
\begin{figure}[!h]
\begin{center}
\psfrag{hatt}{\huge $\hat{T}$}
\psfrag{k}{\huge $k_i$}
\psfrag{1}{\huge $1$}
\resizebox{3cm}{!}{\includegraphics{reduction2.eps}}
\end{center}
\caption{}\label{one}
\end{figure}
\noindent
where $\hat{T}$ is some ${k}_i$-tangle and $i=1$: This follows from sphericality.

\bigskip\noindent
{\bf Step 3:} Reduction to the case $\tilde{T}$ is Temperley-Lieb: This is handled in two different ways according as ${k}_i$ is
even or odd.\\
Subcase 3.1: Suppose that ${k}_i$ is even.
Let $U = U^{k_1}_{2k_1,k_1}$ and $\tilde{S} = \tilde{S}^{2k_1}$ be the following tangles in Figure \ref{subcase3.1},
\begin{figure}[!h]
\begin{center}
\psfrag{k}{\huge $k_1$}
\psfrag{1}{\huge $1$}
\psfrag{2}{\huge $2$}
\psfrag{u=}{\huge $U=$}
\psfrag{s=}{\huge $\tilde{S}=$}
\resizebox{12cm}{!}{\includegraphics{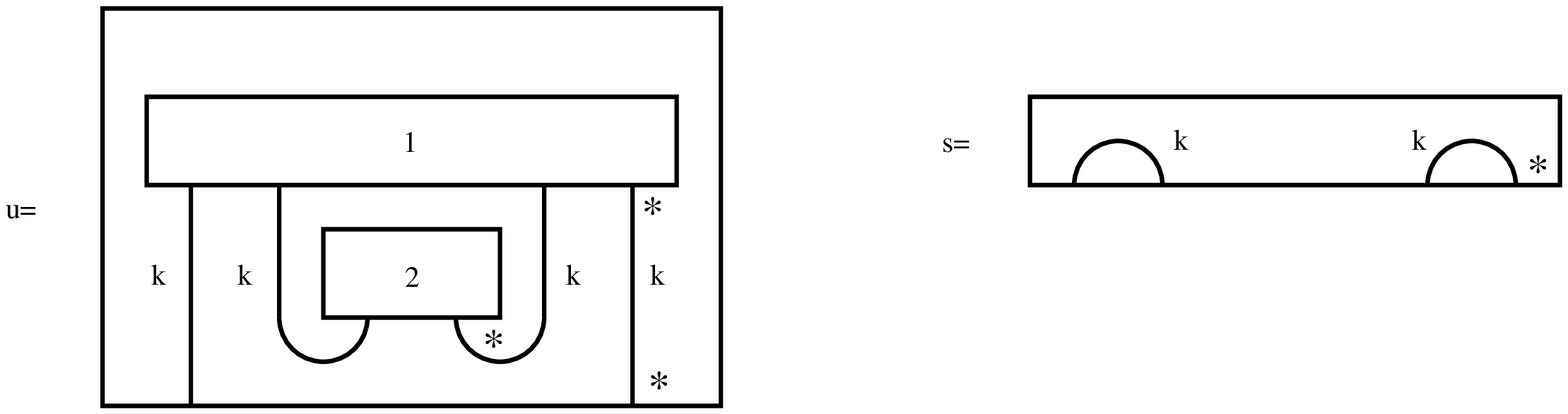}}
\end{center}
\caption{}\label{subcase3.1}
\end{figure}
and $S = T \circ_1 (U \circ_2 \tilde{T})$. It is then clear that $S \circ_1 \tilde{S} = T \circ_1 \tilde{T}$. %($ = U \circ_{(1,2)} (\tilde{S},\tilde{T})$).

We claim that the validity of Equation (*) holding for the pair $(S,\tilde{S})$ implies its validity for the pair $(T,\tilde{T})$. To see this, assume that
\begin{equation*}
Z_{S \circ_1 F(\tilde{S})} = \tau(q)^{\frac{1}{2}(2{k}_1 + l(S \circ_1 \tilde{S}) - l(S) - l(\tilde{S}))} Z_{S \circ_1 \tilde{S}}. 
\end{equation*}
Now observe that $Z_{S \circ_1 \tilde{S}} = Z_{T \circ_1 \tilde{T}}$ and $Z_{S \circ_1 F(\tilde{S})} = Z_{T \circ_1 F(\tilde{T})}$ since $k_1$ is even and
using that $q^2 = q$ several times. Also, note that $l(\tilde{S}) = k_1$ and $l(S) = l(\tilde{T}) + l(\hat{T}) = l(\tilde{T}) + l(T)$. Substituting all this
in the previous equation and simplifying, we get the desired Equation (*).\\
Subcase 3.2: Suppose that ${k}_1$ is odd. Now let $U = U^{k_1}_{2(k_1+1),k_1}$ and $\tilde{S} = \tilde{S}^{2(k_1+1)}$ be the following tangles in Figure \ref{subcase3.2}
\begin{figure}[!h] 
\begin{center}
\psfrag{k}{\huge $k_1$}
\psfrag{1}{\huge $1$}
\psfrag{2}{\huge $2$}
\psfrag{l}{\huge {$k_1+1$}}
\psfrag{u=}{\huge $U=$}
\psfrag{s=}{\huge $\tilde{S}=$}
\resizebox{12cm}{!}{\includegraphics{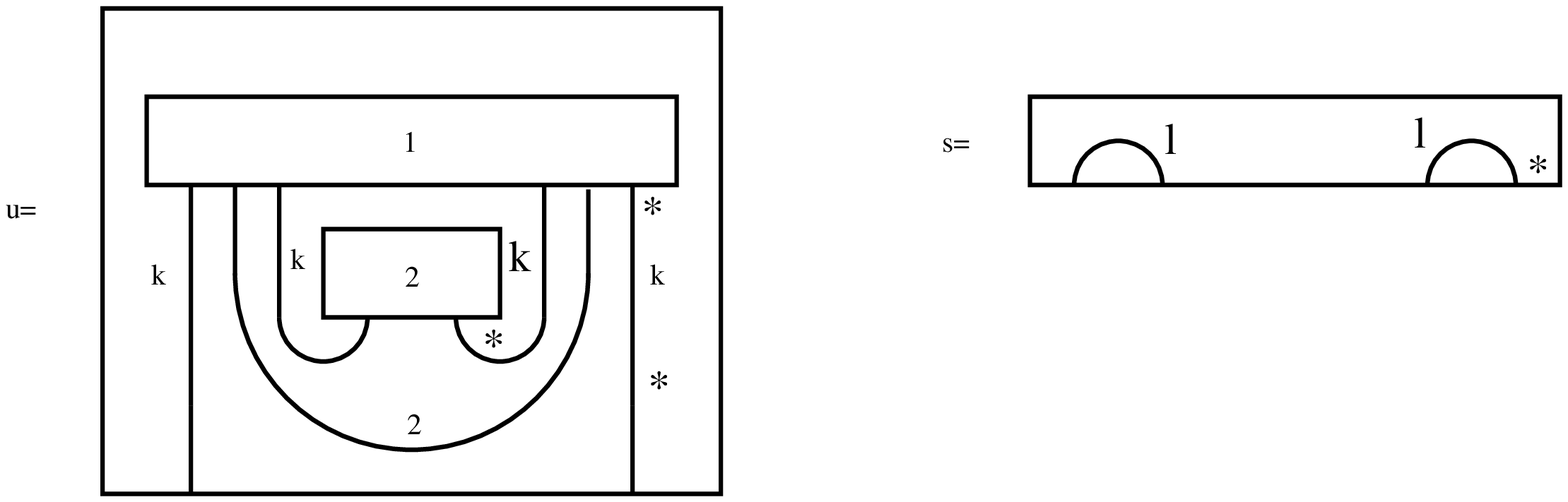}}
\end{center}
\caption{}\label{subcase3.2}
\end{figure}
and let $S = T \circ_1 (U \circ_2 \tilde{T})$. It is then clear that $S \circ_1 \tilde{S}$ differs from $T \circ_1 \tilde{T}$ in having one extra floating loop.

We again claim that the validity of Equation (*) holding for the pair $(S,\tilde{S})$ implies its validity for the pair $(T,\tilde{T})$. To see this, assume that
\begin{equation*}
Z_{S \circ_1 F(\tilde{S})} = \tau(q)^{\frac{1}{2}(2({k}_1+1) + l(S \circ_1 \tilde{S}) - l(S) - l(\tilde{S}))} Z_{S \circ_1 \tilde{S}}. 
\end{equation*}
Now observe that $Z_{S \circ_1 \tilde{S}} = \delta Z_{T \circ_1 \tilde{T}}$. Also note that $l(\tilde{S}) = k_1+1$, $l(S) = l(\tilde{T}) + l(\hat{T}) = l(\tilde{T}) + l(T)$, and $l(S \circ_1 \tilde{S}) = l(T \circ_1 \tilde{T})+1$.

To finish the proof, it suffices to see that $Z_{S \circ_1 F(\tilde{S})} = \delta \tau(q) Z_{T \circ_1 F(\tilde{T})}$. We will first do this in the case
$k_1=5$.
\begin{figure}[!h]
\begin{center}
\psfrag{that}{\Huge $\hat{T}$}
\psfrag{ttilde}{\Huge $\tilde{T}$}
\psfrag{q}{\Huge $q$}
\psfrag{1}{\huge $1$}
\resizebox{9cm}{!}{\includegraphics{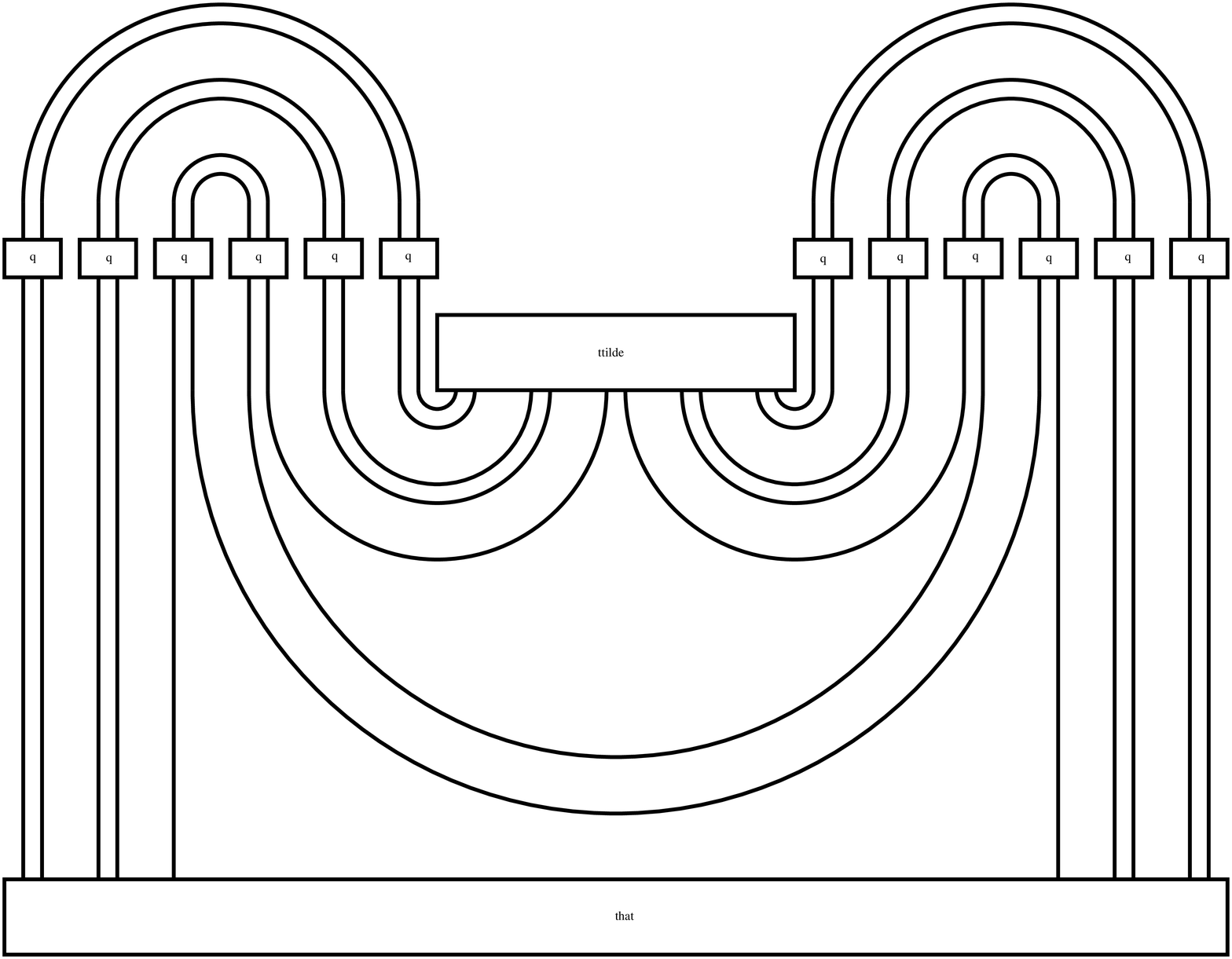}}
\caption{}\label{complicated}
\end{center}
\end{figure}
The tangle $S \circ_1 F(\tilde{S})$ is depicted in Figure \ref{complicated}. With a little bit of manipulation, this reduces to Figure \ref{simplified}.
\begin{figure}[!h]
\begin{center}
\psfrag{that}{\Huge $\hat{T}$}
\psfrag{ttilde}{\Huge $\tilde{T}$}
\psfrag{q}{\Huge $q$}
\psfrag{=}{\Huge $= \delta \tau(q)$}
\resizebox{12cm}{!}{\includegraphics{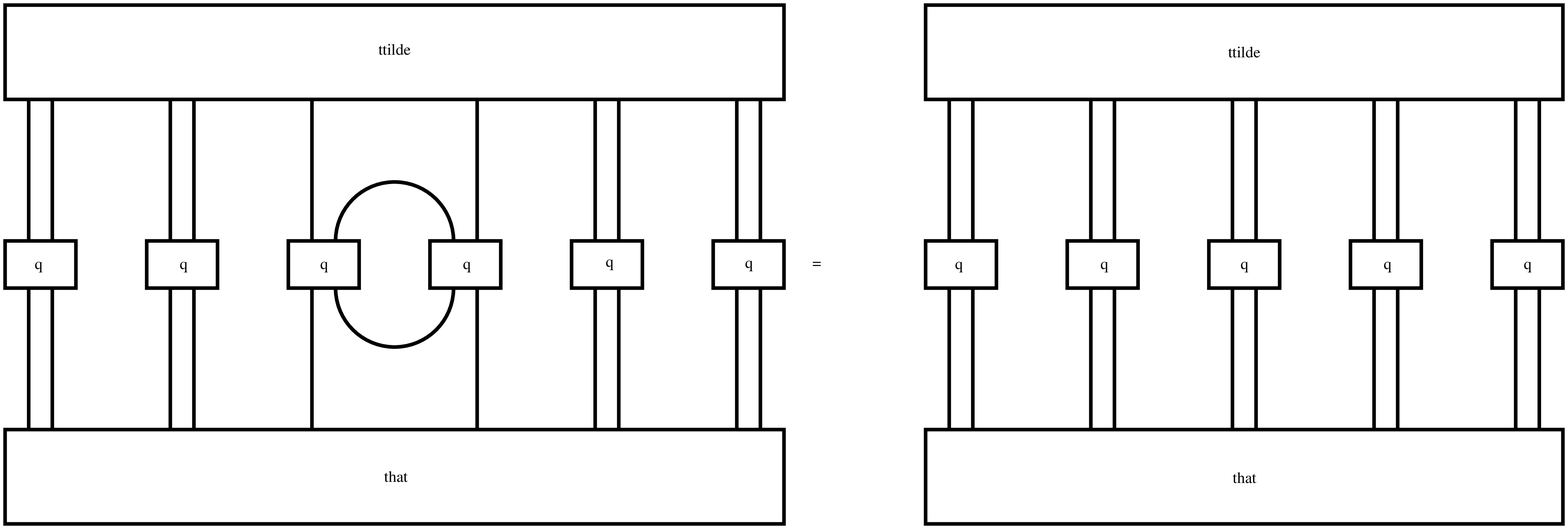}}
\end{center}
\caption{}\label{simplified}
\end{figure}

\noindent
Since the last picture is clearly $T \circ_1 F(\tilde{T})$, we're done. It should be clear that a similar proof works whenever $k_1$ is odd.

\bigskip\noindent
{\bf Step 4:} Resolution of the case $\tilde{T}$ is Temperley-Lieb in three different subcases by induction on ${k}_1$. In each of the subcases,
we will show that the statement for a suitably chosen $S$ and $\tilde{S}$ with $k_0(\tilde{S}) < k_0(\tilde{T})$, implies it for $T$ and $\tilde{T}$.\\
Subcase 4.1: Suppose that in $\tilde{T}$ some $2i-1$ and $2i$ are joined by a string so that $\tilde{T}$ has the form in Figure \ref{two}
\begin{figure}[!h]
\begin{center}
\psfrag{cdots}{\huge ${\cdots}$}
\psfrag{stilde}{\huge $\tilde{S}$}
\psfrag{q}{\Huge $q$}
\psfrag{1}{\large $1$}
\psfrag{2i-1}{\large $2i-1$}
\psfrag{2i}{\large $2i$}
\psfrag{2i+1}{\large $2i+1$}
\psfrag{2i+2}{\large $2i+2$}
\psfrag{2i-2}{\large $2i-2$}
\psfrag{2k_1}{\large $2k_1$}
\psfrag{=}{\Huge $= \delta \tau(q)$}
\resizebox{5cm}{!}{\includegraphics{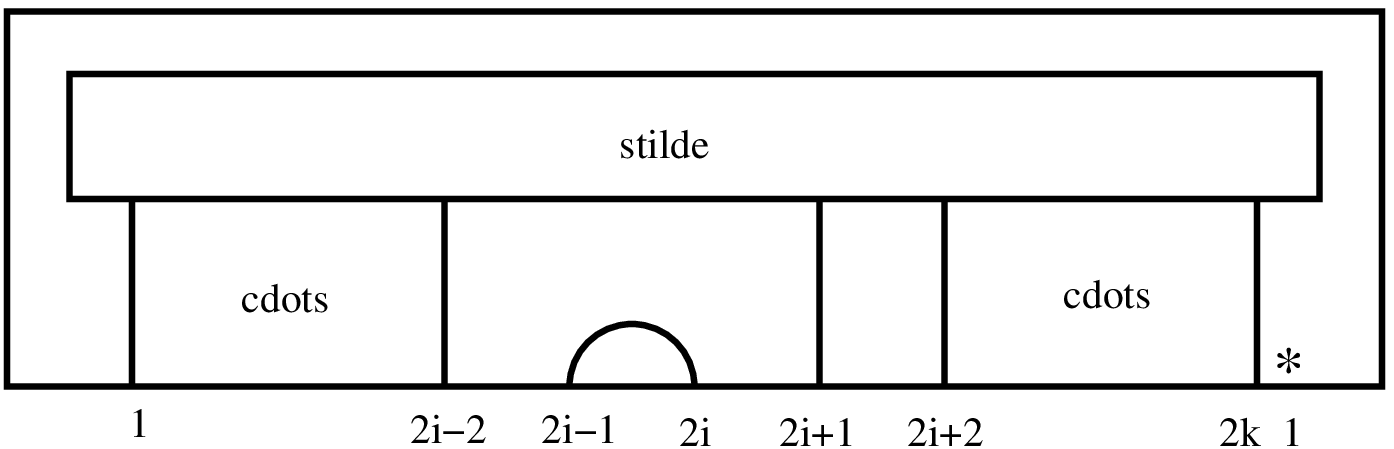}}
\end{center}
\caption{}\label{two}
\end{figure} for some Temperley-Lieb tangle $\tilde{S}$ of colour $k_1-1$. In this case, let $S$ be the  tangle in Figure \ref{three}.
 \begin{figure}[!h]
\begin{center}
\psfrag{cdots}{\huge ${\cdots}$}
\psfrag{ttilde}{\huge $\hat{T}$}
\psfrag{q}{\Huge $q$}
\psfrag{1}{\large $1$}
\psfrag{2i-1}{\large $2i-1$}
\psfrag{2i}{\large $2i$}
\psfrag{2i+1}{\large $2i+1$}
\psfrag{2i+2}{\large $2i+2$}
\psfrag{2i-2}{\large $2i-2$}
\psfrag{2k_1}{\large $2k_1$}
\psfrag{=}{\Huge $= \delta \tau(q)$}
\resizebox{5cm}{!}{\includegraphics{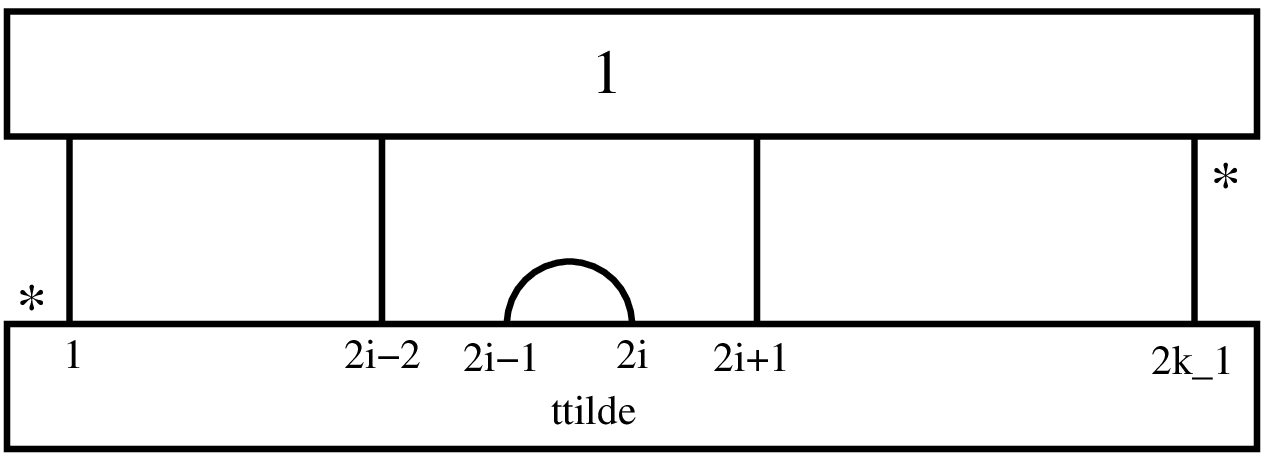}}
\end{center}
\caption{}\label{three}
\end{figure}

Note that $T\circ_1\tilde{T} = S\circ_1\tilde{S}$. It follows that $l(T\circ_1\tilde{T}) = l(S\circ_1\tilde{S})$ and it is easy to see that
$l(S) = l(T)$ ($= l(\hat{T})$) and that $l(\tilde{S}) = l(\tilde{T}) - 1$. To show that the statement for the pair $S,\tilde{S}$ implies that for
the pair $T,\tilde{T}$, it therefore suffices now to see that $T\circ_1F(\tilde{T}) = S\circ_1 F(\tilde{S})$. This follows easily from the 
fact that `$q$ capped on top can be replaced by the identity'.

\noindent
Subcase 4.2: Suppose that in $\tilde{T}$ some $2i$ and $2i+1$ are joined by a string so that $\tilde{T}$ has the form in Figure \ref{subcase4.2}
\begin{figure}[!h]
\begin{center}
\psfrag{cdots}{\huge ${\cdots}$}
\psfrag{stilde}{\huge $\tilde{S}$}
\psfrag{q}{\Huge $q$}
\psfrag{1}{\large $1$}
\psfrag{2i-1}{\large $2i$}
\psfrag{2i}{\large $2i+1$}
\psfrag{2i+1}{\large $2i+2$}
\psfrag{2i+2}{\large $2i+3$}
\psfrag{2i-2}{\large $2i-1$}
\psfrag{2k_1}{\large $2k_1$}
\psfrag{=}{\Huge $= \delta \tau(q)$}
\resizebox{5cm}{!}{\includegraphics{tildet.eps}}
\end{center}
\caption{}\label{subcase4.2}
\end{figure}

\noindent
 for some Temperley-Lieb tangle $\tilde{S}$ of colour $k_1-1$. Note that $l(\tilde{S}) = l(\tilde{T})$. Here there are two further subcases.\\
 Subcase 4.2(a): The black intervals $[2i-1,2i]$ and $[2i+1,2i+2]$ are part of distinct black regions in $\hat{T}$. In this case, let $S$ be the tangle Figure \ref{subcase4.2a}
  \begin{figure}[!h]
\begin{center}
\psfrag{cdots}{\huge ${\cdots}$}
\psfrag{ttilde}{\huge $\hat{T}$}
\psfrag{q}{\Huge $q$}
\psfrag{1}{\large $1$}
\psfrag{2i-1}{\large $2i$}
\psfrag{2i}{\large $2i+1$}
\psfrag{2i+1}{\large $2i+2$}
\psfrag{2i+2}{\large $2i+3$}
\psfrag{2i-2}{\large $2i-1$}
\psfrag{2k_1}{\large $2k_1$}
\psfrag{=}{\Huge $= \delta \tau(q)$}
\resizebox{5cm}{!}{\includegraphics{tangles.eps}}
\end{center}
\caption{}\label{subcase4.2a}
\end{figure}

Here too $T\circ_1\tilde{T} = S\circ_1\tilde{S}$ and it follows that $l(T\circ_1\tilde{T}) = l(S\circ_1\tilde{S})$.
Recall that $l(\tilde{S}) = l(\tilde{T})$. The pictures for computing $l(T)$ and $l(S)$ are shown in Figure \ref{four}.
(The picture for $l(T)$ is above the one for $l(S)$).
 \begin{figure}[!h]
\begin{center}
\psfrag{cdots}{\huge ${\cdots}$}
\psfrag{ttilde}{\huge $BC(\hat{T})$}
\psfrag{q}{\Huge $q$}
\psfrag{1}{\large $1$}
\psfrag{2i-1}{\large $2i-1$}
\psfrag{2i}{\large $2i$}
\psfrag{2i+1}{\large $2i+1$}
\psfrag{2i+2}{\large $2i+2$}
\psfrag{2i-2}{\large $2i-2$}
\psfrag{2k_1}{\large $2k_1$}
\psfrag{=}{\Huge $= \delta \tau(q)$}
\resizebox{8cm}{!}{\includegraphics{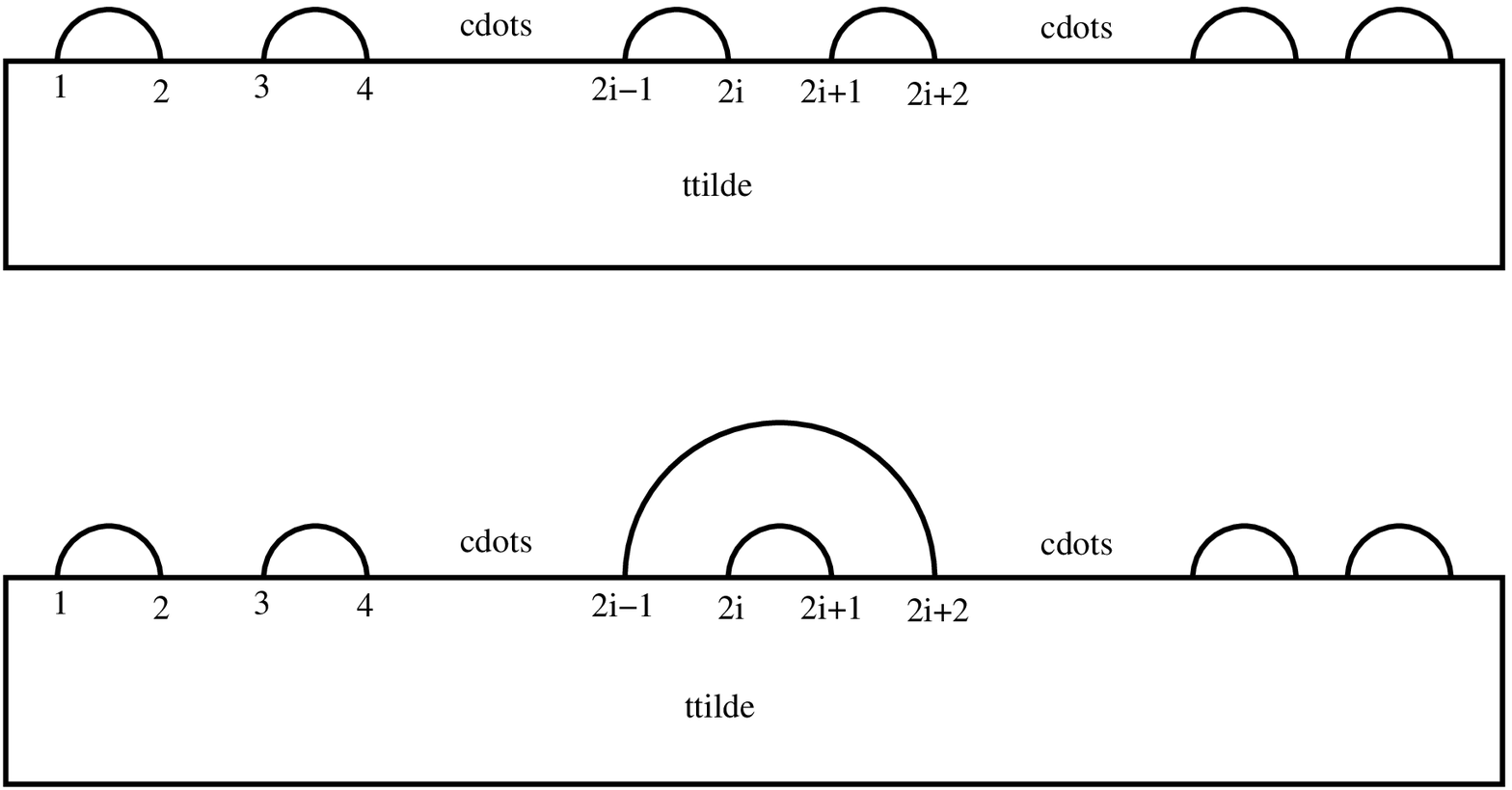}}
\end{center}
\caption{}\label{four}
\end{figure}

Here $BC(\hat{T})$ is the tangle obtained by `black cupping' the insides of all boxes of $\hat{T}$. We need to compare the
number of loops in the top and bottom pictures. Observe that the black regions of $\hat{T}$ and that of $BC(\hat{T})$ are in
natural bijective correspondence and therefore the black intervals $[2i-1,2i]$ and $[2i+1,2i+2]$ are part of distinct black regions in $BC(\hat{T})$.
Thus the loops containing $2i-1$ (and  $2i$) and $2i+1$ (and $2i+2$) are different in the first picture while these two loops are
cut and spliced into a single loop in the second picture. It follows that $l(S) = l(T) -1$.

Now suppose that we know that
\begin{equation*}
Z_{S \circ_1 F(\tilde{S})} = \tau(q)^{\frac{1}{2}({k}_1-1 + l(S \circ_1 \tilde{S}) - l(S) - l(\tilde{S}))} Z_{S \circ_1 \tilde{S}}. 
\end{equation*}
It follows that
\begin{equation*}
Z_{S \circ_1 F(\tilde{S})} = \tau(q)^{\frac{1}{2}({k}_1 + l(T \circ_1 \tilde{T}) - l(T) - l(\tilde{T}))} Z_{T \circ_1 \tilde{T}},
\end{equation*}
and so to complete the proof it suffices to see that $Z_{S \circ_1 F(\tilde{S})} = Z_{T \circ_1 F(\tilde{T})}$.

To see this, first note that the `antipode symmetry' of the $q$ implies that $T \circ_1 F(\tilde{T}) = V \circ_1 F(\hat{T})$ where
$V$ is the tangle in Figure \ref{antipode}.
\begin{figure}[!h]
\begin{center}
\psfrag{hatt}{\huge $1$}
\psfrag{k}{\huge $k_i$}
\psfrag{1}{\huge $\tilde{T}$}
\resizebox{3cm}{!}{\includegraphics{reduction2.eps}}
\end{center}
\caption{}\label{antipode}
\end{figure}

Thus $T \circ_1 F(\tilde{T})$ is given by the picture on the left in Figure \ref{last} which equals the one on the right using properties of $q$.
\begin{figure}[!h]
\begin{center}
\psfrag{that}{\huge $\hat{T}$}
\psfrag{k}{\huge $k_i$}
\psfrag{q}{\huge $q$}
\psfrag{stilde}{\huge $\tilde{S}$}
\resizebox{12cm}{!}{\includegraphics{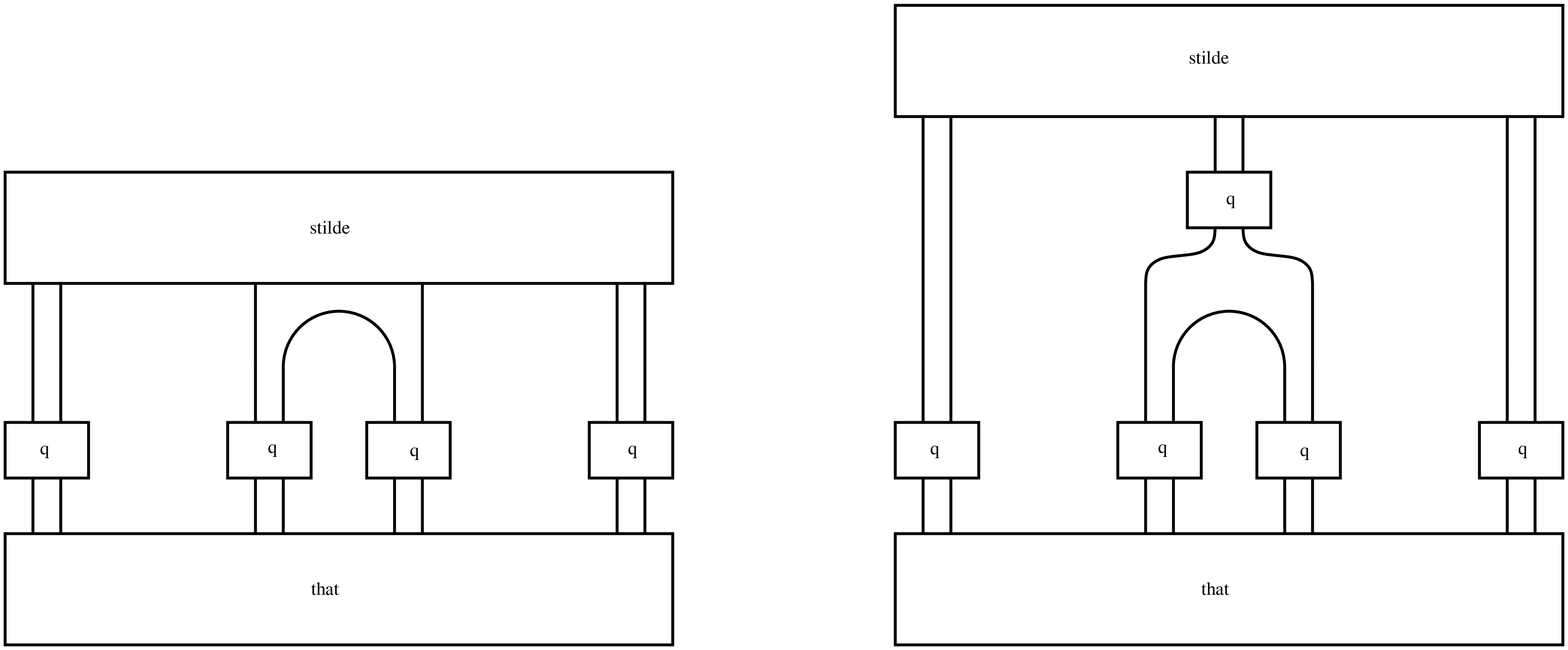}}
\end{center}
\caption{}\label{last}
\end{figure}

The two middle $q$'s in the picture on the right may be deleted using one application of Lemma \ref{follows} to $\hat{T}$ and then what is left is clearly
$S \circ_1 F(\tilde{S})$.

\begin{lemma}\label{follows}
Let $T$ be a $k$-tangle and $[k] = \{1,2,\cdots,k\}$ be regarded as the set of black external boundary arcs of $T$, enumerated, say, in clockwise direction starting from the one immediately next (counterclockwise)
to the $*$ arc.
Let $A \subseteq [k]$ be such that any black region of $T$ intersects at most one element of
$A$. Surround $T$ with $q$'s in all positions except those
given by $A$ and call this partially labelled tangle $F_A(T)$. Then $Z_{F_A(T)} = Z_{F(T)}$ on $P^\prime$.
\end{lemma}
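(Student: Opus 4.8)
The plan is to induct on the cardinality $|A|$. When $|A|=0$ one has $F_A(T)=F(T)$ by Definition~\ref{defF}, so there is nothing to prove. For the inductive step I would fix an arc $a\in A$, set $A^\prime=A\setminus\{a\}$, and observe that since $A^\prime\subseteq A$ the hypothesis that every black region of $T$ meets at most one element of $A^\prime$ is automatically inherited. The inductive hypothesis then gives $Z_{F_{A^\prime}(T)}=Z_{F(T)}$ on $P^\prime$, so it is enough to prove the single-arc statement $Z_{F_A(T)}=Z_{F_{A^\prime}(T)}$ on $P^\prime$. Here $F_A(T)$ and $F_{A^\prime}(T)$ differ only in that the latter carries one extra $q$, placed on the arc $a$; equivalently, the task is to show that this one $q$ may be deleted.

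Next I would pin down the local picture at $a$. Let $R$ be the unique black region of $T$ whose boundary contains the external arc $a$. Because $a$ is the only element of $A$ lying on $\partial R$ --- this is exactly the force of the hypothesis that every black region meets at most one element of $A$ --- every other external arc of $R$ already carries a $q$ in both tangles. Furthermore, since the identity is only asserted for inputs from $P^\prime=\mathrm{ran}(F)$, each input is of the form $F(y_i)$ and hence enters already surrounded by $q$'s; consequently the arcs of $\partial R$ that meet the internal discs of $T$ also carry $q$'s. This is the one and only place where the restriction to $P^\prime$ is used, and it is precisely what makes the statement fail on all of $P$. After these substitutions the region $R$ is encircled by $q$'s on every boundary arc except possibly $a$.

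The heart of the matter is then the purely local diagrammatic claim that a black region encircled by $q$'s on all but one of its boundary arcs is unaffected by inserting a further $q$ on the remaining arc. I would establish this from the tools already available: the exchange relation of Corollary~\ref{cor:exchange} allows the $q$'s to be slid freely around $\partial R$ so as to bring the newly inserted copy alongside a neighbouring one, after which the biprojection relations of Theorem~\ref{Bisch} --- concretely the idempotency $q^2=q$ and the relation that ``a $q$ capped on top may be replaced by the identity'' --- absorb the extra copy. This yields $Z_{F_A(T)}=Z_{F_{A^\prime}(T)}$ on $P^\prime$ and closes the induction.

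The genuine obstacle is the local absorption claim of the last paragraph. For a region $R$ of arbitrary valence, with $q$'s sitting in arbitrary positions around its boundary, one must verify carefully that the surrounding $q$'s really do swallow one more, and in doing so keep track of the shadings and of the placement of the $*$-arcs (recall that for a $q$ the $*$ may sit in either white arc). The care lies in arranging the configuration so that the exchange relation and the capping relation apply in exactly the forms stated; once this single-$q$ absorption is in place, the reduction to it by induction on $|A|$ is routine.
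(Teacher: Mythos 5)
Your proposal is correct and is essentially the paper's own argument: the paper likewise notes that, for inputs from $P^\prime$ (which arrive already surrounded by $q$'s), every boundary arc of each black region carries a $q$ except possibly the single arc of $A$ it meets, and then invokes the exchange relation of Corollary~\ref{cor:exchange} to conclude that this one missing $q$ is immaterial. Your induction on $|A|$ merely serializes what the paper does for all black regions at once; the key local absorption claim, the use of the hypothesis on $A$, and the role of $P^\prime$ are identical in both arguments.
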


\begin{proof} Consider the external boundary of any black region of $T$ that intersects an external boundary arc. Say it looks like something in Figure \ref{lemma2fig2}.
\begin{figure}[!h]
\begin{center}
\psfrag{&}{\Huge $\&$}
\resizebox{5cm}{!}{\includegraphics{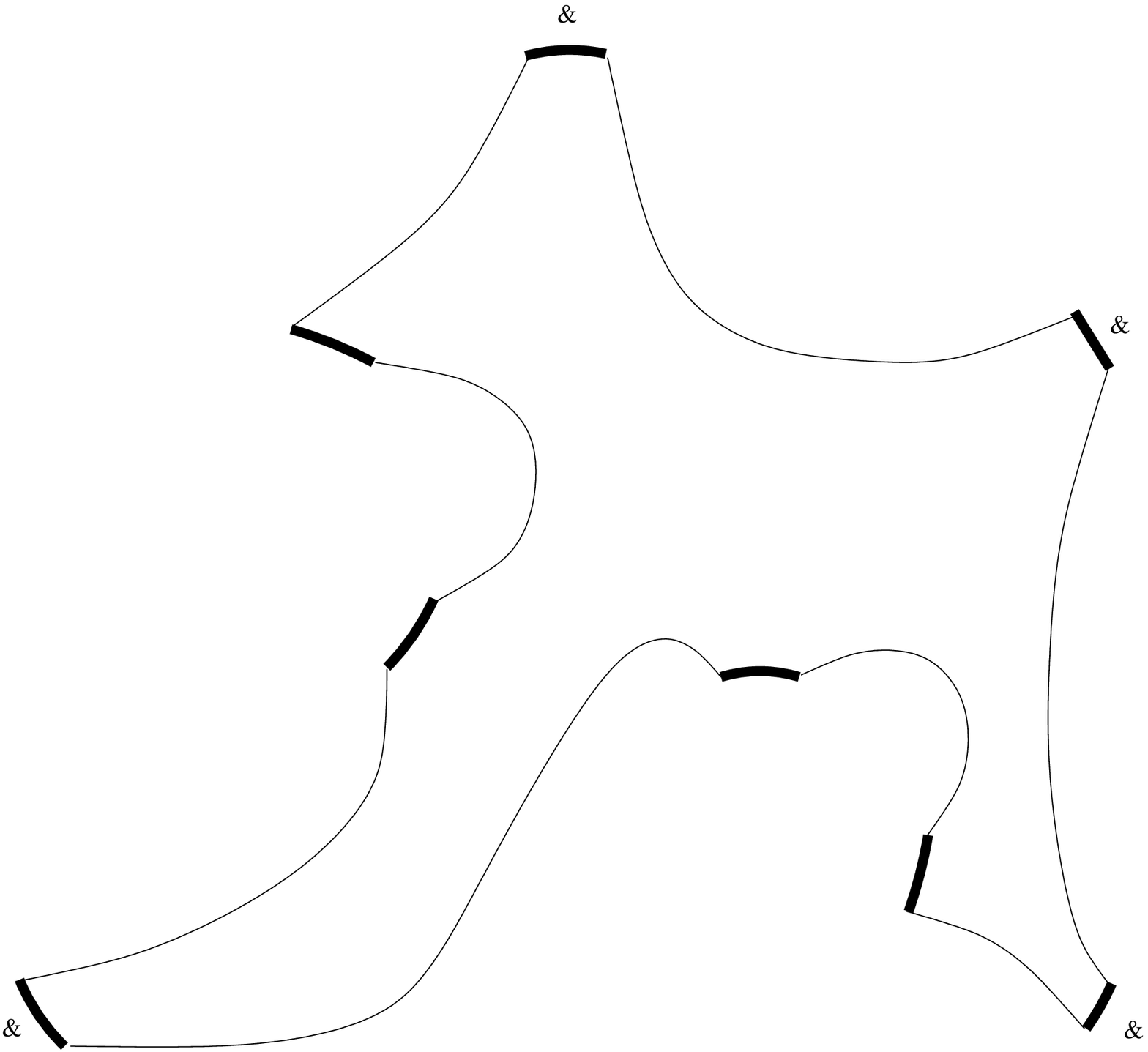}}
\end{center}
\caption{}\label{lemma2fig2}
\end{figure}
Here the dark portions represent boundary arcs of discs of $T$ while the light portions represent strings.
Say the portions marked $\&$ are boundary arcs of the external disc of $T$ while the rest are boundary arcs of various internal discs of $T$. By assumption, at most one of the portions marked $\&$ is in $A$.

Now in calculating $F(T)$, this black region looks as in Figure \ref{lemma2fig3}, where every 2-box has
a $q$ in it.
\begin{figure}[!h]
\begin{center}
\psfrag{&}{\Huge $\&$}
\resizebox{5cm}{!}{\includegraphics{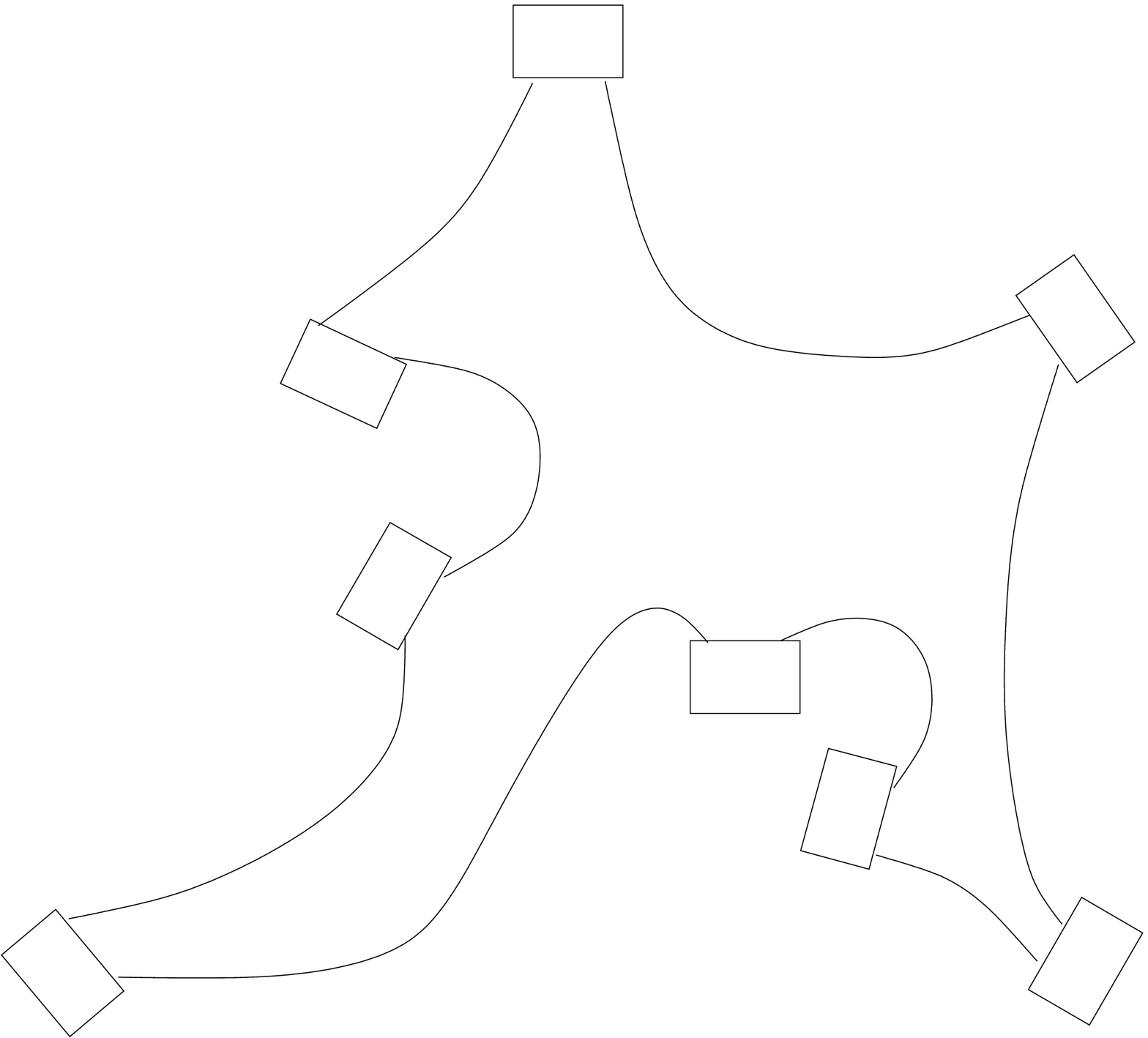}}
\end{center}
\caption{}\label{lemma2fig3}
\end{figure}
The external portions have a $q$ by definition of $F(T)$, while the internal portions have a $q$ because we're only interested in the values of the tangle when inputs come from $P^\prime$.
Now observe that in calculating $F_A(T)$, at most one of the $q$'s is missing - which does not matter
because of the exchange relation that $q$ satisfies (see Corollary \ref{cor:exchange}).
\end{proof}

\noindent
 Subcase 4.2(b): The black intervals $[2i-1,2i]$ and $[2i+1,2i+2]$ are part of the same black region in $\hat{T}$.
 
 Draw a dotted line from the midpoint of the interval $[2i-1,2i]$ to the midpoint of the interval 
$[2i+1,2i+2]$ in $\hat{T}$ that lies entirely in the black region that these are both part of. This
line does not intersect any string of $\hat{T}$ (by definition of a region) and so the part of $\hat{T}$ that lies inside this dotted line is a 1-box that joins the points $2i$ and $2i+1$. By irreducibility we may replace this one box by a scalar times a string and thus assume that
in $\hat{T}$ too, the points $2i$ and $2i+1$ are joined together. Thus $\hat{T}$ is of the form
in Figure \ref{subcase4.2b}
\begin{figure}[!h]
\begin{center}
\psfrag{cdots}{\huge ${\cdots}$}
\psfrag{stilde}{\huge $W$}
\psfrag{q}{\Huge $q$}
\psfrag{1}{\large $1$}
\psfrag{2i-1}{\large $2i$}
\psfrag{2i}{\large $2i+1$}
\psfrag{2i+1}{\large $2i+2$}
\psfrag{2i+2}{\large $2i+3$}
\psfrag{2i-2}{\large $2i-1$}
\psfrag{2k_1}{\large $2k_1$}
\psfrag{=}{\Huge $= \delta \tau(q)$}
\resizebox{5cm}{!}{\includegraphics{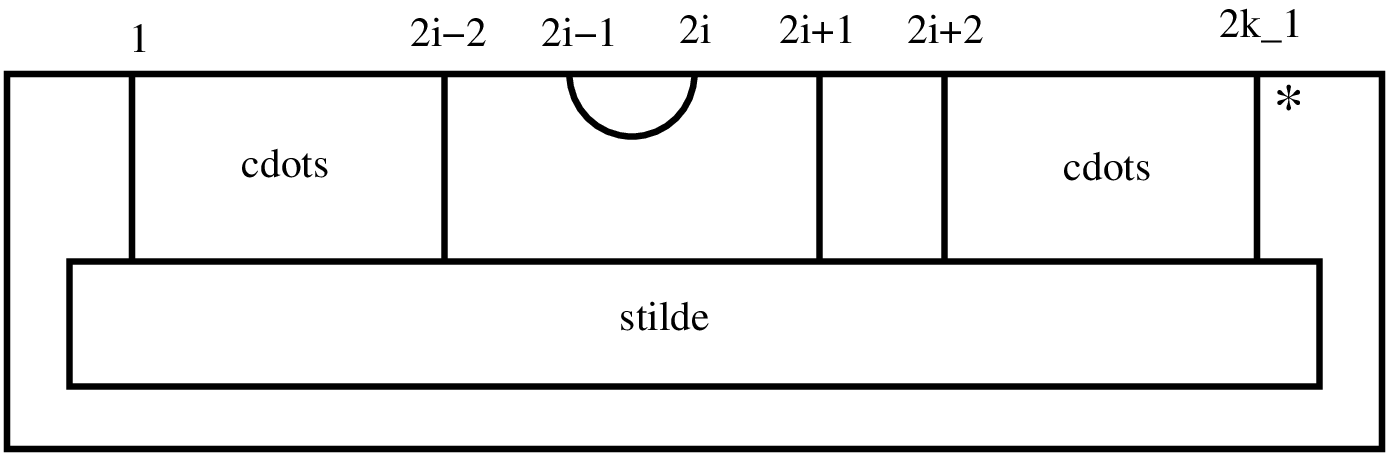}}
\end{center}
\caption{}\label{subcase4.2b}
\end{figure}

\noindent
for some tangle $W$ of colour $k_1-1$. Set $S$ to be the tangle in Figure \ref{stangle}.
\begin{figure}[!h]
\begin{center}
\psfrag{hatt}{\huge $W$}
\psfrag{k}{\huge $k_i-1$}
\psfrag{1}{\huge $1$}
\resizebox{3cm}{!}{\includegraphics{reduction2.eps}}
\end{center}
\caption{}\label{stangle}
\end{figure}

Again we claim that the truth of the statement of $S$ and $\tilde{S}$ implies that of the statement for $T$ and $\tilde{T}$. So suppose that
\begin{equation*}
Z_{S \circ_1 F(\tilde{S})} = \tau(q)^{\frac{1}{2}({k}_1-1 + l(S \circ_1 \tilde{S}) - l(S) - l(\tilde{S}))} Z_{S \circ_1 \tilde{S}}. 
\end{equation*}
Note that $T\circ_1 \tilde{T} = S \circ_1 \tilde{S}$ with one extra floating loop and therefore
$Z_{T \circ_1 \tilde{T}} = \delta Z_{S \circ_1 \tilde{S}}$ and $l({T \circ_1 \tilde{T}}) = l({S \circ_1 \tilde{S}}) +1$. Also $l(T) = l(\hat{T}) = l(W) = l(S)$ and we recall that $l(\tilde{S}) = l(\tilde{T})$.

It remains to compare $S \circ_1 F(\tilde{S})$ and $T \circ_1 F(\tilde{T})$. Observe that $T \circ_1 F(\tilde{T})$ equals the picture on the left in Figure \ref{finish} which equals $\delta \tau(q)$ times picture on the right
using properties of $q$ - which is clearly $S \circ_1 F(\tilde{S})$.

\begin{figure}[!h]
\begin{center}
\psfrag{that}{\huge $W$}
\psfrag{cdots}{\huge $\cdots$}
\psfrag{k}{\huge $k_i$}
\psfrag{q}{\huge $q$}
\psfrag{stilde}{\huge $\tilde{S}$}
\resizebox{12cm}{!}{\includegraphics{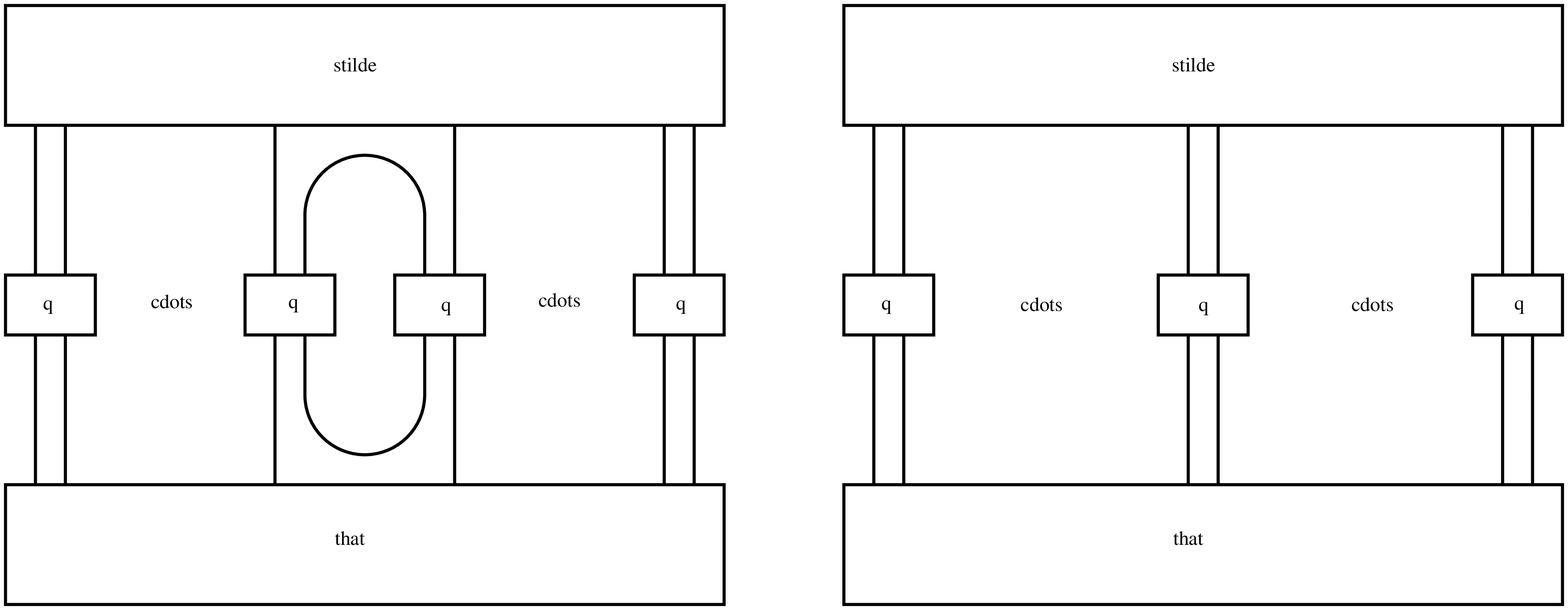}}
\end{center}
\caption{}\label{finish}
\end{figure}

Therefore $Z_{T \circ_1 F(\tilde{T})} = \delta \tau(q) Z_{S \circ_1 F(\tilde{S})}$. It now follows that 
\begin{equation*}
Z_{T \circ_1 F(\tilde{T})} = \tau(q)^{\frac{1}{2}({k}_1 + l(T \circ_1 \tilde{T}) - l(T) - l(\tilde{T}))} Z_{T \circ_1 \tilde{T}}. 
\end{equation*} This completes the proof of Theorem \ref{main}.

\bigskip
We  proceed to verify that our prescription for the tangle action does indeed specify various compatibility requirements that must be satisfied in order to define a planar algebra.

\bigskip

\noindent \textbf{(1) Compatibility with renumbering}

Let, $\sigma\in {\varSigma}_b$. Consider the tangle $\sigma(T)$ which as a subset
of ${\mathbb{R}}^2$ is the same as $T$ except that its $\sigma(i)$-th 
disc is the $i$-th disc of $T$.
We have to show the following diagram commutes:
\begin{displaymath}
 \xymatrix{
 P^{\prime}_{k_1} \otimes P^{\prime}_{k_2} \otimes \cdots \otimes P^{\prime}_{k_b} \ar[r]^-{U_\sigma} \ar[d]^{Z^{\prime}_T} & 
 P^{\prime}_{k_{\sigma^{-1}(1)}} \otimes P^{\prime}_{k_{\sigma^{-1}(2)}} \otimes \cdots 
 \otimes P^{\prime}_{k_{\sigma^{-1}(b)}} \ar[ld]^{Z^{\prime}_{\sigma(T)}}  & \\
 P^{\prime}_{k_0} }  
\end{displaymath}
Where,
$$
U_{\sigma}(x_1\otimes \cdots \otimes x_b)= 
x_{{\sigma}^{-1}(1)} \otimes \cdots \otimes x_{{\sigma}^{-1}(b)}
$$
for $x_i$ belongs to $P^\prime_i \subseteq P_i$. Now,
\begin{align*}
 & Z^\prime_{\sigma(T)}\circ U_{\sigma}(x_1\otimes \cdots \otimes x_b)\\
 & \qquad = Z^\prime_{\sigma(T)}(x_{{\sigma}^{-1}(1)} \otimes \cdots \otimes x_{{\sigma}^{-1}(b)})\\
 & \qquad = \alpha(\sigma(T)) F_{k_0}(Z_{\sigma(T)}(x_{{\sigma}^{-1}(1)} \otimes \cdots \otimes x_{{\sigma}^{-1}(b)}))\\
 & \qquad = \alpha(T) F_{k_0}(Z_{\sigma(T)} \circ U_{\sigma}(x_1 \otimes \cdots \otimes x_b))~~~~ \textrm{[since}~~ ~~~\alpha(\sigma(T))=\alpha(T)]\\
 & \qquad = \alpha(T) F_{k_0}(Z_T(x_1 \otimes \cdots \otimes x_b))~~\textrm{[~~renumbering ~~~axiom~~~for~~~Z]}\\
 & \qquad = Z^{\prime}_T(x_1 \otimes \cdots \otimes x_b)~~ \textrm{~~[by ~~definition]}
\end{align*}

\noindent \textbf{(2) Non-degeneracy}

We have to show, $ Z^{\prime}_{I^k_k} = id_{{P_k}^{\prime}}.$ Now 
for $x \in P^{\prime}_k$,
\begin{align*}
 & Z^{\prime}_{I^k_k} (x)\\
 & \qquad = \alpha(I^k_k)F_k(Z_{I^k_k}(x))~~~~\textrm{[by ~~~~definition]}\\
 & \qquad = \alpha(I^k_k) F_k(x)~~~~\textrm{[non-degeneracy~~~~of~~Z]}\\
 & \qquad = F_k(x)~~~~~~~~~~\textrm{[since~~~~}~~~\alpha(I^k_k)= 1]\\
 & \qquad = x
\end{align*}
\noindent \textbf{(3) Compatibility with respect to substitution}

Let $T= T^{k_0}_{k_1,\cdots,k_b}$ and $ \widetilde{T}= T^{{\widetilde{k}}_0}_{\widetilde{k_1},\cdots, {\widetilde{k}}_{\tilde{b}}}$ with $\tilde{k_0}= k_i$ for some $i\in \{1,\cdots,b\}$.
We need to check that the following diagram commutes:\newline
When $\tilde{b} > 0 :$
 \begin{displaymath}
  \xymatrix{
  (\otimes_{j=1}^{i-1}P^{\prime}_{k_j}) \otimes (\otimes_{j=1}^{\widetilde{b}} P^{\prime}_{\widetilde{k}_j}) \otimes
  (\otimes_{j=i+1}^{b}P^{\prime}_{k_j}) \ar[d]_-{(\otimes_{j=1}^{i-1}id_{P^{\prime}_{k_j}}) \otimes Z^{\prime}_{\widetilde{T}} \otimes
  (\otimes_{j=i+1}^{b}id_{P^{\prime}_{k_j}})} \ar[dr]^-{Z^{\prime}_{T \circ_i {\widetilde{T}}}} &\\
  (\otimes_{j=1}^{b}P^{\prime}_{k_j}) \ar[r]^{Z^{\prime}_T} &
  P^{\prime}_{k_0} }  
 \end{displaymath}
Let $({\otimes}^{i-1}_{j=1} x_j) \otimes ({\otimes}^{\widetilde{b}}_{j=1} \widetilde{x_j}) \otimes ({\otimes}^b_{j=i+1} x_j)$ belongs to
$({\otimes}^{i-1}_{j=1} P^{\prime}_{k_j}) \otimes ({\otimes}^{\widetilde{b}}_{j=1} P^{\prime}_{\widetilde{k_j}}) \otimes ({\otimes}^b_{j=i+1}  P^{\prime}_{k_j})$
Then,
\begin{align*}
& Z^{\prime}_T \circ (id \otimes Z^{\prime}_{\widetilde{T}} \otimes id ) [({\otimes}^{i-1}_{j=1} x_j) \otimes ({\otimes}^{\widetilde{b}}_{j=1} \widetilde{x_j}) \otimes ({\otimes}^b_{j=i+1} x_j)]\\
& \qquad = Z^{\prime}_T[({\otimes}^{i-1}_{j=1} x_j) \otimes Z^{\prime}_{\widetilde{T}}({\otimes}^{\widetilde{b}}_{j=1} \widetilde {x_j}) \otimes ({\otimes}^b_{j=i+1}
 x_j)]\\
& \qquad = Z^{\prime}_T[({\otimes}^{i-1}_{j=1} x_j) \otimes \alpha(\widetilde{T}) Z_{E \circ \widetilde{T}}(({\otimes}^{\widetilde{k_0}} q)
\otimes ({\otimes}^{\widetilde{b}}_{j=1} \widetilde{x_j})) \otimes ({\otimes}^b_{j=i+1} x_j)]\\
&\qquad \qquad \qquad \qquad \qquad \qquad \qquad \qquad \qquad \qquad \textrm{~~~~~(by~~definition}~~of Z^{\prime}_T)\\
& \qquad = \alpha(T)\alpha(\widetilde{T}) Z_{E \circ T}[({\otimes}^{k_0} q) \otimes ({\otimes}^{i-1}_{j=1} x_j) \otimes Z_{E \circ \widetilde{T}}
 (({\otimes}^{\widetilde{k_0}} q) \otimes ({\otimes}^{\widetilde{b}}_{j=1} {\widetilde{x}}_j)) \otimes ({\otimes}^b_{j=i+1} x_j)]\\
& \qquad \qquad \qquad \qquad \qquad \qquad \qquad \qquad \qquad \qquad \textrm{~~~~~(by~~definition}~~of Z^{\prime}_T)\\
& \qquad = \alpha(T)\alpha(\widetilde{T}) Z_{E\circ(T\circ_i(E\circ \widetilde{T}))}[({\otimes}^{k_0} q) \otimes ({\otimes}^{i-1}_{j=1} x_j)
\otimes ({\otimes}^{\widetilde{k_0}} q) \otimes ({\otimes}^{\widetilde{b}}_{j=1} {\widetilde{x}}_j) \otimes ({\otimes}^b_{j=i+1} x_j)]\\
& \qquad \qquad \qquad \qquad \qquad \qquad \qquad \qquad \qquad \qquad \text{(since $Z$ is associative)}\\
& \qquad = \alpha(T) \alpha(\widetilde{T}) \frac{\alpha(T\circ_i \widetilde{T})}{\alpha(T) \alpha(\widetilde{T})}
Z_{E\circ(T\circ_i{\widetilde{T}})}[({\otimes}^{k_0} q) \otimes ({\otimes}^{i-1}_{j=1} x_j) \otimes ({\otimes}^{\widetilde{b}}_{j=1}
 {\widetilde{x}_j}) \otimes ({\otimes}^b_{j=i+1} x_j)]\\
& \qquad \qquad \qquad \qquad \qquad \qquad \qquad \qquad \qquad \qquad \text{(by Theorem \ref{main})}\\
& \qquad = Z^{\prime}_{T\circ_i{\widetilde{T}}}[({\otimes}^{i-1}_{j=1} x_j) \otimes ({\otimes}^{\widetilde{b}}_{j=1} {\widetilde{x}_j})
 \otimes ({\otimes}^b_{j=i+1} x_j)] \textrm{~~~~~~~~~~(by~~definition)}.\\
\end{align*} 
When $\tilde{b} = 0 :$ We need to check the following diagram commutes:\newline
\begin{displaymath}
  \xymatrix{
  (\otimes_{j=1}^{i-1}P^{\prime}_{k_j}) \otimes \mathbb{C} \otimes
  (\otimes_{j=i+1}^{b}P^{\prime}_{k_j})\ar[r]^-\cong \ar[d]_-{(\otimes_{j=1}^{i-1}id_{P^{\prime}_{k_j}})} \otimes Z^{\prime}_{\widetilde{T}} \otimes
  (\otimes_{j=i+1}^{b}id_{P^{\prime}_{k_j}}) & 
  \otimes_{\substack{j=1, \\ j\neq i}}^{b}P^{\prime}_{k_j}
  \ar[d]_-{Z^{\prime}_{T \circ_i {\widetilde{T}}}} & \\
  \otimes_{j=1}^{b}P^{\prime}_{k_j} \ar[r]^{Z^{\prime}_T} &
  P^{\prime}_{k_0} }  
\end{displaymath}
 The proof is as above.\newline
Thus $T \mapsto Z^{\prime}_T$ is compatible with substitution.

In conclusion, the collection $P^{\prime}=\{P^{\prime}_k: k\in Col\}$
of vector spaces, equipped with the assignment $T\mapsto Z^{\prime}_T$ of multilinear maps is a planar algebra.\\ 

 \begin{proof}{\em (of part (b) in the notation of the paragraph following the statement of Theorem \ref{main}.)}\\

  That $P^{\prime}_k=(P^{(N \subseteq Q)})_k$ and consisteny under inclusions of two sides follows from definition. That  $(P^{\prime},Z^{\prime})$
   has modulus $\sqrt{[Q:N]}$ follows from definition of $\alpha$. 
  %%%%%%%%%%analysis prt%%%%%%%%%%%.
  We need, further, to show the following:
  \begin{fact}
  $(i) Z^{\prime}_{{\mathcal{E}}^{2k+1}}(1)= \sqrt{[Q:N]} e^Q_{2k}$ and
  $(ii) Z^{\prime}_{{\mathcal{E}}^{2k}} (1)= \sqrt{[Q:N]} e^Q_{2k-1}.$
  \end{fact}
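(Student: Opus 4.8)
The plan is to unwind both sides using the defining formula $Z^{\prime}_T = \alpha(T)\,Z_{F(T)}$ and to reduce each of $(i),(ii)$ to a single scalar comparison against Theorem \ref{planarstinv}. Since the Jones‑projection tangle $\mathcal{E}^n$ has no internal discs, its only input is the scalar $1$, and $F(\mathcal{E}^n) = E_n\circ(q,\dots,q,\mathcal{E}^n)$ is simply the colour‑$n$ Jones tangle surrounded with biprojections, so that
\[
Z^{\prime}_{\mathcal{E}^n}(1) = \alpha(\mathcal{E}^n)\,Z_{F(\mathcal{E}^n)}(1) = \alpha(\mathcal{E}^n)\,F_n\!\left(Z_{\mathcal{E}^n}(1)\right).
\]
The entire content of the Fact is that $\alpha(\mathcal{E}^n)$ supplies exactly the power of $[M:Q]$ needed to turn $Z_{F(\mathcal{E}^n)}(1)$ into $\sqrt{[Q:N]}$ times the appropriate Jones projection of $N\ss Q$. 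The recognition step, which I would carry out first, is to observe that $Z_{F(\mathcal{E}^n)}(1)$ is \emph{verbatim} the diagram appearing in the formula for the Jones projection in Theorem \ref{planarstinv} (the first displayed picture when $n$ is odd, the second when $n$ is even); here I would check that ``surrounding $\mathcal{E}^n$ with $q$'s'' reproduces those pictures, tracking any stray factor of $\delta$ coming from normalizing the bare cup–cap $\mathcal{E}^n$ against the honest projection, via the relations of Theorem \ref{Bisch} and Fact \ref{f:erel}.

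Next I would split into the two parities. For $n=2k+1$, Theorem \ref{planarstinv} reads $e^Q_{2k} = [M:Q]^{1/2}[Q:N]^{-1/2}\,Z_{F(\mathcal{E}^{2k+1})}(1)$, so inverting gives $Z_{F(\mathcal{E}^{2k+1})}(1) = [M:Q]^{-1/2}[Q:N]^{1/2}\,e^Q_{2k}$; substituting, the identity $(i)$ becomes the scalar statement
\[
\alpha(\mathcal{E}^{2k+1}) = [M:Q]^{1/2}.
\]
For $n=2k$, Theorem \ref{planarstinv} gives $e^Q_{2k-1} = [M:N]^{-1/2}\,Z_{F(\mathcal{E}^{2k})}(1)$, whence $Z_{F(\mathcal{E}^{2k})}(1) = [M:N]^{1/2}\,e^Q_{2k-1}$, and $(ii)$ reduces to
\[
\alpha(\mathcal{E}^{2k}) = [Q:N]^{1/2}[M:N]^{-1/2} = [M:Q]^{-1/2},
\]
using $[M:N]=[M:Q][Q:N]$.

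It then remains to evaluate $\alpha(\mathcal{E}^n)$ directly from Definition \ref{defalpha}. As $\mathcal{E}^n$ has no internal discs, $c(\mathcal{E}^n) = \lceil n/2\rceil - l(\mathcal{E}^n)$, where $l(\mathcal{E}^n)$ counts the loops obtained by black‑capping the external boundary of the cup–cap tangle. Using the same loop‑counting convention that forces $\alpha(I^k_k)=1$ (equivalently $l(I^k_k)=k$, one loop per black through‑band), a direct inspection of $\mathcal{E}^n$ should yield
\[
l(\mathcal{E}^{2k+1}) = k, \qquad l(\mathcal{E}^{2k}) = k+1,
\]
so that $c(\mathcal{E}^{2k+1}) = (k+1)-k = 1$ and $c(\mathcal{E}^{2k}) = k-(k+1) = -1$, giving precisely the two target values $\alpha(\mathcal{E}^{2k+1})=[M:Q]^{1/2}$ and $\alpha(\mathcal{E}^{2k})=[M:Q]^{-1/2}$ (the small indices outside the range $n\geq 1$ of Theorem \ref{planarstinv} being treated separately by the same computation on the explicit low‑degree pictures).

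I expect the main obstacle to be purely a matter of bookkeeping rather than of ideas: pinning down the loop counts $l(\mathcal{E}^n)$ exactly, since an off‑by‑one there flips the sign of the exponent of $[M:Q]$ and destroys the identity, and, in the recognition step, making sure that the passage from $Z_{F(\mathcal{E}^n)}(1)$ to the pictures of Theorem \ref{planarstinv} leaves behind no uncompensated $\delta$ or $\tau(q)$. Once these two scalar identities are confirmed, the Fact is proved; combined with the already‑noted fact that $(P^{\prime},Z^{\prime})$ has modulus $\sqrt{[Q:N]}$ and the correct Jones projections, Theorem $2.1$ of \cite{KodSun} then certifies that $(P^{\prime},Z^{\prime})$ is the planar algebra of $N\ss Q$, completing part (b).
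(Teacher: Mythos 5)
Your proof is correct and follows essentially the same route as the paper: unwind $Z^{\prime}_{\mathcal{E}^n}(1) = \alpha(\mathcal{E}^n)\, F_n\bigl(Z_{\mathcal{E}^n}(1)\bigr)$, recognize $F_n\bigl(Z_{\mathcal{E}^n}(1)\bigr)$ as exactly the Jones-projection picture of Theorem \ref{planarstinv}, and then match scalars using $[M:N]=[M:Q][Q:N]$. The only difference is that you additionally verify $\alpha(\mathcal{E}^{2k+1})=[M:Q]^{1/2}$ and $\alpha(\mathcal{E}^{2k})=[M:Q]^{-1/2}$ by explicit loop counting from Definition \ref{defalpha} (your counts $l(\mathcal{E}^{2k+1})=k$, $l(\mathcal{E}^{2k})=k+1$ are correct), whereas the paper simply asserts these two values.
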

  See the figure \ref{fig:jp1}. Left one is for case (ii) and right one is for case (i).\\
  \begin{figure}[h]
 \includegraphics[scale=.7]{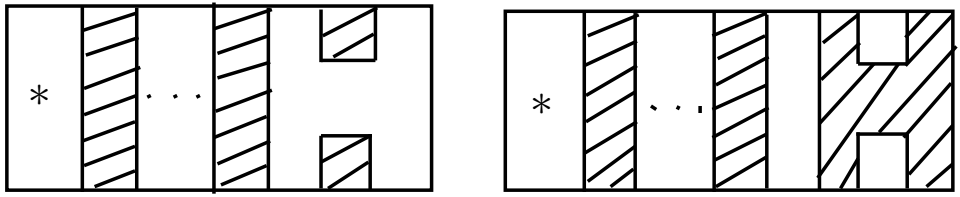}
 \caption{Jones Projection}
 \label{fig:jp1}
\end{figure}
\underline{Justification of (i)}: $\alpha({\mathcal{E}}^{2k+1})= \sqrt{[M:Q]}$. \\By definition,
\begin{align*}
 & Z^{\prime}_{{\mathcal{E}}^{2k+1}}(1)\\
 & \qquad = \sqrt{[M:Q]} F_{2n+1}(Z^{N\subseteq M}_{{\mathcal{E}}^{2k+1}}(1))\\
 & \qquad = \sqrt{[M:Q]} \hspace{5mm} {\begin{minipage}{.4\textwidth}
    \centering
    \includegraphics[scale=.6]{jopo1.eps}
    \end{minipage}}\\
 & \qquad = \sqrt{[Q:N]} e^Q_{2k} \textrm{~~~[by ~~Theorem~~\ref{planarstinv}]}
\end{align*}
This justifies the fact(i).\\
\underline{Justification of (ii)}: $\alpha({\mathcal{E}}^{2k})= [M:Q]^{-\frac{1}{2}}$. \\By definition,
\begin{align*}
 & Z^{\prime}_{{\mathcal{E}}^{2k}}(1)\\
 & \qquad = [M:Q]^{-\frac{1}{2}} F_{2k}(Z^{N\subseteq M}_{{\mathcal{E}}^{2k}}(1))\\
 & \qquad =[M:Q]^{-\frac{1}{2}}  {\begin{minipage}{.4\textwidth}
    \centering
    \includegraphics[scale=.6]{jopo2.eps}
    \end{minipage}}\\
 & \qquad = \sqrt{[Q:N]} e^Q_{2k-1} \textrm{~~~[by ~~~Theorem~~\ref{planarstinv}]}
\end{align*}
This justifies the fact(ii).
\begin{fact}
 $ Z^{\prime}_{{(E^{\prime})}^n_n} (x) = \sqrt{[Q:N]} E_{Q^{\prime}\cap {Q_{n-1}}} (x)$ for all $x$ belongs to $N^{\prime}\cap Q_{n-1}$
  and $k\geq 1$. Where corresponding trace of $E^{N^{\prime}\cap Q_{n-1}}_{Q^{\prime}\cap Q_{n-1}}$ is $tr_{N\subseteq Q}.$ 
  \end{fact}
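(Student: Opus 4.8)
The plan is to evaluate $Z^{\prime}_{(E^{\prime})^n_n}(x)$ straight from the definition of the tangle action, exactly as in the two Jones-projection computations carried out just above, and then to recognise the answer as a conditional expectation by appealing to Theorem \ref{planarstinv}. Since $(E^{\prime})^n_n$ has a single internal disc of colour $n$, the definition of $Z^{\prime}$ together with $F(T)=E_k\circ(q,\dots,q,T)$ gives $Z^{\prime}_{(E^{\prime})^n_n}(x)=\alpha((E^{\prime})^n_n)\,F_n\big(Z^{N\ss M}_{(E^{\prime})^n_n}(x)\big)$ for every $x\in P^{\prime}_n=F_n(P_n)=N^{\prime}\cap Q_{n-1}$. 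My first step would therefore be to read off the scalar $\alpha((E^{\prime})^n_n)$ from Definition \ref{defalpha}, by counting the $\lceil\cdot\rceil,\lfloor\cdot\rfloor$ contributions of the two $n$-discs and the number $l$ of closed loops produced on black-capping the external disc and black-cupping the internal disc of the conditional-expectation tangle; I expect this to produce a power of $[M:Q]^{1/2}$ of the same flavour as the values $\sqrt{[M:Q]}$ and $[M:Q]^{-1/2}$ found for the Jones projections.

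The second, and principal, step is to simplify the $q$-surrounded operator $F_n\big(Z^{N\ss M}_{(E^{\prime})^n_n}(x)\big)$ diagrammatically. The biprojection $q$ implements the conditional expectation $E^M_Q$, and it is precisely the ring of $q$'s introduced by $F$ that converts the $M$-level capping of $(E^{\prime})^n_n$ into a $Q$-level one. I would resolve the resulting picture using the biprojection relations of Theorem \ref{Bisch}, the exchange relation of Corollary \ref{cor:exchange}, and the absorption Lemma \ref{follows} to delete the redundant $q$'s; by Theorem \ref{planarstinv} the range $F_n(P_{1,n})$ of the simplified diagram is exactly $Q^{\prime}\cap Q_{n-1}$, so the output lands where it should, and the diagram is manifestly idempotent on $N^{\prime}\cap Q_{n-1}$. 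Finally I would collect all scalars---the factor $\alpha((E^{\prime})^n_n)$, the factor $\delta$ produced by each cap, and the trace normalisation $tr_{N\ss Q}=[M:Q]^{n}\,tr_{N\ss M}$ of Theorem \ref{planarstinv}---and check that they conspire to the single factor $\sqrt{[Q:N]}$, which yields $(i)$.

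For the trace assertion I would argue by uniqueness. Part $(i)$ exhibits $x\mapsto[Q:N]^{-1/2}Z^{\prime}_{(E^{\prime})^n_n}(x)$ as an idempotent of $N^{\prime}\cap Q_{n-1}$ onto $Q^{\prime}\cap Q_{n-1}$; computing the trace of its output via the partition function and using $tr_{N\ss Q}=[M:Q]^{n}tr_{N\ss M}$ shows that it preserves $tr_{N\ss Q}$. As the trace-preserving conditional expectation onto a subalgebra is unique, this idempotent must coincide with $E^{N^{\prime}\cap Q_{n-1}}_{Q^{\prime}\cap Q_{n-1}}$, whose associated trace is then $tr_{N\ss Q}$. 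The main obstacle is the middle step: the careful diagrammatic reduction of $F((E^{\prime})^n_n)$ via the biprojection relations, together with the exact bookkeeping of the scalars $\alpha$, $\delta$ and the powers of $[M:Q]$ and $[Q:N]$, so that everything collapses to $\sqrt{[Q:N]}$ and no spurious factor survives.
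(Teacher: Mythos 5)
Your opening reduction coincides with the paper's: by definition $Z^{\prime}_{(E^{\prime})^n_n}(x)=\alpha((E^{\prime})^n_n)\,F(Z_{(E^{\prime})^n_n}(x))$, and since $\alpha((E^{\prime})^n_n)=[M:Q]^{1/2}$ while the tangle $(E^{\prime})^n_n$ acts on $P^{(N\ss M)}$ as $[M:N]^{1/2}E^{N^{\prime}\cap M_{n-1}}_{M^{\prime}\cap M_{n-1}}$, everything comes down to identifying $F(E^{N^{\prime}\cap M_{n-1}}_{M^{\prime}\cap M_{n-1}}(x))$ with a conditional expectation. It is exactly at this identification that your argument has a genuine gap. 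You propose to show the $q$-surrounded diagram is an idempotent onto $Q^{\prime}\cap Q_{n-1}$ which preserves $tr_{N\ss Q}$, and then to invoke uniqueness of the trace-preserving conditional expectation. But ``idempotent onto $B$ and trace-preserving'' does not characterize $E_B$: if $\psi:A\to B$ is any nonzero linear map vanishing on $B$ whose values are trace-zero elements of $B$, then $E_B+\psi$ is again an idempotent onto $B$ which preserves the trace (and such $\psi$ exist whenever $B\neq A$ and $\dim B\geq 2$, which is the case here). What does pin down $E_B(x)$ is the pairing $tr(E_B(x)\,y)=tr(x\,y)$ for all $y\in B$ --- equivalently the $B$-bimodule property together with trace preservation --- and your sketch establishes neither.

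Verifying that pairing is precisely what the paper's proof consists of: it shows, for every $y\in M^{\prime}\cap M_{n-1}$, that $tr\big([M:Q]\,F(E^{N^{\prime}\cap M_{n-1}}_{M^{\prime}\cap M_{n-1}}(x))\,F(y)\big)=tr(x\,F(y))$ by a diagrammatic computation using Theorem \ref{Bisch} (a) and (c), the exchange relation, and extremality; this yields the key intertwining identity $E^{F(N^{\prime}\cap M_{n-1})}_{F(M^{\prime}\cap M_{n-1})}(x)=[M:Q]\,F(E^{N^{\prime}\cap M_{n-1}}_{M^{\prime}\cap M_{n-1}}(x))$ (Equation (\ref{eqce})), after which the scalars combine as $[M:Q]^{1/2}[M:N]^{1/2}[M:Q]^{-1}=[Q:N]^{1/2}$. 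Note also why the paper tests against $F(y)$ inside a trace rather than attempting your ``principal step'' of collapsing the operator $F(Z_{(E^{\prime})^n_n}(x))$ outright: the target $E_{Q^{\prime}\cap Q_{n-1}}(x)$ is not itself a labelled tangle (it is defined abstractly as the trace-preserving expectation onto the range of the idempotent $F$), and the manipulations the paper needs --- the steps marked ``by extremality'' --- are isotopies licensed only inside the scalar-valued trace, not as operator identities. So the step you yourself flag as ``the main obstacle'' is not deferred bookkeeping; it is the place where a direct operator-level collapse runs into difficulty, and the trace-pairing argument is the mechanism the paper uses to replace it.
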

  \underline{Justification}:
  The tangles ${(E^{\prime})}^n_n$ are as in figure \ref{LCE} according as $n$ is odd or even respectively.
  \begin{figure}[h]
   \includegraphics[scale=.5]{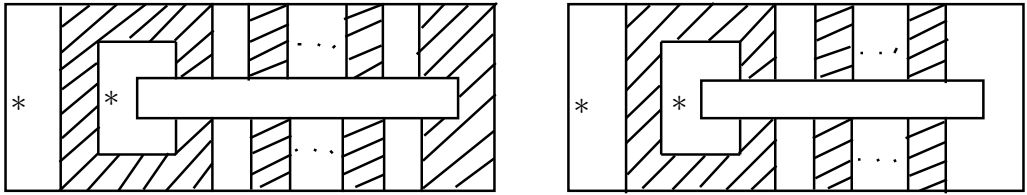}
   \caption{Left Conditional Expectation}
   \label{LCE}
  \end{figure}\\
  Consider the case when $n$ is odd. 
  Now, for all $y \in M^{\prime}\cap M_{n-1}$,\\ $tr([M:Q]F(E^{N^{\prime}\cap M_{n-1}}_{M^{\prime}\cap M_{n-1}} (x) F(y)))$ is equal to
  \begin{align*}
  & {\delta}^{-n}\frac{[M:Q]}{\sqrt{[M:N]}} \hspace{10mm} {\begin{minipage}{.5\textwidth}
    \centering
    \includegraphics[scale=.35]{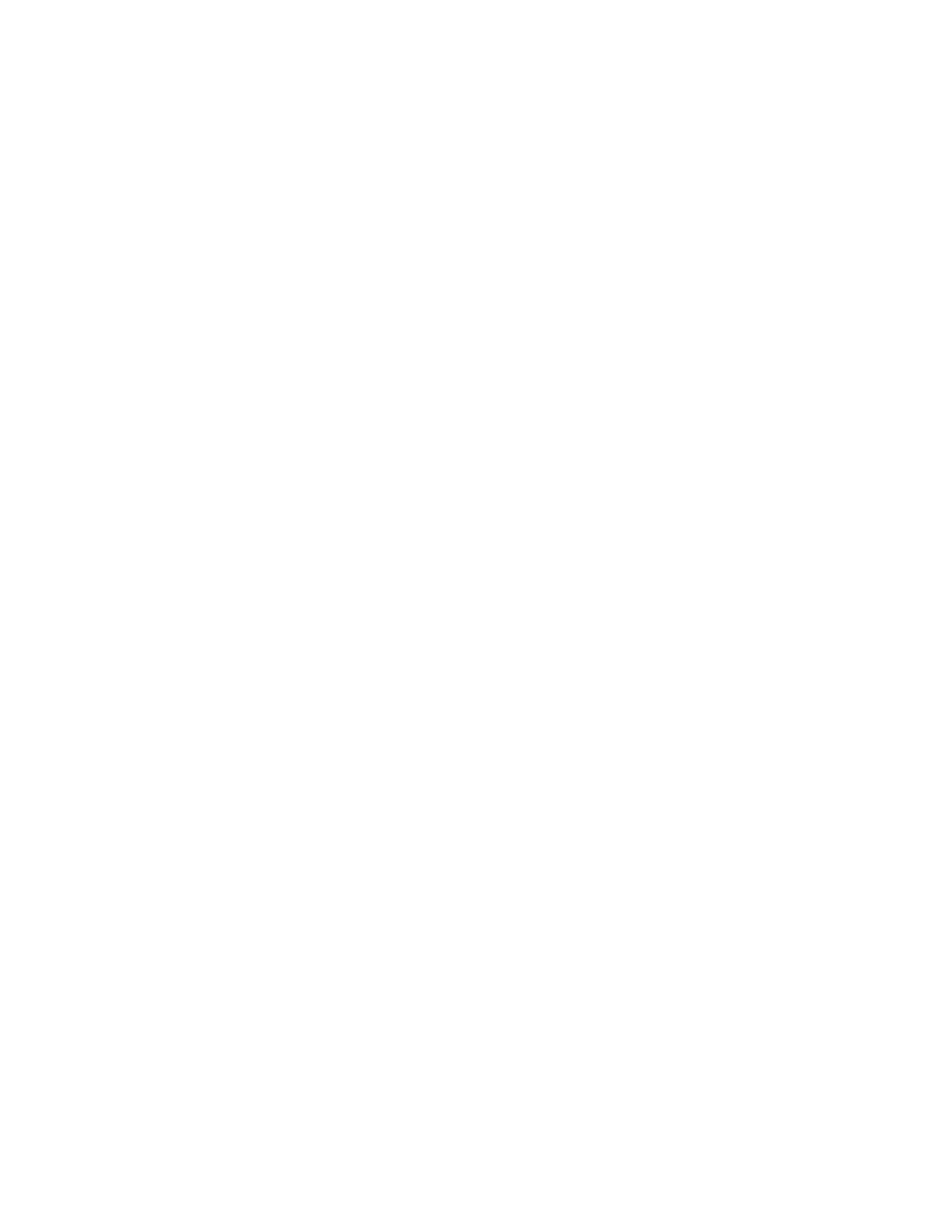}
    \end{minipage}}\\\\
   & \qquad = {\delta}^{-n}\frac{[M:Q]}{\sqrt{[M:N]}}  {\begin{minipage}{.5\textwidth}
    \centering
    \includegraphics[scale=.35]{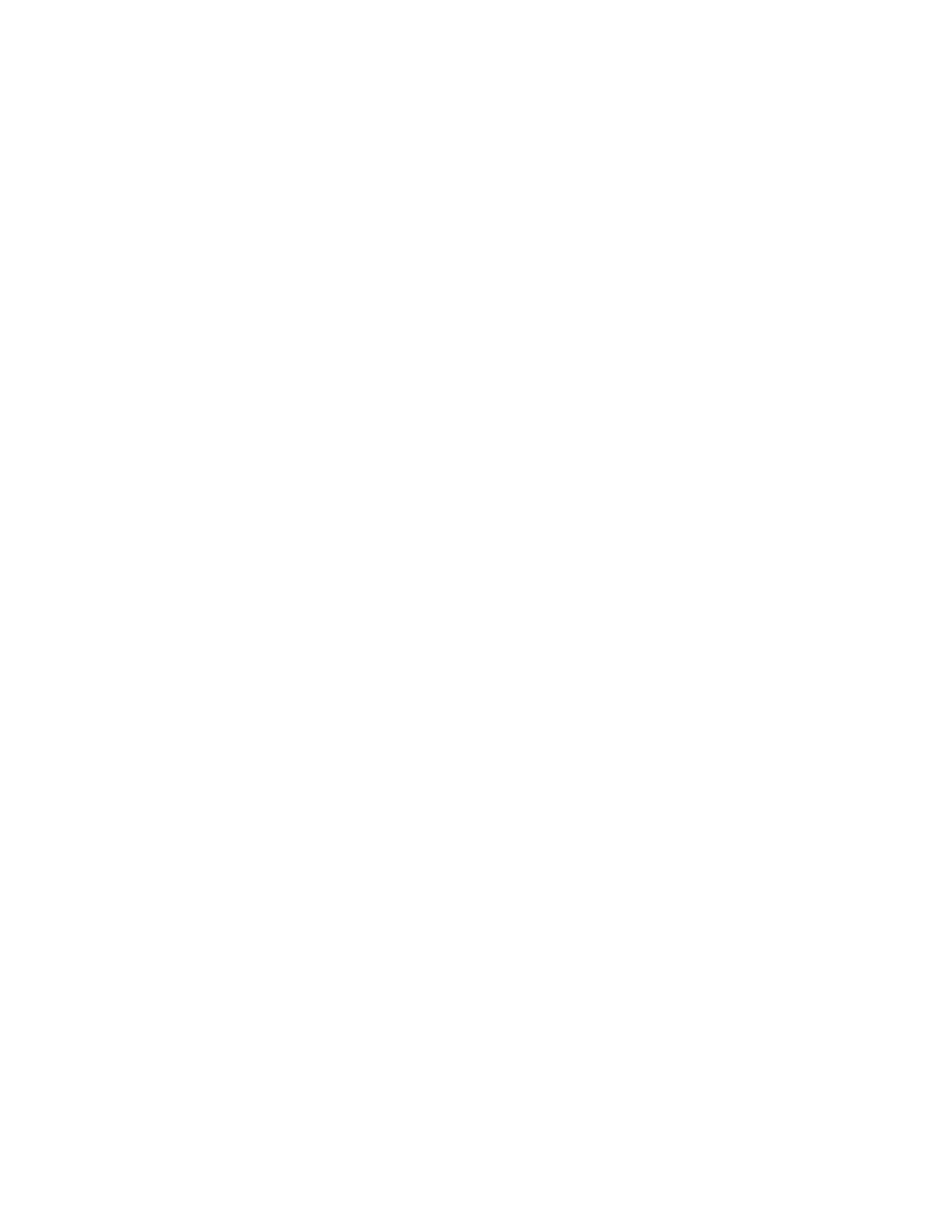}
    \end{minipage}}\\\\
  &\qquad\qquad\qquad\qquad\qquad \qquad~~~~\qquad\qquad~~~~\textrm~~{~~[by ~~~Theorem~~~ \ref{Bisch}~~~(a)]}\\\\
    & \qquad = {\delta}^{-n}\frac{[M:Q]}{\sqrt{[M:N]}}  {\begin{minipage}{.5\textwidth}
    \centering
    \includegraphics[scale=.35]{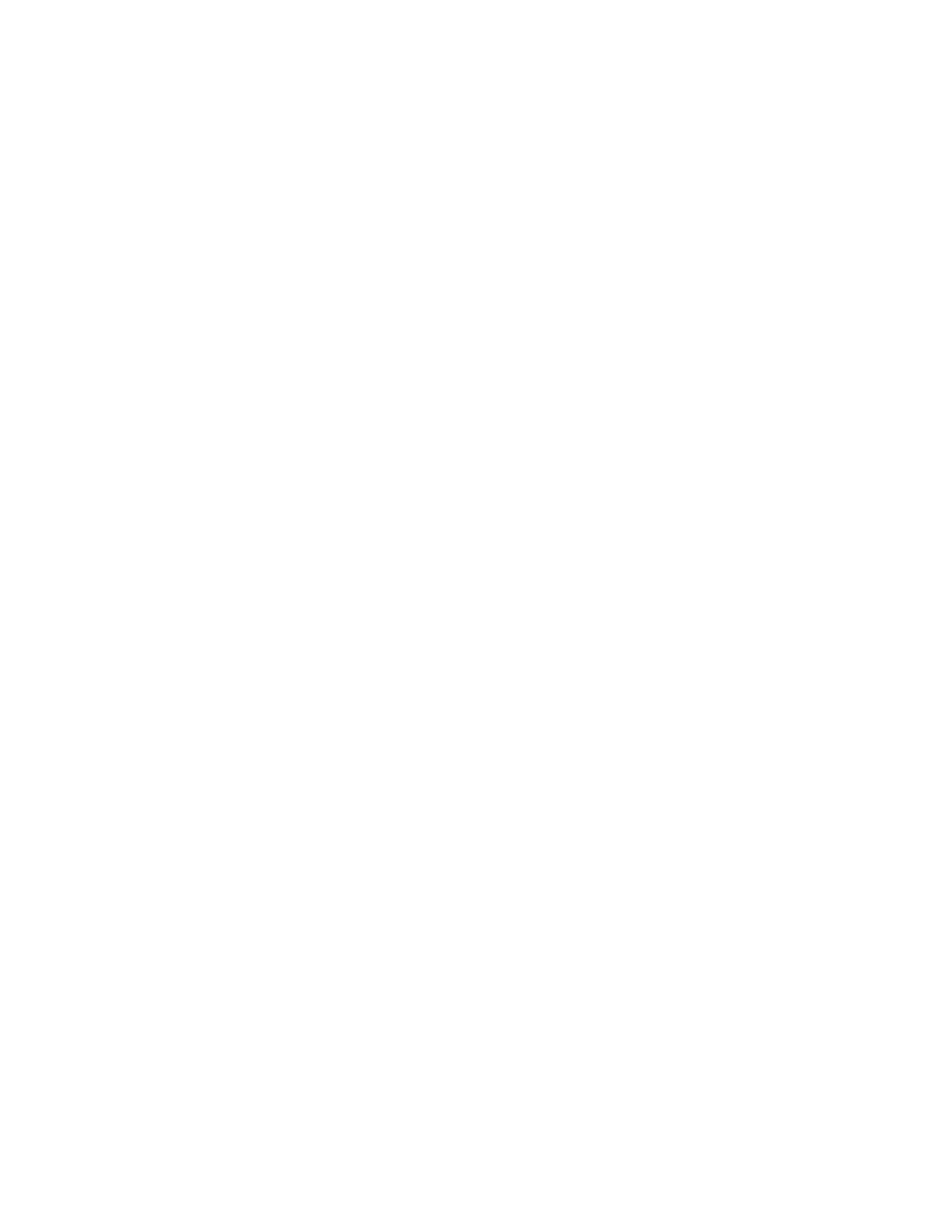}
    \end{minipage}}\\\\
  & \qquad\qquad\qquad\qquad\qquad \qquad\textrm~~{~~~[since~~~x~~\in F_n(P_n)}~~\textrm{and~~by~~exchange ~~relation]}\\\\
   & \qquad  = {\delta}^{-n}\frac{[M:Q]}{\sqrt{[M:N]}}  {\begin{minipage}{.5\textwidth}
    \centering
    \includegraphics[scale=.35]{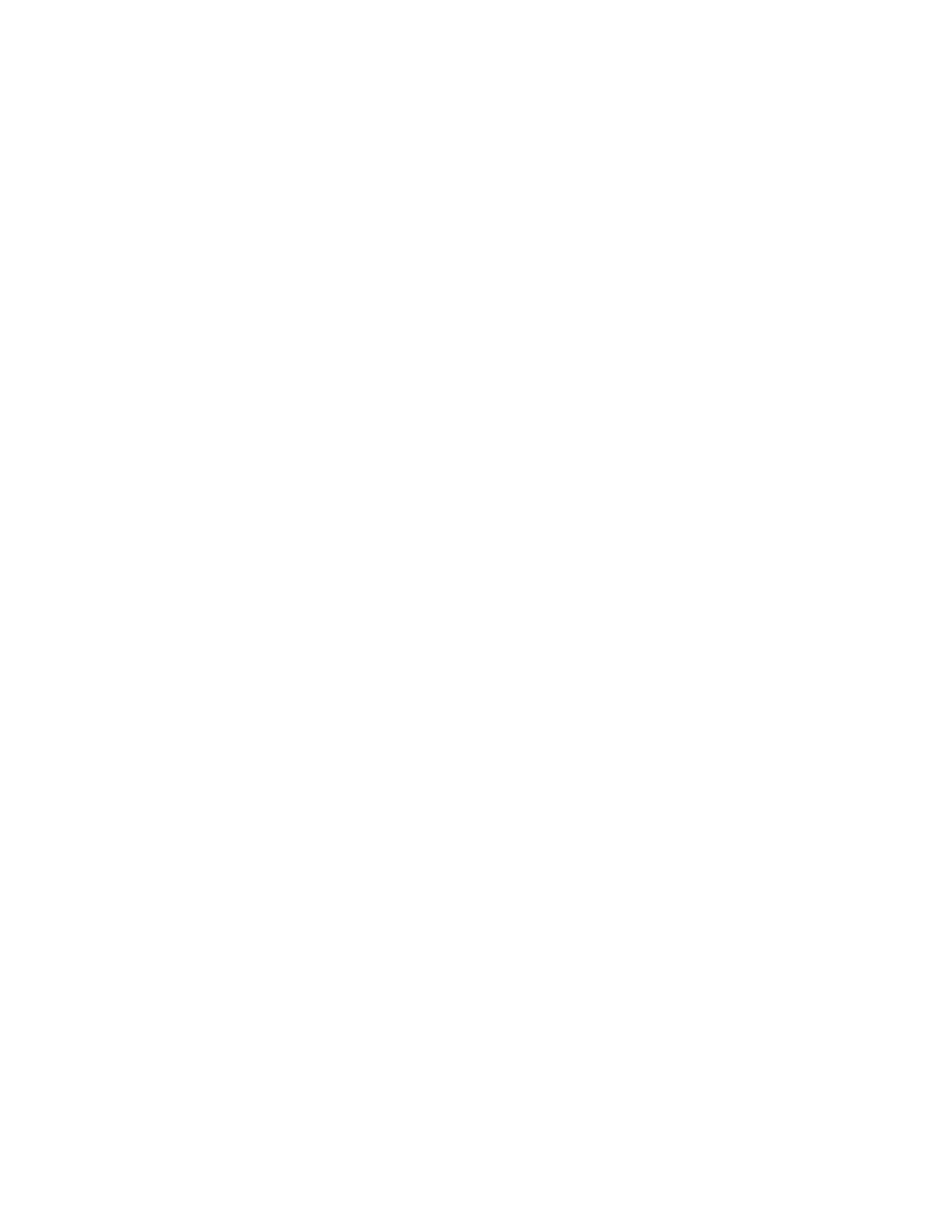}
    \end{minipage}}\\\\
  &\qquad\qquad\qquad\qquad\qquad\qquad\qquad\qquad~~~\textrm{~~[by~~extremality]}\\\\
   &\qquad \qquad = {\delta}^{-n} {\begin{minipage}{.5\textwidth}
    \centering
    \includegraphics[scale=.35]{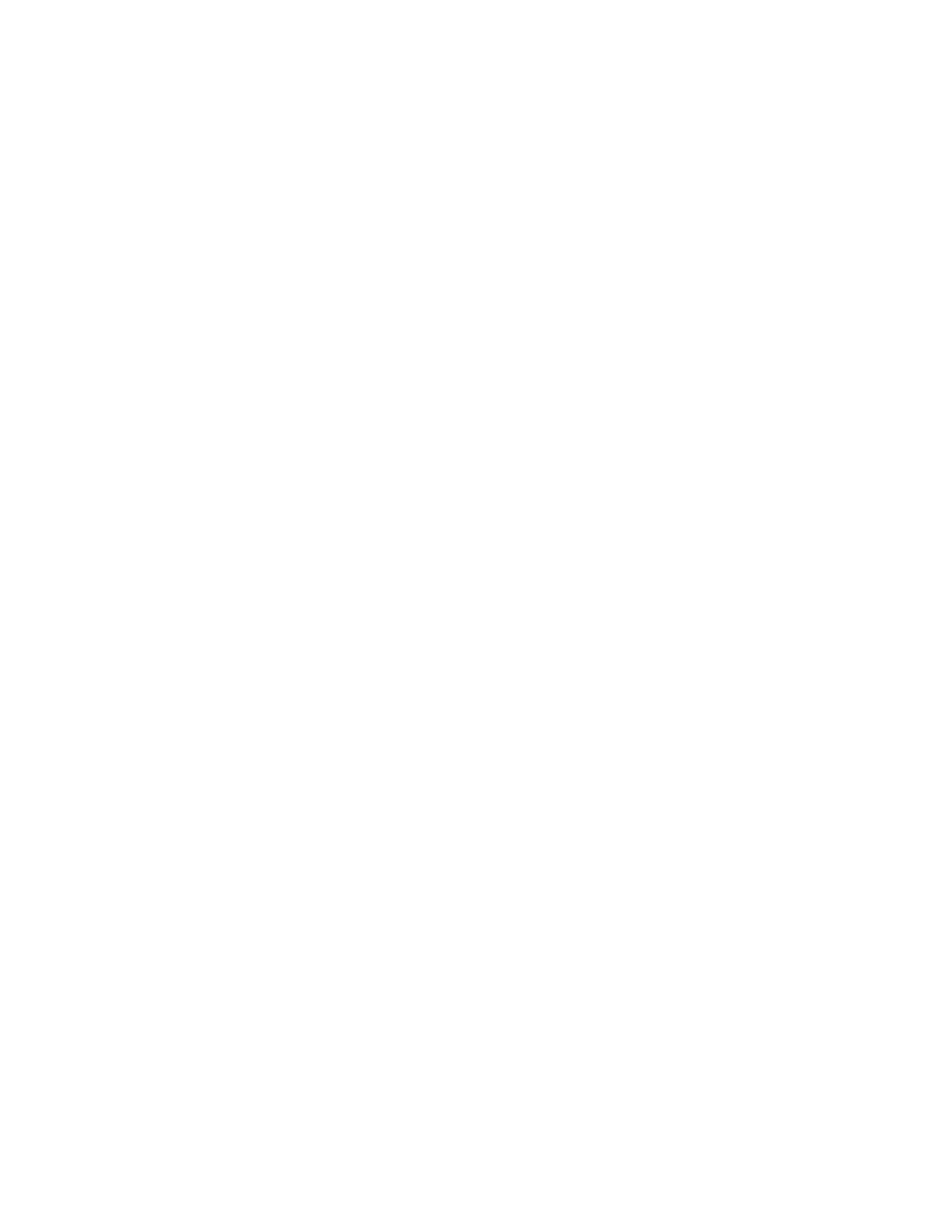}
    \end{minipage}}\\\\
 &\qquad\qquad\qquad\qquad\qquad\qquad ~~\textrm{~~[by~~Theorem~~\ref{Bisch}~~(c)~~~and~~~ ~~~extremality]}\\\\\\\\
   &\qquad \qquad = {\delta}^{-n}{\begin{minipage}{.5\textwidth}
    \centering
    \includegraphics[scale=.35]{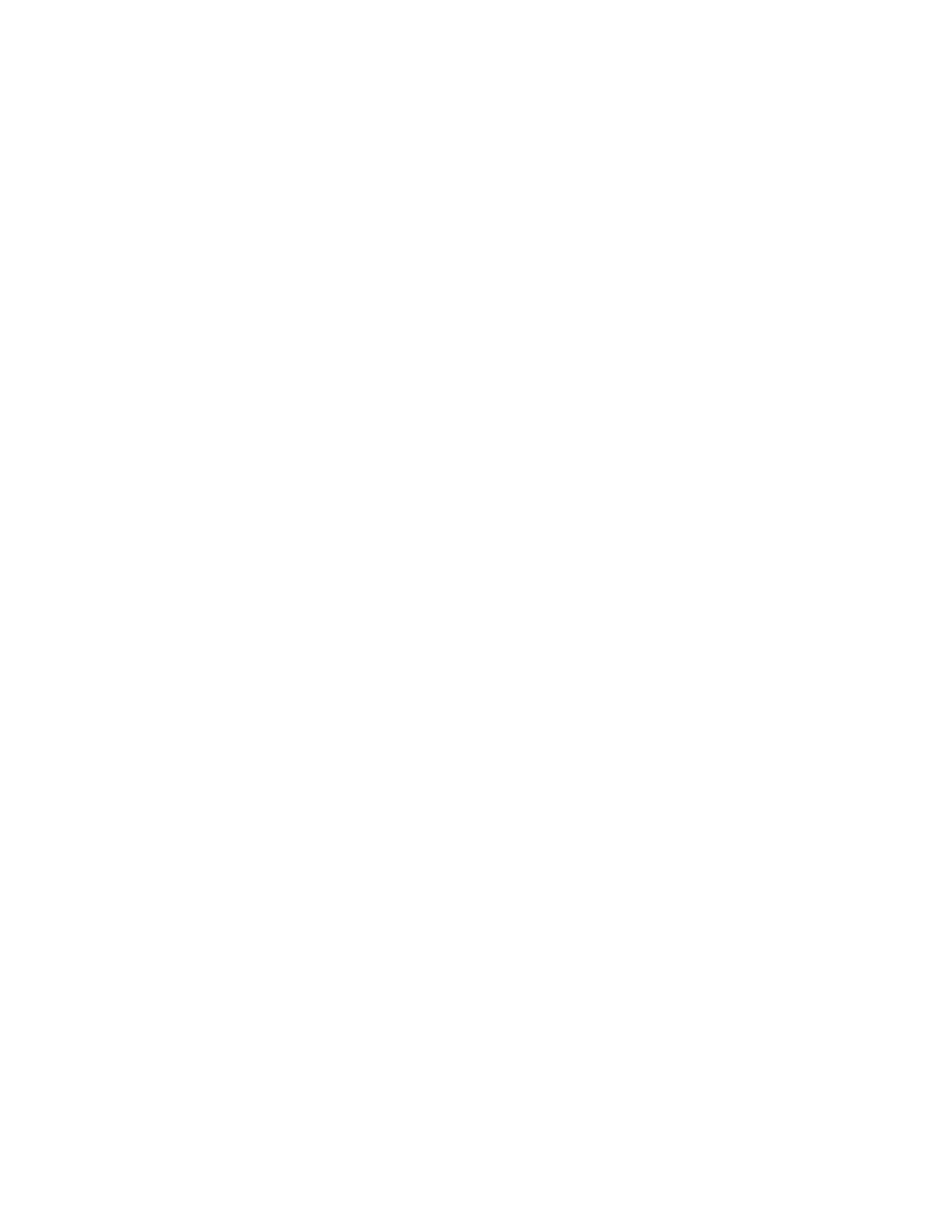}
    \end{minipage}}\\\\
    & \qquad \qquad\qquad\qquad\qquad\qquad~~~\textrm{~~[as~~x}~~\in~~F_n(P_n)]\\\\
   & \qquad \qquad= tr(xF(y)).
  \end {align*}
  Thus it follows that 
  \begin{equation}\label{eqce}
   E^{F(N^{\prime}\cap M_{n-1})}_{F(M^{\prime}\cap M_{n-1})}(x)= [M:Q] F(E^{N^{\prime} \cap M_{n-1}}_{M^{\prime}\cap M_{n-1}}(x)).
  \end{equation}
Now,
\begin{align*}
&  Z^{\prime}_{{(E^{\prime})}^n_n} (x) \\
 & \qquad = \alpha ({(E^{\prime})}^n_n) F(Z_{{(E^{\prime})}^n_n}(x))~~\textrm~~{[by~~definition~~]}\\
 & \qquad = [M:Q]^{\frac{1}{2}} [M:N]^{\frac{1}{2}} F(E^{N^{\prime}\cap M_{n-1}}_{M^{\prime}\cap M_{n-1}}(x))~~\textrm~~{[by~~~~~definition~~of~~~\alpha]}\\\
 & \qquad = [Q:N]^{\frac{1}{2}} E^{F({N^{\prime}\cap M_{n-1}})}_{F(M^{\prime}\cap M_{n-1})}(x)~~~\textrm{~~[by ~~~Equation~~~~(\ref{eqce})]}
\end{align*}
This completes the proof for odd case. Even case is exactly similar, so we omit it.
  \begin{fact}
  $  Z^{\prime}_{E^n_{n+1}} (x)= \sqrt{[Q:N]} E_{N^{\prime} \cap Q_{n-1}}(x)$ for all $x$ belongs to $N^{\prime}\cap Q_n$
  and this is required to hold for all $k$ in Col, where for $k=0_{\underline{+}}$, the equation is interpreted as
  $Z^{\prime}_{E^{0_{+}}_1}(x)= \sqrt{[Q:N]}tr_{N\subseteq Q}(x)$ for all $x$ belongs to $N^{\prime}\cap Q$.
  Here again the trace corresponding to the conditional expectation is given by $tr_{N \subseteq Q}$.
  \end{fact}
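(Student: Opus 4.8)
The plan is to mirror, \emph{mutatis mutandis}, the justification of the preceding fact, replacing the ``left'' conditional expectation $E^{N'\cap M_{n-1}}_{M'\cap M_{n-1}}$ by the ``right'' one $E^{N'\cap M_n}_{N'\cap M_{n-1}}$ that the capping tangle $E^n_{n+1}$ computes at the level of $P^{(N\ss M)}$. Unwinding the definitions, $Z'_{E^n_{n+1}}(x) = \alpha(E^n_{n+1})\,F\big(Z_{E^n_{n+1}}(x)\big)$, and since $E^n_{n+1}$ realizes a scalar multiple of the trace-preserving conditional expectation $E^{N'\cap M_n}_{N'\cap M_{n-1}}$ on $P^{(N\ss M)}$, the whole computation splits into two ingredients: (i) an evaluation of the scalar $\alpha(E^n_{n+1})$ from Definition \ref{defalpha}, and (ii) a ``change of trace'' identity expressing $F\big(E^{N'\cap M_n}_{N'\cap M_{n-1}}(x)\big)$ as a multiple of the $N\ss Q$ conditional expectation $E_{N'\cap Q_{n-1}}$ under the identification $F_{n+1}(P_{n+1}) \cong N'\cap Q_n$, $F_n(P_n)\cong N'\cap Q_{n-1}$ of Theorem \ref{planarstinv}.

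First I would read off $\alpha(E^n_{n+1})$ by computing $c(E^n_{n+1})$ as in Definition \ref{defalpha}; the two parities $n$ even and $n$ odd are handled by the two pictures in the figure, and counting the capped black intervals of the external disc together with the number of resulting closed loops pins down the relevant power of $[M:Q]^{1/2}$, together with the $\delta$ coming from the capping.

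The heart of the matter, and the step I expect to be the main obstacle, is establishing the expectation identity. Following the model of (\ref{eqce}), I would fix $x \in F_{n+1}(P_{n+1}) = N'\cap Q_n$ and an arbitrary test element $F(y)$ with $y \in N'\cap M_{n-1}$, these being the elements of $N'\cap Q_{n-1}$ against which trace-preservation must be checked, and compute the trace $tr_{N\ss Q}\big(Z'_{E^n_{n+1}}(x)\,F(y)\big)$ diagrammatically. The computation proceeds exactly as in the odd case above: one pulls the $\delta$ and $[M:Q]$ factors through using $tr_{N\ss Q} = [M:Q]^{n}\,tr_{N\ss M}$ on $F_{2n}(P_{2n})$ and $F_{2n+1}(P_{2n+1})$ (Theorem \ref{planarstinv}), then repeatedly applies the biprojection relations of Theorem \ref{Bisch} (parts (a) and (c)), the exchange relation of Corollary \ref{cor:exchange}, and extremality to simplify the surrounding $q$'s until the diagram collapses to $tr\big(x\,F(y)\big)$. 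This yields the analogue of (\ref{eqce}), namely $E^{F(N'\cap M_n)}_{F(N'\cap M_{n-1})}(x) = [M:Q]\,F\big(E^{N'\cap M_n}_{N'\cap M_{n-1}}(x)\big)$; combining it with the value of $\alpha(E^n_{n+1})$ gives $Z'_{E^n_{n+1}}(x) = \sqrt{[Q:N]}\,E_{N'\cap Q_{n-1}}(x)$. The subtlety to watch is the bookkeeping of $\delta$ and $[M:Q]$ powers across the two parities of $n$, and ensuring the exchange relation is invoked in exactly the positions where a $q$ meets a black region touching the boundary, as licensed by Lemma \ref{follows}.

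Finally I would treat the base colour $k = 0_+$ separately. Here $N\ss Q$ is irreducible, since $N'\cap Q \subseteq N'\cap M = \C$, so the map $E^{0_+}_1 : N'\cap Q \to \C$ is simply the trace $tr_{N\ss Q}$, and the claimed formula $Z'_{E^{0_+}_1}(x) = \sqrt{[Q:N]}\,tr_{N\ss Q}(x)$ is the specialization of the general computation with $E_{N'\cap Q_{n-1}}$ replaced by the scalar trace; it then follows directly from the fact that $(P',Z')$ has modulus $\sqrt{[Q:N]}$, as recorded in the proof of part (b).
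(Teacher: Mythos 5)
Your overall strategy --- reduce the statement to (i) an evaluation of $\alpha(E^n_{n+1})$ and (ii) a renormalization identity comparing the $tr_{N\subseteq Q}$-preserving conditional expectation $E^{F(N'\cap M_n)}_{F(N'\cap M_{n-1})}$ with $F\circ E^{N'\cap M_n}_{N'\cap M_{n-1}}$ --- is exactly the paper's, but your key identity is stated with the wrong constant in half the cases, and the error is not cosmetic: it makes the proposed proof internally inconsistent. You claim uniformly that $E^{F(N'\cap M_n)}_{F(N'\cap M_{n-1})}(x) = [M:Q]\,F\bigl(E^{N'\cap M_n}_{N'\cap M_{n-1}}(x)\bigr)$. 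In the paper this holds only for odd $n$ (the tangle $E^{2n-1}_{2n}$), where indeed $\alpha(E^{2n-1}_{2n}) = [M:Q]^{1/2}$ and the factor $[M:Q]$ cancels to give $\sqrt{[Q:N]}$. For even $n$ (the tangle $E^{2n}_{2n+1}$) the correct identity is Equation (\ref{eqce2}), with constant $1$, and correspondingly $\alpha(E^{2n}_{2n+1}) = [M:Q]^{-1/2}$; again the product is $\sqrt{[Q:N]}$. If one instead uses your uniform $[M:Q]$ together with the actual value of $\alpha$ from Definition \ref{defalpha}, the even case comes out as $\sqrt{[Q:N]}\,[M:Q]^{-1}E_{N'\cap Q_{n-1}}(x)$, which is false. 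The parity dependence is forced by Theorem \ref{planarstinv}: the trace scaling $tr_{N\subseteq Q} = [M:Q]^{n}\,tr_{N\subseteq M}$ is the \emph{same} on $F_{2n}(P_{2n})$ and $F_{2n+1}(P_{2n+1})$ (so no renormalization appears when expecting from colour $2n+1$ down to $2n$), but it jumps by a factor $[M:Q]$ across the step from colour $2n$ down to $2n-1$, and that jump is exactly the $[M:Q]$ in the odd-case identity. Any correct writeup must track these two parities separately, just as the values of $\alpha(E^n_{n+1})$ themselves alternate between $[M:Q]^{-1/2}$ and $[M:Q]^{1/2}$.

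A secondary gap: you assert that the trace-pairing computation ``proceeds exactly as in the odd case above'' diagrammatically for both parities. The paper does this diagrammatically only for $E^{2n}_{2n+1}$ (Case I), where after one application of Theorem \ref{Bisch}(a) the extra $q$ is absorbed; for $E^{2n-1}_{2n}$ (Case II) it abandons pictures and argues analytically, writing $x = p_{[0,2n-1]}\, m_{2n-1}\, p_{[0,2n-1]}$ via $F_{2n}(P_{2n}) = p_{[0,2n-1]}(N'\cap M_{2n-1})p_{[0,2n-1]}$ and grinding through Fact \ref{f:erel} and the Markov property. That choice is not incidental: the odd case is precisely where the trace-scaling mismatch between domain and codomain enters, and where the inclusion $F_{2n-1}(P_{2n-1})\subset_i F_{2n}(P_{2n})$ (which is $t\mapsto F_{2n}(t)$, not a literal subalgebra inclusion) must be handled carefully; your sketch does not engage with either point. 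So the proposal, as written, would produce a wrong constant in the even case and an unjustified diagrammatic reduction in the odd case.
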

  \begin{figure}[h]
 \includegraphics[scale=.5]{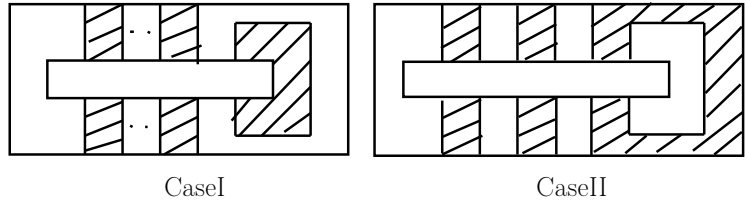}
 \caption{Conditional Expectation}
 \label{fig:co}
 \end{figure}
  \underline{Justification}:
  Consider the conditional expectation tangle as in Figure \ref{fig:co}. For Case I we give a diagramatic proof and for Case II we give analytic proof.
  \bigskip

  $\mathcal Case I (E^{2n}_{2n+1}) :$
  Now, for all $y \in F(N^{\prime}\cap M_{2n-1})$, $tr(F(E^{N^{\prime}\cap M_{2n}}_{N^{\prime}\cap M_{2n-1}} (x)) F(y))$ is equal to 
  \begin{align*}
   & \qquad {\delta}^{-2n} {\begin{minipage}{.4\textwidth}
    \centering
    \includegraphics[scale=.45]{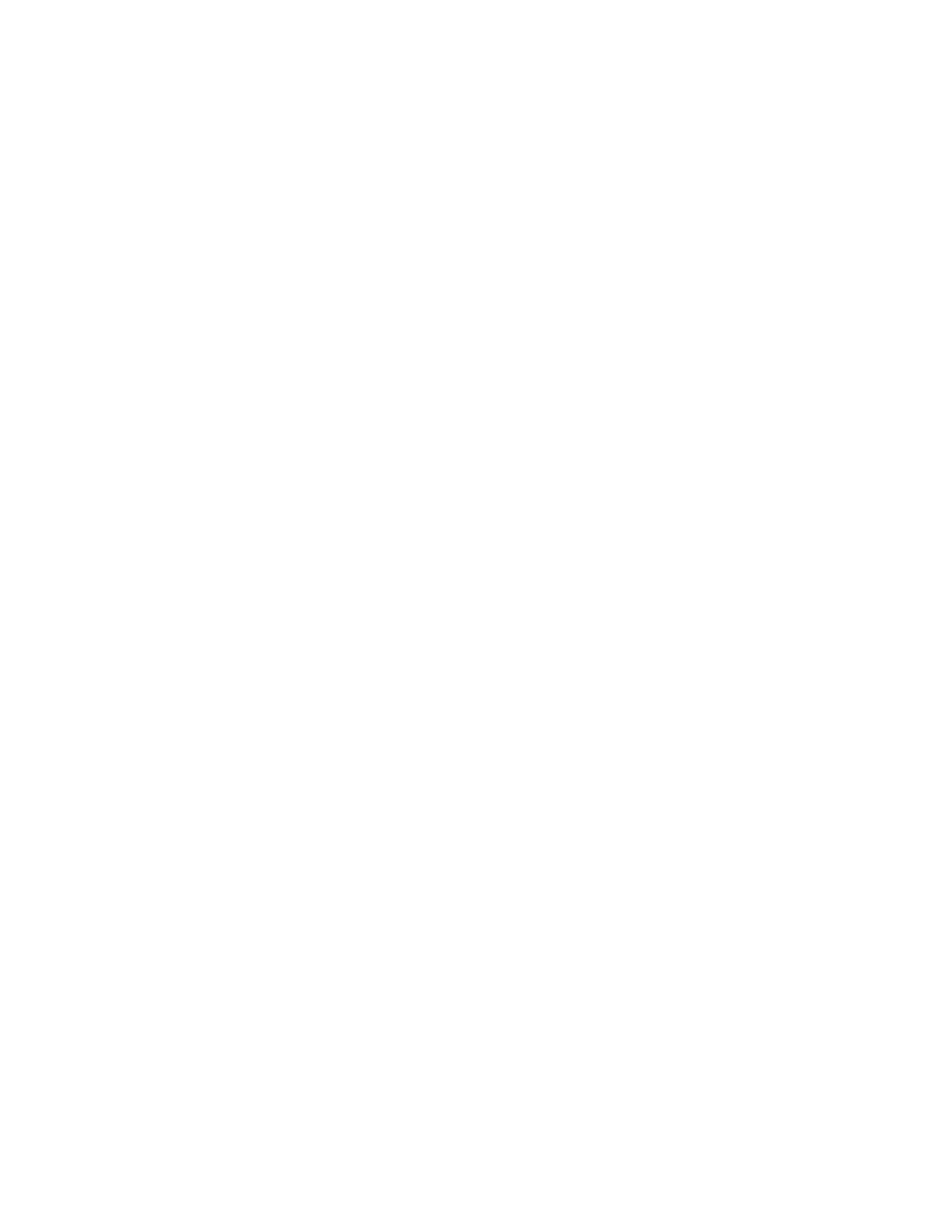}
    \end{minipage}}\\\\\\\\
   & \qquad = {\delta}^{-2n}{\begin{minipage}{.4\textwidth}
    \centering
    \includegraphics[scale=.45]{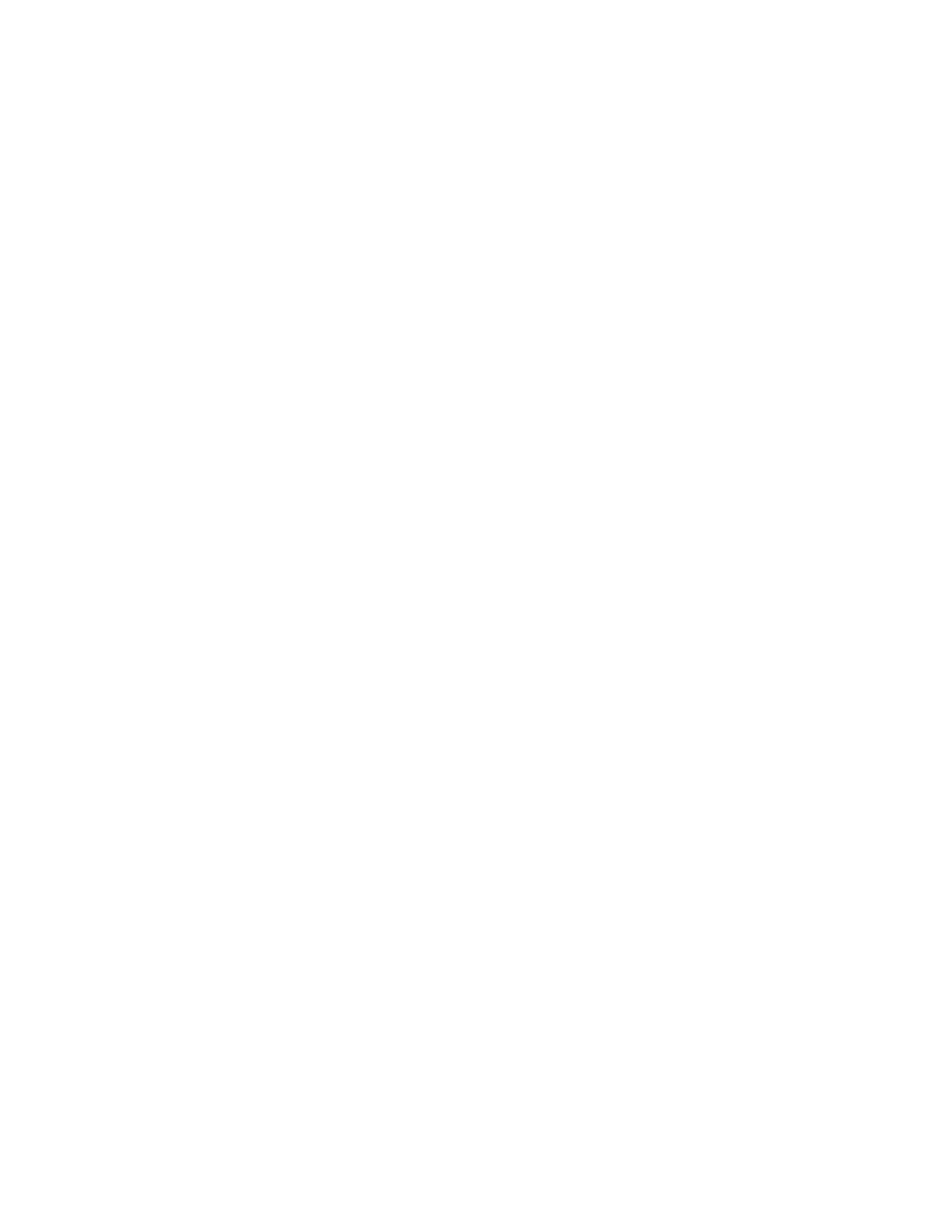}
    \end{minipage}}\\\\
    & \qquad\qquad\qquad\qquad\qquad\qquad\qquad~~~\textrm{~~[by ~~Theorem~~\ref{Bisch}(a)]}\\\\\\
   &\qquad = {\delta}^{-2n} {\begin{minipage}{.4\textwidth}
    \centering
    \includegraphics[scale=.45]{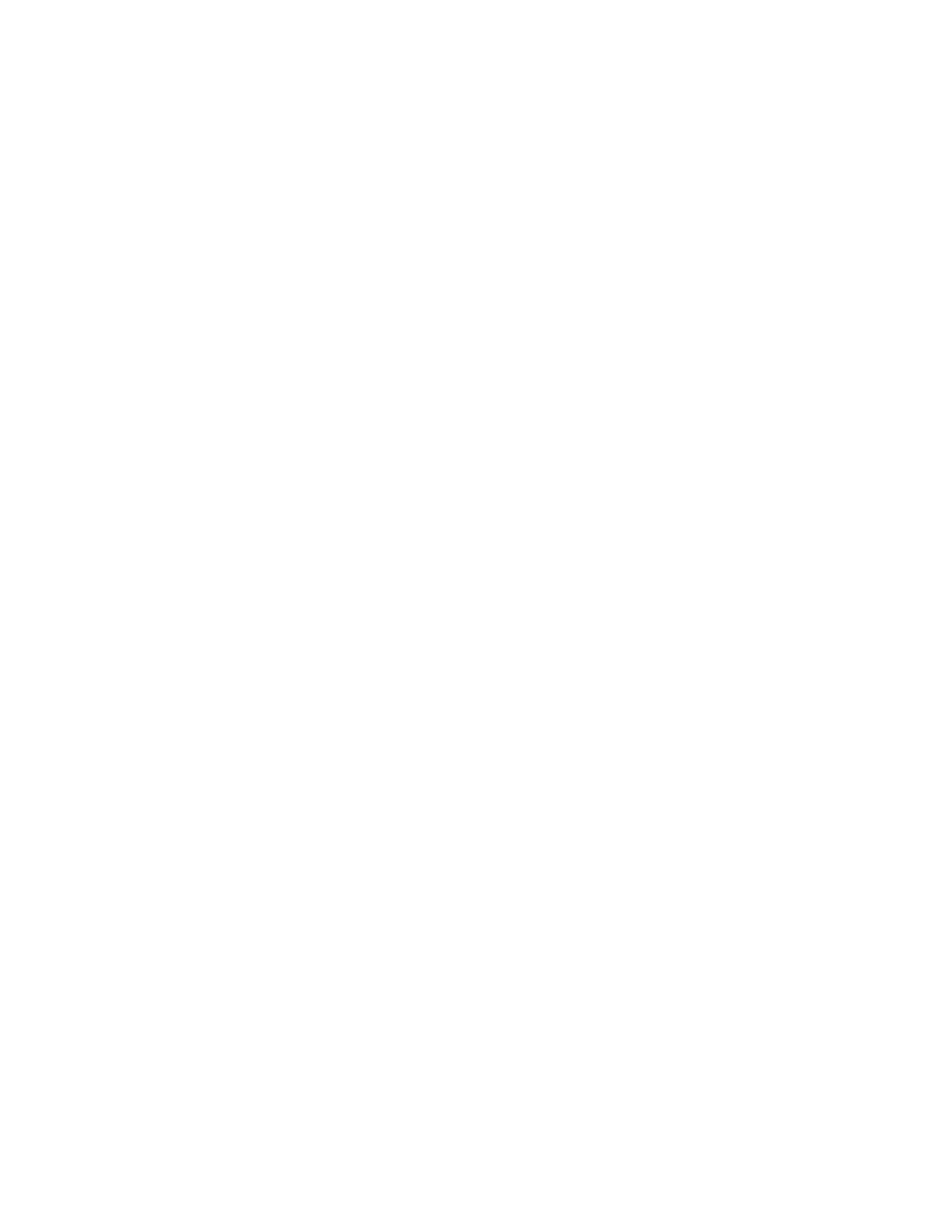}
    \end{minipage}}\\\\
    & \qquad\qquad\qquad\qquad\qquad\qquad\qquad~~\textrm{~~~~~[since},~~y~~\in~~P_{2n}~~\subseteq~~P_{2n-1}]\\\\
   & \qquad = tr(xF(y))
  \end{align*}

Thus,
\begin{equation}\label{eqce2}
 E^{F(N^{\prime}\cap M_{2n})}_{F(N^{\prime}\cap M_{2n-1})}(x)= F(E^{N^{\prime}\cap M_{2n}}_{N^{\prime}\cap M_{2n-1}}(x)).
\end{equation}
\vspace{1cm}
Then the following equations hold:
\begin{align*}
 & Z^{\prime}_{E^{2n}_{2n+1}}(x)\\
 & \qquad = \alpha(E^{2n}_{2n+1}) F(Z_{E^{2n}_{2n+1}}(x))~~\textrm{~~[by~~definition~~]}\\
 & \qquad = [M:Q]^{-\frac{1}{2}} F(Z_{E^{2n}_{2n+1}}(x))~~\textrm{~~[by~~definition~~of}~~\alpha]\\
 & \qquad= [M:Q]^{-\frac{1}{2}} [M:N]^{\frac{1}{2}} F(E^{N^{\prime}\cap M_{2n}}_{N^{\prime}\cap M_{2n-1}}(x)) \\
 & \qquad = [Q:N]^{\frac{1}{2}} E^{F(N^{\prime}\cap M_{2n})}_{F(N^{\prime}\cap M_{2n-1})}(x)~~\textrm{~~[by~~Equation~~(\ref{eqce2})]}
 \end{align*}
 Thus, we proved $ Z^{\prime}_{{(E^{\prime})}^n_n} (x) = \sqrt{[Q:N]} E_{Q^{\prime}\cap {Q_{n-1}}} (x)$.\vspace{2mm} \\
$\mathcal Case II (E^{2n-1}_{2n}):$ By definition, $\alpha(E^{2n-1}_{2n})=\sqrt{[M:Q]}$. Then, for $x \in P^{\prime}_{2n}= F_{2n}(P_{2n})$,
\begin{equation}\label{star}
 Z^{\prime}_{E^{2n-1}_{2n}}(x)= \sqrt{[M:Q]} F(Z^{N\subseteq M}_{E^{2n-1}_{2n}}(x))= \sqrt{[M:Q]}\sqrt{[M:N]} F(E^{N^{\prime}\cap M_{2n-1}}_{N^{\prime}\cap M_{2n-2}}(x))
\end{equation}
Then we claim,
\begin{equation*}
 E^{F(N^{\prime}\cap M_{2n-1})}_{F(N^{\prime}\cap M_{2n-2})}(x) = [M:Q] F(E^{N^{\prime}\cap M_{2n-1}}_{N^{\prime}\cap M_{2n-2}}(x))
 \end{equation*}
\underline{Justification}: 
Firstly observe (see \cite{BhaLa}),
\begin{equation*}
 F_{2n}(P_{2n})= p_{[0,{2n-1}]} (N^{\prime} \cap M_{2n-1}) p_{[0,{2n-1}]}.
 \end{equation*}
Put, $ x= p_{[0,{2n-1}]} m_{2n-1} p_{[0,{2n-1}]}$ for $ m_{2n-1} \in N^{\prime}\cap M_{2n-1}$.
Then the following  self-explanatory array of equations
hold for any $m_{2n-2} \in N^{\prime}\cap M_{2n-2} $(using Fact \ref{f:erel} repeatedly): 
\begin{align*}
 & tr_{N\subseteq Q}([M:Q] p_{[0,{2n-1}]} E^{N^{\prime}\cap M_{2n-1}}_{N^{\prime}\cap M_{2n-2}}(p_{[0,{2n-1}]} m_{2n-1} p_{[0,{2n-1}]})  p_{[0,{2n-1}]}  p_{[0,{2n-1}]} m_{2n-2} p_{[0,{2n-1}]})\\
 & \qquad = {[M:Q]}^{n+1} tr(p_{[0,{2n-1}]} E^{N^{\prime}\cap M_{2n-1}}_{N^{\prime}\cap M_{2n-2}}(p_{[0,{2n-1}]} m_{2n-1} p_{[0,{2n-1}]})  p_{[0,{2n-1}]} m_{2n-2} p_{[0,{2n-1}]})\\
  & \qquad = {[M:Q]}^{n+1} tr( E^{N^{\prime}\cap M_{2n-1}}_{N^{\prime}\cap M_{2n-2}}(p_{[0,{2n-1}]} m_{2n-1} p_{[0,{2n-1}]})p_{[0,{2n-1}]} m_{2n-2} p_{[0,{2n-1}]})\\
 & \qquad = {[M:Q]}^{n+1} tr( E^{N^{\prime}\cap M_{2n-1}}_{N^{\prime}\cap M_{2n-2}}(p_{[0,{2n-1}]} m_{2n-1} p_{[0,{2n-1}]})p_{[0,{2n-3}]}E^{N^{\prime}\cap M_{2n-2}}_{N^{\prime}\cap P_{2n-2}}( m_{2n-2}) p_{[0,{2n-3}]}e_{0,{2n-1}})\\
 & \qquad = {[M:Q]}^{n+1} tr( E^{N^{\prime}\cap M_{2n-1}}_{N^{\prime}\cap M_{2n-2}}(p_{[0,{2n-1}]} m_{2n-1} p_{[0,{2n-1}]} E^{N^{\prime}\cap M_{2n-2}}_{N^{\prime}\cap P_{2n-2}}( m_{2n-2}) p_{[0,{2n-3}]})e_{0,{2n-1}})\\
 & \qquad = {[M:Q]}^n tr(p_{[0,{2n-1}]} m_{2n-1} p_{[0,{2n-1}]} E^{N^{\prime}\cap M_{2n-2}}_{N^{\prime}\cap P_{2n-2}}( m_{2n-2}) p_{[0,{2n-3}]})~~\textrm{~~~[Markov Property]}\\
  & \qquad = {[M:Q]}^n tr(p_{[0,{2n-1}]} m_{2n-1} p_{[0,{2n-1}]} p_{[0,{2n-1}]} m_{2n-2} p_{[0,{2n-1}]})\\
  & \qquad =  tr_{N\subseteq Q}(p_{[0,{2n-1}]} m_{2n-1} p_{[0,{2n-1}]} p_{[0,{2n-1}]} m_{2n-2} p_{[0,{2n-1}]})\\
\end{align*}
Thus from definition of trace preserving conditional expectation we conclude that the claim is justified. Hence from Equation (\ref{star}) it follows that,
\begin{equation*}
 Z^{\prime}_{E^{2n-1}_{2n}}(x)= \frac{\sqrt{[M:Q]} \sqrt{[M:N]}}{[M:Q]} E^{F(N^{\prime}\cap M_{2n-1})}_{F(N^{\prime}\cap M_{2n-2})}(x)= \sqrt{[Q:N]} E^{P^{\prime}_{2n}}_{P^{\prime}_{2n-1}}(x)
\end{equation*}
This is what we wanted to show.
\end{proof}
Thus the proof of Theorem \ref{main1} is complete.

\section{Examples}
\subsection{Dual Intermediate Planar algebra}

We now consider the other intermediate subfactor $Q  \ss M$.
We can describe its planar algebra using Theorem~\ref{main1} and the
fact that it is the dual subfactor to $M \ss Q _1$.  Namely, apply
Theorem~\ref{main1} to the planar algebra $(P_{1,n}(L))_n$ of $M \ss M_1$,
with respect to the projection
$[M:Q ]^{1/2}[Q :N]^{-1/2} \begin{minipage}{.1\textwidth}
  \centering
  \includegraphics[scale= .3]{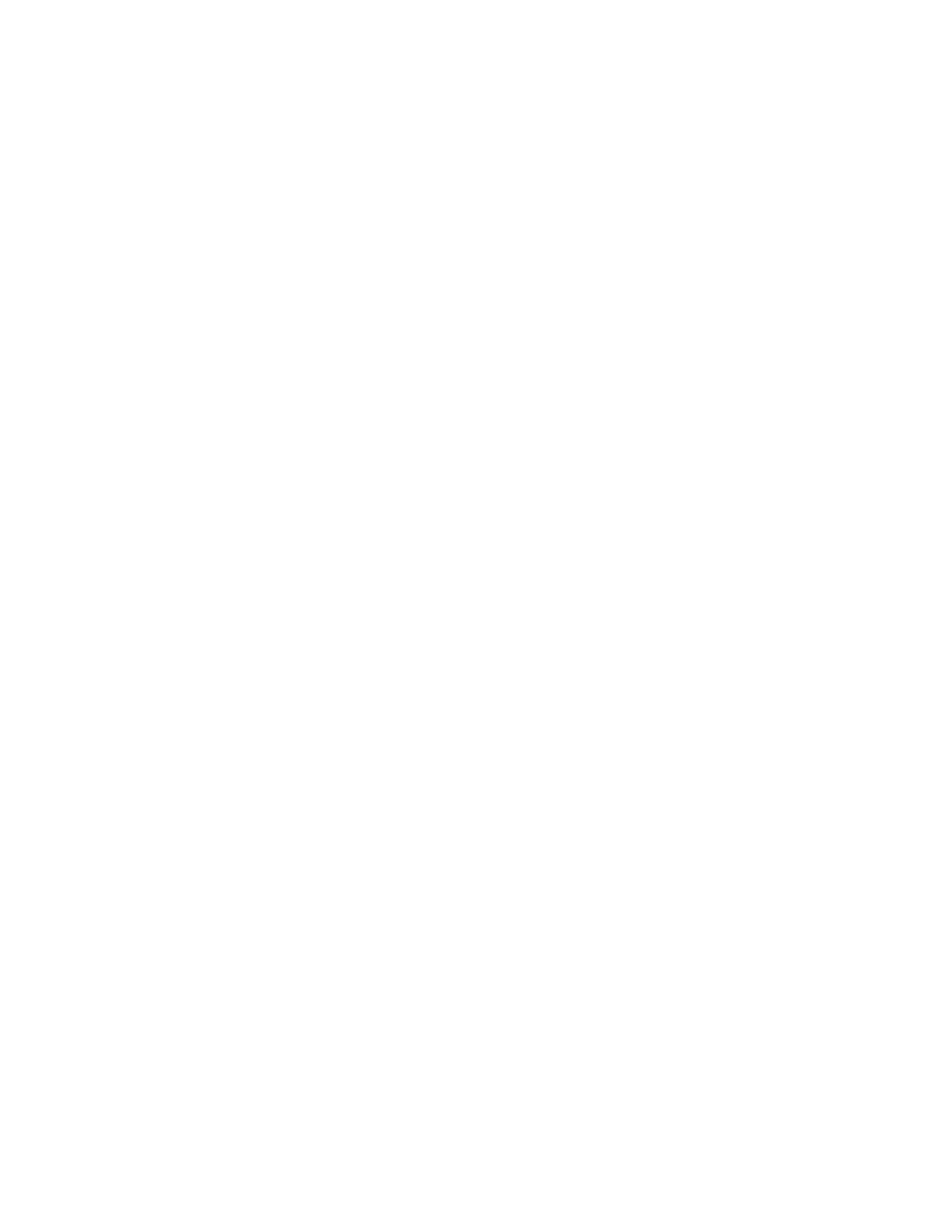}
 \end{minipage}$.  We obtain the planar algebra
$(P^{(M\ss Q _1)}_n)_n$.  The planar algebra of $Q  \ss M$ is its dual,
$(P^{(M\ss Q _1)}_{1,n})_n$.  If we carry out this process, we obtain the
following planar algebra:\\

\begin{definition}
 Denote the following tangle by $E^{\prime}_n$:
  \begin{center} \includegraphics[scale=.5]{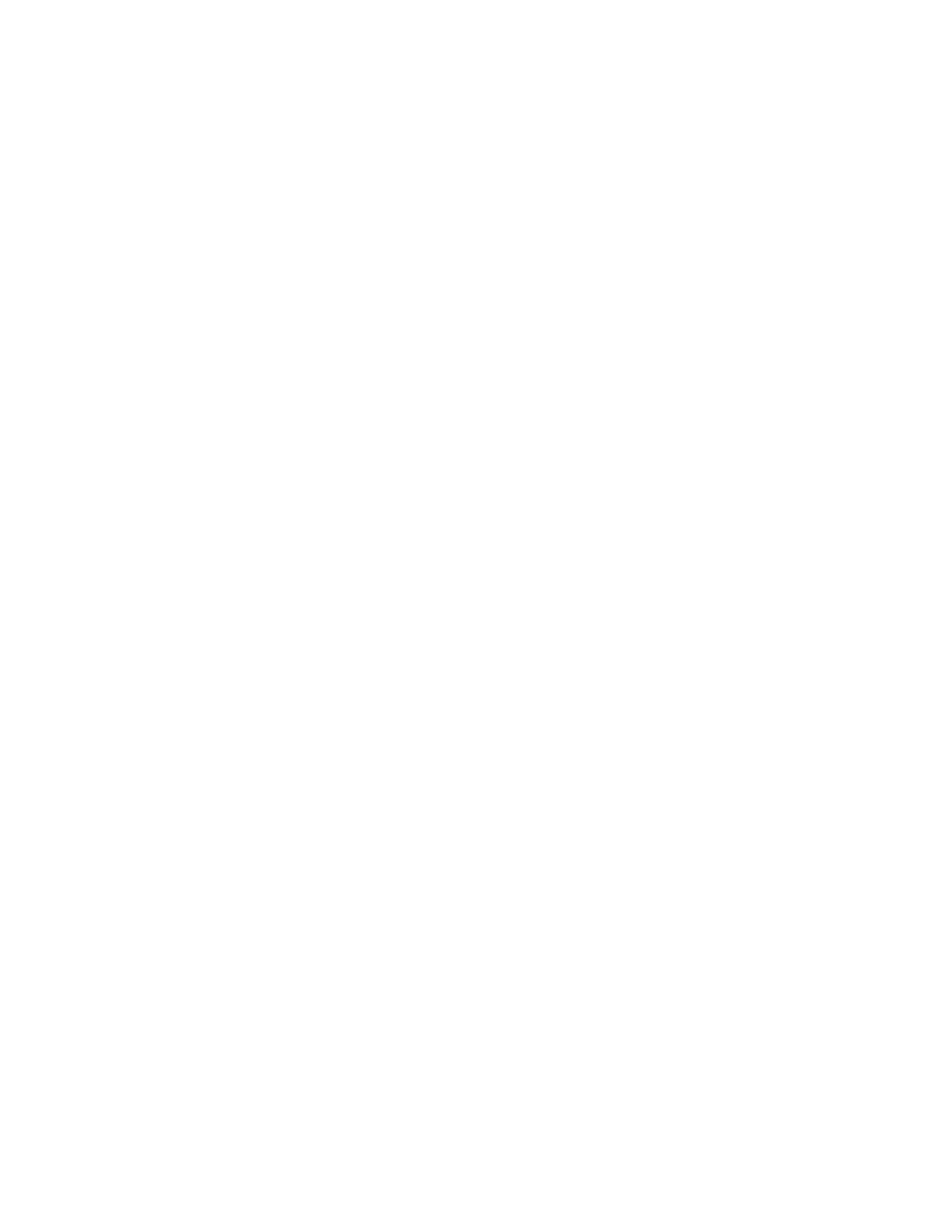} \hspace{1in} for $n$ odd, \end{center}
\begin{center} \includegraphics[scale=.5]{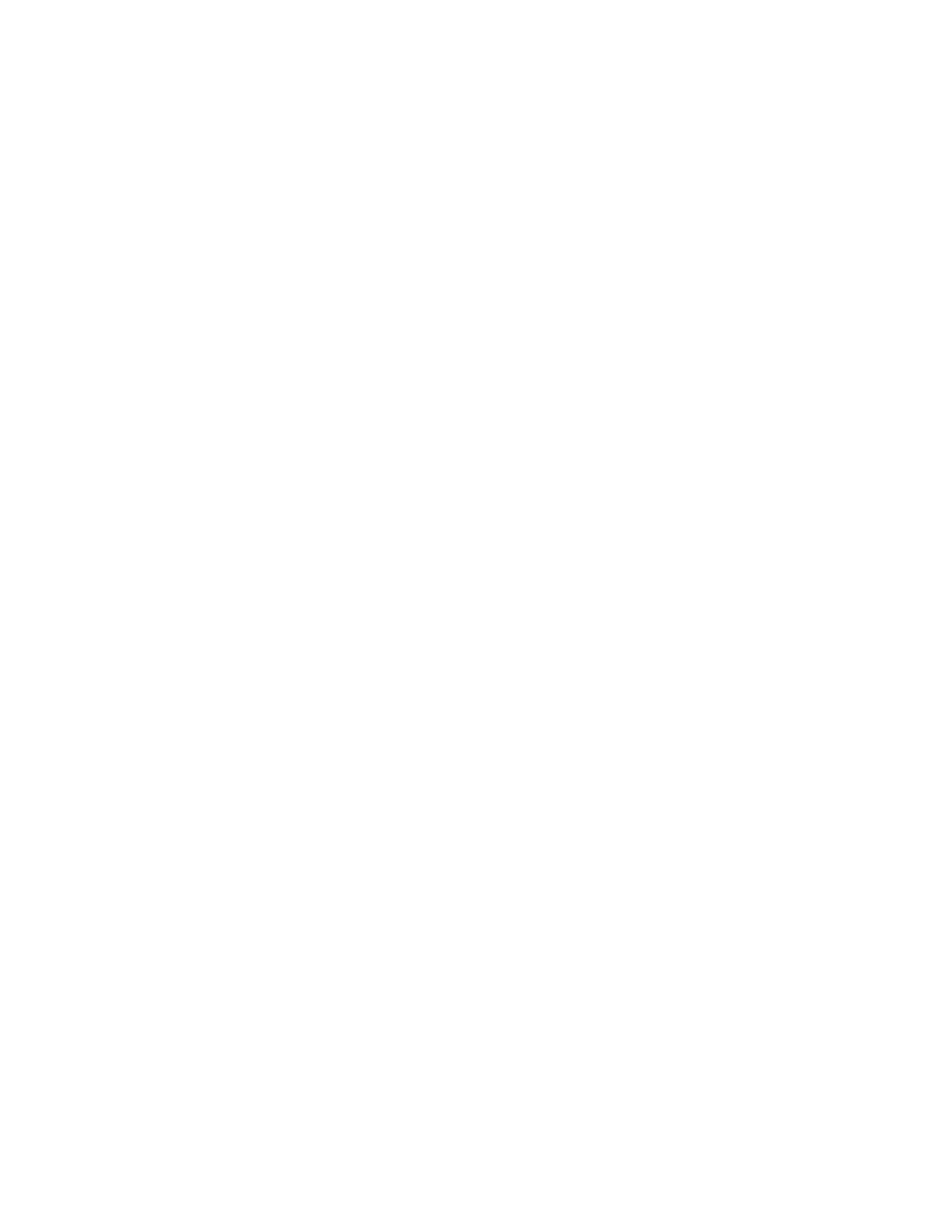} \hspace{1in} for $n$ even. \end{center}

where ${\begin{minipage}{.1\textwidth}
    \centering
    \includegraphics[scale=.55]{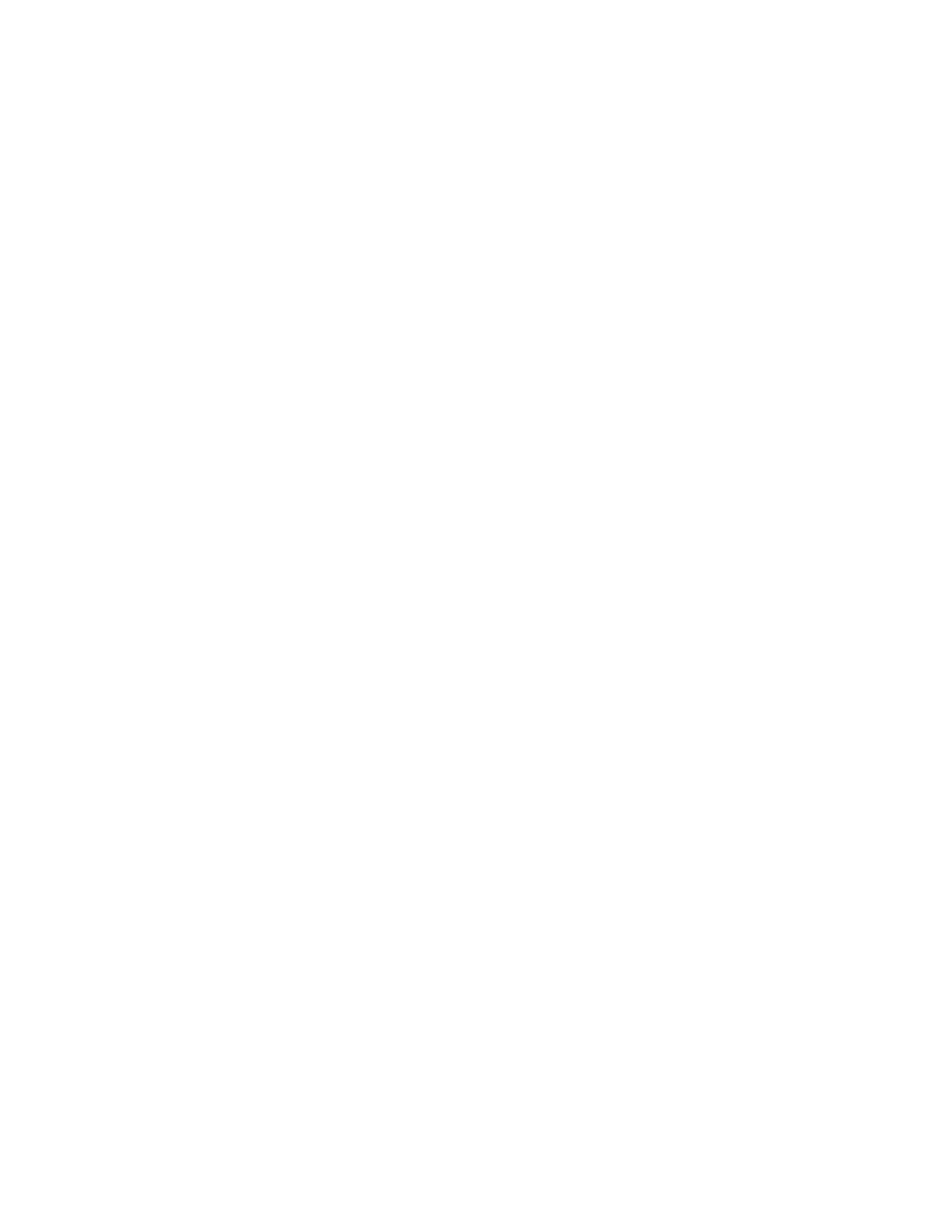}
    \end{minipage}}= \sqrt{\frac{[M:Q]}{[Q:N]}} ~~{\begin{minipage}{.1\textwidth}
    \centering
    \includegraphics[scale=.55]{q.eps}
    \end{minipage}}$

according as $n$ is  odd  or even respectively. We shall use these to define a map $T \mapsto G(T)$ from the class of $k$-tangles to the class of {\em partially labelled} $k$-tangles with $(k+1)$ internal discs all but the last of which are 2-boxes labelled with a $r$, with the the tangle $T$ inserted in the last disc of colour $k$. Thus, $G(T) = E^{\prime}_k\circ_{ (D_1,D_2,\cdots, D_k, D_{k+1})}(r,r,\cdots,r,T)$.

If it is clear from the context then we write $E^{\prime}$ instead of $E^{\prime}_n$. \par Define functions $G_n :P_n \mapsto P_n$ by $G_n(x)=Z_{E^{\prime}_n}(r\otimes r \otimes \cdots \otimes r \otimes x)$
for $x \in P_n$. We often write $G(x)$ instead of $G_n(x)$ if there is no confusion.
\end{definition}
\begin {definition}
  Let $ T $ be a $k$-tangle with $ b\geq 1 $ internal discs ${D_1,\dots D_b}$ of colours ${k_1,\dots k_b}$. Then define $\widetilde{\alpha}(T) = [Q:N]^{\frac{1}{2}\widetilde{c}(T)}$, where
\[\widetilde{c}(T)=(\lceil k_0/2 \rceil+\lfloor k_1/2 \rfloor+\dots +\lfloor k_b/2 \rfloor)-\widetilde{l}(T) \]
with $\widetilde{l}(T)$ being the number of closed loops after capping the white intervals of the external disc  of $T$ and cupping the white intervals of all internal discs of $T$. 
\end {definition}

It is straightforward to verify the following corollary:
\begin{corollary}
\label{main2} If $P^{\prime \prime}_k= ran(G(I^k_k))$ and $Z^{\prime \prime}_T = \widetilde{\alpha}(T) Z_{G(T)|_{\otimes P^{\prime \prime}_{k_i(T)}}}$, then $(P^{\prime \prime}, T \mapsto Z^{\prime \prime}_T|_{\otimes P^{\prime \prime}_{k_i(T)}})$ is a subfactor planar algebra which is isomorphic to $P^{(Q\ss M)}$. 
  \end{corollary}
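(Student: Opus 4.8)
The plan is to deduce Corollary~\ref{main2} from Theorem~\ref{main1} by a duality argument rather than by repeating the four-step combinatorial proof. The key observation is that the planar algebra of $Q \ss M$ is canonically the dual of the planar algebra of $M \ss Q_1$, which in turn is an \emph{intermediate} planar algebra: we have the tower $M \ss Q_1 \ss M_1$ with intermediate factor $Q_1$, so $M \ss Q_1$ sits inside $M \ss M_1$ exactly as $N \ss Q$ sits inside $N \ss M$. Thus I would first record that Theorem~\ref{main1}, applied verbatim to the ambient planar algebra $P^{(M \ss M_1)} = (P_{1,n})_n$ with biprojection $[M:Q]^{1/2}[Q:N]^{-1/2}\,q_1$ (the Jones projection of $L^2(M_1)$ onto $L^2(Q_1)$, viewed as a $2$-box in $P^{(M\ss M_1)}$), produces a bona fide subfactor planar algebra isomorphic to $P^{(M \ss Q_1)}$. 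This step is pure citation of the already-proved theorem; no new diagrammatic work is needed.

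Next I would carry out the dualization explicitly. Taking the dual of a subfactor planar algebra amounts to the shift $P_n \mapsto P_{1,n}$, which at the level of tangles corresponds to a $2$-click rotation and reshading (black and white regions are interchanged). Under this reshading, the biprojection $q$ for $N\ss Q$ inside $N\ss M$ transforms into the rescaled biprojection $r = \sqrt{[M:Q]/[Q:N]}\,q$ of the dual picture, the surrounding tangle $E_n$ becomes $E'_n$, and the map $F$ becomes $G$. The crucial bookkeeping item is the scalar: because reshading swaps the roles played by the black and white intervals, the loop count $l(T)$ (computed by capping/cupping \emph{black} intervals) is replaced by $\widetilde l(T)$ (computed on \emph{white} intervals), and the base of the exponent switches from $[M:Q]$ to $[Q:N]$. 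This is precisely the difference between $\alpha$ and $\widetilde\alpha$ in their respective definitions. So I would verify that, after the $2$-click rotation identifying $P^{(M\ss Q_1)}_{1,n}$ with $P^{(Q\ss M)}_n$, the prescription $Z'_T = \alpha(T) Z_{F(T)}$ for the $M\ss Q_1$ data translates termwise into $Z''_T = \widetilde\alpha(T) Z_{G(T)}$.

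The main obstacle I anticipate is the scalar accounting, not the structural duality. One must check that the index factors combine correctly: applying Theorem~\ref{main1} to $M\ss M_1$ uses the rescaled biprojection, and dualizing introduces further normalization constants (the modulus changes, and traces get reweighted as in the trace formula of Theorem~\ref{planarstinv}). I would confirm that these constants are exactly what is needed to make the base of the exponential $[Q:N]$ rather than $[M:Q]$, and that the ceiling/floor asymmetry $\lceil k_0/2\rceil + \lfloor k_i/2\rfloor$ in the definition of $\widetilde c(T)$ survives the rotation unchanged (it should, since a $2$-click rotation preserves parity of every disc colour). Assuming these checks go through, the multiplicativity of $Z''$ under composition and the remaining planar-algebra axioms are inherited automatically from the corresponding properties of $Z'$, which were established in Theorem~\ref{main1}; hence no re-run of the Step~1--Step~4 reduction is required. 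This is why the corollary is genuinely ``straightforward to verify'': the entire analytic and combinatorial burden has already been discharged in Theorem~\ref{main}, and only the symmetric relabeling $(q,F,\alpha,l) \mapsto (r,G,\widetilde\alpha,\widetilde l)$ remains.
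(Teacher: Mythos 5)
Your proposal is correct and is essentially the paper's own argument: the paper likewise obtains Corollary~\ref{main2} by applying Theorem~\ref{main1} to $P^{(M\ss M_1)}=(P_{1,n})_n$ with the rescaled biprojection $[M:Q]^{1/2}[Q:N]^{-1/2}q_1$ corresponding to $M\ss Q_1\ss M_1$, and then dualizing $P^{(M\ss Q_1)}$ to get $P^{(Q\ss M)}$, with the relabeling $(q,F,\alpha,l)\mapsto(r,G,\widetilde\alpha,\widetilde l)$ absorbing the reshading and scalar bookkeeping. The paper records this outcome as ``straightforward to verify,'' and your write-up is a faithful expansion of exactly that outline.
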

  \bigskip

\subsection{Crossed Product Example}\label{CPE}
Landau described  the planar algebra $P(G)$ of the group subfactor (corresponding to the fixed-points of an outer action
of a finite group $G$ on a $II_1$ factor ) which has a presenatation with generators given by $L_2= G$ and $L_k=\phi$ for $k\neq2,$
and the relation that a simple closed loop of either colour be the scalar $\sqrt{\lvert G \rvert}$ and the additional six relations
labelled $00,0,1,2,3,4$ as in \cite{La1}. Denote by $e$, the identity of $G$.

\smallskip
We have another group $\Theta$ and an action $\alpha:\Theta \mapsto Aut(G)$ as in \cite{LaSu}. Without loss of generality we can assume 
$\alpha$ is $1-1.$ Denote by  $f$, the identity of $\Theta$. The map that replaces the label of each $2$-box with the label's image under $\theta\in \Theta$
defines an automorphism of $P(G)$ (we will denote this also by $\theta$). Then the set $P^{\Theta}$ of invariants for this action is a
sub-planar algebra of $P$, and the set of $\Theta$-invariant $k$-boxes of $P(G)$ constituts precisely the set of $k$-boxes
of $P^{\Theta}$.

\smallskip
\begin{notation}\label{basis}
We follow the same notation as in \cite{LaSu} [Remark $3.3.1.(b)$] to denote an orthonormal basis of $P(G)_k$ (with respect to the inner product given by the natural trace):
define $S(\bar{g})$ (where $\bar{g} \in G^{k-1}$) to be the labelled $k$-tangle $(k >2)$ given by the following two Figures \ref{basis1} and \ref{basis2} for $k$ odd and even respectively.
 \begin{figure}
   \includegraphics[scale=.5]{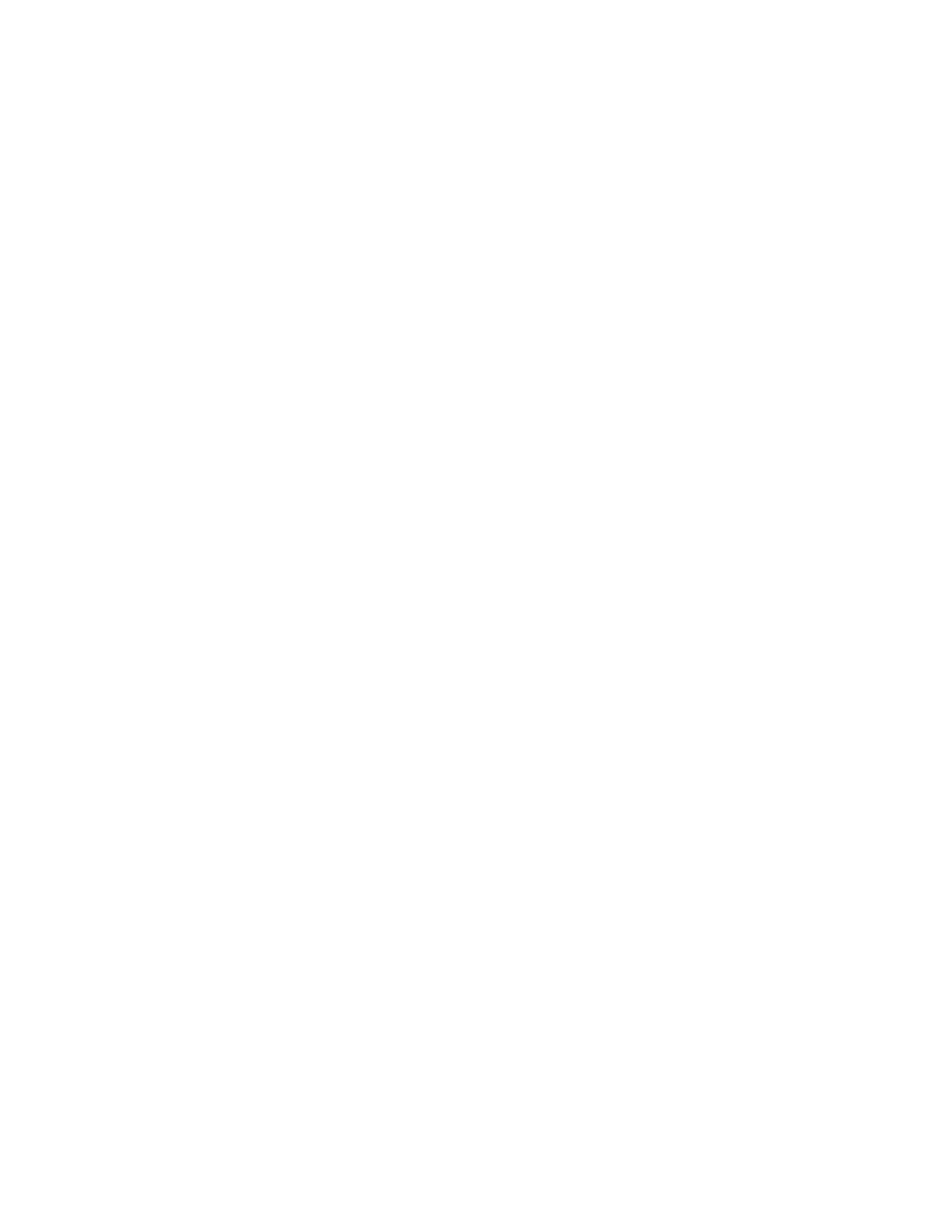}
   \caption{$k$ Odd}
  \label{basis1}
  \end{figure}

\begin{figure}
   \includegraphics[scale=.5]{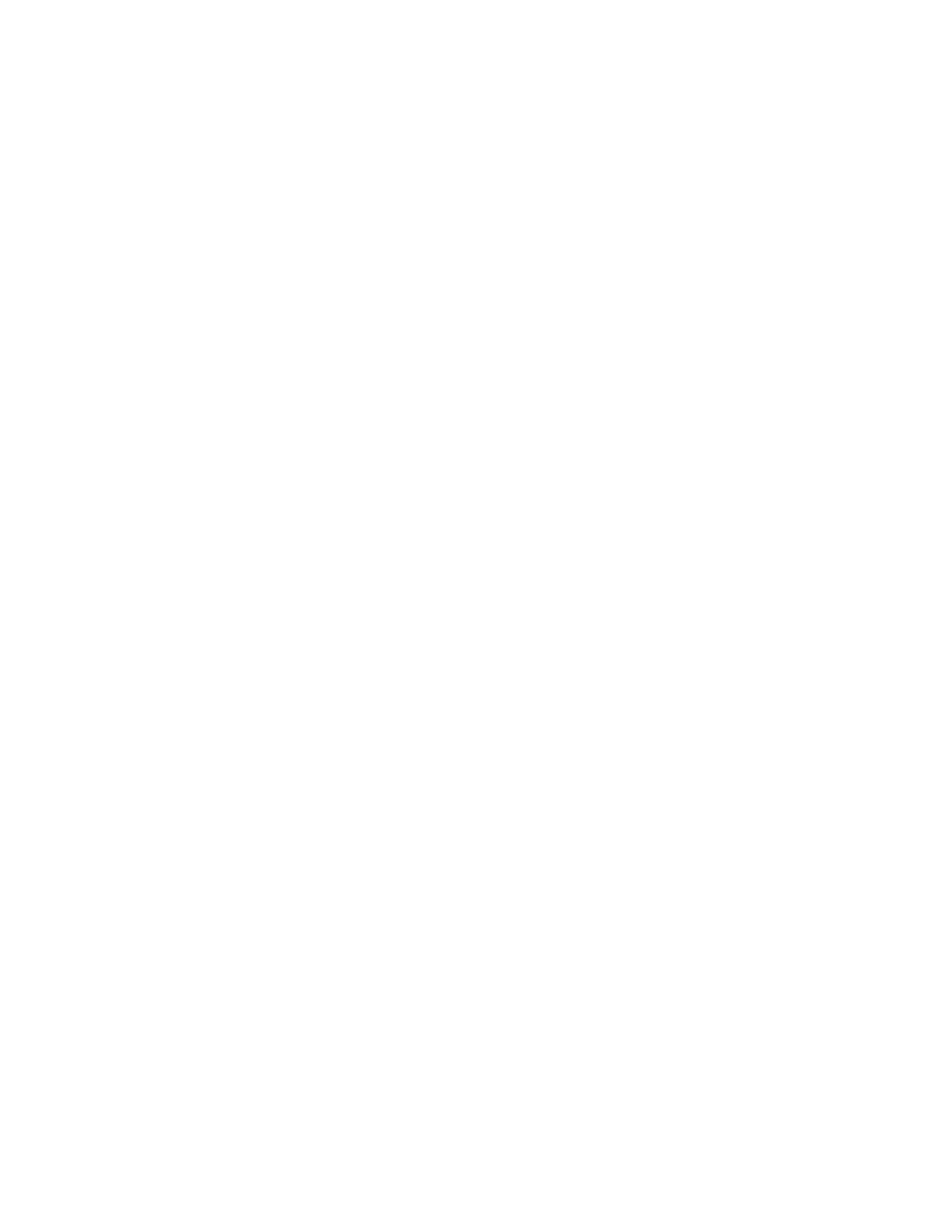}
   \caption{$k$ Even}
   \label{basis2}
  \end{figure}

Also,
\begin{equation*}
 S(g)= {\begin{minipage}{.2\textwidth}
    \centering
    \includegraphics[scale=.7]{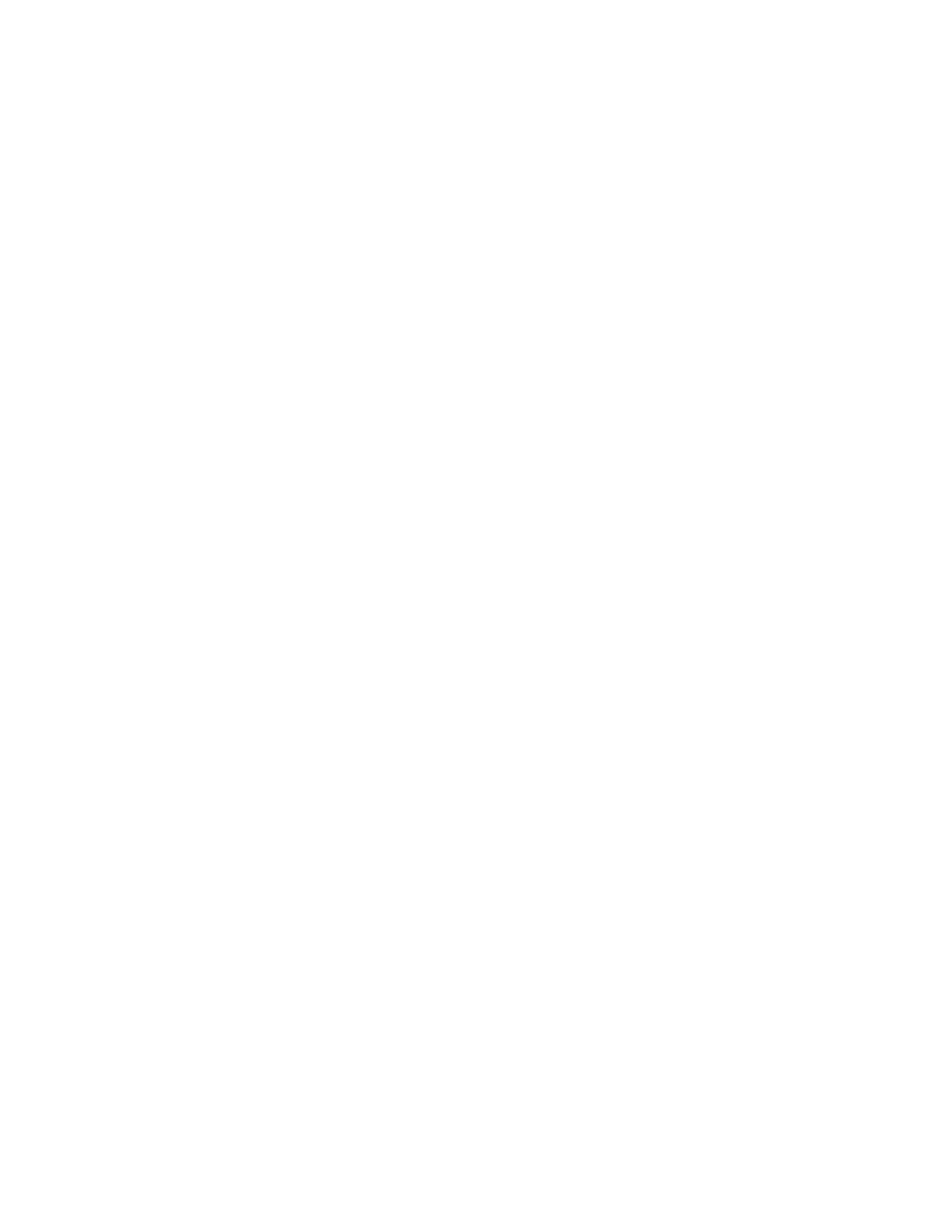}
    \end{minipage}}
\end{equation*}

\smallskip
We use Latin alphabets to denote the elements of $G$, whereas we  use Greek symbols to write the elements of $\Theta$. As usual we write the elements of $G\rtimes \Theta$ as ordered pairs $(g,\theta)$ with the ususal multiplication $(g_1,\theta_1)(g_2,\theta_2)= (g_1\theta_1(g_2),\theta_1\theta_2).$
Also for each integer $k\geq 1$ and $\theta \in \Theta$, we simply write $\theta(g_1,\cdots,g_k)$ to denote the map $\alpha^{(k)}_{\theta} \in Aut(G^k)$
defined by $\alpha^{(k)}_{\theta}(g_1,g_2,\cdots,g_k)= (\alpha_{\theta}(g_1),\alpha_{\theta}(g_2),\cdots,\alpha_{\theta}(g_k))$.  Lastly by ${\bar{\delta}}_n$ we denote the $n$-tuple
$(\delta_1,\cdots,\delta_n)$. If from the context it is obvious what is $n$,we simply write $\bar{\delta}.$ For convenience we denote by 
${\bar{\delta}}_{[k,n]} ($ respectively, ${\bar{\delta}}_{(k,n]}$) the tuple $(\delta_k,\cdots,\delta_n) ($ respectively, $(\delta_{k+1},\cdots,\delta_n)).$
\end{notation}
We prove the following theorem (\cite{LaSu}):
\begin{theorem}\label{subgroup}
 Let $G,\Theta$ be as above, and let $G\rtimes \Theta$ denote the semi-direct product,and let $N= R^{G\rtimes \Theta}\subset R^{\Theta}=M$
 denote the corresponding subgroup-subfactor. Then,
 $$P^{\Theta} \simeq P^{(N\subset M)}.$$
\end{theorem}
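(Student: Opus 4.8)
The plan is to realize $P^{\Theta}$ and $P^{(N\subset M)}$ as two descriptions of the same object by constructing an explicit isomorphism $\beta = \{\beta_k\}$ on each box space and checking it intertwines all tangle actions. Since $N = R^{G\rtimes\Theta} \subset R^{\Theta} = M$ is a subgroup-subfactor, its planar algebra $P^{(N\subset M)}$ has a known concrete model: the $k$-box space is the fixed-point algebra of the $(G\rtimes\Theta)$-action on an appropriate tensor/function space, and one has an orthonormal basis indexed by cosets or group tuples analogous to the $S(\bar g)$ basis for $P(G)_k$. First I would write down, for each colour $k$, a linear map $\beta_k$ sending the distinguished basis $S(\bar g)$ of $P^{\Theta}_k$ (the $\Theta$-invariant $k$-boxes of $P(G)$) to the corresponding basis element of $P^{(N\subset M)}_k$, inserting the correct normalizing scalar. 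The remark following Theorem~\ref{subgroup} flags that the constant in \cite{LaSu}'s $\beta_k$ is slightly off and is corrected in Definition~\ref{defi}, so getting this scalar right — so that $\beta_k$ is trace-preserving (hence an isometry) and multiplicative — is the first technical point.

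Next I would invoke the machinery built earlier in the paper. The natural route is to view $N\subset M$ inside the chain $N \subset Q \subset M$ with a suitable intermediate $Q$ (here the crossed product structure $G\rtimes\Theta$ furnishes $\Theta\subset G\rtimes\Theta$ and hence an intermediate subfactor), and then apply Theorem~\ref{main1}, which already identifies $P^{(N\subset Q)}$ as the planar algebra $(P',Z')$ with $P'_k = \mathrm{ran}(F(I^k_k))$ and $Z'_T = \alpha(T)Z_{F(T)}$. The biprojection $q$ attached to the intermediate subfactor is, in the group picture, the averaging projection over $\Theta$ (or over $G$, depending on which inclusion), and $F$ is precisely the "surround with $q$'s" operation. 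So the second step is to identify the biprojection $q\in P(G)_2$ whose range under $F$ reproduces exactly the $\Theta$-invariant boxes, and to check that $Z'_T$ under the isomorphism $\beta$ matches the tangle action on $P^{\Theta}$ inherited from $P(G)$.

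With these pieces in place, the verification reduces to a finite list of generating tangles. Because $P(G)$ is presented by generators ($L_2 = G$) and the six relations $00,0,1,2,3,4$ of \cite{La1}, it suffices to check that $\beta$ carries these generators and relations of $P^{\Theta}$ to the corresponding data of $P^{(N\subset M)}$; equivalently, using Theorem~2.1 of \cite{KodSun}, one checks that $\beta$ preserves the modulus $\sqrt{|G|}$ (or the appropriate index), the Jones projections, and the conditional-expectation/trace tangles, exactly as was done for the Facts in the proof of Theorem~\ref{main1}. Concretely I would compute $\beta_k(S(\bar g))$ on the multiplication tangle, the (co)pairing tangles, and the rotation, and confirm these agree with the crossed-product formula $(g_1,\theta_1)(g_2,\theta_2) = (g_1\theta_1(g_2),\theta_1\theta_2)$.

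The main obstacle I anticipate is twofold. First, pinning down the correct normalizing constant in $\beta_k$ — the paper itself signals an error in \cite{LaSu} here — so that $\beta$ is simultaneously trace-preserving and compatible with every relation; this is delicate because the normalization interacts multiplicatively across the $\lceil k/2\rceil$ shadings and must be consistent with the scalar $\alpha(T)$ from Definition~\ref{defalpha}. Second, faithfully translating the $\Theta$-action on labels of $P(G)$ into the fixed-point data of the subgroup-subfactor $N\subset M$ and verifying that $\Theta$-invariance of a $k$-box corresponds exactly to membership in $N'\cap M_{k-1}$; the semidirect-product twist $\theta(g)$ must appear correctly in the tangle computations, and keeping track of it through the rotation and pairing tangles is where the bookkeeping is heaviest.
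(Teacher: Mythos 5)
Your high-level outline (realize the subgroup-subfactor planar algebra through Theorem \ref{main1} applied to a three-step chain arising from the crossed product, define an explicit basis-level isomorphism with a corrected normalizing constant, and verify equivariance on a generating set of tangles) is indeed the paper's strategy, but your second step contains a genuine structural error that would make the plan fail as written. You ask for a biprojection $q\in P(G)_2$ ``whose range under $F$ reproduces exactly the $\Theta$-invariant boxes.'' No such biprojection exists: biprojections of $P(G)$ (the planar algebra of $R^G\subset R$) correspond to intermediate subfactors $R^G\subset R^H\subset R$, i.e.\ to subgroups $H\le G$, and $\Theta$ is not a subgroup of $G$. More fundamentally, $P^{\Theta}$ is a planar \emph{subalgebra} of $P(G)$, carrying the restriction of the tangle action $Z_T$, whereas any $F$-range produced by Theorem \ref{main1} carries the twisted action $Z^{\prime}_T=\alpha(T)Z_{F(T)}$; these two kinds of subobjects cannot be identified by fiat, and relating them is precisely the content of the theorem. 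There is also a circularity in your phrasing of the chain: if you take the chain to be the theorem's $N\subset Q\subset M$ with $M=R^{\Theta}$ on top, then Theorem \ref{main1} expresses $P^{(N\subset Q)}$ in terms of $P^{(N\subset M)}$, which is the unknown you are trying to compute.

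The correct setup, which the paper uses, is the chain $R^{G\rtimes\Theta}\subset R^{\Theta}\subset R$ coming from $\{1\}\subset\Theta\subset G\rtimes\Theta$: the \emph{ambient} planar algebra for Theorem \ref{main1} is the group planar algebra $P(G\rtimes\Theta)$ of $R^{G\rtimes\Theta}\subset R$, and the biprojection is the average over $\Theta$ of the $2$-boxes labelled $(e,\theta)$, which lives in $P(G\rtimes\Theta)_2$, not in $P(G)_2$ (Fact \ref{f}). Theorem \ref{main1} then identifies $P^{(N\subset M)}$, with $N=R^{G\rtimes\Theta}$ and $M=R^{\Theta}$, with $(F(P(G\rtimes\Theta)),Z^{\prime})$, a subspace of $P(G\rtimes\Theta)$ rather than of $P(G)$. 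Consequently the isomorphism you call $\beta$ is a map between subspaces of two \emph{different} planar algebras, $\Phi_k:P^{\Theta}_k\to F_k(P_k(G\rtimes\Theta))$, and it must send the orbit-sum basis $\Theta S(\bar g)=\sum_{\theta\in\Theta}S(\theta(\bar g))$ (note that $S(\bar g)$ itself is not $\Theta$-invariant, hence not a basis vector of $P^{\Theta}_k$) to a scalar multiple of $U(\bar g)=\sum_{\theta,\bar\gamma}S((\theta(g_1),\gamma_1),\dots,(\theta(g_{k-1}),\gamma_{k-1}))$. Pinning down that scalar is not a formality: it requires the product computations of Fact \ref{theta} and Lemma \ref{u}, and the equivariance is then checked tangle by tangle on the generating set $\{M_k,\,E^k_{k+1},\,(E^{\prime})^k_k,\,I^{k+1}_k,\,\mathcal{E}^k,\,1^{0_{\pm}}\}$ via Theorem 3.3 of \cite{KodSun} (not Theorem 2.1, which is used elsewhere). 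Once your step 2 is repaired in this way, the remainder of your plan does align with the paper's proof.
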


We prove this theorem in various steps.
\begin{fact}\label{s}
For $k\geq 3$, using exchange relation repeatedly and other relations labelled $0,1,2$ as stated in \cite{La1} we get,
\begin{align*}
 & S(g_1,g_2,\cdots,g_{k-1}) S(h_1,h_2,\cdots,h_{k-1})\\
 & \qquad = {(\sqrt{ \lvert G\rvert})}^{(\lceil k/2 \rceil-1)}(\prod_{i=2}^{\lceil k/2 \rceil} \delta(h_1 g_{k+1-i}, h_i))
  S(h_1g_1,h_1g_2,\cdots,h_1 g_{\lceil k/2 \rceil}, h_{\lceil k/2 \rceil +1},\cdots, h_{k-1})
\end{align*}
Then for $k=2$ simply observe,
$
S(g_1)S(g_2) = S(g_1 g_2).
$
\end{fact}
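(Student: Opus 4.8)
The plan is to compute the product $S(\bar g)S(\bar h)$ purely diagrammatically, by stacking the two labelled $k$-tangles of Figures~\ref{basis1} and~\ref{basis2} in the $k$-box algebra and then simplifying the resulting picture using only the exchange relation of Corollary~\ref{cor:exchange} together with the group relations $0$, $1$, $2$ of \cite{La1}. First I would draw the stacked tangle $S(g_1,\dots,g_{k-1})S(h_1,\dots,h_{k-1})$ and observe that, once the middle boundary points are glued, the strands carrying the labels fall into two classes: the through-strands connecting top to bottom, and $\lceil k/2\rceil-1$ cap--cup junctions where a $g$-labelled box meets an $h$-labelled box in a closed region. This decomposition is precisely what the right-hand side of the claimed identity encodes, so the bulk of the work is to read off the two contributions correctly and uniformly in the parity of $k$.

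Next I would slide labels along strands using the exchange relation, which lets a $2$-box labelled by a group element pass a string; doing so repeatedly brings the ``spine'' element $h_1$ of $S(\bar h)$ into contact with each of the first $\lceil k/2\rceil$ boxes of $S(\bar g)$. Applying relation $1$ (stacked group-labels multiply) at each such contact converts the label $g_j$ into $h_1 g_j$ for $j=1,\dots,\lceil k/2\rceil$, while the boxes beyond position $\lceil k/2\rceil$ are untouched and retain their labels $h_{\lceil k/2\rceil+1},\dots,h_{k-1}$; this already produces the argument list of the surviving tangle $S(h_1 g_1,\dots,h_1 g_{\lceil k/2\rceil},h_{\lceil k/2\rceil+1},\dots,h_{k-1})$. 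Each of the $\lceil k/2\rceil-1$ cap--cup junctions is then resolved by relation $2$, which replaces a capped pair of labels by a Kronecker delta together with a factor $\sqrt{|G|}$; a careful matching of the two indices meeting at the $i$-th junction gives exactly the constraint $h_1 g_{k+1-i}=h_i$, so the junctions contribute $(\sqrt{|G|})^{\lceil k/2\rceil-1}\prod_{i=2}^{\lceil k/2\rceil}\delta(h_1 g_{k+1-i},h_i)$. Collecting the three pieces yields the stated formula.

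The base case $k=2$ is immediate: there are no caps and no spine to slide, so $S(g_1)S(g_2)=S(g_1g_2)$ is just one application of relation $1$, consistent with the convention that the product and the $\sqrt{|G|}$-power in the general formula are empty when $k=2$.

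I expect the main obstacle to be the index bookkeeping rather than any conceptual difficulty: one must verify that the even and odd pictures of Figures~\ref{basis1} and~\ref{basis2} both produce precisely $\lceil k/2\rceil-1$ cap--cup junctions and that the reflection $g_{k+1-i}$ pairing with $h_i$ emerges correctly in both parities, so that a single formula covers both cases. Tracking which strand carries $h_1$ through the exchange moves---and hence why only the first $\lceil k/2\rceil$ of the $g$'s acquire the left factor $h_1$---is the delicate step; the relations themselves are applied routinely once the picture is organized.
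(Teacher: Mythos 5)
Your proposal is correct and is essentially the paper's own argument: the paper gives no separate proof of this Fact, but asserts precisely that the formula follows from repeated use of the exchange relation together with relations $0,1,2$ of \cite{La1}, i.e., the stacking computation you describe. Your bookkeeping --- $\lceil k/2\rceil-1$ cap--cup junctions each contributing $\sqrt{\lvert G\rvert}\,\delta(h_1g_{k+1-i},h_i)$, the factor $h_1$ distributed by the exchange relation onto the surviving labels $g_1,\dots,g_{\lceil k/2\rceil}$, and the empty-product convention at $k=2$ --- agrees with the stated formula (which is \cite{LaSu} Remark 3.3.1(f) with the constant corrected).
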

\bigskip

\begin{fact}\label{theta}
Define $\Theta S(\bar{g})= \sum_{\theta \in \Theta} S(\theta(\bar{g}))$ for $\bar{g}\in G^{k-1}.$ Then as stated in \cite{LaSu}
$\{\Theta S(\bar{g}):[\bar{g}]\in G^{k-1}/\Theta\}$ is an orthogonal basis for $P^{\Theta}_k$.
A simple calculation shows the following:
\begin{align*}
 & \Theta S(g_1,g_2,\cdots, g_{k-1}) \Theta S(h_1,h_2,\cdots, h_{k-1}) \\ 
 & \qquad = {(\sqrt {\lvert G \rvert})}^{(\lceil k/2 \rceil -1)}\sum_{{\theta}^{\dprime}\in \Theta}(\prod_{i=2}^{\lceil k/2\rceil}
 \delta(h_1 {\theta}^{\dprime}(g_{k+1-i}),h_i))~~~\times \\
 & \qquad \qquad \qquad \qquad \qquad \Theta S(h_1{\theta}^{\dprime}(g_1), h_1 {\theta}^{\dprime}(g_2), \cdots, h_1{\theta}^{\dprime}(g_{\lceil k/2\rceil}),
 h_{\lceil k/2 \rceil +1}, h_{\lceil k/2 \rceil +2},\cdots, h_{k-1})
\end{align*}
For $k=2$ the above is being interpreted as 
\begin{equation*}
 \Theta S(g) \Theta S(h) = \sum_{{\theta}^{\dprime} \in \Theta} \Theta S(g {\theta}^{\dprime}(h)).
\end{equation*}

\end{fact}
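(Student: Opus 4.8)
The plan is to derive the multiplication rule for the $\Theta$-averaged basis $\Theta S(\bar g)$ directly from the single-tangle product rule of Fact \ref{s}, using only that each $\theta\in\Theta$ acts on $G$ as a group automorphism. First I would use the definition $\Theta S(\bar g)=\sum_{\theta\in\Theta}S(\theta(\bar g))$ together with bilinearity of the product in $P(G)_k$ to write
\[
\Theta S(\bar g)\,\Theta S(\bar h)=\sum_{\theta,\theta'\in\Theta}S(\theta(\bar g))\,S(\theta'(\bar h)),
\]
and then apply Fact \ref{s} to each summand, replacing $g_j$ by $\theta(g_j)$ and $h_j$ by $\theta'(h_j)$ throughout.

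The key step, and the one that makes the whole computation collapse correctly, is the change of summation variable ${\theta}^{\dprime}:=(\theta')^{-1}\theta$ (so $\theta=\theta'{\theta}^{\dprime}$ and $\theta(x)=\theta'({\theta}^{\dprime}(x))$ for all $x\in G$). Because $\theta'$ is a homomorphism, every prefactor appearing in Fact \ref{s} becomes $\theta'(h_1)\,\theta(g_j)=\theta'\big(h_1\,{\theta}^{\dprime}(g_j)\big)$, and because $\theta'$ is injective each Kronecker delta collapses as
\[
\delta\big(\theta'(h_1)\,\theta(g_{k+1-i}),\,\theta'(h_i)\big)=\delta\big(h_1\,{\theta}^{\dprime}(g_{k+1-i}),\,h_i\big),
\]
which no longer depends on $\theta'$. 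Likewise the output tuple of each $S$ is exactly $\theta'$ applied componentwise to
\[
\vec w_{{\theta}^{\dprime}}:=\big(h_1{\theta}^{\dprime}(g_1),\dots,h_1{\theta}^{\dprime}(g_{\lceil k/2\rceil}),\,h_{\lceil k/2\rceil+1},\dots,h_{k-1}\big).
\]

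Next I would reparametrize the double sum: for each fixed $\theta'$, as $\theta$ ranges over $\Theta$ the element ${\theta}^{\dprime}=(\theta')^{-1}\theta$ ranges over $\Theta$ bijectively, so $\sum_{\theta,\theta'}=\sum_{\theta',{\theta}^{\dprime}}$. Since the scalar $(\sqrt{|G|})^{\lceil k/2\rceil-1}$ and all the deltas depend only on ${\theta}^{\dprime}$, the inner sum over $\theta'$ factors out and collapses via $\sum_{\theta'\in\Theta}S(\theta'(\vec w_{{\theta}^{\dprime}}))=\Theta S(\vec w_{{\theta}^{\dprime}})$, which is precisely the asserted formula. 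For the degenerate case $k=2$ one uses instead the rule $S(g_1)S(g_2)=S(g_1g_2)$ of Fact \ref{s}, so that $S(\theta(g))S(\theta'(h))=S(\theta(g)\theta'(h))$; here the handedness of the product rule is reversed, so I would substitute $\theta'=\theta\,{\theta}^{\dprime}$ instead, giving $S(\theta(g)\theta'(h))=S\big(\theta(g\,{\theta}^{\dprime}(h))\big)$, and summing out $\theta$ yields $\sum_{{\theta}^{\dprime}\in\Theta}\Theta S(g\,{\theta}^{\dprime}(h))$ directly.

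I expect no genuine obstacle here: the only points requiring care are keeping the componentwise automorphism action on tuples consistent, and checking that each reparametrization is an honest bijection of $\Theta\times\Theta$. The automorphism property of $\Theta$ is exactly what decouples the two averages---without it the Kronecker deltas would entangle $\theta$ and $\theta'$ and the sum over $\theta'$ would not collapse into a single $\Theta S$.
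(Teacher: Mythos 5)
Your proposal is correct and is essentially the computation the paper intends: the paper dismisses Fact \ref{theta} as ``a simple calculation,'' and its written-out proof of the closely analogous Lemma \ref{u} proceeds exactly as you do --- expand by bilinearity, apply Fact \ref{s} termwise, use the homomorphism and injectivity of each automorphism to pull it out of the products and Kronecker deltas, and reparametrize the double sum by the substitution ${\theta}'' = (\theta')^{-1}\theta$ (the paper's ``Putting ${\phi}^{-1}\sigma_1 \theta = {\theta}''$\,'') so that the sum over $\theta'$ collapses to a single $\Theta S$. Your handling of the $k=2$ case, where the reversed handedness of $S(g_1)S(g_2)=S(g_1g_2)$ forces the substitution $\theta'=\theta\,{\theta}''$ instead, is also correct and consistent with the stated formula.
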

\bigskip
\begin{remark}
 Note that there is a slight correction in constant in Fact \ref{s} and \ref{theta} as compared to \cite{LaSu} Remark 3.3.1 (f) and (g) respectively.
\end{remark}

\begin{fact}\label{f}
Let $q$ be the biprojection corresponding to the intermediate subfactor $R^{\Theta}$ such that\\
$R^{G\rtimes \Theta}\subset R^{\Theta}\subset R$. In other words,
\begin{equation*}
 {\begin{minipage}{.1\textwidth}
    \centering
    \includegraphics[scale=.65]{q}
    \end{minipage}}= \frac{1}{\lvert \Theta \rvert}\sum_{\theta \in \Theta}
  {\begin{minipage}{.2\textwidth}
    \centering
    \includegraphics[scale=.6]{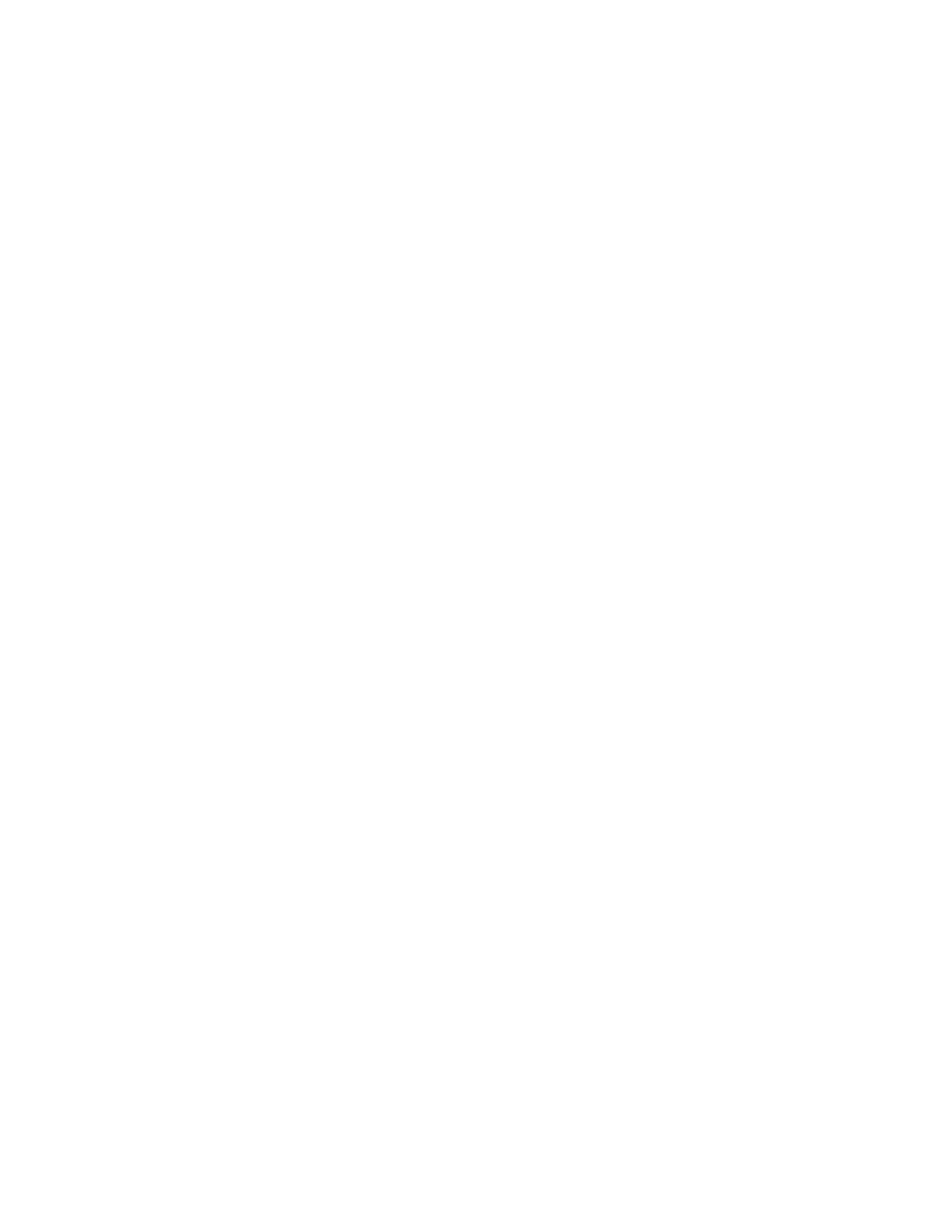}
    \end{minipage}}
\end{equation*}

Then using exchange relation we easily get the following result as mentioned in \cite{LaSu}:
\begin{align*}
&  F_n(S((g_1,{\theta}_1), (g_2,{\theta}_2),\cdots,(g_{n-1},{\theta}_{n-1})))\\
& \qquad = \frac{1}{{\lvert \Theta\rvert}^n}\sum_{\substack{{\theta}\in \Theta\\ \bar{\gamma}\in {\Theta}^{n-1}}} S((\theta(g_1),\gamma_1),(\theta(g_2),\gamma_2),\cdots,(\theta(g_{n-1}),\gamma_{n-1}))
 \end{align*}
\end{fact}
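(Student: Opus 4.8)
The plan is to unwind Definition \ref{defF} against the explicit form of the biprojection displayed just above, and then to evaluate the resulting sum of labelled tangles by the group-planar-algebra relations, now carried out in $G\rtimes\Theta$ rather than in $G$. Writing $q=\tfrac{1}{|\Theta|}\sum_{\psi\in\Theta}S((e,\psi))$ and recalling that $F_n(x)=Z_{E_n}(q\otimes\cdots\otimes q\otimes x)$ with $n$ copies of $q$, multilinearity gives
\[
F_n(S(\bar g))=\frac{1}{|\Theta|^{\,n}}\sum_{\psi_1,\dots,\psi_n\in\Theta} Z_{E_n}\big(S((e,\psi_1))\otimes\cdots\otimes S((e,\psi_n))\otimes S(\bar g)\big).
\]
This already accounts for the overall factor $|\Theta|^{-n}$ of the statement, one power of $|\Theta|^{-1}$ for each of the $n$ biprojection $2$-boxes.

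First I would use the exchange relation of Corollary \ref{cor:exchange} repeatedly to slide each label $S((e,\psi_j))$ out of its surrounding position in $E_n$ and into contact with the central box $S(\bar g)$; this is the manipulation alluded to in the statement of the Fact. When a biprojection is absorbed in this way, the defining relations $0,1,2$ of \cite{La1} (equivalently the $2$-box case of the multiplication rule of Fact \ref{s}, namely $S(g_1)S(g_2)=S(g_1g_2)$) apply the group law of $G\rtimes\Theta$: since $(e,\psi)(g_i,\theta_i)=(\psi(g_i),\psi\theta_i)$, merging a biprojection simultaneously rotates the $G$-coordinate $g_i$ by $\psi$ and left-translates the $\Theta$-coordinate $\theta_i$ by $\psi$. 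Because the absorbed boxes have colour $2$ and $\lceil 2/2\rceil-1=0$, no spurious power of $\sqrt{|G|}$ is generated, consistently with the clean right-hand side.

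The heart of the matter is the reorganisation of the $n$-fold sum over $(\psi_1,\dots,\psi_n)$. The constraints produced by the successive absorptions couple the $G$-side rotations to a single common parameter $\theta\in\Theta$ acting uniformly on every coordinate (so that $g_i$ becomes $\theta(g_i)$), while each $\Theta$-coordinate, being left-translated by a $\psi$ and then summed over $\Theta$, is freed to range independently over a parameter $\gamma_i\in\Theta$. Matching the surviving free parameters against the prefactor confirms the normalisation $|\Theta|^{-n}$, and re-indexing the sum as one over $\theta\in\Theta$ and $\bar\gamma\in\Theta^{\,n-1}$ yields exactly
\[
F_n\big(S((g_1,\theta_1),\dots,(g_{n-1},\theta_{n-1}))\big)=\frac{1}{|\Theta|^{\,n}}\sum_{\substack{\theta\in\Theta\\ \bar\gamma\in\Theta^{n-1}}} S\big((\theta(g_1),\gamma_1),\dots,(\theta(g_{n-1}),\gamma_{n-1})\big).
\]

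I expect the main obstacle to be exactly this bookkeeping of the semidirect-product twist as the biprojections pass through the nested tangle $E_n$: one must check that the successive absorptions couple the $G$-coordinates into a single $\theta$ while leaving all $\Theta$-coordinates free, and that the resulting count of parameters reproduces the normalisation $|\Theta|^{-n}$. In practice I would first fix the indexing conventions in the smallest cases $n=2,3$, treating odd and even colours separately as in Figures \ref{basis1} and \ref{basis2}, and then observe that the alternating cup/cap structure of $E_n$ renders the general computation a uniform iteration of this single step.
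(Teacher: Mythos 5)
Your proposal is correct and takes essentially the same route as the paper: the paper's entire justification of Fact \ref{f} is precisely this appeal to multilinearity, the exchange relation, and absorption of the $2$-boxes $S((e,\psi))$ via the group law of $G\rtimes\Theta$, with the result quoted from \cite{LaSu}. Your reparametrization of the sum over $(\psi_1,\dots,\psi_n)\in\Theta^n$ as a sum over one common rotation $\theta$ and free coordinates $\bar\gamma\in\Theta^{n-1}$ (e.g.\ for $n=2$, $(e,\psi_1)(g,\theta)(e,\psi_2)=(\psi_1(g),\psi_1\theta\psi_2)$, so $(\theta',\gamma)=(\psi_1,\psi_1\theta\psi_2)$ ranges bijectively over $\Theta^2$) is exactly the computation the paper leaves implicit.
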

\bigskip

\begin{remark}
 Observe that the formula in Fact \ref{f} depends only on the orbit of $(g_1,g_2,\cdots,g_{n-1})$ under $\Theta$. Following \cite{LaSu}
 we put,$$U(g_1,g_2,\cdots, g_{k-1})= \sum_{\substack{{\theta}\in \Theta\\ \bar{\gamma}\in {\Theta}^{k-1}}} S((\theta(g_1),\gamma_1),(\theta(g_2),\gamma_2),\cdots,(\theta(g_{k-1}),\gamma_{k-1})).$$
  Then it is simple to verify that $\{U(\bar{g}): [\bar{g}]\in G^{k-1}/\Theta\}$ is an
 orthogonal basis for $F_k(P^{(R^{G\rtimes \Theta}\subset R)}).$
\end{remark}
\bigskip

\begin{lemma}\label{u}
 \begin{align*}
 & U(g_1,g_2,\cdots,g_{k-1}) U(h_1,h_2,\cdots,h_{k-1})\\
 & \qquad = {(\sqrt {\lvert G\rvert})}^{(\lceil k/2 \rceil -1)}{(\sqrt {\lvert \Theta \rvert})}^{(\lceil k/2 \rceil -1)}(\lvert \Theta \rvert)^{\lfloor k/2 \rfloor}\sum_{{\theta}^{\dprime}\in \Theta}(\prod_{i=2}^{\lceil k/2\rceil}
 \delta(h_1 {\theta}^{\dprime}(g_{k+1-i}),h_i)) ~~~ \times\\
 & \qquad \qquad \qquad \qquad \qquad  U(h_1{\theta}^{\dprime}(g_1), h_1 {\theta}^{\dprime}(g_2), \cdots, h_1{\theta}^{\dprime}(g_{\lceil k/2\rceil}),
 h_{\lceil k/2 \rceil +1}, h_{\lceil k/2 \rceil +2},\cdots, h_{k-1})
\end{align*}
For $k=2$ the above is being interpreted as 
\begin{equation*}
 U(g) U(h)= \lvert \Theta \rvert \sum_{{\theta}^{\dprime}\in \Theta} \Theta S(g {\theta}^{\dprime}(h)).
\end{equation*}

\end{lemma}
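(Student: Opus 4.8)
The plan is to compute the product straight from the definition of $U$ as a sum of $S$-tangles labelled by elements of the semidirect product, and to feed it into the multiplication rule of Fact \ref{s} read inside the group planar algebra $P^{(R^{G\rtimes\Theta}\subset R)}=P(G\rtimes\Theta)$, where a simple closed loop now carries the scalar $\sqrt{|G\rtimes\Theta|}=\sqrt{|G|}\,\sqrt{|\Theta|}$. Writing $a_j=(\theta(g_j),\gamma_j)$ and $b_j=(\theta'(h_j),\gamma'_j)$, I would expand $U(\bar g)U(\bar h)$ as the double sum over $\theta,\theta'\in\Theta$ and $\bar\gamma,\bar\gamma'\in\Theta^{k-1}$ of the products $S(a_1,\dots,a_{k-1})S(b_1,\dots,b_{k-1})$ and apply Fact \ref{s} termwise. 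This at once produces the prefactor $(\sqrt{|G|}\,\sqrt{|\Theta|})^{\lceil k/2\rceil-1}$, the Kronecker deltas $\prod_{i=2}^{\lceil k/2\rceil}\delta(b_1a_{k+1-i},b_i)$ evaluated in $G\rtimes\Theta$, and the surviving tangle $S(b_1a_1,\dots,b_1a_{\lceil k/2\rceil},b_{\lceil k/2\rceil+1},\dots,b_{k-1})$.

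The key structural point is that each delta on $G\rtimes\Theta$ factors as a delta on the $G$-coordinate times one on the $\Theta$-coordinate. Using $(g_1,\theta_1)(g_2,\theta_2)=(g_1\theta_1(g_2),\theta_1\theta_2)$, the constraint $\delta(b_1a_{k+1-i},b_i)$ splits into $\delta(\gamma'_1\gamma_{k+1-i},\gamma'_i)$ and $\delta(\theta'(h_1)\gamma'_1(\theta(g_{k+1-i})),\theta'(h_i))$. Since the variables $\gamma'_2,\dots,\gamma'_{\lceil k/2\rceil}$ occur nowhere in the surviving tangle (only the entries $b_1a_j$ and the $b_j$ with $j>\lceil k/2\rceil$ appear there), summing over them against the $\Theta$-deltas simply deletes these variables together with the $\Theta$-deltas, at no cost in scalars. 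What is left is the $G$-part deltas and a single tangle per remaining assignment of variables.

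Next I would make the change of variables $\phi=\theta'$, $\theta''=\theta'^{-1}\gamma'_1\theta$ (equivalently $\gamma'_1\theta=\phi\theta''$), together with $\mu_j=\gamma'_1\gamma_j$ for $1\le j\le\lceil k/2\rceil$ and $\mu_j=\gamma'_j$ for $j>\lceil k/2\rceil$; since $\alpha$ is injective this is a bijection of the summation variables for each fixed $\gamma'_1$. Under it the $G$-entry of the surviving tangle in position $j\le\lceil k/2\rceil$ becomes $\phi(h_1\theta''(g_j))$ and in position $j>\lceil k/2\rceil$ becomes $\phi(h_j)$, so the tangle is exactly the $(\phi,\bar\mu)$-summand of $U(h_1\theta''(g_1),\dots,h_1\theta''(g_{\lceil k/2\rceil}),h_{\lceil k/2\rceil+1},\dots,h_{k-1})$; likewise each $G$-delta becomes $\delta(\phi(h_1\theta''(g_{k+1-i})),\phi(h_i))=\delta(h_1\theta''(g_{k+1-i}),h_i)$, matching the deltas in the statement. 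Summing over the reindexed variables $\phi,\theta'',\mu_1,\dots,\mu_{k-1}$ thus reconstitutes $\sum_{\theta''}\bigl(\prod_i\delta(\cdots)\bigr)\,U(\cdots)$.

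Finally I would pin down the leftover constant. After the change of variables the summand depends on neither $\gamma'_1$ nor $\gamma_{\lceil k/2\rceil+1},\dots,\gamma_{k-1}$ (the latter having dropped out once the $\Theta$-deltas were consumed), so the free sum over these $1+(\,k-1-\lceil k/2\rceil\,)=\lfloor k/2\rfloor$ inert variables contributes precisely $|\Theta|^{\lfloor k/2\rfloor}$; multiplied by $(\sqrt{|G|}\,\sqrt{|\Theta|})^{\lceil k/2\rceil-1}$ this gives the asserted scalar. The degenerate case $k=2$, where the product of deltas is empty, is checked directly from the relation $S(g_1)S(g_2)=S(g_1g_2)$ of Fact \ref{s} together with the definitions. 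I expect the main obstacle to be purely bookkeeping: correctly factoring the $G\rtimes\Theta$-deltas, confirming that exactly the variables $\gamma'_1$ and $\gamma_{\lceil k/2\rceil+1},\dots,\gamma_{k-1}$ become inert (with care at the boundary index $\lceil k/2\rceil$ when $k$ is odd), and verifying that the proposed change of variables is a genuine bijection, all of which must be handled precisely in order to land the exact power of $|\Theta|$.
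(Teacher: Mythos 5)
Your proposal is correct and takes essentially the same route as the paper's own proof: expand $U(\bar{g})U(\bar{h})$ as a double sum of $S$-tangles, apply Fact \ref{s} termwise in $P(G\rtimes\Theta)$ (with loop parameter $\sqrt{|G|}\sqrt{|\Theta|}$), split the semidirect-product deltas into $G$- and $\Theta$-components, consume the $\Theta$-deltas by summing the otherwise-absent variables, and make the change of variables $\theta''=\theta'^{-1}\gamma_1'\theta$ (the paper's $\theta''=\phi^{-1}\sigma_1\theta$) together with the reindexing of the $\Theta$-labels. Your identification of exactly $\lfloor k/2\rfloor$ inert variables producing $|\Theta|^{\lfloor k/2\rfloor}$ coincides with the paper's extraction of the factors $|\Theta|^{\lfloor k/2\rfloor-1}$ and $|\Theta|$ in its final step.
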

\begin{proof}
 \begin{align*}
  & U(g_1,g_2,\cdots, g_{k-1}) U(h_1,h_2,\cdots, h_{k-1})\\
  & \qquad = (\sum_{\substack{{\theta}\in \Theta\\ \bar{\gamma}\in {\Theta }^{k-1}}} S((\theta(g_1),\gamma_1),(\theta(g_2),\gamma_2), \cdots, (\theta(g_{k-1}),\gamma_{k-1})))~~~ \times\\
  & \qquad  \qquad \qquad (\sum_{\substack{{\phi}\in \Theta\\ \bar{\sigma}\in {\Theta}^{k-1}}} S((\phi(h_1),\sigma_1), (\phi(h_2),\sigma_2), \cdots, (\phi(h_{k-1}),\sigma_{k-1})))\\
  & \qquad = (\sqrt{G})^{(\lceil k/2 \rceil -1)} (\sqrt{\lvert \Theta \rvert})^{(\lceil k/2 \rceil -1)} \sum_{\theta,\phi, \bar{\gamma}, \bar{\sigma}}(\prod_{i=2}^{\lceil k/2 \rceil} \delta((\phi(h_1)\sigma_1 \theta(g_{k+1-i}),\sigma_1 \gamma_{k+1-i}), (\phi(h_i),\sigma_i)))~~\times\\
  & \qquad \qquad \qquad S((\phi(h_1)\sigma_1 \theta(g_1), \sigma_1 \gamma_1), (\phi(h_1)\sigma_1 \theta(g_2), \sigma_1 \gamma_2), \cdots,
  (\phi(h_1) \sigma_1 \theta(g_{\lceil k/2 \rceil}), \sigma_1 \gamma_{\lceil k/2 \rceil}), \\ 
  & \qquad \qquad \qquad(\phi(h_{\lceil k/2 \rceil +1}),\sigma_{\lceil k/2 \rceil +1}),\cdots,
  (\phi(h_{k-1}),\sigma_{k-1})))~~~~~~~~~~~\textrm{[Using~~Fact}~~~\ref{s}]\\ 
   & \qquad = (\sqrt{G})^{(\lceil k/2 \rceil -1)} (\sqrt{\lvert \Theta \rvert})^{(\lceil k/2 \rceil -1)} \sum_{\phi,\theta} \sum_{\substack{\sigma_1\in \Theta\\ \bar{\delta}\in {\Theta}^{k-1}\\ {\bar{\gamma}}_{(\lceil k/2 \rceil,k-1]}}}(\prod_{i=2}^{\lceil k/2 \rceil} \delta(\phi(h_1 {\phi}^{-1} \sigma_1 \theta(g_{k+1-i})),\phi(h_i))~~\times\\
    & \qquad \qquad \qquad S((\phi(h_1{\phi}^{-1}\sigma_1 \theta(g_1)), \delta_1), (\phi(h_1 {\phi}^{-1}\sigma_1 \theta(g_2)), \delta_2), \cdots,
  (\phi(h_1 {\phi}^{-1} \sigma_1 \theta(g_{\lceil k/2 \rceil})), \delta_{\lceil k/2 \rceil}), \\ 
  & \qquad \qquad \qquad (\phi(h_{\lceil k/2 \rceil +1}),\delta_{\lceil k/2 \rceil +1}),\cdots,
  (\phi(h_{k-1}),\delta_{k-1})))\\ 
   & \qquad = (\sqrt{G})^{(\lceil k/2 \rceil -1)} (\sqrt{\lvert \Theta \rvert})^{(\lceil k/2 \rceil -1)} (\lvert \Theta \rvert)^{(\lfloor k/2 \rceil-1)} \lvert \Theta \rvert \sum_{\phi,\bar{\delta}}\sum_{{\theta}^{\dprime}\in \Theta}(\prod_{i=2}^{\lceil k/2 \rceil} \delta(h_1 \theta^{\dprime}(g_{k+1-i}), h_i)~~\times\\
    & \qquad \qquad \qquad S((\phi(h_1\theta^{\dprime}(g_1)), \delta_1), (\phi(h_1\theta^{\dprime}(g_2)), \delta_2), \cdots,
  (\phi(h_1 \theta^{\dprime}(g_{\lceil k/2 \rceil})), \delta_{\lceil k/2 \rceil}), \\ 
  & \qquad \qquad \qquad (\phi(h_{\lceil k/2 \rceil +1}),\delta_{\lceil k/2 \rceil +1}),\cdots,
  (\phi(h_{k-1}),\delta_{k-1})))~~~~~~~~~\textrm{[Putting}~~{\phi}^{-1}\sigma_1 \theta = {\theta}^{\dprime}]
  \end{align*}
  This completes the proof.
\end{proof}

\begin{definition}\label{defi}
 Define linear maps ${\Phi}_k : P^{\Theta}_k\longmapsto F_k(P_k(G\ltimes \Theta))$ by,\par
$${\Phi}_k(\Theta (S(\bar{g})))= {\lvert \Theta\rvert}^{-\lfloor k/2 \rfloor}(\sqrt{\lvert \Theta\rvert})^{(1-\lceil k/2\rceil)} U(\bar{g}),$$ which also equals to
$${(\alpha(S))}^{1-k-\lfloor k/2 \rfloor} U(\bar{g}).$$ Here $S$ is the tangle as in Notation \ref{basis}, but unlabelled. Here $[\bar{g}]\in G^{k-1}/\Theta.$
\end{definition}

To prove Theorem \ref{subgroup} we need to check that the following equation holds:
\begin{equation} \label{sanat}
{\Phi}_{k_0}(Z_T(x_1\otimes\cdots \otimes x_b))=Z^{\prime}_T({\Phi}_{k_1}(x_1)\otimes \cdots \otimes {\Phi}_{k_b}(x_b))\}
\end{equation}
 for any tangle $T(= T^{k_0}_{k_1,\cdots,k_b}).$
 
In view of \cite{KodSun}[Theorem 3.3], it suffices to prove 
\begin{theorem}\label{generating tangles}
The collection $\mathscr{T}$ of those
tangles $T$ which satisfy Equation (\ref{sanat}) contains a class of `generating tangles' namely
$\mathscr{T} \supset \{1^{0_{+}},1^{0_{-}}\}\cup\{{\mathcal{E}}^k:k\geq 2\}\cup \{{(E^{\prime})}^k_k: k\geq 1\}\cup \{E^k_{k+1},M_k, I^{k+1}_k : k\in Col\} $
\end{theorem}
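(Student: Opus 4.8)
The plan is to invoke the criterion of \cite{KodSun}[Theorem 3.3], by which a family of linear maps $\{\Phi_k\}$ between two subfactor planar algebras extends to a planar algebra isomorphism as soon as each $\Phi_k$ is a linear isomorphism and the intertwining relation (\ref{sanat}) holds for every tangle in a generating family. Since $\{\Theta S(\bar g) : [\bar g]\in G^{k-1}/\Theta\}$ and $\{U(\bar g) : [\bar g]\in G^{k-1}/\Theta\}$ are bases of $P^{\Theta}_k$ and $F_k(P_k(G\rtimes\Theta))$ respectively, indexed by the same set, each $\Phi_k$ of Definition \ref{defi} is a linear isomorphism. It therefore remains to verify (\ref{sanat}) for the generating tangles $1^{0_{\pm}}$, $\mathcal{E}^k$, $(E')^k_k$, $E^k_{k+1}$, $M_k$ and $I^{k+1}_k$, which I would do one family at a time, evaluating both sides on the basis $\{\Theta S(\bar g)\}$. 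For the unit tangles $1^{0_{\pm}}$ both sides produce the identity of the relevant $0$-box space, on which $\Phi$ is the identity of $\C$; this is immediate.

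For the inclusion tangle $I^{k+1}_k$ I would use that adding a through-string on the right sends $\Theta S(\bar g)$ and $U(\bar g)$ to their evident images in the next box, so that (\ref{sanat}) reduces to comparing the prefactor $c_k := |\Theta|^{-\lfloor k/2\rfloor}(\sqrt{|\Theta|})^{1-\lceil k/2\rceil}$ of Definition \ref{defi} at levels $k$ and $k+1$ with the scalar $\alpha(I^{k+1}_k)$ produced by the intermediate action $Z'=\alpha(\cdot)Z_{F(\cdot)}$ of Theorem \ref{main1}. For the conditional expectation tangles $(E')^k_k$ and $E^k_{k+1}$ I would appeal directly to the identities $Z'_{(E')^n_n}(x)=\sqrt{[Q:N]}\,E_{Q'\cap Q_{n-1}}(x)$ and $Z'_{E^n_{n+1}}(x)=\sqrt{[Q:N]}\,E_{N'\cap Q_{n-1}}(x)$ established in the proof of Theorem \ref{main1}, matching these against the corresponding trace-preserving conditional expectations of $P^{\Theta}$ and again checking that the scalars $c_k$ transform correctly. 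The Jones projection tangles $\mathcal{E}^k$ are handled by comparing the explicit Jones projections $e^Q$ recorded in Theorem \ref{planarstinv} with the standard Jones projections of $P^{\Theta}$.

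The substantive case is the multiplication tangle $M_k$, where the whole point of the corrected constant in Definition \ref{defi} resides. Here the left side of (\ref{sanat}) is computed from the product rule of Fact \ref{theta}, giving $\Phi_k(\Theta S(\bar g)\,\Theta S(\bar h)) = c_k\,(\sqrt{|G|})^{\lceil k/2\rceil-1}\sum_{\theta''}\big(\prod_{i=2}^{\lceil k/2\rceil}\delta(h_1\theta''(g_{k+1-i}),h_i)\big)U(W(\theta''))$, while the right side is $c_k^2\,Z'_{M_k}(U(\bar g)\otimes U(\bar h))$. Since $U(\bar g),U(\bar h)\in P'_k$, a Lemma \ref{follows}-type absorption of the redundant $q$'s shows that $Z_{F(M_k)}(U(\bar g)\otimes U(\bar h))$ is the ambient group-algebra product $U(\bar g)\,U(\bar h)$, whence $Z'_{M_k}(U(\bar g)\otimes U(\bar h))=\alpha(M_k)\,U(\bar g)\,U(\bar h)$, which I would expand using Lemma \ref{u}. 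Equating the two sides and cancelling the common factor $(\sqrt{|G|})^{\lceil k/2\rceil-1}\sum_{\theta''}(\cdots)U(W(\theta''))$ reduces everything to the single scalar identity $1=c_k\,\alpha(M_k)\,(\sqrt{|\Theta|})^{\lceil k/2\rceil-1}\,|\Theta|^{\lfloor k/2\rfloor}$; since $c_k\,(\sqrt{|\Theta|})^{\lceil k/2\rceil-1}\,|\Theta|^{\lfloor k/2\rfloor}=1$ by the definition of $c_k$, and $[M:Q]=|\Theta|$ gives $\alpha(M_k)=1$ because $c(M_k)=0$, this collapses to the tautology $|\Theta|^0=1$.

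The hard part will be the bookkeeping in the multiplication case: correctly evaluating $\alpha(M_k)$ from Definition \ref{defalpha}, that is verifying the loop count $l(M_k)$ and hence $c(M_k)=0$ under the shading in force, and confirming that the $q$-absorption genuinely converts $Z'_{M_k}$ on $P'$-inputs into the ambient product of $P(G\rtimes\Theta)$, so that the powers of $|\Theta|$ arising from $c_k$, from Lemma \ref{u}, and from $\alpha(M_k)$ cancel exactly. Everything else is a matter of transporting the facts about $Z'$ already proved in Section \ref{sec:planar-algebra-an} through the isomorphism $\Phi$.
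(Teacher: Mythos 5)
Your treatment of the multiplication tangle is essentially the paper's own proof: both sides of (\ref{sanat}) are evaluated on the bases $\{\Theta S(\bar{g})\}$ and $\{U(\bar{g})\}$, the left side expanded via Fact \ref{theta}, the right side via Lemma \ref{u} together with $\alpha(M_k)=1$ and the absorption of the redundant $q$'s on $P^\prime$-inputs, and the scalar identity you isolate, $c_k\,(\sqrt{|\Theta|})^{\lceil k/2\rceil-1}\,|\Theta|^{\lfloor k/2\rfloor}=1$, is exactly the cancellation the paper performs; this is indeed where the corrected constant of Definition \ref{defi} does its work. Your sketches for $1^{0_\pm}$, $\mathcal{E}^k$ and $I^{k+1}_k$ are also consistent with the paper, which itself only sketches $I^{k+1}_k$ (note, though, that the inclusion does not send $S(\bar{g})$ to a literal relabelling: for even $k$ it inserts the normalized sum $h=|G|^{-1/2}\sum_g S(g)$ in the middle position, and on the $U$-side extra sums over $\Theta$ appear, so ``evident image'' conceals a small but genuine computation) and does not write out $1^{0_\pm}$ or $\mathcal{E}^k$ at all.

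Where you genuinely diverge is at the conditional expectation tangles $E^k_{k+1}$ and $(E^\prime)^k_k$. The paper handles these by direct computation inside the group planar algebra: relations $1$, $2$ and the exchange relation yield closed formulas (Equations (\ref{a}) and (\ref{b})) for $Z_{E^k_{k+1}}(S(\bar{g}))$, and then both sides of (\ref{sanat}) are expanded with all constants explicit, using $\alpha(E^{2n}_{2n+1})=|\Theta|^{-1/2}$ and $\alpha(E^{2n-1}_{2n})=|\Theta|^{1/2}$. You instead want to quote $Z^\prime_{E^n_{n+1}}=\sqrt{[Q:N]}\,E_{N^\prime\cap Q_{n-1}}$ and $Z^\prime_{(E^\prime)^n_n}=\sqrt{[Q:N]}\,E_{Q^\prime\cap Q_{n-1}}$ from the proof of Theorem \ref{main1} and transport them through $\Phi$ by uniqueness of the trace-preserving conditional expectation. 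That route can be made to work, but it needs an ingredient you never establish: that $\Phi_k$ is \emph{trace-preserving} (and carries the level-$k$ subalgebra onto the corresponding subalgebra of $P^\prime$). Multiplicativity you get from $M_k\in\mathscr{T}$ and inclusion-compatibility from $I^{k+1}_k\in\mathscr{T}$, but trace preservation cannot be deduced from the tangles already placed in $\mathscr{T}$ without circularity, because the trace tangle on $P_k$ is precisely a composite of the conditional expectation tangles $E^j_{j+1}$ you are in the middle of handling. So you would have to verify $tr_{N\subset Q}(\Phi_k(\Theta S(\bar{g})))=tr(\Theta S(\bar{g}))$ directly on the basis, using the trace formula of Theorem \ref{planarstinv} --- a computation of the same kind and difficulty as the paper's direct verification, which is presumably why the paper chose the computational route in the first place. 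With that one check added, your argument closes; without it, the conditional expectation cases are not yet proved.
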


We prove in detail that $\mathscr{T}$ contains  the multiplication tangles and the right conditional expectation tangles. In other cases we just  sketch the proofs.
\begin{lemma}
 $M_k \in \mathscr{T}$.
\end{lemma}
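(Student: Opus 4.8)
The plan is to verify Equation~(\ref{sanat}) for $T=M_k$ by evaluating both sides on the orthogonal basis $\{\Theta S(\bar g):[\bar g]\in G^{k-1}/\Theta\}$ of $P^\Theta_k$ and comparing scalars. Since $Z_{M_k}$ and $Z^\prime_{M_k}$ are both bilinear, it suffices to treat a single pair of basis vectors $\Theta S(\bar g),\Theta S(\bar h)$, so I would need to check
\[
\Phi_k\bigl(\Theta S(\bar g)\,\Theta S(\bar h)\bigr)=Z^\prime_{M_k}\bigl(\Phi_k(\Theta S(\bar g))\otimes\Phi_k(\Theta S(\bar h))\bigr).
\]
For the left-hand side, $Z_{M_k}(\Theta S(\bar g)\otimes\Theta S(\bar h))$ is just the ordinary product $\Theta S(\bar g)\,\Theta S(\bar h)$, which Fact~\ref{theta} expands as a $\Theta$-sum of $\Theta S(\cdots)$'s carrying the factor $(\sqrt{\lvert G\rvert})^{\lceil k/2\rceil-1}$ and the constraints $\prod_{i=2}^{\lceil k/2\rceil}\delta(h_1\theta^{\dprime}(g_{k+1-i}),h_i)$. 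Applying $\Phi_k$ termwise (Definition~\ref{defi}) then turns each $\Theta S(\cdots)$ into $C_k\,U(\cdots)$, where $C_k=\lvert\Theta\rvert^{-\lfloor k/2\rfloor}(\sqrt{\lvert\Theta\rvert})^{1-\lceil k/2\rceil}$.

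For the right-hand side I would first substitute $\Phi_k(\Theta S(\bar g))=C_k\,U(\bar g)$, $\Phi_k(\Theta S(\bar h))=C_k\,U(\bar h)$ and use $Z^\prime_{M_k}=\alpha(M_k)\,Z_{F(M_k)}$ from Theorem~\ref{main1}, recalling $[M:Q]=\lvert\Theta\rvert$. The crucial intermediate step is the identity
\[
Z_{F(M_k)}\bigl(U(\bar g)\otimes U(\bar h)\bigr)=\alpha(M_k)^{-1}\,U(\bar g)\,U(\bar h),
\]
i.e.\ that on the $F$-invariant inputs $U(\bar g),U(\bar h)\in F_k(P_k)$ the $q$-surrounded multiplication tangle recovers precisely the ordinary product (up to the scalar $\alpha(M_k)^{-1}$), the latter being computed in Lemma~\ref{u}. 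Diagrammatically this holds because the outer $q$'s of $F(M_k)$ are absorbed into the already $F$-invariant inputs by idempotency $q^2=q$ and the exchange relation (Corollary~\ref{cor:exchange}), while the $q$'s in the central channel collapse via the biprojection relations of Theorem~\ref{Bisch}, leaving exactly the power of $\lvert\Theta\rvert$ recorded by $\alpha(M_k)^{-1}$. Granting this, the right-hand side becomes $C_k^2\,\alpha(M_k)\,\alpha(M_k)^{-1}\,U(\bar g)U(\bar h)=C_k^2\,U(\bar g)U(\bar h)$, and inserting Lemma~\ref{u} yields the same sum of $U(\cdots)$'s as the left-hand side, now weighted by $C_k^2(\sqrt{\lvert G\rvert})^{\lceil k/2\rceil-1}(\sqrt{\lvert\Theta\rvert})^{\lceil k/2\rceil-1}\lvert\Theta\rvert^{\lfloor k/2\rfloor}$.

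Finally I would match the prefactors: the $(\sqrt{\lvert G\rvert})^{\lceil k/2\rceil-1}$ factors and the identical orbit sums with their $\delta$-constraints cancel, so the lemma reduces to the purely numerical identity
\[
C_k=C_k^{\,2}\,(\sqrt{\lvert\Theta\rvert})^{\lceil k/2\rceil-1}\,\lvert\Theta\rvert^{\lfloor k/2\rfloor},
\]
which is immediate from $C_k=\lvert\Theta\rvert^{-\lfloor k/2\rfloor}(\sqrt{\lvert\Theta\rvert})^{1-\lceil k/2\rceil}$, since the exponents of $\lvert\Theta\rvert$ cancel exactly. I expect the main obstacle to be the intermediate identity for $Z_{F(M_k)}$: one must compute the loop number $l(M_k)$ entering $\alpha(M_k)=[M:Q]^{\frac12 c(M_k)}$ (Definition~\ref{defalpha}) and check carefully, via the $q$-relations, that the $F$-surrounding of the multiplication tangle contributes exactly the compensating power of $\lvert\Theta\rvert$; the remaining steps are the routine substitutions above. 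The degenerate case $k=2$ follows by the same argument using the $k=2$ forms of Fact~\ref{theta} and Lemma~\ref{u}.
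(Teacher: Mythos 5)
Your proposal is correct and takes essentially the same route as the paper: evaluate Equation~(\ref{sanat}) on the basis $\{\Theta S(\bar g)\}$, expand the left-hand side via Fact~\ref{theta} and Definition~\ref{defi}, expand the right-hand side via Lemma~\ref{u}, and close with exactly the numerical identity $C_k=C_k^{\,2}(\sqrt{\lvert\Theta\rvert})^{\lceil k/2\rceil-1}\lvert\Theta\rvert^{\lfloor k/2\rfloor}$ you state. The one place you anticipate difficulty --- your intermediate identity for $Z_{F(M_k)}$ --- is in the paper an immediate observation rather than an obstacle: $\alpha(M_k)=1$, and $Z_{F(M_k)}=F_k\circ Z_{M_k}$ holds by the very definition of $F$ (the $q$'s sit only around the outside, not in a central channel), so on $P^\prime$-inputs one just needs that $F_k$ fixes the $U(\cdots)$'s appearing in Lemma~\ref{u}, which holds since they lie in the range of the idempotent $F_k$.
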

\begin{proof}
 Firstly note,
 \begin{align*}
 & {\Phi}_k(Z_{M_k}(\Theta S(g_1,g_2,\cdots,g_{k-1})\otimes \Theta S(h_1,h_2,\cdots,h_{k-1})))\\
 & \qquad \qquad \qquad= {\Phi}_k(\Theta S(g_1,g_2,\cdots,g_{k-1}) \Theta S(h_1,h_2,\cdots,h_{k-1}))\\
 & \qquad \qquad \qquad= {(\sqrt {\lvert G\rvert})}^{(\lceil k/2 \rceil -1)}\sum_{{\theta}^{\dprime}\in \Theta}(\prod_{i=2}^{\lceil k/2\rceil}
 \delta(h_1 {\theta}^{\dprime}(g_{k+1-i}),h_i))\times \\
 & \qquad \qquad \qquad \qquad \qquad {\Phi}_k(\Theta S(h_1{\theta}^{\dprime}(g_1), h_1 {\theta}^{\dprime}(g_2), \cdots, h_1{\theta}^{\dprime}(g_{\lceil k/2\rceil}),
 h_{\lceil k/2 \rceil +1}, h_{\lceil k/2 \rceil +2},\cdots, h_{k-1}))\\
 &\qquad \qquad \qquad \qquad \qquad \qquad\qquad\qquad \qquad \qquad \textrm{[Using~~Fact}~~\ref{theta}]\\
 & \qquad \qquad \qquad= (\sqrt{\lvert \Theta\rvert})^{1-\lceil k/2 \rceil}(\lvert \Theta\rvert)^{-\lfloor k/2\rfloor}{(\sqrt {\lvert G\rvert})}^{(\lceil k/2 \rceil -1)}\sum_{{\theta}^{\dprime}}(\prod_{i=2}^{\lceil k/2\rceil}
 \delta(h_1 {\theta}^{\dprime}(g_{k+1-i}),h_i))\times \\
 & \qquad \qquad \qquad \qquad \qquad U(h_1{\theta}^{\dprime}(g_1), h_1 {\theta}^{\dprime}(g_2), \cdots, h_1{\theta}^{\dprime}(g_{\lceil k/2\rceil}),
 h_{\lceil k/2 \rceil +1}, h_{\lceil k/2 \rceil +2},\cdots, h_{k-1})\\
 &\qquad \qquad \qquad \qquad \qquad \qquad\qquad\qquad \qquad \qquad \textrm{[Definition}~~\ref{defi}]
 \end{align*} 
 On the other hand, since $\alpha(M_k) =1$ the following equations hold:
 \begin{align*}
  & Z^{\prime}_{M_k}({\Phi}_k(\Theta S(g_1,g_2,\cdots,g_{k-1}))\otimes {\Phi}_k(\Theta S(h_1,h_2,\cdots, h_{k-1})))\\
  &  = \alpha(M_k) F_k(Z_{M_k}((\lvert \Theta\rvert)^{-2\lfloor  k/2 \rfloor} (\sqrt{\lvert\Theta \rvert})^{2(1-\lceil k/2 \rceil)}U(g_1,g_2,\cdots,g_{k-1})\otimes 
   U(h_1,h_2,\cdots,h_{k-1})))\\
   &\qquad \qquad \qquad \qquad \qquad \qquad\qquad\qquad \qquad \qquad \qquad \qquad \qquad \textrm{[Definition}~~\ref{defi}]\\
  & = {(\sqrt {\lvert G\rvert})}^{(\lceil k/2 \rceil -1)}{(\sqrt {\lvert\Theta\rvert})}^{(\lceil k/2 \rceil -1)}(\lvert\Theta\rvert)^{\lfloor k/2 \rfloor}(\lvert\Theta\rvert)^{-2\lfloor  k/2 \rfloor} (\sqrt{\lvert\Theta\rvert})^{2(1-\lceil k/2 \rceil)} 
   \sum_{{\theta}^{\dprime}\in \Theta}(\prod_{i=2}^{\lceil k/2\rceil}
 \delta(h_1 {\theta}^{\dprime}(g_{k+1-i}),h_i)) \\
  & \qquad \qquad \qquad \qquad U(h_1{\theta}^{\dprime}(g_1), h_1 {\theta}^{\dprime}(g_2), \cdots, h_1{\theta}^{\dprime}(g_{\lceil k/2\rceil}),
 h_{\lceil k/2 \rceil +1}, h_{\lceil k/2 \rceil +2},\cdots, h_{k-1}\\
 &\qquad \qquad \qquad \qquad \qquad \qquad\qquad\qquad \qquad \qquad \qquad \qquad \qquad \textrm{[by~~Lemma~~\ref{u}]}\\
 & = {(\sqrt {\lvert G\rvert})}^{(\lceil k/2 \rceil -1)}(\lvert\Theta\rvert)^{-\lfloor  k/2 \rfloor} (\sqrt{\lvert\Theta\rvert})^{1-\lceil k/2 \rceil)}\sum_{{\theta}^{\dprime}}(\prod_{i=2}^{\lceil k/2\rceil}
 \delta(h_1 {\theta}^{\dprime}(g_{k+1-i}),h_i))\\ 
 & \qquad \qquad \qquad \qquad U(h_1{\theta}^{\dprime}(g_1), h_1 {\theta}^{\dprime}(g_2), \cdots, h_1{\theta}^{\dprime}(g_{\lceil k/2\rceil}),
 h_{\lceil k/2 \rceil +1}, h_{\lceil k/2 \rceil +2},\cdots, h_{k-1})
 \end{align*}
 This completes the proof.
\end{proof}
\begin{lemma}
 $E^k_{k+1} \in \mathscr{T}$.
\end{lemma}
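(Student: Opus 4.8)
The plan is to verify Equation~(\ref{sanat}) for $T=E^k_{k+1}$ directly on the orthogonal basis $\{\Theta S(\bar g): [\bar g]\in G^{k}/\Theta\}$ of $P^\Theta_{k+1}$ (so the single input disc has colour $k+1$ and $\bar g\in G^{k}$), treating the two parities of $k$ separately, exactly as in the Justifications of Case~I ($E^{2n}_{2n+1}$) and Case~II ($E^{2n-1}_{2n}$) given after Theorem~\ref{main1}. Throughout I use that, for the chain $R^{G\rtimes\Theta}\subset R^\Theta\subset R$, one has $[M:Q]=[R:R^\Theta]=|\Theta|$ and $[Q:N]=|G|$, so the modulus of $P(G)$ is $\sqrt{[Q:N]}=\sqrt{|G|}$, while the modulus of the ambient group planar algebra $P(G\rtimes\Theta)$ is $\sqrt{|G\rtimes\Theta|}=\sqrt{|G|}\,\sqrt{|\Theta|}$.

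First I would compute the left-hand side $\Phi_k\big(Z_{E^k_{k+1}}(\Theta S(\bar g))\big)$. Since the tangle $E^k_{k+1}$ carries no labels, its action commutes with the relabelling $\Theta$-action on $P(G)$, so $Z_{E^k_{k+1}}(\Theta S(\bar g))=\Theta\,Z_{E^k_{k+1}}(S(\bar g))$. Using the generating relations of the group planar algebra $P(G)$ (the relations $00,0,1,2,3,4$ together with the exchange relation, as in Fact~\ref{s}), the conditional-expectation tangle applied to the flower $S(g_1,\dots,g_k)$ caps off the appropriate strand, contributing a single factor $\sqrt{|G|}$ and a Kronecker constraint $\delta(\bar g)$ on the components of $\bar g$ (whose precise form is dictated by the parity of $k$), and collapsing to $\sqrt{|G|}\,\delta(\bar g)\,S(g_1,\dots,g_{k-1})$. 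Summing over $\Theta$ and then applying $\Phi_k$ via Definition~\ref{defi} turns this into $\sqrt{|G|}$ times the constant $c_k=|\Theta|^{-\lfloor k/2\rfloor}(\sqrt{|\Theta|})^{\,1-\lceil k/2\rceil}$ times $\delta(\bar g)\,U(g_1,\dots,g_{k-1})$.

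Next I would compute the right-hand side $Z'_{E^k_{k+1}}\big(\Phi_{k+1}(\Theta S(\bar g))\big)$. By Definition~\ref{defi}, $\Phi_{k+1}(\Theta S(\bar g))=c_{k+1}\,U(\bar g)$ with $c_{k+1}=|\Theta|^{-\lfloor (k+1)/2\rfloor}(\sqrt{|\Theta|})^{\,1-\lceil (k+1)/2\rceil}$, and by the definition of the tangle action in Theorem~\ref{main1}, $Z'_{E^k_{k+1}}=\alpha(E^k_{k+1})\,F_k\circ Z_{E^k_{k+1}}$, where $\alpha(E^{2n}_{2n+1})=|\Theta|^{-1/2}$ and $\alpha(E^{2n-1}_{2n})=|\Theta|^{1/2}$ as recorded earlier. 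I then apply the same conditional-expectation computation, but inside $P(G\rtimes\Theta)$, to each summand $S((\theta(g_1),\gamma_1),\dots,(\theta(g_k),\gamma_k))$ of $U(\bar g)$: the cap now imposes a constraint against the identity $(e,f)$, and since the Kronecker factor splits as a product of a $G$-part and a $\Theta$-part, the $G$-part reproduces $\delta(\bar g)$ while the $\Theta$-part collapses one of the $\gamma$-summations. The loop so created contributes $\sqrt{|G\rtimes\Theta|}=\sqrt{|G|}\sqrt{|\Theta|}$ rather than $\sqrt{|G|}$. Re-assembling the surviving sum over $\theta$ and over $\bar\gamma\in\Theta^{\,k-1}$ as $U(g_1,\dots,g_{k-1})$ (this is exactly the content of Fact~\ref{f}) and applying $F_k$, I collect the powers of $|\Theta|$ coming from $c_{k+1}$, from $\alpha(E^k_{k+1})$, from the extra $\sqrt{|\Theta|}$ in the big modulus, and from the integer power gained by killing a $\gamma$-index.

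The final step is to match the two sides. The main obstacle, and the only genuinely delicate point, is the bookkeeping of the powers of $|\Theta|$: I must check that the net exponent of $|\Theta|$ on the right-hand side reduces precisely to the exponent $-\lfloor k/2\rfloor+\tfrac12(1-\lceil k/2\rceil)$ hidden in $c_k$ on the left. Because $\lceil (k+1)/2\rceil,\lfloor (k+1)/2\rfloor$ swap roles relative to $\lceil k/2\rceil,\lfloor k/2\rfloor$ as the parity of $k$ changes, and because the sign of $\alpha(E^k_{k+1})=|\Theta|^{\mp1/2}$ likewise flips with parity, this verification splits cleanly into the even and odd cases; it is exactly this accounting that forces the corrected normalisation constant of Definition~\ref{defi} in place of the constant of \cite{LaSu}. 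Once the constants are seen to agree in both parities, Equation~(\ref{sanat}) holds for $E^k_{k+1}$, establishing $E^k_{k+1}\in\mathscr{T}$.
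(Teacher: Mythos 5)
Your overall plan coincides with the paper's: verify Equation (\ref{sanat}) for $T=E^k_{k+1}$ on the orthogonal basis $\{\Theta S(\bar g)\}$, compute the left side in $P(G)$ and the right side in $P(G\rtimes\Theta)$ via $Z^{\prime}_T=\alpha(T)\,F_k\circ Z_T$, reassemble the result using the $U$-basis, and match powers of $\lvert\Theta\rvert$ using the parity behaviour of $\lceil\cdot\rceil$ and $\lfloor\cdot\rfloor$. But there is a genuine error in the step where you describe the action of the tangle itself: you assert that in \emph{both} parities the cap contributes a factor $\sqrt{\lvert G\rvert}$ (respectively $\sqrt{\lvert G\rvert\lvert\Theta\rvert}$ upstairs) and a Kronecker constraint. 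That is correct only for odd $k$, where indeed $Z_T(S(g_1,\dots,g_k))=\sqrt{\lvert G\rvert}\,\delta(g_{\lceil k/2\rceil},g_{\lceil k/2\rceil+1})\,S(g_1,\dots,g_{\lceil k/2\rceil},g_{\lceil k/2\rceil+2},\dots,g_k)$, which is the paper's Equation (\ref{b}). For even $k$ the conditional expectation tangle produces \emph{no} loop factor and \emph{no} constraint: it simply deletes the entry $g_{k/2+1}$, i.e.\ $Z_T(S(g_1,\dots,g_k))=S(g_1,\dots,g_{k/2},g_{k/2+2},\dots,g_k)$, the paper's Equation (\ref{a}); correspondingly, on the crossed-product side the compensating power of $\lvert\Theta\rvert$ arises because the summation over the deleted index $\gamma_{k/2+1}$ in $U(\bar g)$ becomes free, contributing $\lvert\Theta\rvert$, and not from any loop value.

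This is not a cosmetic inaccuracy, because with your uniform description the constants fail to match in the even case. Writing $c_k=(\sqrt{\lvert\Theta\rvert})^{1-\lceil k/2\rceil}\lvert\Theta\rvert^{-\lfloor k/2\rfloor}$ for the constant of Definition \ref{defi}, your formulas give left side $=\sqrt{\lvert G\rvert}\,\delta\cdot c_k\,U$ and right side $=\alpha(T)\,c_{k+1}\,\sqrt{\lvert G\rvert\lvert\Theta\rvert}\,\delta\cdot U$, where $\alpha(T)=\lvert\Theta\rvert^{-1/2}$ and the $\Theta$-part of the delta merely collapses one $\gamma$-sum (a factor $1$); since $c_{k+1}=c_k/\sqrt{\lvert\Theta\rvert}$ when $k$ is even, the right side comes out as $\lvert\Theta\rvert^{-1/2}\sqrt{\lvert G\rvert}\,\delta\,c_k\,U$, off by $\sqrt{\lvert\Theta\rvert}$. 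With the correct (deletion) formula both sides equal $c_k\,U(g_1,\dots,g_{k/2},g_{k/2+2},\dots,g_k)$, since on the right $\lvert\Theta\rvert^{-1/2}\cdot c_{k+1}\cdot\lvert\Theta\rvert=c_k$ using $\lceil(k+1)/2\rceil=\lceil k/2\rceil+1$ and $\lfloor(k+1)/2\rfloor=\lfloor k/2\rfloor$. So the parity split is not only a matter of bookkeeping of ceilings, floors and the sign in $\alpha(E^k_{k+1})$: the two conditional-expectation tangles act in structurally different ways on the flower basis (deletion for even $k$, delta-plus-loop for odd $k$), and your even case must be redone with the deletion formula before the verification closes.
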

\begin{proof}
 \textbf{Case I}: $k=2n$. Put $T= E^k_{k+1}$.
 \bigskip

 For $n=1$, use relation $1$ to get
 $$Z_T(S(g_1,g_2))= S({g_1}^{-1}).$$

 If $n\geq 2$, we  again using relation $1$  get the following result easily:
 \begin{equation}\label{a}
 Z_T(S(g_1,g_2,\cdots, g_k))= S(g_1,g_2,\cdots,g_{(k/2)}, g_{(\frac{k}{2}+2)},\cdots,g_k) 
 \end{equation}
 Also observe, $\alpha(T)= {\lvert \Theta \rvert}^{-1/2}$.
 
 We show for $n \geq 2, T\in \mathscr{T} (n=1$ is exactly similar).
 \begin{align*}
 & {\Phi}_{k}(Z_T(\Theta S(g_1,g_2,\cdots,g_k)))\\
 &\qquad ={\Phi}_{k}(Z_T(\sum_{\theta \in \Theta}S(\theta(g_1),\theta(g_2),\cdots, \theta(g_k))))\\
 & \qquad= {\Phi}_{k}(\sum_{\theta}Z_T(S(\theta(g_1),\theta(g_2),\cdots, \theta(g_k))))\\
 & \qquad= {\Phi}_{k}(\sum_{\theta}S(\theta(g_1),\theta(g_2),\cdots,\theta(g_{k/2}),\theta(g_{(\frac{k}{2}+2)}),\cdots,\theta(g_k)))~~~\textrm{[by~~Equation~~\ref{a}]}\\
 &\qquad= {\Phi}_{k}(\Theta S(g_1, g_2,\cdots,g_{(k/2)},g_{(\frac{k}{2}+2)},\cdots,g_k))\\
 & \qquad= (\sqrt{\lvert \Theta \rvert})^{(1-\lceil k/2 \rceil)}(\lvert \Theta \rvert)^{-\lfloor k/2 \rfloor}U(g_1,g_2,\cdots,g_{(k/2)},g_{(\frac{k}{2}+2)},\cdots,g_k).
 \end{align*}
On the other hand,
\begin{align*}
 & Z^{\prime}_T({\Phi}_{k+1}(\Theta S(g_1,g_2,\cdots,g_k)))\\
 & \qquad= \alpha(T)F_k(Z_T((\sqrt{\lvert \Theta \rvert})^{(1-\lceil \frac{k+1}{2}\rceil)}(\lvert \Theta \rvert)^{-\lfloor\frac{k+1}{2}\rfloor}U(g_1,g_2,\cdots,g_k)))\\
 &\qquad= (\lvert \Theta \rvert)^{-1/2}(\sqrt{\lvert \Theta \rvert})^{(1-\lceil \frac{k+1}{2}\rceil)}(\lvert \Theta \rvert)^{-\lfloor \frac{k+1}{2}\rfloor}F_k(Z_T(\sum_{\substack{\theta \in \Theta\\ \bar{\gamma}\in {\Theta}^k}}(\theta(g_1),\gamma_1),(\theta(g_2),\gamma_2),\cdots,(\theta(g_k),\gamma_k))))\\
 &\qquad= (\lvert \Theta \rvert)^{-1/2}(\sqrt{\lvert \Theta \rvert})^{(1-\lceil \frac{k+1}{2}\rceil)}(\lvert \Theta \rvert)^{-\lfloor \frac{k+1}{2}\rfloor}
 F_k(\sum_{\theta,\bar{\gamma}}(\theta(g_1),\gamma_1),(\theta(g_2),\gamma_2),\cdots\\
 &\qquad \qquad \qquad \qquad \qquad\qquad\qquad \qquad \cdots,(\theta(g_{(k/2)}),\gamma_{(k/2)}),(\theta(g_{(\frac{k}{2}+2)}),\gamma_{(\frac{k}{2}+2)}),\cdots,(\theta(g_k),\gamma_k)))\\
 &\qquad= (\lvert \Theta \rvert)^{-1/2}(\sqrt{\lvert \Theta \rvert})^{(1-\lceil \frac{k+1}{2} \rceil)}(\lvert \Theta \rvert)^{-\lfloor \frac{k+1}{2} \rfloor} \lvert \Theta \rvert
 F_k(U(g_1,g_2,\cdots,g_{(k/2)},g_{(\frac{k}{2}+2)},\cdots,g_k)).\\
 & \qquad= (\sqrt{\lvert \Theta \rvert})^{(2-\lceil \frac{k+1}{2} \rceil)} (\lvert \Theta \rvert)^{-\lfloor \frac{k+1}{2} \rfloor} U(g_1,g_2,\cdots,g_{(k/2)},g_{(\frac{k}{2}+2)},\cdots,g_k)
 \end{align*}
Simple algebraic calculation tells us,
$\lceil (k+1)/2\rceil= \lceil (k/2)\rceil +1,$ and
$ \lfloor (k+1)/2 \rfloor= \lfloor k/2 \rfloor.$
Thus we have proved,
$$
 {\Phi}_{k}(Z_T(\Theta S(g_1,g_2,\cdots,g_k))) = Z^{\prime}_T({\Phi}_{k+1}(\Theta S(g_1,g_2,\cdots,g_k)))
$$
In other words, $E^k_{k+1} \in \mathscr{T}$.
\bigskip

\textbf{Case II}: $k=2n-1.$ Put $T= E^k_{k+1}$.
\bigskip

The case $n=1$ is trivial.
\bigskip

For $n\geq 2$ using relation $2$ as in \cite{LaSu} and exchange relation we easily get:
\begin{equation}\label{b}
Z_T(S(g_1,g_2,\cdots,g_k))= \sqrt{\lvert G\rvert} \delta(g_{(\lceil k/2 \rceil)},g_{(\lceil k/2 \rceil+1)}) S(g_1,g_2,\cdots, g_{(\lceil k/2\rceil)}, g_{(\lceil k/2 \rceil +2)},\cdots, g_k).
\end{equation}
 Then the following equations are easy to check:
\begin{align*}
 &{\Phi}_k(Z_T(\Theta S(g_1,g_2,\cdots,g_k)))\\
 &\qquad= {\Phi}_k(Z_T(\sum_{\theta\in \Theta} S(\theta(g_1),\theta(g_2),\cdots,\theta(g_k))))\\
 &\qquad= {\Phi}_k(\sum_{\theta}\sqrt{\lvert G \rvert}~~~\delta(g_{(\lceil k/2 \rceil)},g_{(\lceil k/2 \rceil+1)})S(\theta(g_1),\theta(g_2),\cdots,\theta(g_{(\lceil k/2\rceil)}), \theta(g_{(\lceil k/2 \rceil +2)}),\cdots,\theta(g_k))\\
 &\qquad \qquad \qquad \qquad \qquad \qquad \qquad \textrm{[by~~Equation~~\ref{b}]}\\
 & \qquad= {\Phi}_k(\sqrt{\lvert G\rvert}~~~\delta(g_{(\lceil k/2 \rceil)},g_{(\lceil k/2 \rceil+1)}) \Theta S(g_1,g_2,\cdots, g_{(\lceil k/2\rceil)}, g_{(\lceil k/2 \rceil +2)},\cdots, g_k))\\
 & \qquad= \sqrt{\lvert G\rvert}~~~\delta(g_{(\lceil k/2 \rceil)},g_{(\lceil k/2 \rceil+1)}) (\sqrt{\lvert \Theta \rvert})^{(1-\lceil k/2 \rceil)}(\lvert \Theta \rvert)^{-\lfloor k/2 \rfloor}
 U(g_1,g_2,\cdots, g_{(\lceil k/2\rceil)}, g_{(\lceil k/2 \rceil +2)},\cdots, g_k).
 \end{align*}
On the other hand,
\begin{align*}
 & Z^{\prime}_T({\Phi}_{k+1}(\Theta S(g_1,g_2,\cdots,g_k)))\\
 &\qquad = (\lvert \Theta \rvert)^{1/2} F_k(Z_T((\sqrt{\lvert \Theta \rvert})^{(1-\lceil \frac{k+1}{2}\rceil)}(\lvert \Theta \rvert)^{- \lfloor\frac{k+1}{2}\rfloor} U(g_1,g_2,\cdots,g_k)))~~\textrm{[since}~~\alpha(T) = {\lvert \Theta \rvert}^{1/2}].\\
 & \qquad = (\lvert \Theta \rvert)^{1/2}(\sqrt{\lvert \Theta \rvert})^{(1-\lceil \frac{k+1}{2}\rceil)}(\lvert \Theta \rvert)^{- \lfloor\frac{k+1}{2}\rfloor}\\
 &\qquad\qquad \qquad \qquad   F_k(Z_T(\sum_{\substack{\theta\in \Theta\\ \bar{\gamma}\in {\Theta}^k}} S((\theta(g_1),\gamma_1),(\theta(g_2),\gamma_2),\cdots,(\theta(g_k),\gamma_k))))\\
 &\qquad=  (\lvert \Theta \rvert)^{1/2}(\sqrt{\lvert \Theta \rvert})^{(1-\lceil \frac{k+1}{2}\rceil)}(\lvert \Theta \rvert)^{- \lfloor\frac{k+1}{2}\rfloor}
 (\sqrt{\lvert G\rvert} \sqrt{\lvert \Theta \rvert})\\
 & \qquad \qquad \qquad \qquad \delta(g_{(\lceil k/2 \rceil)},g_{(\lceil k/2 \rceil+1)}) F_k(U(g_1,g_2,\cdots, g_{(\lceil k/2\rceil)}, g_{(\lceil k/2 \rceil +2)},\cdots, g_k))~~\textrm{[by~~Equation~~\ref{b}]}\\
 &\qquad= \sqrt{\lvert G\rvert}(\sqrt{\lvert \Theta \rvert})^{(1-\lceil \frac{k+1}{2}\rceil)}(\lvert \Theta \rvert)^{(1- \lfloor\frac{k+1}{2}\rfloor)}
 \delta(g_{(\lceil k/2 \rceil)},g_{(\lceil k/2 \rceil+1)}) U(g_1,g_2,\cdots, g_{(\lceil k/2\rceil)}, g_{(\lceil k/2 \rceil +2)},\cdots, g_k)
 \end{align*}
In this case observe that,
$
\lceil k/2 \rceil= \lceil (k+1)/2 \rceil
$ and
$
\lfloor (k+1)/2 \rfloor = \lfloor k/2 \rfloor +1.
$
Thus,
$$
 {\Phi}_{k}(Z_T(\Theta S(g_1,g_2,\cdots,g_k))) = Z^{\prime}_T({\Phi}_{k+1}(\Theta S(g_1,g_2,\cdots,g_k)))
$$
In other words, $E^k_{k+1} \in \mathscr{T}$.
\end{proof}
 \begin{lemma}
  $I^{k+1}_k \in \mathscr{T}$.
 \end{lemma}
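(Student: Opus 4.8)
The plan is to mimic the proofs just given that $M_k,E^k_{k+1}\in\mathscr{T}$: first pin down the action of the inclusion tangle on the basis elements $S(\bar g)$, then evaluate both sides of Equation (\ref{sanat}) on $\Theta S(\bar g)$ and reconcile the powers of $\lvert\Theta\rvert$. The first task is to record $Z_{I^{k+1}_k}$ in $P(G)$. Since the inclusion tangle merely adds a single through-string and contains no new $2$-box, it creates no closed loop and simply appends the identity $e\in G$ to the label string, so that (an analogue of Equations (\ref{a}),(\ref{b}), but with trivial constant)
\[ Z_{I^{k+1}_k}(S(g_1,\dots,g_{k-1})) = S(g_1,\dots,g_{k-1},e). \]
The identical statement holds in $P(G\rtimes\Theta)$, now appending the identity $(e,f)$.

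For the left side, $\Theta$-linearity together with $\theta(e)=e$ gives $Z_{I^{k+1}_k}(\Theta S(\bar g))=\Theta S(\bar g,e)$, so by Definition \ref{defi}
\[ \Phi_{k+1}(Z_{I^{k+1}_k}(\Theta S(\bar g))) = \lvert\Theta\rvert^{-\lfloor (k+1)/2\rfloor}(\sqrt{\lvert\Theta\rvert})^{1-\lceil (k+1)/2\rceil}\,U(\bar g,e). \]
For the right side, $Z^\prime_{I^{k+1}_k}=\alpha(I^{k+1}_k)\,F_{k+1}(Z_{I^{k+1}_k}(\,\cdot\,))$, and the essential point is that applying $Z_{I^{k+1}_k}$ to $U(\bar g)=\sum_{\theta,\bar\gamma}S((\theta g_1,\gamma_1),\dots,(\theta g_{k-1},\gamma_{k-1}))$ appends the \emph{fixed} pair $(e,f)$, so that the sum over the new $\Theta$-coordinate is absent. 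Applying $F_{k+1}$, which by Fact \ref{f} erases all $\Theta$-components and reinstates the full orbit average, one recovers $U(\bar g,e)$ but with an extra factor $\lvert\Theta\rvert^{-1}$ coming from the missing coordinate sum: a count of terms gives $F_{k+1}(Z_{I^{k+1}_k}(U(\bar g)))=\lvert\Theta\rvert^{-1}\,U(\bar g,e)$.

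It then remains to compute $\alpha(I^{k+1}_k)$ from Definition \ref{defalpha}. Here $[M:Q]=[R:R^{\Theta}]=\lvert\Theta\rvert$, and since capping the black external intervals and cupping the black internal interval of the inclusion tangle leaves $l(I^{k+1}_k)=k$ loops, one has $c(I^{k+1}_k)=\lceil (k+1)/2\rceil+\lfloor k/2\rfloor-k$, whence $\alpha(I^{k+1}_k)=\lvert\Theta\rvert^{1/2}$ for $k$ even and $\alpha(I^{k+1}_k)=1$ for $k$ odd. Substituting this together with the factor $\lvert\Theta\rvert^{-1}$ above into $\alpha(I^{k+1}_k)\,\lvert\Theta\rvert^{-\lfloor k/2\rfloor}(\sqrt{\lvert\Theta\rvert})^{1-\lceil k/2\rceil}\lvert\Theta\rvert^{-1}U(\bar g,e)$, and using the parity identities for $\lceil (k+1)/2\rceil,\lfloor (k+1)/2\rfloor$ already exploited in the lemma for $E^k_{k+1}$, the two sides agree, proving $I^{k+1}_k\in\mathscr{T}$.

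The main obstacle I anticipate is the bookkeeping in the last two paragraphs: correctly isolating the single $\lvert\Theta\rvert^{-1}$ that $F_{k+1}$ contributes when it restores the orbit sum over the appended coordinate, and matching it against the parity-dependent value of $\alpha(I^{k+1}_k)$. The only genuinely geometric input is the verification that $Z_{I^{k+1}_k}(S(\bar g))=S(\bar g,e)$ with no spurious scalar, i.e.\ that the inclusion creates no loop, so that the constant is the same ($=1$) in $P(G)$ and in $P(G\rtimes\Theta)$; everything else is the same routine manipulation as in the previous cases.
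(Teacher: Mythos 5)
You follow the same strategy as the paper (pin down $Z_{I^{k+1}_k}$ on the basis $S(\bar g)$, then match powers of $\lvert\Theta\rvert$ through $\Phi$, $F_{k+1}$ and $\alpha$), but the two computational claims your argument rests on are both false, and the first is fatal. The inclusion tangle does \emph{not} act by appending the identity label: in the basis pictures of Notation \ref{basis} the $2$-boxes sit on strings that double back, so when the extra through-string created by $I^{k+1}_k$ is re-expanded in the basis of $P_{k+1}$ one gets (as the paper records)
\begin{equation*}
Z_{I^{k+1}_k}(S(g_1,\dots,g_{k-1}))=S\bigl(g_1,\dots,g_{k/2},h,g_{k/2+1},\dots,g_{k-1}\bigr),\qquad h=\tfrac{1}{\sqrt{\lvert G\rvert}}\sum_{g\in G}S(g),
\end{equation*}
for $k$ even --- insertion, in the \emph{middle}, of the normalised sum over \emph{all} of $G$ --- while for $k$ odd the middle label $g_{\lceil k/2\rceil}$ is \emph{duplicated}, giving $S(g_1,\dots,g_{\lceil k/2\rceil},g_{\lceil k/2\rceil},\dots,g_{k-1})$. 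Your vector $S(\bar g,e)$ is a single basis element, not proportional to either of these, so no scalar bookkeeping afterwards can repair the computation. A quick check that your formula cannot be right: the inclusion tangle intertwines multiplication tangles, so $S(g)\mapsto S(g,e)$ would have to be multiplicative; but by Fact \ref{s} (case $k=3$), $S(g,e)\,S(g',e)=\sqrt{\lvert G\rvert}\,\delta(g',e)\,S(g'g,g')$, which vanishes whenever $g'\neq e$, whereas the image of $S(g)S(g')=S(gg')$ must be a nonzero vector. (The vectors $\frac{1}{\sqrt{\lvert G\rvert}}\sum_{a}S(g,a)$, by contrast, do multiply correctly.) The same error recurs in your treatment of $U(\bar g)$ inside $P(G\rtimes\Theta)$, where you again append the fixed pair $(e,f)$.

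Second, your computation of $\alpha(I^{k+1}_k)$ from Definition \ref{defalpha} is wrong when $k$ is even: the correct value is $\alpha(I^{k+1}_k)=1$ for \emph{all} $k$, as the paper asserts. The loop count is $l(I^{k+1}_k)=k+1$ for $k$ even, because the region to the right of the $(k+1)$-st string is then black, so that string together with the cap on the corresponding black external interval closes into a loop of its own; only for $k$ odd is that region white, giving $l(I^{k+1}_k)=k$. In both cases $c(I^{k+1}_k)=\lceil (k+1)/2\rceil+\lfloor k/2\rfloor-l(I^{k+1}_k)=0$. Your uniform claim $l=k$, hence $\alpha(I^{k+1}_k)=\lvert\Theta\rvert^{1/2}$ for even $k$, compounds rather than cancels the first error. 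The genuinely geometric content of this lemma is exactly the two identities for $Z_{I^{k+1}_k}$ displayed above (which the paper states and then calls routine), and it is precisely that content which your proposal gets wrong.
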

\begin{proof}
 Put $T= I^{k+1}_k.$  Clearly, $\alpha(T)=1.$
 
 Put
\begin{equation*}
 h= \frac{1}{\sqrt{\lvert G \rvert}}\sum_{g\in G}
 {\begin{minipage}{.2\textwidth}
    \centering
    \includegraphics[scale=.7]{gg}
    \end{minipage}}
\end{equation*}
 Clearly $\theta(h)=h$ for all $\theta \in \Theta.$
\bigskip

 It suffices to check that:
 \begin{equation*}
 Z_T(S(g_1,g_2,\cdots,g_{k-1}))= S(g_1,g_2,\cdots,g_{(k/2)},h,g_{(k/2+1)},\cdots, g_{k-1}).
 \end{equation*}
 for $k=2n.$

and

 \begin{equation*}
 Z_T(S(g_1,g_2,\cdots,g_{k-1})) = S(g_1, g_2,\cdots, g_{(\lceil k/2 \rceil)}, g_{(\lceil k/2 \rceil)},g_{(\lceil k/2 \rceil+1)},\cdots, g_{k-1}).
 \end{equation*}
 for $k=2n-1.$

The proof of the above two equations is routine, and omitted.
 \end{proof}
 
\begin{lemma}
 ${(E^{\prime})}^k_k\in \mathscr{T}.$
\end{lemma}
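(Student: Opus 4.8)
The plan is to follow the pattern of the preceding lemma for $E^k_{k+1}$ essentially verbatim, treating the two parities of $k$ separately and reducing the identity (\ref{sanat}) for $T=(E')^k_k$ to a comparison of scalar coefficients. The first ingredient I would need is the action of $Z_T$ on the labelled generators $S(g_1,\dots,g_{k-1})$, both in $P(G)$ (for the left-hand side) and, with the identical relation, in $P(G\rtimes\Theta)$ (for the right-hand side). Since $(E')^k_k$ is the left conditional-expectation tangle—which, after the normalization recorded in the Fact proved in Section~\ref{sec:planar-algebra-an}, implements $\sqrt{[Q:N]}\,E_{Q'\cap Q_{k-1}}$—applying the Landau relations $0,1,2$ of \cite{La1} together with the exchange relation of Corollary~\ref{cor:exchange} should give a closed formula
\[
Z_T\bigl(S(g_1,\dots,g_{k-1})\bigr)=(\text{explicit scalar})\cdot S(g'_1,\dots,g'_{k-1}),
\]
the exact analogue of Equations~(\ref{a}) and~(\ref{b}), where the scalar is a power of $\sqrt{|G|}$ possibly carrying a Kronecker-$\delta$ (or an average over $G$) recording the label(s) the left cap near the $*$-arc forces. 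In parallel I would compute $\alpha((E')^k_k)$ directly from Definition~\ref{defalpha} via $c((E')^k_k)=k-l((E')^k_k)$; this yields a power of $\sqrt{|\Theta|}=[M:Q]^{1/2}$, whose value I expect to depend on the parity of $k$.

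With these two facts in hand, both sides of (\ref{sanat}) are routine. For the left-hand side I would push $Z_T$ through the $\Theta$-average $\Theta S(\bar g)=\sum_{\theta}S(\theta(\bar g))$, using that the relations are $\Theta$-equivariant, and then convert the resulting $\Theta$-orbit sum into $U(\bar g')$ by Definition~\ref{defi}. For the right-hand side I would use $Z^{\prime}_T=\alpha(T)\,F_k\circ Z_T$, expand $\Phi_k(\Theta S(\bar g))$ as a scalar times $U(\bar g)$ via Definition~\ref{defi}, apply the same formula for $Z_T$ to each summand of $U(\bar g)$ in $P(G\rtimes\Theta)$, and re-collect back into $U(\bar g')$—precisely the bookkeeping already performed in Lemma~\ref{u} and in the remark defining $U$. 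Both computations terminate in the same vector $U(\bar g')$, so all that remains is to match the two scalar prefactors.

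The only genuine content is that scalar identity, which I expect to reduce—just as in the $E^k_{k+1}$ case—to an elementary relation among the exponents of $\sqrt{|G|}$, $\sqrt{|\Theta|}$ and $|\Theta|$, here simplified by the fact that the input and output colours of $(E')^k_k$ are both $k$, so the $\lceil\cdot\rceil,\lfloor\cdot\rfloor$ exponents appearing in $\Phi_k$ on the two sides cancel cleanly rather than shifting as they did for $E^k_{k+1}$. I anticipate the one real obstacle to be step one: pinning down the precise formula for $Z_{(E')^k_k}(S(\bar g))$—in particular the correct power of $\sqrt{|G|}$ and exactly which labels the left cap contracts—because, unlike $E^k_{k+1}$, this tangle acts at the $*$-end rather than the far end, so the relation invoked from \cite{La1} and the induced constraint take a slightly different form in the even and odd cases. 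Once that formula is established, the two parity cases close by the same arithmetic used above, and hence $(E')^k_k\in\mathscr{T}$.
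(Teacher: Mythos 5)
Your proposal follows essentially the same route as the paper, which itself only sketches this case: it records $\alpha\bigl((E')^k_k\bigr)=|\Theta|^{1/2}$ and the key action formula $Z_T\bigl(S(g_1,\dots,g_{k-1})\bigr)=\sqrt{|G|}\,\delta(g_1,e)\,S(e,g_2,\dots,g_{k-1})$, leaving the $\Theta$-averaging and scalar bookkeeping (your second and third steps) as routine, exactly as in the $M_k$ and $E^k_{k+1}$ lemmas. The only small correction to your expectations: the cap at the $*$-end forces $g_1=e$ (a single Kronecker-$\delta$ with scalar $\sqrt{|G|}$), and $\alpha\bigl((E')^k_k\bigr)=|\Theta|^{1/2}$ turns out to be independent of the parity of $k$, since $c(T)=k-l(T)$ with $l(T)=k-1$ in both cases.
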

\begin{proof}
We omit the details.
Put $T= {(E^{\prime})}^k_k.$
In this case $\alpha(T)= (\lvert \Theta \rvert)^{1/2}$.
Then  simply observe:
 \begin{equation*}
 Z_T (S(g_{1},g_2,\cdots,g_{k-1}))= \sqrt{\lvert G \rvert} \delta(g_{1},e) S(e, g_2,\cdots,g_{k-1}).
 \end{equation*}
 \end{proof}
\bigskip

This complets the proof of Theorem \ref{generating tangles}.
\bigskip

Lastly we apply Theorem \ref{main1} to conclude that the proof of Theorem \ref{subgroup} is now complete.
\section{Appendix}

As promised in the introduction, we here describe the tower of iterated basic construction of  $N\subseteq Q$ in terms of the the corresponding tower of $N \subseteq M$. 
Firstly we state  the following lemma which we will use to prove this.
\begin{lemma}\cite{Bak}
\label{fvrt}
Let $N\subseteq M$ be an inclusion of Type  $II_1$ factors.
Assume $\{\lambda_i:i\in{1,2,..n}\}$
is a basis for $M/N$ (in the sense used in \cite{JoSu}). Let $P$ be a $II_1$ factor such that $P$ contains $M$  and also  contains a projection $f$ such that
$\sum_{i=1}^n{{\lambda_i}^*f{\lambda_i}}=1$ and satisfies
further the following two properties :\par$1)fxf=E_N(x)f$  for all $x\in
M$ and\par2)\{${\tau}^{-1/2}f{\lambda_i}$\} is a basis for $P/M $.
 \\Then there exists an isomorphism from $M_1=\langle M,e_1\rangle$ onto $P$ which maps
$e_1$ to f.
\end{lemma}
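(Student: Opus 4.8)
The plan is to realise the isomorphism as the linear extension of the map $m e_1 m' \mapsto m f m'$ and to control it by the trace. Since $M_1 = \la M, e_1\ra$ is the weak closure of $\mathcal{A}_0 := \mathrm{span}\{m e_1 m' : m, m' \in M\}$ (every word in $M$ and $e_1$ collapses to this form via $e_1 x e_1 = E_N(x) e_1$), and since hypothesis $(2)$ forces $P = \la M, f\ra$ (the spanning set $\{\tau^{-1/2} f \lambda_i\}$ for $P/M$ puts every element of $P$ into $\overline{M f M}$), I would define $\phi_0 : \mathcal{A}_0 \to P$ by $\phi_0(\sum_j a_j e_1 b_j) = \sum_j a_j f b_j$. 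Granting well-definedness, the two Jones-type relations $e_1 x e_1 = E_N(x) e_1$ and $f x f = E_N(x) f$ are \emph{literally the same}, so $\phi_0$ is automatically a $*$-homomorphism: $(a e_1 b)(c e_1 d) = a E_N(bc) e_1 d$ maps to $a E_N(bc) f d = (afb)(cfd)$, and $(a e_1 b)^* = b^* e_1 a^* \mapsto b^* f a^* = (afb)^*$.

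The whole content therefore sits in the well-definedness of $\phi_0$, and the natural device is the Markov identity $E_M(f) = \tau\cdot 1$, where $E_M \colon P \to M$ is the trace-preserving expectation and $\tau := [M:N]^{-1}$. I would establish it in three short steps. First, $(1)$ forces $f$ to commute with $N$: for $n\in N$ one has $f n f = E_N(n) f = n f$, and applying this to $n^*$ and taking adjoints gives $f n f = f n$, whence $n f = f n$. Second, $E_M(f)\in N$: taking $tr_P$ in $(1)$ yields $tr_P(x f) = tr_P(E_N(x) f)$ for all $x\in M$, i.e. $tr_M\big(x\,E_M(f)\big) = tr_M\big(E_N(x)\,E_M(f)\big)$, and a short manipulation with $E_N$ turns the right-hand side into $tr_M\big(x\,E_N(E_M(f))\big)$, so faithfulness gives $E_M(f) = E_N(E_M(f))\in N$. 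Third, the first two steps put $E_M(f)$ into $N'\cap N = Z(N) = \C$ (here $N$ is a factor), so $E_M(f)$ is a scalar; its value is pinned to $\tau$ by taking $tr_P$ of the resolution $\sum_i \lambda_i^* f \lambda_i = 1$, which via cyclicity and $\sum_i \lambda_i\lambda_i^* = [M:N] = \tau^{-1}$ gives $tr_M(E_M(f)) = \tau$.

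With $E_M(f) = \tau$ in hand, well-definedness follows by matching $L^2$-inner products. For $X = \sum_j a_j e_1 b_j$ and $Y = \sum_k c_k e_1 d_k$, the relations together with $tr_{M_1}(y e_1) = \tau\, tr_M(y)$ and $tr_P(y f) = tr_M\big(y\, E_M(f)\big) = \tau\, tr_M(y)$ for $y\in M$ give
\[
tr_P\big(\phi_0(Y)^* \phi_0(X)\big) = \tau \sum_{j,k} tr_M\big(b_j d_k^* E_N(c_k^* a_j)\big) = tr_{M_1}(Y^* X).
\]
Thus $\phi_0$ preserves the form $\la\, \cdot\, ,\,\cdot\,\ra = tr(\,\cdot^*\,\cdot\,)$; in particular, if an expression $\sum_j a_j e_1 b_j$ equals $0$ in $M_1$ then its image has zero $2$-norm in $P$ and hence vanishes, which is exactly well-definedness (and injectivity). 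Consequently $\phi_0$ is a trace-preserving $*$-homomorphism on the weakly dense subalgebra $\mathcal{A}_0$, so it extends to a normal, trace-preserving $*$-homomorphism $\phi : M_1 \to P$ with $\phi|_M = \mathrm{id}$ and $\phi(e_1) = f$. Since $M_1$ is a factor, $\phi$ is injective, and its image is a von Neumann subalgebra of $P$ containing $M$ and $f$, hence all of $\la M, f\ra = P$ by $(2)$; therefore $\phi$ is the required isomorphism.

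The one genuinely non-formal point — and the step I expect to be the main obstacle — is the Markov identity $E_M(f) = \tau$: everything else is either a formal consequence of the coincidence of the two Jones relations or a standard density/normality argument. It is worth noting how the hypotheses are used: condition $(1)$ supplies both the commutation $[f,N]=0$ and the trace relation that places $E_M(f)$ in $N$; factoriality of $N$ (so that $Z(N)=\C$) is what upgrades this to a scalar; the resolution $\sum_i\lambda_i^* f\lambda_i = 1$ fixes the scalar; and condition $(2)$ is needed only to guarantee surjectivity via $P=\la M,f\ra$.
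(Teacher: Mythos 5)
Your proof is correct, but note that the paper itself contains no argument for this lemma: it is imported verbatim from \cite{Bak} (a paper on Pimsner--Popa bases), and its hypotheses are visibly tailored to a purely algebraic basis argument rather than the trace/GNS route you take. Concretely, since the basis $\{\lambda_i\}$ is finite, $M_1 = \mathrm{span}\, M e_1 M$ holds as an exact algebraic identity (for instance $x = \tau^{-1}\sum_i E_M(x\lambda_i^* e_1)\, e_1\lambda_i$ for $x \in M_1$), and hypothesis (2) asserts precisely the analogous identity for $P$, namely $P = \mathrm{span}\, M f M$. So the map $\sum_j a_j e_1 b_j \mapsto \sum_j a_j f b_j$ is already globally defined on all of $M_1$ and automatically surjective, and no Kaplansky density or normal-extension machinery is needed; that is what the intended Pimsner--Popa approach buys. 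Your route instead reduces the lemma to the classical Markov characterization of the basic construction by \emph{deriving} $E_M(f)=\tau$ from (1), factoriality of $N$, and the resolution $\sum_i\lambda_i^* f\lambda_i = 1$; this is a genuinely nice feature, since it isolates exactly where each hypothesis enters (with (2) used only for surjectivity), but it costs you the appeal to the not-quite-one-line fact that a trace-preserving $*$-homomorphism defined on a weakly dense $*$-subalgebra extends to a normal $*$-homomorphism of the von Neumann algebras. Two small repairs: first, your parenthetical ``every word in $M$ and $e_1$ collapses to $m e_1 m'$'' overlooks words containing no $e_1$; this is harmless because $1=\sum_i\lambda_i^* e_1\lambda_i$ puts $M$ inside your $\mathcal{A}_0$ (indeed $\mathcal{A}_0 = M_1$ exactly, which is how you could bypass the extension step altogether). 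Second, with the module convention forced by the hypothesis $\sum_i\lambda_i^* f\lambda_i=1$, applying $E_M$ to $\sum_i\lambda_i^* e_N\lambda_i=1$ yields the operator identity $\sum_i\lambda_i^*\lambda_i=[M:N]$, not $\sum_i\lambda_i\lambda_i^*=[M:N]$ as you wrote; your computation nonetheless survives intact, because you establish scalarity of $E_M(f)$ before evaluating it, so all you actually need is $tr_M\bigl(\sum_i\lambda_i\lambda_i^*\bigr)=[M:N]$, which holds by traciality.
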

The following  well known fact is often useful:
\begin{fact}\label{f:localtraceformula}
Given a $II_1$ factor $A$ and projections $r \in A$ and $s \in
(rAr)'$, we have
\begin{equation} \label{eq1}  tr_{rArs}(rzrs)= (tr_A(r))^{-1} tr_A (rzr)
\end{equation}
for all {$z \in A$.}
\end{fact}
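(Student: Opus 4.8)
The plan is to reduce the identity to the uniqueness of the normalized trace on a ${\rm II}_1$ factor. First I would recall that the corner $rAr$ is itself a ${\rm II}_1$ factor, with unit $r$, and that its canonical (unit-normalized) trace is $tr_{rAr}(x)=(tr_A(r))^{-1}tr_A(x)$ for $x\in rAr$; indeed the restriction of $tr_A$ to $rAr$ is tracial, and dividing by $tr_A(r)$ is exactly the normalization making the value at the unit $r$ equal to $1$, which pins down the trace by uniqueness. Since $rzr\in rAr$, the right-hand side of \eqref{eq1} is nothing but $tr_{rAr}(rzr)$, so it suffices to prove $tr_{rArs}(rzrs)=tr_{rAr}(rzr)$.

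Next I would introduce the map $\phi\colon rAr\to rArs$ given by $\phi(x)=xs$. Using that $s=s^*=s^2$ lies in $(rAr)'$, one checks that $\phi$ is a unital $*$-homomorphism onto $rArs$: one has $\phi(xy)=xys=xs\,ys=\phi(x)\phi(y)$, $\phi(x^*)=x^*s=(xs)^*=\phi(x)^*$, and $\phi(r)=rs$ is the unit of $rArs$. Assuming $s\neq 0$ (otherwise the statement is vacuous), we have $rs\neq 0$; since $rAr$ is a factor, $\ker\phi$ is a two-sided ideal of $rAr$ that is not all of $rAr$ (because $\phi(r)=rs\neq 0$), hence $\ker\phi=0$. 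Thus $\phi$ is a $*$-isomorphism of $rAr$ onto $rArs$, which in particular shows $rArs$ is a ${\rm II}_1$ factor.

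Finally I would invoke uniqueness of the normal faithful tracial state on a ${\rm II}_1$ factor: any $*$-isomorphism between two ${\rm II}_1$ factors intertwines their canonical traces, since the pullback of the target trace along $\phi$ is again a unit-normalized normal faithful trace on the source, hence equals $tr_{rAr}$. Therefore $tr_{rArs}(\phi(x))=tr_{rAr}(x)$ for all $x\in rAr$, and taking $x=rzr$ yields $tr_{rArs}(rzrs)=tr_{rAr}(rzr)=(tr_A(r))^{-1}tr_A(rzr)$, as required. The argument presents no genuine obstacle; the only points deserving a line of care are the factoriality step guaranteeing that $\phi$ is injective (so that $rArs$ really is a ${\rm II}_1$ factor isomorphic to $rAr$) and the matching of normalizations, which is ensured precisely because $\phi(r)=rs$ carries the unit of $rAr$ to the unit of $rArs$.
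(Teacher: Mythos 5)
Your argument is correct in substance, but there is nothing in the paper to compare it against: the paper states this Fact as ``well known'' and offers no proof whatsoever, so your write-up (the corner trace formula $tr_{rAr}=(tr_A(r))^{-1}\,tr_A|_{rAr}$, the compression $\phi(x)=xs$ being a unital $*$-isomorphism onto $rArs$ by algebraic simplicity of the ${\rm II}_1$ factor $rAr$, and uniqueness of the tracial state to transport $tr_{rAr}$ to $tr_{rArs}$) is precisely the standard justification one would supply, and it is a welcome addition.

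One detail needs repair: the implication ``$s\neq 0\Rightarrow rs\neq 0$'' is false. For instance $s=1-r$ (with $r\neq 1$) is a nonzero projection lying in $(rAr)'$, since $(1-r)(rxr)=0=(rxr)(1-r)$ for all $x\in A$, and yet $rs=0$; in that case $rArs=\{0\}$, the left-hand side of the equation is meaningless, and the right-hand side need not vanish. So the correct non-degeneracy hypothesis --- for your proof and for the Fact itself --- is $rs\neq 0$, which is equivalent to $rArs\neq\{0\}$ and is exactly what makes $\phi(r)=rs$ nonzero, so that your kernel argument applies and $rArs$ is a ${\rm II}_1$ factor. Since $rs\neq 0$ is implicitly presupposed by the statement (otherwise $tr_{rArs}$ is undefined) and holds in the paper's applications (there $A=M_1$, $r=e_{0,1}$, $s=e_{0,3}$, and $rs=p_{[0,3]}\neq 0$), this is a slip in identifying the degenerate case rather than a gap in the mathematics; with ``$s\neq 0$'' replaced by ``$rs\neq 0$'' throughout, your proof is complete.
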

\begin{theorem} \label{T:intsi}
Given $N\ss Q  \ss M$ and the notation introduced above, set $\p =e_{0,1}\ e_{0,3} \ e_{0,5} \cdots e_{0, 2n-1}$.  Then the chain
\[ N\p  \ss \p M\p  \ss \p M_1\p  \ss \p M_2\p  \ss \dots \ss \p M_{2n-1}\p  \]
is isomorphic to the first $2n-1$ steps of the basic construction of
$N \ss Q $.  The Jones projections are given by $e_{0,2i}\p : L^2( \p
M_{2i-1}\p ) \rightarrow L^2( \p M_{2i-2}\p )$ and $e_{1,2i+1}\p :
L^2( \p M_{2i}\p ) \rightarrow L^2( \p M_{2i-1}\p )$.  The unique
normalized trace on the chain, denoted $tr_{N \ss Q }$, is given by
$tr_{N \ss Q }(x) = [M:Q ]^{n}tr_{N \ss M}(x) $.
\end{theorem}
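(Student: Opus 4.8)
The plan is to prove the isomorphism one basic-construction step at a time, using Lemma \ref{fvrt} as the recognition principle for each step and Fact \ref{f:localtraceformula} to pin down the trace. Throughout I would lean on Fact \ref{f:erel}: since the odd-indexed projections $e_{0,1},e_{0,3},\dots,e_{0,2n-1}$ have indices differing by at least two, part (2) shows they mutually commute, so $p$ is a genuine projection; and part (4) shows each $e_{0,2j-1}$ commutes with $P_{2j-2}\supseteq N$, whence $p\in N'$. For bookkeeping I set $Q_{-1}=N$, $Q_0=Q$, so the target tower is $Q_{-1}\ss Q_0\ss Q_1\ss\cdots$, and the claim becomes $pM_kp\cong Q_k$ for $-1\le k\le 2n-1$.

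First I would dispose of the two bottom algebras. Since $p\in N'$ and $N$ is a factor, $x\mapsto xp$ is a $*$-isomorphism $N\cong Np=pNp$. For $pMp$, write $p=e_{0,1}p'$ with $p'=e_{0,3}\cdots e_{0,2n-1}$; by Fact \ref{f:erel}(2),(4) the projection $p'$ commutes with $M$, with $Q$, and with $e_{0,1}$, while $e_{0,1}$ implements $E_Q$ on $M$. A short computation then gives $pMp=e_{0,1}Me_{0,1}\,p'=E_Q(M)\,p=Qp$, and since $p\in Q'$ this exhibits $Np\ss pMp$ as an isomorphic copy of $N\ss Q$, the bottom two rungs of the target tower.

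For the inductive step, suppose the chain has been identified with the Jones tower of $N\ss Q$ up through $pM_{k-1}p\cong Q_{k-1}$. I would apply Lemma \ref{fvrt} with the roles $N\leadsto pM_{k-2}p$, $M\leadsto pM_{k-1}p$, $P\leadsto pM_kp$, and candidate Jones projection $f_k=e_{0,k}p$ for $k$ even and $f_k=e_{1,k}p$ for $k$ odd (these are exactly the projections named in the statement, each implementing $E\colon pM_{k-1}p\to pM_{k-2}p$). The work is to produce, from the Pimsner--Popa bases native to the Bisch--Jones tower, a basis $\{\lambda_i\}$ for $pM_{k-1}p/pM_{k-2}p$ and to verify the three hypotheses: the resolution of identity $\sum_i\lambda_i^* f_k\lambda_i=p$, the module relation $f_k\,y\,f_k=E_{pM_{k-2}p}(y)\,f_k$ for $y\in pM_{k-1}p$, and that $\{\tau^{-1/2}f_k\lambda_i\}$ is a basis for $pM_kp/pM_{k-1}p$. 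Each of these is checked by pushing $p$ through the $e$'s using the absorption relation (1), the commutation relations (2),(3), the module property (4), and the pull-down relations (5)--(7), with the two parities handled by the $e_{0,\mathrm{even}}$ and $e_{1,\mathrm{odd}}$ families respectively. The conclusion of Lemma \ref{fvrt} identifies $pM_kp$ with $Q_k$ and $f_k$ with the corresponding Jones projection; since the basic construction is unique over its base, these isomorphisms glue to an isomorphism of towers, completing the induction.

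Finally, for the trace: the projections $e_{0,2j-1}$ mutually commute, each is the Jones projection of the vertical inclusion $P_{2j-2}\ss M_{2j-2}$ of index $[M:Q]$, and $e_{0,1}\cdots e_{0,2j-3}$ lies in $M_{2j-2}$, so the Markov property gives $tr(p)=\prod_{j=1}^n tr(e_{0,2j-1})=[M:Q]^{-n}$ by induction. Fact \ref{f:localtraceformula} (with $s=1$, $r=p$) then yields $tr_{N\ss Q}(x)=tr(p)^{-1}tr_{N\ss M}(x)=[M:Q]^{n}\,tr_{N\ss M}(x)$ on each $pM_jp$, as claimed. The hard part will be the inductive step, and specifically the choice of Pimsner--Popa bases $\{\lambda_i\}$ adapted to the compression together with the verification of the module relation and the basis condition of Lemma \ref{fvrt}: this is where the full interaction of $p$ with both families $e_{0,\bullet},e_{1,\bullet}$ must be marshalled. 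The base case and the trace computation are by comparison routine.
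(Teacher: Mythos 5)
Your plan follows the same route as the paper: recognize each rung of the chain with Lemma \ref{fvrt}, transport Pimsner--Popa bases up the tower, and derive the trace statement from $tr(p)=[M:Q]^{-n}$ together with Fact \ref{f:localtraceformula}. Your handling of the bottom rungs ($Np\cong N$, $pMp=Qp$), your choice of candidate Jones projections ($e_{0,k}p$ for $k$ even, $e_{1,k}p$ for $k$ odd), and your trace computation are correct and agree with the paper's proof.

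The gap is that the inductive step, which you yourself flag as the hard part, is not carried out, and the method you propose for it would not suffice. You assert that the three hypotheses of Lemma \ref{fvrt} are ``checked by pushing $p$ through the $e$'s'' using only the relations of Fact \ref{f:erel}. They are not: those algebraic relations must be supplemented by trace-theoretic input at every step. Specifically, the basis hypothesis of Lemma \ref{fvrt} requires (i) the index of the compressed inclusion, $[pM_kp:pM_{k-1}p]=[Q:N]$, which the paper obtains from Jones' local index formula $[e_{0,1}M_1e_{0,1}:Qe_{0,1}]=tr(e_{0,1})^2[M_1:Q]$ together with \emph{extremality} of $N\subset M$ (Equation (\ref{p4})); (ii) formulas for the compressed conditional expectations, which do not simply restrict but acquire scalar factors, e.g.\ $E^{e_{0,1}M_1e_{0,1}}_{Qe_{0,1}}(x)=[M:Q]\,E^{M_1}_Q(x)e_{0,1}$ (Equation (\ref{p3})) and $E^{M_1}_{P_1}(e_{0,1}m_1e_{0,1})=[M:Q]\,E^{M_1}_M(e_{0,1}m_1e_{0,1})e_{0,1}$ (Equation (\ref{pop})) --- these come from uniqueness of the trace-preserving expectation and the Markov property, not from the $e$-relations; and (iii) a criterion certifying that explicitly constructed candidates such as $\{\sqrt{[Q:N]}\,e_{1,1}\lambda_i e_{0,1}\}$ and $\{[Q:N]\,e_{0,2}e_{1,1}\lambda_i p_{[0,3]}\}$ really are Pimsner--Popa bases, for which the paper invokes Theorem 2.2 of \cite{Bak} after computing the relevant expectations of $\lambda_i\lambda_j^*$. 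Even your module relation $f_k\,y\,f_k=E(y)f_k$ presupposes (ii), since one must first identify the compressed expectation $E$ with a scaled ambient one. These computations, together with the shifting-and-gluing argument (the paper's Step 3, which applies the two base cases to towers starting at higher levels and uses Fact \ref{f:erel}(3) to insert the remaining odd projections), constitute essentially the entire proof; as it stands, your proposal is a correct outline of the strategy rather than a proof of the theorem.
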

\begin{proof}
We put, $p_{[0,2n-1]}= e_{0,1}\ e_{0,3} \ e_{0,5} \cdots e_{0, 2n-1}$. The final trace assertion is immediate from the fact $tr_{N \ss M}(\p_{[0,2n-1]}) = [M:Q ]^{-n}$.
 It suffices to show that the above chain is a basic
construction and that the inclusion $N\p_{[0,2n-1]}\ss \p_{[0,2n-1]}M \p_{[0,2n-1]}$ is isomorphic to $N \ss Q $.  
We do this in several steps.
\bigskip

$ \noindent \textbf{Step 1}:$ Firstly we show, $Ne_{0,1}\subseteq e_{0,1}Me_{0,1} \subseteq e_{0,1}M_1e_{0,1}$ is isomorphic to the (first step)  basic construction of $N\subseteq Q$, where the corresponding Jones' projection is given by $e_{1,1}e_{0,1}(= e_{1,1})$. We prove this using Lemma \ref{fvrt}.
\par Since $Q^{\prime}$ is a von Neumann algebra and $e_{0,1}\in Q^{\prime}, Qe_{0,1}$ is a von Neumann algebra. Again, as $e_{0,1} \in N^{\prime}, Ne_{0,1}$ is also a von Neumann algebra. Since $e_{0,1}Me_{0,1}= Qe_{0,1}$ it is clear that $(Ne_{0,1}\subseteq e_{0,1}Me_{0,1}) \cong (N \subseteq Q)$
via the map $ q \longmapsto q e_{0,1} $ for $q\in Q $. In particular, $[Qe_{0,1}:Ne_{0,1}]= [Q:N]$. Let $\{\lambda_i\}$ be a basis for $Q/N$, which always exists by \cite{PiPo}, then since  $E^{Qe_{0,1}}_{Ne_{0,1}}(qe_{0,1})= E^Q_N(q)e_{0,1}, \{\lambda_i e_{0,1}\}$ is a basis for $Qe_{0,1}/Ne_{0,1}$. 
Now, $e_{1,1} \in e_{0,1}M_1e_{0,1}$ since $e_{1,1}\leq e_{0,1}$ and observe,
\begin{align*}
 & \sum{{(\lambda_i e_{0,1})}^* e_{1,1} \lambda_i e_{0,1}}\\
 & \qquad = \sum{{\lambda_i}^* e_{0,1} e_{1,1} e_{0,1} \lambda_i}~~\textrm{~[since}~~ e_{0,1}\in Q^{\prime}]\\
 & \qquad = \sum{{\lambda_i}^* e_{1,1} \lambda_i}~~~\textrm{~[since}~~~e_{1,1}= e_{1,1}e_{0,1}= e_{0,1}e_{1,1}]\\
 & \qquad = \sum{{\lambda_i}^* e^Q_N e_{0,1}\lambda_i}~~~\textrm{~[since}~~~e^Q_N e_{0,1}= e_{1,1}]\\
 & \qquad = \sum{({\lambda_i}^* e^Q_N \lambda_i}) e_{0,1}\\
 & \qquad = e_{0,1} ~~~\textrm{~[since}~~\sum{{\lambda_i}^* e^Q_N \lambda_i}=1].
\end{align*}
Also, 
\begin{equation}\label{p1}
 e_{1,1} (e_{0,1}m e_{0,1}) e_{1,1}= e_{1,1} m e_{1,1}= E^M_N(m) e_{1,1}
\end{equation} and,
\begin{equation}\label{p2}
E^{Qe_{0,1}}_{Ne_{0,1}}(e_{0,1} m e_{0,1})e_{1,1}= E^{Qe_{0,1}}_{Ne_{0,1}}(E^M_Q(m)e_{0,1})e_{1,1}=E^Q_N(E^M_Q(m))e_{0,1} e_{1,1}= E^M_N(m) e_{1,1}
\end{equation}
  for all $m\in M.$
  Equation (\ref{p1}) and (\ref{p2}) implies $e_{1,1} (e_{0,1}m e_{0,1}) e_{1,1}= E^{Qe_{0,1}}_{Ne_{0,1}}(e_{0,1} m e_{0,1})e_{1,1}.$
  We next show, 
  \begin{equation}\label{p3}
  E^{e_{0,1}M_1e_{0,1}}_{Q e_{0,1}}(x)= [M:Q] E^{M_1}_Q(x)e_{0,1}.
  \end{equation}
  for all $x \in e_{0,1}M_1e_{0,1}.$
  This follows from the following array of equations $\forall q\in Q $:
  \begin{align*}
   & tr_{N\subseteq Q}([M:Q] E^{M_1}_Q (x)e_{0,1}.qe_{0,1})\\
   & \qquad = [M:Q]^2 tr(E^{M_1}_Q(x)qe_{0,1})\\
   &\qquad = [M:Q]^2 tr(E^{M_1}_Q(xq)e_{0,1})\\
   & \qquad= [M:Q] tr(E^{M_1}_Q(xq)) \textrm{~~~[since}~~~tr(e_{0,1})= {[M:Q]}^{-1}]\\
   & \qquad= [M:Q] tr(xq)\\
   & \qquad = tr_{N\subseteq Q}(e_{0,1}xq) \textrm{~~~[since}~~~e_{0,1}x=x]\\
   & \qquad= tr_{N\subseteq Q}(x.qe_{0,1}).
  \end{align*}
  Then we show, $\{\sqrt{[Q:N]}e_{1,1}\lambda_i e_{0,1}\} $ is a basis for $e_{0,1}M_1 e_{0,1}/e_{0,1} M e_{0,1}.$
  To prove this firstly note that,by Jones's local index formula (see \cite{Jo1} or sections 2.2-2.3 of \cite{JoSu}) and extremality (see \cite{Po} page 176) the following equation holds
  \begin{equation}\label{p4}
   [e_{0,1}M_1 e_{0,1}:Q e_{0,1}]= {tr(e_{0,1})}^2 [M_1:Q]= [Q:N]= [Qe_{0,1}:Ne_{0,1}]
   \end{equation}

  Then,the  following array of equations hold:
  \begin{align*}
   & E^{e_{0,1}M_1 e_{0,1}}_{Qe_{0,1}}[(\sqrt{[Q:N]} e_{1,1} \lambda_i e_{0,1})({\sqrt{[Q:N]} e_{1,1} \lambda_j e_{0,1})}^*]\\
   & \qquad = [Q:N] E^{e_{0,1}M_1 e_{0,1}}_{Qe_{0,1}}(e_{1,1}e_{0,1}\lambda_i {\lambda}^*_je_{1,1})        ~~\textrm{~[since}~~ e_{0,1}\in Q^{\prime}]\\
   & \qquad = [Q:N] E^{e_{0,1}M_1 e_{0,1}}_{Qe_{0,1}}(e_{0,1}e_{1,1}\lambda_i {\lambda}^*_je_{1,1}e_{0,1})~~~\textrm{~[since}~~~e_{1,1}= e_{1,1}e_{0,1}= e_{0,1}e_{1,1}]\\
   & \qquad = [Q:N] [M:Q] E^{M_1}_Q(e_{0,1}e_{1,1}\lambda_i {\lambda}^*_je_{1,1}e_{0,1})e_{0,1}~~~~\textrm{~~~[by~~Equation}~~(\ref{p3})]\\
   & \qquad= [M:N] E^{M_1}_Q(e_{1,1} \lambda_i{\lambda}^*_je_{1,1})e_{0,1}\\
   & \qquad = [M:N] E^{M_1}_Q[E^M_N(\lambda_i{\lambda}^*_j)e_{1,1}]e_{0,1}\\
   & \qquad = [M:N]E^Q_N(\lambda_i{\lambda}^*_j)E^{M_1}_Q(e_{1,1}) e_{0,1}\\
   & \qquad = [M:N] E^Q_N(\lambda_i{\lambda}^*_j)E^M_Q(E^{M_1}_M(e_{1,1})) e_{0,1}\\
   & \qquad = E^Q_N(\lambda_i{\lambda}^*_j)e_{0,1}\\
   & \qquad = E^{Qe_{0,1}}_{Ne_{0,1}}(\lambda_i{\lambda}^*_j e_{0,1})\\
   & \qquad = E^{Qe_{0,1}}_{Ne_{0,1}}(\lambda_i e_{0,1}.e_{0,1}{\lambda}^*_j)   ~~\textrm{~[since}~~ e_{0,1}\in Q^{\prime}]
  \end{align*}
  Now as $ \{\lambda_i e_{0,1}\}$ is a basis for $Qe_{0,1}/Ne_{0,1}$ the last equation in the above array of equations together with Equation (\ref{p4}) tells that
  $\{\sqrt{[Q:N]}e_{1,1}\lambda_i e_{0,1}\} $ is a basis for $e_{0,1}M_1 e_{0,1}/e_{0,1} M e_{0,1}.$ Here we have used Theorem 2.2 in \cite{Bak}. Now applying Lemma \ref{fvrt}
  we get the desired result.\vspace{4mm}\\
 $\noindent \textbf{Step 2}:$ Here again we have $N\subseteq Q \subseteq M$, with biprojection $e_{0,1}$. We claim $(p_{[0,3]}M p_{[0,3]}\subseteq p_{[0,3]}M_1 p_{[0,3]}\subseteq p_{[0,3]}M_2 p_{[0,3]})\cong(Q \subseteq Q_1 \subseteq Q_2)$.
 Here Jones' projection is given by $e_{0,2}p_{[0,3]}$.
 We will again apply Lemma \ref{fvrt}.\\
 Firstly, as $e_{0,3}$ commutes with $e_{0,1}$ and every element of $M_1$, it follows from above discussion  that \\(a)$\{\sqrt{[Q:N]} e_{1,1}\lambda_i p_{[0,3]}\}$ is 
 a basis for $p_{[0,3]} M_1 p_{[0,3]}/p_{[0,3]} Mp_{[0,3]}$.\vspace{2mm}\\
 Next we show that \\
 (b) for $m_1\in M_1,$
 \begin{equation}\label{p5}
 (e_{0,2} p_{[0,3]})(p_{[0,3]} m_1 p_{[0,3]})(e_{0,2}p_{[0,3]})= E^{p_{[0,3]} M_1 p_{[0,3]}}_{p_{[0,3]} M p_{[0,3]}}(p_{[0,3]} m_1 p_{[0,3]})e_{0,2}p_{[0,3]}.
 \end{equation}
  As $e_{0,3}$ commutes with $e_{0,1}$ and every element of $M_1$, it follows that,
 \begin{align*}
  & E^{p_{[0,3]} M_1 p_{[0,3]}}_{p_{[0,3]} M p_{[0,3]}}(p_{[0,3]} m_1 p_{[0,3]})\\
  & \qquad = E^{e_{0,1} M_1 e_{0,1} e_{0,3}}_ {Q e_{0,1}e_{0,3}}(e_{0,1} m_1 e_{0,1}e_{0,3})\\
  & \qquad = E^{e_{0,1} M_1 e_{0,1}}_{Q e_{0,1}}(e_{0,1} m_1 e_{0,1})e_{0,3}~~~\textrm{~~[by~~Fact~~\ref{f:localtraceformula}]}\\
  & \qquad = [M:Q] E^{M_1}_Q(e_{0,1}m_1 e_{0,1})p_{[0,3]}~~\textrm{~~[by~~~Equation}~~~(\ref{p3})]
 \end{align*}
 Since $e_{0,1}\in P_1$ it is easy to see that, $E^{M_1}_{P_1}(e_{0,1}m_1e_{0,1})=E^{M_1}_{P_1}(e_{0,1}m_1e_{0,1})e_{0,1}=me_{0,1}$ for some unique $m \in M$, which exists by \cite{PiPo}.\\
 That implies,
 \begin{align*}
  &E^{M_1}_M(E^{M_1}_{P_1}(e_{0,1}m_1e_{0,1}))\\
  & \qquad \qquad= m E^{M_1}_M(e_{0,1})\\
  & \qquad \qquad= m E^{P_1}_M(e_{0,1})\\
  & \qquad \qquad= m{[M:Q]^{-1}}
 \end{align*} That is,
$ E^{M_1}_M(e_{0,1}m_1e_{0,1})=m{[M:Q]}^{-1}.$ Therefore $ m=[M:Q] E^{M_1}_M(e_{0,1}m_1e_{0,1}).$ Thus,
\begin{equation}\label{pop}
 E^{M_1}_{P_1}(e_{0,1}m_1e_{0,1})= [M:Q] E^{M_1}_M(e_{0,1}m_1e_{0,1}) e_{0,1}
\end{equation}

Thus,
\begin{align*}
 &  (e_{0,2} p_{[0,3]})(p_{[0,3]} m_1 p_{[0,3]})(e_{0,2}p_{[0,3]})\\
 & \qquad = e_{0,2}p_{[0,3]}m_1p_{[0,3]}e_{0,2}p_{[0,3]}\\
 & \qquad = e_{0,3}e_{0,2}e_{0,1}m_1 e_{0,1}e_{0,2}e_{0,3}~~~\textrm{~~~[by~~~Fact~~\ref{f:erel}(3)]}\\
 & \qquad = e_{0,1}{E^{M_1}_{P_1}}(e_{0,1}m_1e_{0,1})e_{0,2}e_{0,3}~~~\textrm{~~~[by~~~definition~~of}~~e_{0,2}]\\
 & \qquad= [M:Q]e_{0,1}{E^{M_1}_M}(e_{0,1}m_1e_{0,1})e_{0,1}e_{0,2}e_{0,3}~~~~\textrm{~~~[by~~~Equation~~(\ref{pop})]}\\
 & \qquad= [M:Q]E^M_Q(E^{M_1}_M(e_{0,1}m_1e_{0,1}))e_{0,1}e_{0,2}e_{0,3}\\
 & \qquad= [M:Q]E^{M_1}_Q(e_{0,1}m_1e_{0,1})p_{[0,3]}e_{0,2}p_{[0,3]}~~~\textrm{~~~[by~~~Fact~~\ref{f:erel}(3)]}\\
 & \qquad= E^{p_{[0,3]}M_1p_{[0,3]}}_{p_{[0,3]}Mp_{[0,3]}}({p_{[0,3]}m_1p_{[0,3]}})e_{0,2}p_{[0,3]}
\end{align*}
This completes the proof of (b).\\
(c) We need to prove $ [p_{[0,3]}M_2p_{[0,3]}:p_{[0,3]}M_1p_{[0,3]}]=[Q:N]$\newline
For this note,
\begin{align*}
 & p_{[0,3]}M_2p_{[0,3]}\\
 & \qquad= e_{0,1}e_{0,3}M_2e_{0,3}e_{0,1}\\
 & \qquad= e_{0,1}P_2e_{0,3}e_{0,1}~~~\textrm{~~~[by~~~definition~~of}~~e_{0,3}]\\
  &\qquad= p_{[0,3]}P_2p_{[0,3]}~~~\textrm{~~~[since}~~e_{0,3}\in P^{\prime}_2]
\end{align*}
It is trivial to see that, $[P_2:M_1] = [M_1:P_1] = \frac{[M_1:M]}{[P_1:M]}= \frac{[M:N]}{[M:Q]}=[Q:N]$.
Thus,
\begin{align*}
  &[Q:N]=[P_2:M_1]\\
  & \qquad \qquad=[e_{0,1}P_2e_{0,1}:e_{0,1}M_1e_{0,1}] \textrm~~~{~~~~~~~~[~~~~~~~as}~~~~e_{0,1}\in M_1\subseteq P_2]\\
& \qquad \qquad=[p_{[0,3]}P_2p_{[0,3]}:p_{[0,3]}M_1p_{[0,3]}]\\
& \qquad \qquad= [p_{[0,3]}M_2p_{[0,3]}:p_{[0,3]}M_1p_{[0,3]}]
\end{align*}
This proves (c).\\
(d) The following array of equations hold:
\begin{align*}
 & \sum{(e_{1,1} \lambda_i p_{[0,3]})}^* e_{0,2}p_{[0,3]}(e_{1,1} \lambda_i p_{[0,3]})\\
 & \qquad= \sum p_{[0,3]}{{\lambda}^*_i} e_{1,1}e_{0,2}e_{1,1} \lambda_i p_{[0,3]}\\
 & \qquad= [Q:N]^{-1} \sum p_{[0,3]} {{\lambda}^*_i}e_{1,1} \lambda_i p_{[0,3]}~~~\textrm{~~~[Fact}~~\ref{f:erel} (6)]\\
 & \qquad= [Q:N]^{-1} \sum p_{[0,3]} {{\lambda}^*_i} e^Q_N e_{0,1} \lambda_i p_{[0,3]}\\
 & \qquad= [Q:N]^{-1} p_{[0,3]}
\end{align*}
The last equation follows from the fact that $\{\lambda_i\}$ is a basis for $Q/N$ and hence $\sum{{\lambda}^*_i e^Q_N {\lambda}_i} = 1$.\\
(e) Firstly it is easy to check that,
\begin{equation}\label{e1}
 E^{P_2}_{M_1}(e_{0,2})= [M_1:P_1]^{-1}= [Q:N]^{-1}.
\end{equation}
Next we show that,
\begin{equation}\label{e2}
 E^{M_1}_{P_1}(e_{1,1})= [Q:N]^{-1}e_{0,1}
\end{equation}
To see this, note that by \cite{PiPo} there exists unique $m_0\in M$ such that $E^{M_1}_{P_1}(e_{1,1})= E^{M_1}_{P_1}(e_{1,1})e_{0,1} = m_0 e_{0,1}.$
Thus, $E^{M_1}_M(e_{1,1})= m_0 E^{M_1}_M(e_{0,1})$. Hence, $m_0 = [Q:N]^{-1}.$
We have,
\begin{align*}
 & [p_{[0,3]}M_2p_{[0,3]}:p_{[0,3]}M_1p_{[0,3]}]\\
 & \qquad= [Q:N]~~~~~~~~~~~~~~~~~~~~~~~~~~~~~~~~~~~~~~~~~~\textrm~~~~~~{[by~~~~~~ ~~(c)]}\\
 & \qquad= [Q p_{[0,1]}:N p_{[0,1]}]\\
 & \qquad= [p_{[0,1]}M_1p_{[0,1]}:p_{[0,1]}Mp_{[0,1]}]~~~~~~~~~~~~~~~~~~~~~~~~~~~~\textrm~~~{[by~~~Equation~~(\ref{p4})]}\\
 & \qquad= [p_{[0,3]}M_1p_{[0,3]}:p_{[0,3]}Mp_{[0,3]}]
\end{align*}
By Fact \ref{f:localtraceformula} it is trivial to check that for all $m_2 \in M_2$:
\begin{equation} \label{la}
 E^{p_{[0,3]}M_2 p_{[0,3]}}_{p_{[0,3]}M_1 p_{[0,3]}}(p_{[0,3]}m_2 p_{[0,3]})=p_{[0,3]}E^{M_2}_{M_1}(m_2)p_{[0,3]}
\end{equation}

Then the following array of equation hold:
\begin{align*}
 & E^{p_{[0,3]}M_2 p_{[0,3]}}_{p_{[0,3]}M_1 p_{[0,3]}} [(e_{0,2}e_{1,1} \lambda_i p_{[0,3]}){(e_{0,2}e_{1,1} \lambda_j p_{[0,3]})}^*]\\
 & \qquad \qquad= E^{p_{[0,3]}M_2p_{[0,3]}}_{p_{[0,3]}M_1p_{[0,3]}} (p_{[0,3]}e_{0,2}e_{1,1} \lambda_i {{\lambda}^*_j} e_{1,1}e_{0,2}p_{[0,3]})\\
 & \qquad \qquad= p_{[0,3]}E^{M_2}_{M_1} (e_{0,2}e_{1,1} \lambda_i {{\lambda}^*_j} e_{1,1}e_{0,2})p_{[0,3]}~~~~\textrm{~~[by~~Equation~~(\ref{la})]}\\
 & \qquad\qquad= p_{[0,3]}E^{M_2}_{M_1} (E^{M_1}_{P_1}(e_{1,1} \lambda_i {{\lambda}^*_j} e_{1,1})e_{0,2})p_{[0,3]}\\
 & \qquad \qquad= p_{[0,3]}E^{M_1}_{P_1} (e_{1,1} \lambda_i {{\lambda}^*_j} e_{1,1})E^{M_2}_{M_1}(e_{0,2}) p_{[0,3]}\\
 & \qquad\qquad= p_{[0,3]} E^Q_N(\lambda_i {{\lambda}^*_j})E^{M_1}_{P_1}(e_{1,1})E^{P_2}_{M_1}(e_{0,2})p_{[0,3]}\\
 & \qquad \qquad= p_{[0,3]} E^Q_N (\lambda_i {{\lambda}^*_j}){[Q:N]}^{-1} E^{M_1}_{P_1}(e_{1,1})p_{[0,3]}~~~\textrm{~~~[by ~~Equation~~(\ref{e1})]}\\
 & \qquad \qquad= p_{[0,3]}E^Q_N (\lambda_i {{\lambda}^*_j}){[Q:N]}^{-1} {[Q:N]}^{-1}e_{0,1}p_{[0,3]}~~~~\textrm{~~~~[by~~~Equation~~(\ref{e2})]}\\
 & \qquad \qquad= {[Q:N]}^{-2} p_{[0,3]} E^Q_N (\lambda_i {{\lambda}^*_j})p_{[0,3]}\\
 & \qquad \qquad= {[Q:N]}^{-2} E^{Q p_{[0,3]}} _{Np_{[0,3]}}(\lambda_i p_{[0,3]} p_{[0,3]} {{\lambda}^*_j})
\end{align*}
Since, $\{\lambda_i p_{[0,3]}\}$ is a basis for $Qp_{[0,3]}/Np_{[0,3]}$ it follows from Theorem $2.2$ in \cite{Bak} that $\{[Q:N] e_{0,2}p_{[0,3]}e_{1,1}\lambda_i p_{0,3}\}$ that is,
$\{[Q:N] e_{0,2}e_{1,1} \lambda_i p_{[0,3]}\}$ is a basis for $p_{[0,3]} M_2 p_{[0,3]}/p_{[0,3]}M_1 p_{[0,3]}$.\\
Thus  combining (a),(b),(c),(d) and (e) and applying Lemma \ref{fvrt} we complete the proof of Step 2. \vspace{4mm}\\
$ \noindent \textbf{Step 3}:$ \\
 In general apply Step 1 to  the following subfactors for $2n-1\geq i\geq 3 $ and $i$ odd:\\
$p_{[0,2n-i]} M_{2n-i} p_{[0,2n-i]}\subseteq p_{[0,2n-i]} P_{2n-{i+1}} p_{[0,2n-i]} \subseteq p_{[0,2n-i]} M_{2n-{i+1}} p_{[0,2n-i]}.$\\
Here biprojection is given by $ p_{[0,2n-i]} e_{0,2n-{i+2}}$. We get
$p_{[0,2n-{i+2}]} M_{2n-i} p_{[0,2n-{i+2}]}\subseteq p_{[0,2n-{i+2}]}\\ M_{2n-{i+1}} p_{[0,2n-{i+2}]}\subseteq p_{[0,2n-{i+2}]} M_{2n-{i+2}} p_{[0,2n-{i+2}]}$
is a Jones' tower with corresponding Jones' projection is given by $e_{1,2n-i+2} p_{[0,2n-{i+2}]}$. But $ e_{0,2n-1} e_{0,2n-3}\cdots e_{0,2n-{i+4}} \in M^{\prime}_{2n-i}, M^{\prime}_{2n-{i+1}}$ and $ M^{\prime}_{2n-{i+2}}.$
So, using Fact \ref{f:erel} (3) we have 
$$p_{[0,2n-1]} M_{2n-i} p_{[0,2n-1]}\subseteq p_{[0,2n-1]} M_{2n-{i+1}} p_{[0,2n-1]} \subseteq p_{[0,2n-1]} M_{2n-{i+2}} p_{[0,2n-{i+2}]}$$
is a Jones' tower with the Jones' projection $e_{1,2n-i+2} p_{[0,2n-1]}$. \vspace{3mm}\\
Apply Step 2 to the following subfactors for $2n-1\geq j\geq 3 $ and $j$ odd:\\
$p_{[0,2n-j]} M_{2n-{j-2}} p_{[0,2n-j]}\subseteq p_{[0,2n-j]} P_{2n-{j-1}} p_{[0,2n-j]} \subseteq p_{[0,2n-j]} M_{2n-{j-1}} p_{[0,2n-j]}.$\\
Here biprojection is given by $ p_{[0,2n-j]} e_{0,2n-j}$. Then we get 
$p_{[0,2n-{j+2}]} M_{2n-{j-1}} p_{[0,2n-{j+2}]}\subseteq p_{[0,2n-{j+2}]}\\ M_{2n-j} p_{[0,2n-{j+2}]} \subseteq p_{[0,2n-{j+2}]} M_{2n-{j+1}} p_{[0,2n-{j+2}]}$ is a 
Jones' tower with the corresponding Jones' projection is given by $e_{0,2n-j+1} p_{[0,2n-j+2]}$.
Again note that $ e_{0,2n-1} e_{0,2n-3}\cdots e_{0,2n-{j+4}} \in M^{\prime}_{2n-j-1}, M^{\prime}_{2n-j}$ and $ M^{\prime}_{2n-j+1}$. Thus it follows from Fact \ref{f:erel} (3) that
$$p_{[0,2n-1]} M_{2n-{j-1}} p_{[0,2n-1]}\subseteq p_{[0,2n-1]} M_{2n-j} p_{[0,2n-1]} \subseteq p_{[0,2n-1]} M_{2n-{j+1}} p_{[0,2n-1]}$$ is a Jones's tower with the corresponding Jones' projection is given by $e_{0,2n-j+1} p_{[0,2n-1]}$.\vspace{3mm}\\
Lastly note that, from Step 1 and Fact \ref{f:erel} it follows that $$Np_{[0,2n-1]}\subseteq p_{[0,2n-1]}Mp_{[0,2n-1]} \subseteq p_{[0,2n-1]}M_1p_{[0,2n-1]}$$ is a Jones' tower with the corresponding Jones' projection $e_{1,1}p_{[0,2n-1]}$.\par
Combining all the facts mentioned above we prove the result as stated in the theorem.
\end{proof}

\section*{Acknowledgement} The author sincerely thanks V.S. Sunder and Vijay Kodiyalam for innumerable enlightening discussions and fruitful advice. He also wishes to thank Sohan Lal Saini and Sebastein Palcoux for various useful discussions.

\end{document}